\newtheorem{theorem}{Theorem}[section]
\newtheorem{lemma}[theorem]{Lemma}
\newtheorem{prop}[theorem]{Proposition}
\newtheorem{corollary}[theorem]{Corollary}
\theoremstyle{definition}
\newtheorem{definition}[theorem]{Definition}
\newtheorem{example}[theorem]{Example}
\theoremstyle{remark}
\newtheorem{remark}[theorem]{Remark}
\numberwithin{equation}{section}
\newcommand{\pres}[3]{\textnormal{#1} \langle #2 \mid #3 \rangle}
\newcommand{\RU}{\mathscr{R}_1}
\newcommand{\MCG}[2]{\Gamma_M(#1, #2)}
\newcommand{\GCG}[2]{\Gamma_G(#1, #2)}
\newcommand{\lr}[1]{\xleftrightarrow{}_{#1}}
\newcommand{\lra}[1]{\xleftrightarrow{\ast}_{#1}}
\newcommand{\xra}[1]{\xrightarrow{\ast}_{#1}}
\newcommand{\xr}[1]{\xrightarrow{}_{#1}}
\newcommand{\fB}{\mathfrak B}
\newcommand{\fP}{\mathfrak P}
\newcommand{\fPe}{\mathfrak P_\varepsilon}
\newcommand{\fR}{\mathfrak R}
\newcommand{\fU}{\mathfrak U}
\newcommand{\T}{\operatorname{Tree}}
\newcommand{\TU}{\textsc{T}\fU}
\newcommand{\TUE}{\widetilde{\textsc{T}}}
\newcommand{\TG}{\textsc{T}\Gamma}
\newcommand{\TGE}{\widetilde{\textsc{T}\Gamma}}
\newcommand{\TUH}{\textsc{T}\fU^{(h)}}
\newcommand{\TUHE}{\widetilde{\textsc{T}\fU^{(h)}}}
\newcommand{\ro}{\mathfrak{1}}
\begin{document}

\title{The Geometry of Special Monoids}

\author{Carl-Fredrik \textsc{Nyberg-Brodda}}
\address{Department of Mathematics, University of East Anglia, Norwich, England, UK}
\email{c.nyberg-brodda@uea.ac.uk}


\date{\today}



\begin{abstract}
A monoid is said to be special if it admits a presentation in which all defining relations are of the form $w = 1$. Groups are familiar examples of special monoids. This article studies the geometric and structural properties of the Cayley graphs of finitely presented special monoids, building on work by Zhang and Gray-Steinberg. It is shown that the right Cayley graph $\Gamma$ of a special monoid $M$ is a context-free graph, in the sense of Muller \& Schupp, if and only if the group of units of $M$ is virtually free. This generalises the geometric aspect of the well-known Muller-Schupp Theorem from groups to special monoids. Furthermore, we completely characterise when the monadic second order theory of $\Gamma$ is decidable: this is precisely when the group of units is virtually free. This completely answers for the class of special monoids a question of Kuske \& Lohrey from 2006. As a corollary, we obtain that the rational subset membership problem for $M$ is decidable when the group of units of $M$ is virtually free, extending results of Kambites \& Render. We also show that the class of special monoids with virtually free group of units is the same as the class of special monoids with right Cayley graph quasi-isometric to a tree as undirected graphs. The above results are proven by developing two general constructions for graphs which preserve context-freeness, of independent interest. The first takes a context-free graph and constructs a tree of copies of this graph. The second is a bounded determinisation of the resulting tree of copies. 
\end{abstract}

\maketitle

If $M$ is a monoid admitting a presentation $\pres{Mon}{A}{R_i = 1 \: (i \in I)}$, then we say that $M$ is \textit{special}. Any group is a special monoid, but there are countless examples of special monoids that are not groups. An example of a non-group special monoid appearing in the literature with staggering frequency is the \textit{bicyclic monoid}, defined by $\pres{Mon}{b,c}{bc=1}$. The study of special monoids goes back quite far. The work by Thue \cite{Thue1914} in part deals with what today can be recognised as special monoids with a single defining relation; this definition, however, predates even that of a semigroup, and so only forms part of the proto-history of special monoids.

In a more modern setting, special monoids were first introduced in 1958 by Tse{\u{\i}}tin \cite{Tseitin1958}, who gave them their name as \textit{special associative systems}. This was swiftly followed by the first systematic study of such monoids in 1960, when Adian \cite{Adian1960} proved that the word problem for a finitely presented \textit{homogeneous} (i.e. all relations have the same length) special monoid is decidable if and only if it is decidable for its group of units. Here, the group of units of a monoid $M$ is the subgroup of all two-sided invertible elements of $M$. As a corollary of Adian's result, the word problem for any one-relator special monoid $\pres{Mon}{A}{w=1}$ is decidable, as the word problem for one-relator groups is decidable by the classical result of Magnus \cite{Magnus1932}. The study did not end there; Adian \cite{Adian1966} published a major monograph on the subject in 1966, to which the above one-relation result is often attributed, and Makanin \cite{Makanin1966, MakaninThesis1966}, a student of Adian's, extended the results to all finitely presented special monoids in his thesis, going beyond the homogeneous case. The interested reader is referred to the author's English translation of Makanin's thesis, see \cite{MakaninThesisTranslated}.

Squier \cite{Squier1987} and Otto \& Zhang \cite{Otto1991} made a thorough investigation of special monoids in the context of \textit{string rewriting systems}, and were able to show a number of results, as well as reprove the older results in a more modern framework, such as in \cite{Zhang1992b}. Perhaps most notably, Zhang \cite{Zhang1991} showed that the \textit{conjugacy problem}, that is, the problem of deciding whether two words represent conjugate elements in $M$, once appropriately defined, of a special monoid reduces to the same problem for its group of units. Furthermore, the submonoid of all right invertible elements of a special monoid was shown in \cite{Squier1987, Zhang1992} to be isomorphic to the free product of a finitely generated free monoid by the group of units. Recently, topological methods have found their place in the study of special monoids, too. Gray \& Steinberg \cite{Gray2018} showed that any special monoid $M$ whose group of units satisfies the homological finiteness condition $\operatorname{FP}_n$ will in turn satisfy this condition. The same authors focus primarily on the one-relator case, and show that the cohomological dimension of a one-relator special monoid is either $2$ (if the group of units is a torsion-free one-relator group) or $\infty$ (if it has torsion), and that any one-relator special monoid satisfies the cohomological finiteness condition $\operatorname{FP}_\infty$. As an aside, this has recently been extended by the same authors to show that all one-relation monoids $\pres{Mon}{A}{u=v}$ -- not necessarily special -- satisfy $\operatorname{FP}_\infty$ \cite{Gray2019}. These results indicate that there is prospect for geometric methods in the study of special monoids.

On the other hand, the language-theoretic methods introduced by An\={\i}s\={\i}mov \cite{Anisimov1971} in 1970 to the study of groups have found innumerable applications therein. In his article, An\={\i}s\={\i}mov introduced the idea of measuring the complexity of the word problem of a group by representing this as a set. Given a group $G$, this set -- slightly abusively called the \textit{word problem} for $G$ -- is defined as the set of all words over some fixed generating set of $G$ which represent the identity element. As deciding membership in this set is equivalent to the word problem for $G$, we know already that this set is recursive if and only if $G$ has decidable word problem. It is natural to ask what can be said about groups for which the word problem is a set with strong language-theoretic properties. An\={\i}s\={\i}mov himself proved in his original article that the word problem is a regular language if and only if $G$ is a finite group. Herbst \cite{Herbst1991} showed that the word problem is a \textit{one-counter} language if and only if $G$ is virtually cyclic. 

However, probably the most famous of all results along this line is the \textit{Muller-Schupp theorem} from 1983 \& 1985, due to Muller \& Schupp \cite{MullerSchupp1983} and Dunwoody \cite{Dunwoody1985}, which states that a group is context-free if and only if it is finitely presented virtually free. This result strongly depends on the geometry of the Cayley graph of the groups involved, and these ideas were developed by the same authors further into \textit{context-free graphs} in 1985 in \cite{Muller1985}. These graphs enjoy strong properties; most notably, the monadic second-order theory of any such graph is decidable, by reducing this problem to Rabin's tree theorem \cite{Rabin1969}. Groups whose Cayley graphs are context-free graphs are precisely the context-free groups, and hence precisely the finitely presented virtually free groups. 

Thus, two natural questions appear. The first is how one might characterise special monoids with a context-free word problem; this is beyond the scope of this paper, and is completely answered in further work by the author \cite{NybergBrodda2020b}. The second is to what extent the geometric methods employed in the proof of the Muller-Schupp theorem can be extended to special monoids. This is the main question which has guided the writing of this paper. This question specialises to the question of describing precisely which special monoids have a context-free Cayley graph; this question is entirely resolved by Theorem~\ref{Thm: MULLER-SCHUPP!} -- they are precisely those special monoids whose group of units is virtually free.

The outline of the paper is as follows. We first collect some notation and terminology for the graphs which will appear throughout; in particular, the notion of context-free graphs is defined. This is followed in Section~\ref{Sec: Trees and Determinisations} by \textit{tree constructions} $\T(\Gamma, S)$, a general construction which makes formal the notion of a ``tree of copies'' of  a graph $\Gamma$, and which preserves context-freeness (Proposition~\ref{Prop: Gamma CF => Tree(Gamma) CF}). We also prove one of the main technical results of the paper (Theorem~\ref{Thm: Bounded folding gives CF from T}), which states that under some strong combinatorial assumptions on the graph $\Gamma$ involved, including almost-transitivity, the \textit{determinisation} of $\T(\Gamma, S)$ can be ensured to be context-free. In Section~\ref{Sec: Special Monoids}, we summarise some of the key results regarding special monoids $M$. We also, building on Zhang's work, simplify the study of the group of units of $M$ by considering the set of invertible pieces of the presentation, and prove some minor new results regarding these pieces (Propositions~\ref{Prop: Invertible contains a piece} \& \ref{Prop: Xi generates the right invertibles}). In Section~\ref{Sec: Constructing fU} we introduce a graph $\fU$, which we call the \textit{Sch\"utzenberger graph of the units} of $M$. This graph is quasi-isometric to the group of units of $M$ and enjoys many significant properties. This graph serves as the first step towards a construction of the Cayley graph of $M$. In particular, $\fU$ is a context-free graph if and only if $U(M)$ is virtually free (Theorem~\ref{Thm: Big fat equivalence list}). In Section~\ref{Sec: Constructing fR1}, we introduce the \textit{Sch\"{u}tzenberger graph of $1$}, denoted $\fR_1$, which is the connected component of the identity element in the Cayley graph of $M$. We show (Theorem~\ref{Thm: R_1 is TU determinised}) that $\fR_1$ is isomorphic to the determinisation of a tree of copies of $\fU$. Using the results from the earlier sections, we are then able to conclude that $\fR_1$ is context-free if and only if the group of units $U(M)$ is virtually free, independently of finite generating set chosen for $M$. In Section~\ref{Sec: Building MCG from fR1}, we use the structural results of Gray \& Steinberg to conclude (Corollary~\ref{Cor: fR1 CF iff U(M) VF}) that the Cayley graph of $M$ is context-free if and only if $\fR_1$ is context-free. This, when combined with the results from previous sections, yields the main theorem (Theorem~\ref{Thm: MULLER-SCHUPP!}) of the paper: the Cayley graph of $M$ is context-free if and only if $U(M)$ is virtually free. The final Section~\ref{Sec: Applications and Open Problems} deals with applications of this theorem. The two main applications of independent interest are: the monadic second-order theory of the Cayley graph of $M$ is decidable if and only if $U(M)$ is virtually free (Theorem~\ref{Thm: Dec MSO iff U(M) VF}); and if $U(M)$ is virtually free, then the rational subset membership problem for $M$ is decidable (Corollary~\ref{Cor: VF U(M) => RSMP dec}). Finally, we combine the results of the author on the word problem \cite{NybergBrodda2020b}, to obtain a list (Theorem~\ref{Thm: Full equivalence of context-free M}) of five equivalent properties for characterising special monoids with virtually free group of units. We conclude with some possible directions for further study into the geometry of special monoids. 

We remark that while the results stated in this paper generally deal with \textit{right} Cayley graphs and \textit{right} invertible elements, one can at all places, if one is so inclined, without any loss of validity of the main results replace \textit{right} by \textit{left}.

\section{Background on Graphs}\label{Sec: Background on Graphs}

\subsection{Graphs} 

Before any results can be shown, we will need to fix some terminology regarding the graphs that will appear in the paper. The following definition of a graph follows \cite{Muller1985} and \cite{Kuske2005}, but it is not too dissimilar from any standard treatment of labelled graphs. An \textit{alphabet} is a finite set of symbols. A \textit{labelled graph} $\Gamma$ consists of a set $V = V(\Gamma)$ of vertices, a \textit{label alphabet} $\Sigma$, and a set $E$ of (labelled) \textit{edges}, where $E \subseteq V \times \Sigma \times V$. For every edge $e \in E$, the projection to the first coordinate is called the \textit{origin} $o(e) \in V$ of $e$, the projection to the second coordinate is called the \textit{label} $\ell(e) \in \Sigma$ of $e$, and the projection to the third coordinate is called the \textit{terminus} $t(e) \in V$ of $e$. For $\sigma \in \Sigma$, let $E_\sigma = E \cap (V \times \{ \sigma \} \times V)$ be the set of edges labelled by $\sigma$. This definition of edge does not allow for multiple edges sharing all of origin, terminus, and label. Some constructions, particularly graph quotients, in later sections will often at first appearance seem to introduce pairs of such edges. To enhance readability, we will throughout keep the convention that any such edges are identified as a single edge, and make no further comment on them.

We will often find it useful to associate to $\Gamma$ the \textit{undirected} and \textit{unlabelled} graph
\[
\text{ud}(\Gamma) := \left( V, \bigcup_{\sigma \in \Sigma} \{ (u, \sigma,  v) \mid u \neq v, (u, \sigma, v) \in E_\sigma \: \text{or} \: (v, \sigma, u) \in E_\sigma \}\right)
\]

If $\Sigma$ is the label alphabet of $\Gamma$, then we associate an alphabet $\overline{\Sigma}$ in bijective correspondence with $\Sigma$, denoting this bijection by $\sigma \mapsto \overline{\sigma}$ for all $\sigma \in \Sigma$, with $\Sigma \cap \overline{\Sigma}= \varnothing$. There is a natural labelled graph, adjoint to $\textnormal{ud}(\Gamma)$, obtained by labeling $(u,v)$ by $\sigma$ if $(u,\sigma, v) \in E$, and by $\overline{\sigma}$ if $(v, \sigma, u) \in E$. We denote the resulting \textit{undirected} and \textit{labelled} graph by $\textnormal{lud}(\Gamma)$, and note that $\textnormal{lud}(\textnormal{lud}(\Gamma)) = \textnormal{lud}(\Gamma)$.

The connected components of a graph $\Gamma$ is defined as the connected components of ud$\Gamma$. The \textit{tree-width} of a graph $\Gamma$ is the minimum width among all possible \textit{tree decompositions} of $\Gamma$; this is the same notion of tree decompositions as appears originally in \cite{Robertson1986}. The reader need know nothing about tree-decompositions than what appears in the below paragraph. A tree has tree-width $1$. The \textit{strong tree-width} of a graph $\Gamma$ is the minimum width among all possible \textit{strong tree decompositions} of $\Gamma$. Both these notions were introduced in \cite{Seese1985}, and are explained lucidly in \cite{Kuske2005}. The only property of strong tree decompositions needed is the following, cf. \cite{Kuske2005}: if a graph $\Gamma$ has strong tree-width $\leq k$, then any induced subgraph of $\Gamma$ has strong tree-width $\leq k$. 

We say that a graph $\Gamma$ is \textit{locally finite} if for every vertex $v$ there are only finitely many edges with $v$ as origin or terminus. We say that a graph $\Gamma$ has \textit{bounded in-degree} if there exists $K \geq 0$ such that for every vertex $v$ of $\Gamma$, there are at most $K$ edges with $v$ as terminus; analogously, $\Gamma$ has \textit{bounded out-degree} if there exists $K \geq 0$ such that for every vertex $v$ of $\Gamma$, there are at most $K$ edges with $v$ as origin. We say that $\Gamma$ has \textit{bounded degree} if it has bounded in-degree and bounded out-degree. 

We will frequently reference walks in graphs; if $p$ is a walk
\[
u_0 \xrightarrow{\sigma_1} u_1 \xrightarrow{\sigma_2} \cdots \xrightarrow{\sigma_n} u_n
\]
where $u_i \xrightarrow{\sigma_{i+1}} u_{i+1}$ is meant to indicate that $(u_i, u_{i+1}) \in E_{\sigma_{i+1}}$, then we say that $p$ has \textit{walk label} $\ell(p) = \sigma_1 \cdots \sigma_n$, and we say that $p$ is the walk $u_0 \xrightarrow{\ell(p)} u_n$. At times, this may be abbreviated to simply read $p : u_0 \xrightarrow{\ell(p)} u_n$. If all vertices $u_i$ are pairwise distinct, except possibly $u_0$ and $u_n$, then we say that $p$ is a \textit{path}, and we will accordingly refer to its walk label as its \textit{path label}.

\subsection{Ends of graphs}

An incredibly useful notion in the study of the coarse geometry of a graph is that of \textit{ends}. For this, we follow \cite{Muller1985}. Let $\Gamma$ be a connected labelled graph of bounded degree. We will distinguish a vertex $\ro \in V(\Gamma)$ and say that $\ro$ is the \textit{root} of $\Gamma$. If $v$ is any vertex of $\Gamma$, then we use $|v|_\Gamma$ to denote the length of a shortest (undirected) walk from $\ro$ to $v$ in $\text{ud}(\Gamma)$. By $\Gamma^{(n)}$ we mean the subgraph of $\Gamma$ consisting of all the vertices and edges which are connected to $\ro$ by an undirected walk of length less than $n$; in particular $\Gamma^{(0)}$ is empty, $\Gamma^{(1)}$ consists of $\ro$, and $\Gamma^{(2)}$ consists of $\ro$, its neighbours, and all edges connecting these vertices. As in the theory of ends in e.g.\ \cite{Cohen1972} or \cite{Rabin1969}, the connected components of $\Gamma \setminus \Gamma^{(n)}$ will be the central objects of study. If $C$ is a connected component of $\Gamma \setminus \Gamma^{(n)}$, then we say that a \textit{frontier point} of $C$ is a vertex $u$ of $C$ such that $|u|_{\text{ud} \Gamma} = n$. If $v$ is a vertex of $\Gamma$ with $|v|_{\text{ud} \Gamma} = n$, then we use $\Gamma(v)$ to denote the component of $\Gamma \setminus \Gamma^{(n)}$ which contains $v$. The set of frontier points of $\Gamma(v)$ will be denoted by $\Delta(v)$; this set is always finite (but possibly unbounded in $v$) as $\Gamma$ has bounded degree. Let $u, v \in V(\Gamma)$. An \textit{end-isomorphism} between the two subgraphs $\Gamma(u)$ and $\Gamma(v)$ is a mapping $\psi$ between $\Gamma(u)$ and $\Gamma(v)$ such that 
\begin{enumerate}
\item $\psi$ is a label-preserving graph isomorphism, and
\item $\psi$ maps $\Delta(u)$ onto $\Delta(v)$.
\end{enumerate}

We will write $\Gamma(u) \sim \Gamma(v)$ if there exists some end-isomorphism $\psi : \Gamma(u) \to \Gamma(v)$. 

\begin{definition}
A connected labelled graph $\Gamma$ of bounded degree is said to be \textit{context-free} if: 
\begin{enumerate}
\item $\Gamma$ has bounded degree;
\item $\Gamma(v) \mid v \in V(\Gamma) \} / \sim$ is finite.
\end{enumerate} 
Furthermore, a labelled graph $\Gamma$ of bounded degree that is the union of finitely many connected graphs is said to be context-free if all its connected components are context-free.
\end{definition}

We now give one example together with a non-example of a context-free graph, to illustrate some of the considerations of importance, as well as consolidate the definitions. The interested reader may construct many more examples of both kinds without much difficulty. 

\begin{example}[A context-free graph]
Let $\Gamma$ be the graph obtained from the following procedure: take a triangle graph, and attach a single edge to each of its vertices. To each of these edges, attach an isomorphic copy of the original triangle, and repeat; the graph $\Gamma$ is the colimit of the sequence of graphs obtained. See Figure~\ref{Fig: C2C3 is virtually free}, in which the resulting graph is drawn out without any edge labels. We note that this graph is very closely related to the Cayley graph of the virtually free group $\pres{Gp}{a, b}{a^2 = b^3 = 1} \cong C_2 \ast C_3$.
\end{example}

Thus context-freeness captures the idea that if one traverses the graph from the identity, travelling outwards, one eventually encounters graphs which one already has seen.

\begin{figure}
\begin{center}
\hspace*{-1cm}\includegraphics[scale=0.8]{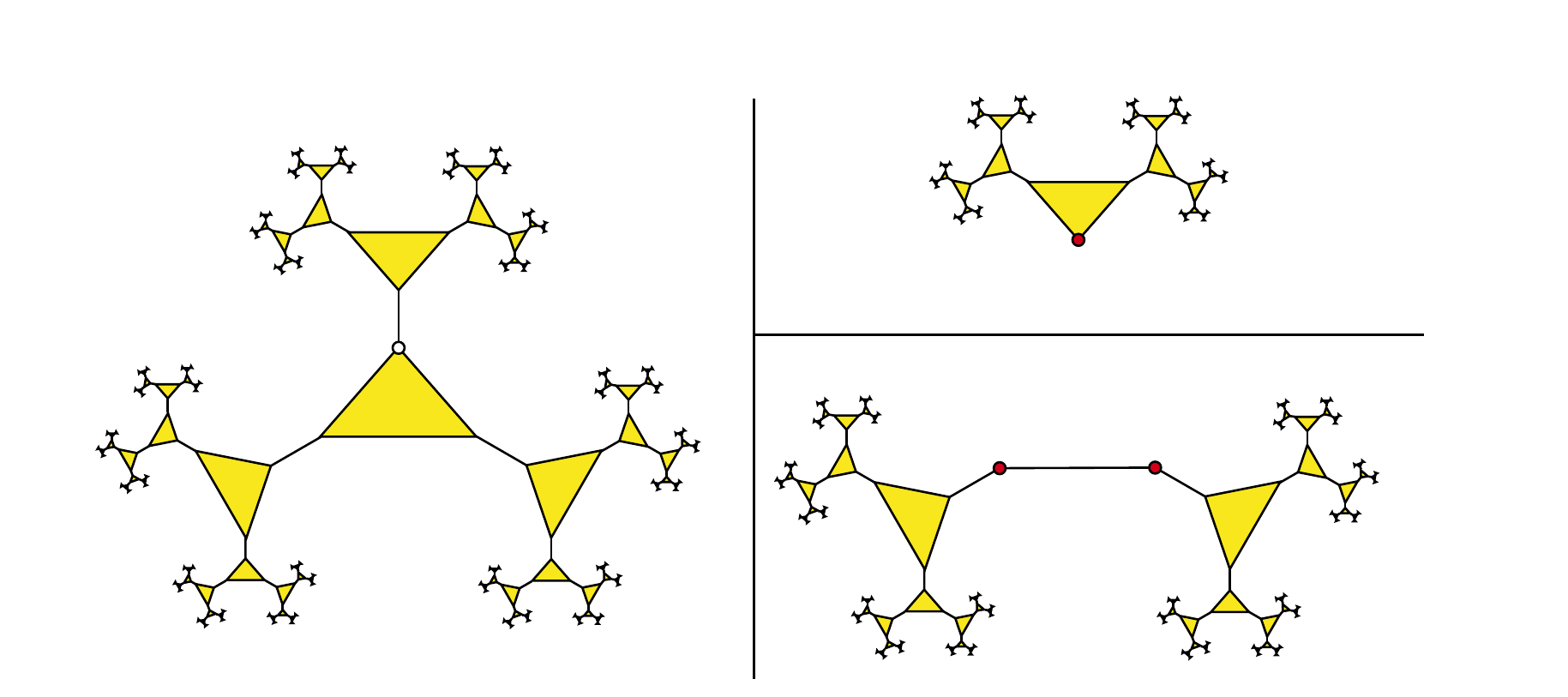}
\end{center}
\caption{Left: A context-free graph $\Gamma$, with root $\ro$ central and enlarged. Right: Two of the three end-isomorphism classes of $\Gamma$, with frontier points marked in red. The third end-isomorphism class is represented by $\Gamma(\ro)$, and is isomorphic to $\Gamma$. Note that all edge labels are intentionally suppressed; it is assumed that all triangles have the same labels, as do all single edges.}
\label{Fig: C2C3 is virtually free}
\end{figure}

\begin{example}[A graph which is not context-free]
Let $\Gamma$ be the infinite two-dimensional grid, with two different types of edge labels; see Figure~\ref{Fig: ZxZ grid is not CF}. The root $\ro$ is the central enlarged vertex. Some graphs $\Gamma^{(n)}$ are drawn out. In all of these graphs, the red vertices indicate vertices at distance $n$  from the root $\ro$. Then $\Gamma$ is not a context-free graph, as the number of frontier points of $\Gamma \setminus \Gamma^{(n)}$ equals the number of vertices at distance $n$ from $\ro$. But this number grows quadratically, and hence unboundedly in $n$, so there cannot be only finitely many end-isomorphism classes of ends of $\Gamma$. We note that this graph is isomorphic to the Cayley graph of the group $\pres{Gp}{a,b}{[a, b] = 1} \cong \mathbb{Z} \times \mathbb{Z}$, which is not virtually free.
\end{example}

\begin{figure}
\begin{center}
\hspace*{-1cm}\includegraphics[scale=0.7]{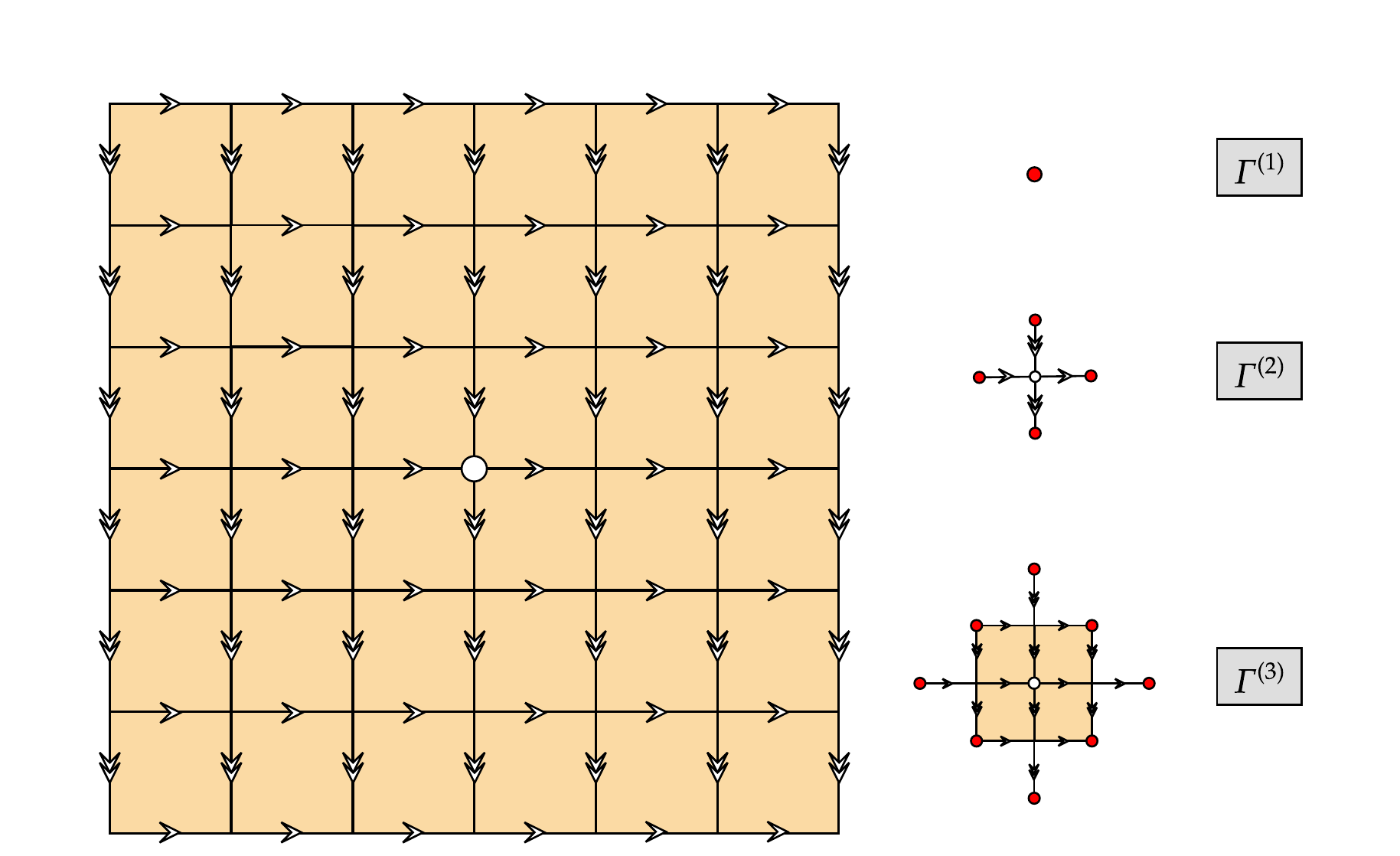}
\end{center}
\caption{An illustration of why the infinite two-dimensional grid is not a context-free graph. We remark that the filled-in squares are purely to make the picture clear, and have no significance to the graph itself. Left: the graph $\Gamma$, with root $\ro$ in the center. Right: three complements of ends of $\Gamma$, showing that the number of frontier points of $\Gamma \setminus \Gamma^{(n)}$ grows unboundedly in $n$.}
\label{Fig: ZxZ grid is not CF}
\end{figure}

Note that as the two graph-theoretic tools used above for defining context-free graphs -- namely connected components and distances -- are used in the undirected sense, lud$(\Gamma)$ is context-free if and only if $\Gamma$ is context-free. We note that by \cite[Corollary~2.7]{Muller1985} a context-free graph remains context-free independent of the choice of root; this result will often be implicitly assumed throughout. In particular, some statements will be easier to state without explicitly rooting the graphs involved; the reader can to such statements always add the sentence ``independently of root chosen''. Furthermore, if we speak of a context-free graph, then to save ink we implicitly assume this graph is labelled and has bounded degree.

A final definition which occasionally is useful is that of a \textit{second-level subgraph} of a context-free graph. Let $\Gamma$ be a context-free graph rooted at $\ro$, and let $\{ \Gamma_0, \Gamma_1, \dots, \Gamma_k\}$ be a complete list of representatives of the end-isomorphism classes of $\Gamma$. Assume without loss of generality that $\Gamma_0 = \Gamma(\ro)$. Let $\Gamma_i$ be one of these representative graphs, with $\Delta_i$ as its set of frontier points. If the edges of $\Gamma_i$ that are incident to some vertex of $\Delta_i$ are deleted, then there remains a finite union of connected subgraphs of $\Gamma_i$. These subgraphs are called the \textit{second-level subgraphs} of $\Gamma_i$. It may be the case that $\Gamma_0$ is not the second-level subgraph of any $\Gamma_i$. If this is the case, then we declare $\Gamma_0$ to nevertheless be a second-level subgraph.

We note that for every $\Gamma_i$ there exists some $\Gamma_j$ such that $\Gamma_i$ is a second-level subgraph of $\Gamma_j$, and every second-level subgraph is isomorphic to some $\Gamma_i$. Thus the notion of second-level subgraphs formalises the notion of ``seeing'' elements of $\{ \Gamma_0, \Gamma_1, \dots, \Gamma_k \}$ as one travels in $\Gamma$ outward from the root. 
\subsection{Cayley graphs}

There remain a handful of standard definitions to make. The first is that of a \textit{quasi-isometry of undirected graphs} -- a quasi-isometry is a quasi-isometry in the usual sense, but the graphs considered are rather the geometric realisations of the graphs above. This distinction is important, but to prevent cumbersome notation, we omit writing this explicitly in all places. Hence we will occasionally speak of quasi-isometries of graphs. The second definition is that of a Cayley graph of a monoid, which was mentioned in passing in both of the above examples. The (right) \textit{monoid Cayley graph} $\MCG{M}{A}$ of a monoid $M$ together with a generating set $A$ for $M$ is the labelled graph with vertex set $M$ and edges $u \xrightarrow{a} v$, i.e.\ $(u, v) \in E_a$, such that $u \cdot \pi(a) =v$ in $M$. Here $\pi : A^\ast \to M$ denotes the natural homomorphism from the free monoid on $A$ to $M$. The \textit{group Cayley graph} $\GCG{G}{A}$ of a group $G$ with a generating set $A$ for $M$ is defined analogously, but with two additional assumptions: that $A$ be placed in involutive correspondence with an alphabet $A^{-1}$ such that $A \cap A^{-1} = \varnothing$, and such that for every edge $(u, v) \in E_a$, there exists an edge $(v, u) \in E_{a^{-1}}$. In particular, the label alphabet of $\GCG{G}{A}$ is $A \sqcup A^{-1}$, and $\pi(a^{-1}) = \pi(a)^{-1}$. 

\section{Tree Constructions}\label{Sec: Trees and Determinisations}

\subsection{Trees of copies}

For ease of notation, throughout this section all graphs will be labelled with alphabet $A$. Let $\Gamma$ be a connected labelled graph rooted at $\ro$ with bounded degree. Let $\{ \Gamma_0, \Gamma_1, \dots \}$ be a (at most countable) set of representatives of the end-isomorphism classes of $\Gamma$, with $\Gamma_0 := \Gamma(\ro)$ for notational convenience. For every $i$ appearing in this list, let $\Delta_i$ be the set of frontier points of $\Gamma_i$, and let $F(\Gamma) = \cup_i \Delta_i$, where the union is taken over all $i$ appearing in the earlier list. We assume $\ro$ is the representative for $\Gamma_0$, so $\ro \in F(\Gamma)$. For example, in Figure~\ref{Fig: C2C3 is virtually free}, we can take $F(\Gamma)$ as a set of four vertices: the root $\ro$, together with the three vertices around any triangle other than the central one. Thus $F(\Gamma)$ is a set of representatives of frontier points of the ends of $\Gamma$. 

For any subset $S \subseteq F(\Gamma)$, we will now define a graph $\T(\Gamma, S)$, and will show that if $\Gamma$ is a context-free graph then so too is $\T(\Gamma, S)$, for any (non-trivial) choice of $S$. We first give some motivation. Intuitively, this graph will capture the idea of ``a tree of copies of $\Gamma$'', where the ``branching'' of the tree takes place only on vertices which are end-equivalent to a vertex of $S$. If one were to ``branch'' at completely arbitrary places in $\Gamma$ and thus construct a tree of copies, then it is clear that the resulting graph could be highly non-context-free, even if $\Gamma$ itself was chosen to be context-free. However, the condition that we only ``branch'' at vertices which are end-equivalent to a vertex from $S$, which is necessarily a finite set if $\Gamma$ is context-free, means that the resulting tree of copies will only have finitely many different types of ``branching behaviours'', which gives some initial justification for the claim that this tree of copies should be a context-free graph.

We now make this formal. Fix some $S \subseteq F(\Gamma)$ with $\ro \not\in S$. By definition of $F(\Gamma)$, every vertex $v \in V(\Gamma)$ is such that there exists a unique $i$ with $f \in \Delta_i \subseteq F$ and an end-isomorphism $\psi : \Gamma(v) \to \Gamma_i$ with $\psi(v) = f$. In particular, there exists a set $V_S \subseteq V(\Gamma)$ which is the collection of all $v \in V(\Gamma)$ with an end-isomorphism such that $\psi(v) \in S$. We remark that if $\Gamma$ is context-free and infinite, then whereas $S$ is necessarily finite, the set $V_S$ will in general be infinite, unless $S = \varnothing$. Furthermore, $\ro \not\in V_S$.

We inductively define graphs $\T_n(\Gamma, S)$ and associated distinguished sets of vertices $V_n \subseteq V(\T_n(\Gamma, S))$. Define $\T_0(\Gamma, S) := \Gamma$, and $V_0 := V_S$. Assume that for some $n \geq 0$ the graphs $\T_k(\Gamma, S)$ and $V_k$ have been defined for some $0 \leq k \leq n$. Then $\T_{n+1}(\Gamma, S)$ is defined as the graph obtained from attaching a copy $\Gamma_v$ of $\Gamma$ to every vertex $v \in V_n$ inside $\T_n(\Gamma, S)$, identifying the root of $\Gamma_v$ with $v$. Denote by $V_{v,S}$ the copy of the subset $V_S$ inside $V(\Gamma_v)$. Then we define 
\[
V_{n+1} := \bigcup_{v \in V_n} V_{v,S} \subseteq V(\T_{n+1}(\Gamma, S)).
\]
In other words, $V_{n+1}$ is defined by taking the union of all the newly added copies of $V_0$. Note that the graphs 
\[
\T_0(\Gamma, S) \subseteq \T_1(\Gamma, S) \subseteq \cdots \subseteq \T_k(\Gamma, S) \subseteq \cdots
\] form a directed system, and in particular their directed colimit exists. If $S$ is non-empty, then the inclusions are strict.

\begin{definition}
For a graph $\Gamma$ and a set $S \subseteq F(\Gamma)$ of vertices of $\Gamma$, we define the \textit{tree of copies of $\Gamma$ with respect to $S$} as \[\T(\Gamma, S) := \varinjlim_i \T_i(\Gamma, S) = \bigcup_i \T_i(\Gamma, S).\]
The set $\bigcup_i V_i$ is called the set of \textit{branch points} of $\T(\Gamma, S)$. 
\end{definition}

We will almost exclusively root $\T(\Gamma, S)$ at $\ro \in \Gamma = \T_0(\Gamma, S)$. An example of $\T(\Gamma, S)$ for when $\Gamma$ is a triangle graph and $S$ consists of the two non-root vertices is shown in Figure~\ref{Fig: A triangle graph tree of copies}.

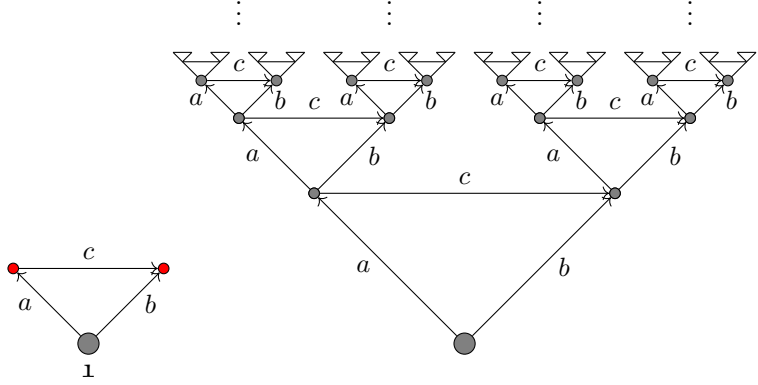
\begin{figure}
\begin{tikzpicture}
\node (v0)[label=below:$\ro$][circle, draw, fill=black!50, inner sep=0pt, minimum width=8pt] at (0,0) {};
\node (a1)[circle, draw, fill=red!100, inner sep=0pt, minimum width=4pt] at (-1, 1) {};
\node (a2)[circle, draw, fill=red!100, inner sep=0pt, minimum width=4pt] at (1, 1) {};

\path[->]
(v0)  edge node[left = 0.1]{$a$}         (a1)
(v0)  edge node[right=0.1]{$b$}         (a2)
(a1)  edge node[above]{$c$}         (a2);


\node (v0)[circle, draw, fill=black!50, inner sep=0pt, minimum width=8pt] at (5,0) {};
\node (a1)[circle, draw, fill=black!50, inner sep=0pt, minimum width=4pt] at (3, 2) {};
\node (a2)[circle, draw, fill=black!50, inner sep=0pt, minimum width=4pt] at (7, 2) {};
\path[->]
(v0)  edge node[left = 0.1]{$a$}         (a1)
(v0)  edge node[right=0.1]{$b$}         (a2)
(a1)  edge node[above]{$c$}         (a2);

\node (v0)[circle, draw, fill=black!50, inner sep=0pt, minimum width=4pt] at (3,2) {};
\node (a1)[circle, draw, fill=black!50, inner sep=0pt, minimum width=4pt] at (2,3) {};
\node (a2)[circle, draw, fill=black!50, inner sep=0pt, minimum width=4pt] at (4,3) {};
\path[->]
(v0)  edge node[left = 0.1]{$a$}         (a1)
(v0)  edge node[right=0.1]{$b$}         (a2)
(a1)  edge node[above]{$c$}         (a2);

\node (v0)[circle, draw, fill=black!50, inner sep=0pt, minimum width=4pt] at (7,2) {};
\node (a1)[circle, draw, fill=black!50, inner sep=0pt, minimum width=4pt] at (6,3) {};
\node (a2)[circle, draw, fill=black!50, inner sep=0pt, minimum width=4pt] at (8,3) {};
\path[->]
(v0)  edge node[left = 0.1]{$a$}         (a1)
(v0)  edge node[right=0.1]{$b$}         (a2)
(a1)  edge node[above]{$c$}         (a2);


\foreach \x in {0,...,7}{
    \draw (1.5+\x, 3.5) edge (1.25+\x, 3.75);
    \draw (1.5+\x, 3.5) edge (1.75+\x, 3.75);
	\draw (1.25+\x, 3.75) edge (1.75+\x, 3.75);}
	
\foreach \x in {0,...,15}{
    \draw (1.25+\x*0.5, 3.75) edge (1.125+\x*0.5, 3.875);
    \draw (1.25+\x*0.5, 3.75) edge (1.375+\x*0.5, 3.875);
	\draw (1.125+\x*0.5, 3.875) edge (1.375+\x*0.5, 3.875);}

\node (v0)[circle, draw, fill=black!50, inner sep=0pt, minimum width=4pt] at (2,3) {};
\node (a1)[circle, draw, fill=black!50, inner sep=0pt, minimum width=4pt] at (1.5,3.5) {};
\node (a2)[circle, draw, fill=black!50, inner sep=0pt, minimum width=4pt] at (2.5,3.5) {};
\path[->]
(v0)  edge node[left = 0.1]{$a$}         (a1)
(v0)  edge node[right=0.1]{$b$}         (a2)
(a1)  edge node[above]{$c$}         (a2);

\node (v0)[circle, draw, fill=black!50, inner sep=0pt, minimum width=4pt] at (4,3) {};
\node (a1)[circle, draw, fill=black!50, inner sep=0pt, minimum width=4pt] at (3.5,3.5) {};
\node (a2)[circle, draw, fill=black!50, inner sep=0pt, minimum width=4pt] at (4.5,3.5) {};
\path[->]
(v0)  edge node[left = 0.1]{$a$}         (a1)
(v0)  edge node[right=0.1]{$b$}         (a2)
(a1)  edge node[above]{$c$}         (a2);

\node (v0)[circle, draw, fill=black!50, inner sep=0pt, minimum width=4pt] at (6,3) {};
\node (a1)[circle, draw, fill=black!50, inner sep=0pt, minimum width=4pt] at (5.5,3.5) {};
\node (a2)[circle, draw, fill=black!50, inner sep=0pt, minimum width=4pt] at (6.5,3.5) {};
\path[->]
(v0)  edge node[left = 0.1]{$a$}         (a1)
(v0)  edge node[right=0.1]{$b$}         (a2)
(a1)  edge node[above]{$c$}         (a2);

\node (v0)[circle, draw, fill=black!50, inner sep=0pt, minimum width=4pt] at (8,3) {};
\node (a1)[circle, draw, fill=black!50, inner sep=0pt, minimum width=4pt] at (7.5,3.5) {};
\node (a2)[circle, draw, fill=black!50, inner sep=0pt, minimum width=4pt] at (8.5,3.5) {};
\path[->]
(v0)  edge node[left = 0.1]{$a$}         (a1)
(v0)  edge node[right=0.1]{$b$}         (a2)
(a1)  edge node[above]{$c$}         (a2);

\draw (4,4.5) node{$\vdots$};
\draw (2,4.5) node{$\vdots$};
\draw (6,4.5) node{$\vdots$};
\draw (8,4.5) node{$\vdots$};

\end{tikzpicture}
\caption{A triangle graph $\Gamma$ (left), and a tree of copies of $\Gamma$ (right). The set $S$ of attachment points consists of two frontier points (onto which copies are attached), and are marked as red vertices. It is clear that the tree of copies is a context-free, having only two distinct end-spaces; that of the root vertex, and the union of two such spaces, connected by an edge. For all $i \geq 0$, the set $V_i$ (considered as a subset of $\T(\Gamma, S)$) consists of the $2^{i-1}$ vertices at undirected distance $i-1$ from the root.}
\label{Fig: A triangle graph tree of copies}
\end{figure}

Note that if we were to permit $\ro \in S$, then $\T(\Gamma, S)$ would in general not have bounded degree, as we would then in the above definitions have that $\ro \in V_n$ for all $n \geq 0$, and infinitely many copies of $\Gamma$ will be attached to $\ro$. However, without this restriction, it is easy to see that $\T(\Gamma, S)$ has bounded degree. Thus, to simplify notation, we will call a subset $S$ of $F(\Gamma)$ a set of \textit{attachment points} (of $\Gamma$) if we have $\ro \not\in S$. We note again that if $\Gamma$ is context-free, then $S$ is necessarily a finite set, as $F(\Gamma)$ is. Hence if $\Gamma$ is a context-free graph, then $\T(\Gamma, S)$ can be encoded by a finite amount of information, viz. the ends of $\Gamma$ together with $S$. In fact, we will show that in this setting, $\T(\Gamma, S)$ is actually a context-free graph. Before this, we make a simple observation about $\T(\Gamma, S)$. 

\begin{prop}\label{Prop: Tree(Gamma, S) is a tree, tuples}
Let $\Gamma$ be a graph, and $S \subseteq F(\Gamma)$ a set of attachment points. Then the set of vertices of $\T(\Gamma, S)$ is in bijective correspondence with all finite tuples
\[
(u_0, u_1, \dots, u_k) 
\]
where $u_i \in V_0$ for $0 \leq i < k$; and where $u_k \in V(\Gamma) \setminus \{ \ro \}$ if $k > 0$, or else $u_k \in V(\Gamma)$. Furthermore, for two vertices $u, w \in \T(\Gamma, S)$ with corresponding tuples $(u_0, u_1, \dots, u_k)$ and $(w_0, w_1, \dots, w_n)$, respectively, there is an edge $u \xrightarrow{a} w$ if and only if one of the following holds: 
\begin{enumerate}
\item $n = k$, $u_i = w_i$ for all $0 \leq i \leq k-1$, and $(u_k \xrightarrow{a} w_n) \in E(\Gamma)$; or
\item $n = k+1$, $u_i = w_i$ for all $0 \leq i \leq k-1$, $u_k \in V_0$, and $(\ro \xrightarrow{a} w_n) \in E(\Gamma)$. 
\end{enumerate}
\end{prop}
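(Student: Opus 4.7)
The plan is to prove both parts of the proposition simultaneously by induction on the stage $n \geq 0$ of the construction $\T_n(\Gamma, S)$, and then to pass to the colimit $\T(\Gamma, S) = \bigcup_{n} \T_n(\Gamma, S)$. In the base case $n = 0$, the graph $\T_0(\Gamma, S)$ equals $\Gamma$, whose vertices correspond trivially to the length-one tuples $(u_0)$ with $u_0 \in V(\Gamma)$, and the edge characterisation reduces to condition (1) with $k = 0$.

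For the inductive step on vertices, I assume that vertices of $\T_n(\Gamma, S)$ are in bijection with tuples of length at most $n+1$ satisfying the stated restrictions, and that $V_n$ consists exactly of those vertices whose tuple has length $n+1$ and ends in $V_0$. The graph $\T_{n+1}(\Gamma, S)$ is obtained by attaching, at each branch point $v \in V_n$, a fresh disjoint copy $\Gamma_v$ of $\Gamma$ via the identification of $\ro \in \Gamma_v$ with $v$. Hence the new vertices of $\T_{n+1}(\Gamma, S)$ are parametrised by pairs $(v, x)$ with $v \in V_n$ and $x \in V(\Gamma) \setminus \{\ro\}$, and to each such pair I assign the length-$(n+2)$ tuple obtained by appending $x$ to $v$'s tuple. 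Since the attached copies are mutually disjoint outside their roots, this assignment is injective and exhausts the new tuples; the updated branch set $V_{n+1}$ then consists of precisely those new vertices whose last coordinate lies in $V_0$, preserving the inductive hypothesis.

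For the edge characterisation at stage $n+1$, every new edge lies inside a unique attached copy $\Gamma_v$ and comes from a unique edge of $\Gamma$ under the attachment map $\ro \mapsto v$. Edges of $\Gamma$ avoiding $\ro$ translate to pairs of tuples of the same length $n+2$ agreeing on the first $n+1$ coordinates, matching condition (1) with $k = n+1$. Edges of $\Gamma$ incident to $\ro$ translate to pairs relating the branch point $v$ (whose tuple has length $n+1$ and ends in $V_0$) with a newly attached vertex of length $n+2$, matching condition (2) with $k = n$. Conversely, every tuple-pair satisfying (1) or (2) arises from some such edge of some $\Gamma_v$, so the characterisation is tight.

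The main obstacle is the bookkeeping needed to keep the tuple encoding injective across all stages: distinct branching histories must yield genuinely distinct vertices in the colimit. This is ensured by the fact that each attached copy $\Gamma_v$ is fresh and shares only its root with the existing tree, so that the final coordinate of a tuple unambiguously records the vertex within the most recently entered copy of $\Gamma$. Once this is cleanly handled, the proposition amounts to unpacking the inductive definition of $\T(\Gamma, S)$ and everything else is routine.
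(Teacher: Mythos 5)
Your proposal is correct and follows essentially the same route as the paper: induction on the stage $\T_n(\Gamma,S)$ of the construction, extending the bijection by appending the position of a vertex within the freshly attached copy $\Gamma_v$ as a final coordinate, and then classifying edges according to whether they lie entirely inside one copy or join a branch point to a neighbour of the root of its attached copy. The paper's proof phrases the induction in terms of the minimal $k$ with $v \in V(\T_k(\Gamma,S))$ and defines $\phi_{n+1}(v) := (\phi_n(u_n), v')$, but the substance — including the observation that disjointness of the attached copies outside their roots is what makes the encoding injective — is the same.
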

\begin{proof}
Let $v \in \T(\Gamma, S)$. Then there exists a minimal $k \geq 0$ such that $v \in V(\T_k(\Gamma, S))$. Hence, we will for all $k \geq 0$ define a bijection $\phi_k$ from the set of vertices of $\T_k(\Gamma, S)$ to the set of tuples of the above form of length at most $k$. This bijection will then extend to a bijection $\phi$ from the set of vertices of $\T(\Gamma, S)$ of the desired form when taking the directed colimit.

If $k=0$, then $v \in V(\T_0(\Gamma))$; this set is in bijective correspondence with $V(\Gamma)$ by definition, and we will define $\phi_0$ to be this bijection.

Assume, for strong induction, that $\phi_k$ is defined for all $0 \leq k \leq n$ for some $n \geq 0$. Let $v \in \T(\Gamma, S)$ be such that $v \in V(\T_{n+1}(\Gamma, S))$ with $n+1$ minimal. Since $n+1$ is minimal and $\ro \not\in S$, we have that there exists a unique $u_n \in V_n$ such that $v \in \Gamma_{u_n}$, and furthermore since $\Gamma_{u_n}$ is a copy of $\Gamma$, there exists a vertex $v^\prime \in V(\Gamma)$ uniquely determined by $v \in V(\Gamma_{u_n})$. By strong induction, we can find a unique $n$-tuple $\phi_n(u_n)$ representing $u_n$. We then define $\phi_{n+1}(v) := (\phi_n(u_n), v^\prime)$. In this resulting tuple, the first $n$ entries will be chosen as elements of $V_0$, and the final will be any element of $V(\Gamma)$, as needed. This completes the claim concerning the vertices by induction.

Now, for the edges, it is clear that the only possible adjacent vertices are either such that they belong to the same added copy of $\Gamma$; or one is an element of $V_n$ for some $n \geq 0$, and the other is connected to the root of the copy of $\Gamma$ attached to the former. But clearly, the first case happens if and only if criterion $(1)$ of the statement of the proposition holds; the second case happens if and only if criterion $(2)$ holds. This completes the proof.
\end{proof}

Because of this proposition, we will often consider this above prescribed bijection as invisible, and say that vertices of $\T(\Gamma, S)$ with $S \neq \varnothing$ are equal to a tuple of elements of the above prescribed form. We shall phase this notation in throughout this section. Now, a natural definition in light of the proposition above is the following.

\begin{definition}
The \textit{depth function} $d \colon V(\T(\Gamma, S)) \to \mathbb{N}$ is defined as follows: if $v \in V(\T(\Gamma, S))$ is uniquely associated to the tuple $(u_0, u_1, \dots, u_k)$ by Proposition~\ref{Prop: Tree(Gamma, S) is a tree, tuples}, then $d(v) := k$. 
\end{definition}

\begin{figure}
\includegraphics[scale=0.7]{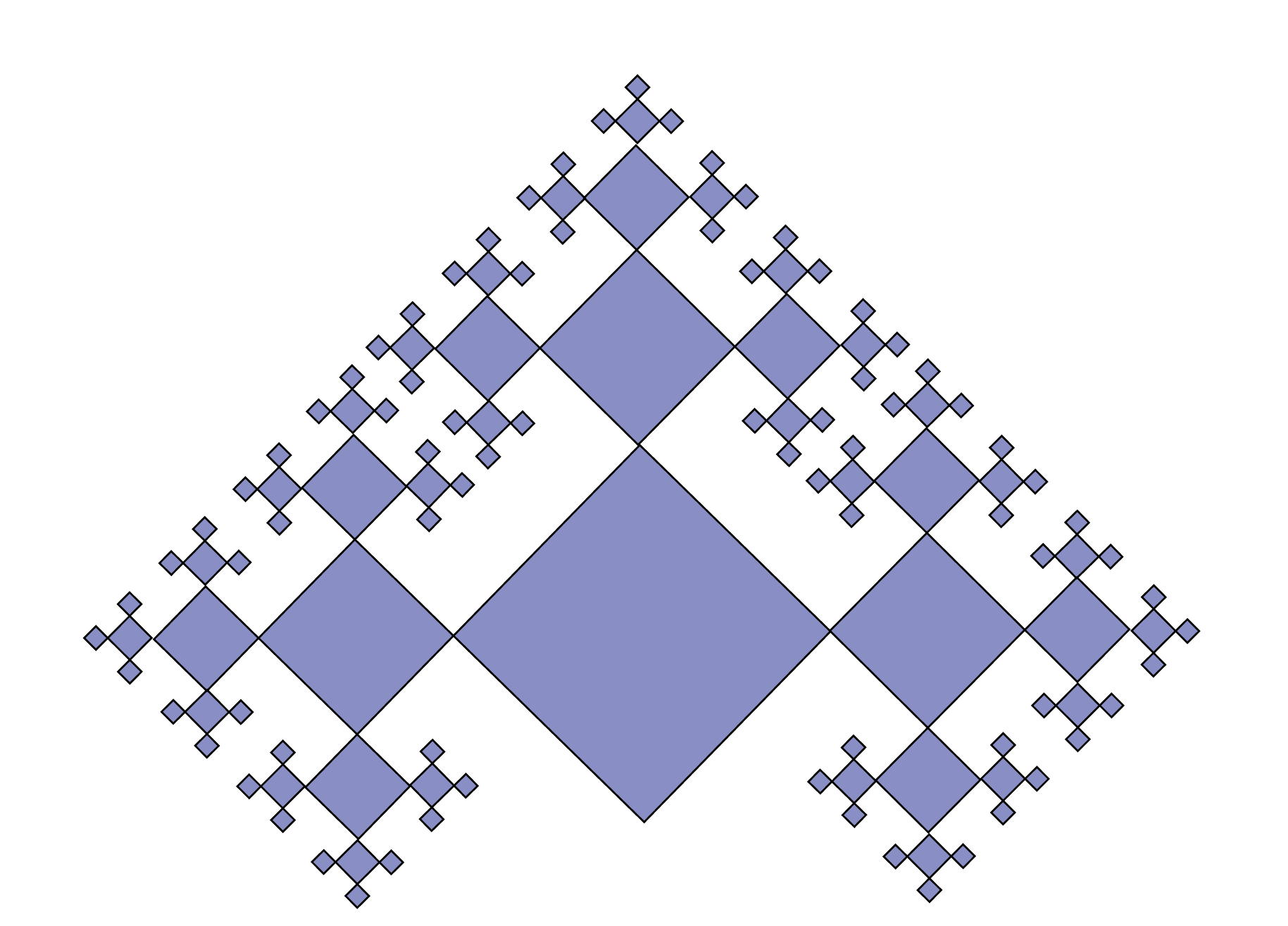}
\caption{A tree of copies of a square graph, with branch points everywhere except the root vertex (the only vertex of degree $2$). This graph is context-free (regardless of edge labels) as it has two distinct end-spaces; that of the root, and that of either of the two vertices at distance $1$ from the root. Both of these repeat infinitely often.}
\end{figure}

We note that the edge case, and indeed the base cases for many inductive arguments on the depth, that $d(u) = 0$ for all $u \in V(\Gamma)$, where we canonically identify $u$ with the $1$-tuple $(u)$. In particular, $d(\ro) = 0$. 

By the tree-like nature of the construction, we additionally have the following, which is one of the main reasons for introducing constructions of this form. We remind the reader that this proposition captures the main reason of choosing $S \subseteq F(\Gamma)$ to define the tree-construction.

\begin{prop}\label{Prop: Gamma CF => Tree(Gamma) CF}
If $\Gamma$ is context-free and $S$ a set of attachment points, then $\T(\Gamma, S)$ is context-free.
\end{prop}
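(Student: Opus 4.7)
The plan is to verify the two defining properties of context-freeness in turn. For bounded degree: each vertex $v$ of $\T(\Gamma, S)$ belongs to at most two copies of $\Gamma$ — the ``current'' copy in which it lies, and, if $v$ is a branch point, the newly-attached copy whose root is identified with $v$. The total degree is thus at most twice the degree bound of $\Gamma$, and the condition $\ro \notin S$ ensures that no vertex is the root of more than one attached copy, making the bound uniform.

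For the finiteness of end-isomorphism classes, the intermediate branch points $u_0, \ldots, u_{k-1}$ in the tuple of a vertex $v = (u_0, \ldots, u_k)$ are, by Proposition~\ref{Prop: Tree(Gamma, S) is a tree, tuples}, cut vertices of $\T(\Gamma, S)$ separating the subtree above from everything below. This gives $|v|_{\T(\Gamma, S)} = \sum_{i=0}^{k} |u_i|_\Gamma$. I would then establish the structural description that $\T(\Gamma, S)(v)$ is isomorphic (as a rooted labelled graph with distinguished frontier) to the graph obtained from $\Gamma(u_k) \subseteq \Gamma$, with frontier $\Delta(u_k)$, by attaching at each $w \in V_S \cap V(\Gamma(u_k))$ an isomorphic rooted copy of the tree-of-copies emanating upward from $w$. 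By the uniform recursive nature of the construction, this emanating tree is, as a rooted labelled graph, canonically isomorphic to $\T(\Gamma, S)$ itself rooted at $\ro$, independently of which branch point $w$ it hangs from.

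Finally I would show that the end-isomorphism class of $\T(\Gamma, S)(v)$ depends only on that of $\Gamma(u_k)$ in $\Gamma$. Given an end-isomorphism $\psi \colon \Gamma(u_k) \to \Gamma(u_{k'}')$, I would extend it by the canonical rooted isomorphism on each pair of matched hanging trees. The essential point is that $\psi$ preserves membership in $V_S$: since $V_S$ is defined via the existence of an end-isomorphism $\Gamma(w) \to \Gamma_i$ sending $w$ into $S$, and since a routine computation (using that end-isomorphisms preserve distance-to-frontier, and that inside $\Gamma(u_k)$ the distance to $\ro$ equals the distance to $\Delta(u_k)$ plus $|u_k|_\Gamma$) shows $\psi$ restricts to an end-isomorphism $\Gamma(w) \to \Gamma(\psi(w))$ in $\Gamma$ for any $w \in \Gamma(u_k)$, composition witnesses that $\psi(w) \in V_S$ whenever $w \in V_S$. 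Since $\Gamma$ is context-free, its ends $\Gamma(u_k)$ realise only finitely many end-isomorphism classes, and the same therefore holds for $\T(\Gamma, S)$.

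I expect the main technical obstacle to lie in the structural description in paragraph two: precisely formulating and verifying the claim that every hanging tree from a branch point is a rooted copy of $\T(\Gamma, S)$, and pinning down that the cut-vertex analysis gives exact additivity of distances rather than only an inequality. Once these are in hand, the extension of $\psi$ and the corresponding counting argument are essentially formal.
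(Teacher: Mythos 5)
Your proposal is correct and follows essentially the same route as the paper's proof: reduce the end-isomorphism class of $v=(u_0,\dots,u_k)$ to that of $u_k$ in $\Gamma$ via the cut-vertex additivity of distances, and extend an end-isomorphism of $\Gamma$ across the hanging trees using the key fact that end-isomorphisms preserve membership in $V_S$ (the paper phrases this last step in terms of matching second-level subgraphs, but the content is identical). No gaps.
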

\begin{proof}
Assume that $\Gamma$ is context-free and that $S \subseteq F(\Gamma)$ with $\ro \not\in S$. Then in particular $S \subseteq F(\Gamma)$ is finite. If $S$ is empty, then $\T(\Gamma, S) = \T(\Gamma, \varnothing) = \Gamma$, and there is nothing to show. Hence, assume that $S$ is non-empty. Since $\ro \not\in S$, it is clear that $\T(\Gamma, S)$ has bounded degree, as every vertex has a copy of $\Gamma$ attached at most once. 

We will now show that for any $v \in V(\T(\Gamma, S))$ there exists $v^\prime \in V(\T_0(\Gamma, S))$ such that $\T(\Gamma, S)(v) \sim \T(\Gamma, S)(v^\prime)$. To establish this claim, let $v \in V(\T(\Gamma, S))$ be any vertex. Then there exists a minimal $n \geq 0$ such that $v \in V(\T_n(\Gamma, S))$. If $n = 0$, then we may take $v^\prime = v$, and there is nothing to show. Hence, assume that $n > 0$. By Proposition~\ref{Prop: Tree(Gamma, S) is a tree, tuples}, there exists a unique tuple $(u_0, u_1, \dots, u_n)$ with $u_i \in V_0$ for $0 \leq i < n$, and $u_n \in V(\Gamma)$. We claim that we can take $v^\prime = u_n$.

First, note that $v$ belongs to a copy $\Gamma_{u_{n-1}}$ of $\Gamma$ with a labelled graph isomorphism $\phi : \Gamma_{u_{n-1}} \to \Gamma$ taking $v$ to $u_n$, where $\phi$ is simply the identity mapping. This isomorphism hence extends to an end-isomorphism 
\[\phi^\prime : \Gamma_{u_{n-1}}(v) \to \Gamma(u_n) \]
Since all walks in $\T(\Gamma, S)$ from $\ro$ to vertices $w \in V(\Gamma_{u_{n-1}})$ must pass through $u_{n-1}$, applying Proposition~\ref{Prop: Tree(Gamma, S) is a tree, tuples} to such $w$, we see that the tuple corresponding to $w$ must be $(u_0, u_1, \dots, u_{n-1}, w_n)$ for some $w_n \in V(\Gamma)$. In particular, if $u^\prime$ denotes the vertex corresponding to the tuple $(u_0, u_1, \dots, u_{n-1})$, then 
\[
|w|_{\T(\Gamma, S)} = |u^\prime|_{\T(\Gamma, S)} + |w_n|_{\Gamma}.
\]
In particular, our end-isomorphism $\phi^\prime$ extends to an end-isomorphism $\phi^{\prime \prime}$ between the embedded $\Gamma_{u_{n-1}}(v)$ and $\Gamma(u_n)$ inside the graph $\T(\Gamma, S)$, where $\Gamma$ is now identified with the canonically embedded copy of $\Gamma$ inside $\T(\Gamma, S)$, i.e.\ $\Gamma_{\ro}$. 

We now extend this end-isomorphism one final time. If $w_s$ is a vertex of the embedded $\Gamma_{u_{n-1}}(v)$ end-isomorphic to an element of $S$ in $\Gamma_{u_{n-1}}$, then this will be carried to an element $\phi^{\prime \prime}(w_s)$ end-isomorphic to an element of $S$ by $\phi^{\prime\prime}$, as $S$ is a set of representatives of frontier points. This is a key step, and the reason for choosing $S$ as it is. In particular, the second-level subgraphs of $w_s$ and $\phi^{\prime \prime}(w_s)$ are the same, as a copy of $\Gamma$ is attached to both vertices, and $\phi^{\prime \prime}$ is an end-isomorphism. If $w_s$ is instead not end-isomorphic to an element of $S$, then its second-level subgraphs have frontier points entirely in $\Gamma_{u_{n-1}}(v)$, and so $\phi^{\prime \prime}$ being an end-isomorphism guarantees that $w_s$ and $\phi^{\prime \prime}(w_s)$ have the same second-level subgraphs. Thus $\phi^{\prime \prime}$ extends to an end-isomorphism $\phi^\ast : \T(\Gamma, S)(v) \to \T(\Gamma, S)(u_n)$, and since $u_n \in \T_0(\Gamma, S)$, we have our claim.

To show the proposition it now suffices, as $\T_0(\Gamma, S)$ is context-free, to show that if $u, v \in V(\T_0(\Gamma, S))$, then $\T(\Gamma, S)(u) \sim \T(\Gamma, S)(v)$. But this follows from a near-identical argument to the final extension of the end-isomorphism above, as we only need to show that the end-isomorphism sends elements of $S$ to $S$, and elements not in $S$ to elements not in $S$. Thus every vertex in $\T(\Gamma, S)$ has an end-isomorphism representative in $\T_0(\Gamma, S)$, and there are only finitely many end-isomorphism classes of such vertices; hence $\T(\Gamma, S)$ is context-free.
\end{proof} 

\subsection{Determinisations} 

We will now show that under certain conditions, the process of \textit{determinising} a tree of copies of a context-free graph can preserve context-freeness. Let $\Gamma$ be a labelled graph, with label alphabet $A$. We say that $\Gamma$ is \textit{deterministic} if for every $a \in A$, the presence of a subgraph of $\Gamma$ of the form 
\[
\begin{tikzpicture}
\node (ro)[circle, draw, fill=black!50, inner sep=0pt, minimum width=8pt] at (0,0) {};
\node (v0)[label=right:$v_0$][circle, draw, fill=black!50, inner sep=0pt, minimum width=8pt] at (2, 0.5) {};
\node (v1)[label=right:$v_1$][circle, draw, fill=black!50, inner sep=0pt, minimum width=8pt] at (2, -0.5) {};

\path[->]
(ro)  edge node[above]{$a$}         (v0)
(ro)  edge node[below]{$a$}         (v1);
\end{tikzpicture}
\]
implies that $v_0 = v_1$, and hence that the edges are the same element of $E(\Gamma)$. Let a \textit{congruence} on a graph be an equivalence relation on $\Gamma$ identifying vertices with vertices and edges with edges, such that if two edges are identified, then the origins of the edges are identified, and the endpoints of the edges are identified. A congruence on a graph is \textit{determinising} if the quotient of the graph by this congruence is deterministic. The \textit{least determinising congruence} $\eta$ on $\Gamma$ is the least congruence on $\Gamma$ such that $\Gamma / \eta$ is deterministic. That a least determinising congruence always exists is clear, as the intersection of determinising congruences is also determinising.

We then say that $\Gamma / \eta$ is the \textit{determinised form} of $\Gamma$. It is not immediately clear that this congruence is well-defined. However, the following appears in the work of Stephen as \cite[Theorem~4.4]{Stephen1987}, but the terminology used therein, involving category theory, is omitted from here, as it is not necessary for the purposes of this article. 

\begin{lemma}
The determinised form of a labelled graph is unique.
\end{lemma}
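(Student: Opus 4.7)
The plan is to characterise the least determinising congruence $\eta$ explicitly as the intersection of all determinising congruences on $\Gamma$, from which the uniqueness of the quotient $\Gamma/\eta$ follows immediately. Write $\mathcal{T}$ for the collection of all determinising congruences on $\Gamma$. This family is nonempty since the total congruence (identifying all vertices and all edges with a common label) produces a bouquet of loops that is trivially deterministic. Set $\eta := \bigcap_{\theta \in \mathcal{T}} \theta$, interpreted componentwise on $V(\Gamma)$ and on $E(\Gamma)$.

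First I would verify that $\eta$ is a congruence: componentwise intersection preserves the property of being an equivalence relation, so it remains only to check that $e_1 \eta e_2$ implies $o(e_1) \eta o(e_2)$ and $t(e_1) \eta t(e_2)$, which follows because each $\theta \in \mathcal{T}$ satisfies this condition and then one intersects. The key step is then to show that $\eta$ itself is determinising. Suppose $\Gamma/\eta$ contains edges of the form $[v] \xrightarrow{a} [x]$ and $[v] \xrightarrow{a} [y]$; by the definition of the quotient these lift to edges $u_1 \xrightarrow{a} x_1$ and $u_2 \xrightarrow{a} y_1$ of $\Gamma$ with $u_1, u_2 \in [v]$, $x_1 \in [x]$, and $y_1 \in [y]$, so in particular $u_1 \eta u_2$. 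For any $\theta \in \mathcal{T}$ we have $\eta \subseteq \theta$, giving $u_1 \theta u_2$; the two resulting $a$-edges from $[u_1]_\theta = [u_2]_\theta$ in the deterministic graph $\Gamma/\theta$ must then coincide, forcing $x_1 \theta y_1$. Since this holds for every $\theta \in \mathcal{T}$, we obtain $x_1 \eta y_1$, and hence $[x] = [y]$ in $\Gamma/\eta$.

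Thus $\eta \in \mathcal{T}$ and is the least element of $\mathcal{T}$ by construction, so it is the unique least determinising congruence on $\Gamma$, and $\Gamma/\eta$ is uniquely determined by $\Gamma$ as required. The main subtlety in this outline is the lifting step in the second paragraph: an edge of the quotient $\Gamma/\eta$ is an $\eta$-equivalence class of edges of $\Gamma$ (using the paper's convention that parallel edges with matching origin, terminus, and label are identified), so one always obtains representative lifts $u_1 \xrightarrow{a} x_1$ and $u_2 \xrightarrow{a} y_1$; once this is secured, the remainder is formal verification using the intersection definition of $\eta$ together with the hypothesis that each $\theta \in \mathcal{T}$ is determinising.
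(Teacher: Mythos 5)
Your proof is correct and follows exactly the route the paper indicates: the paper itself only remarks that ``the intersection of determinising congruences is also determinising'' and otherwise defers to Stephen's Theorem~4.4, and your argument is a careful write-up of precisely that intersection argument. The lifting step and the verification that $\eta$ is determinising are both sound, so nothing is missing.
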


Throughout, if not explicitly stated otherwise, let $\Gamma$ be a labelled graph with bounded degree, with label alphabet $A$, and $S \subseteq F(\Gamma)$ a set of attachment points. In general, it is clear that $\T(\Gamma, S)$ is not deterministic. We will henceforth let $\eta$ denote the least determinising congruence on $\T(\Gamma, S)$. If $u \in \T(\Gamma, S)$, we will simply write $[u]$ for $[u]_\eta$. Much as we were able to define the notion of depth in $\T(\Gamma, S)$, we can do the same in the determinised case.

\begin{definition}
Let $[u] \in V(\T(\Gamma, S) / \eta)$. Then we define $d([u]) \in \mathbb{N}$ as 
\[
d([u]) = \min_{w \in [u]} d(w).
\] 
\end{definition}
We note that if $u = (u_0, u_1, \dots, u_k)$, then $d([u]) \leq k$. In other words, determinisation does not increase the depth of any vertices. As the following example will show, determinisation can drastically decrease the depth of vertices, and will not preserve context-freeness in general. 

\begin{example}
Let $A$ be a finite alphabet, and $X \subseteq A^\ast$ a subset such that $\pres{Mon}{A}{x = 1 \: (x \in X)}$ is isomorphic to $\mathbb{Z} \times \mathbb{Z}$. Let $\Gamma$ be the \textit{flower graph} associated to $X$, i.e.\ the graph labelled by elements of $A$, consisting of a distinguished root $\ro$, and for each $w \in X$ a petal (loop) of length $|w|$ labelled by $w$. Let $S = V(\Gamma) \setminus \{ \ro \}$. Then $\Gamma$ is a finite graph and hence context-free, whence $\T(\Gamma, S)$ is also context-free by Proposition~\ref{Prop: Gamma CF => Tree(Gamma) CF}. However, we will see in Section~\ref{Sec: Special Monoids} that the monoid Cayley graph of a group whose word problem is not context-free is not context-free. It follows from \cite[Theorem~4.12]{Stephen1987} that $\T(\Gamma, S) / \eta$ is isomorphic to the right monoid Cayley graph of $\mathbb{Z} \times \mathbb{Z}$ with respect to the generating set $A$, and hence is not context-free. Furthermore, since the r\^ole played by $\mathbb{Z} \times \mathbb{Z}$ is simply as a group without context-free word problem, we may substitute e.g.\ a finitely presented group with undecidable word problem, or indeed any other horrible property, for $\mathbb{Z} \times \mathbb{Z}$ above, and conclude that determinisation in full generality yields very little in terms of preserving structural properties.
\end{example}

Thus the notion of depth is in general not very useful in the fully general case. The goal of the remaining parts of the section will be to introduce rather strong conditions on $\Gamma$ and $S$ such that the determinised form of $\T(\Gamma, S)$ can be guaranteed to remain context-free. Recall that for $S \subseteq F(\Gamma)$, the set $V_S$ was defined as the set of all vertices of $\Gamma$ to which copies of $\Gamma$ are attached when defining $\T_1(\Gamma, S)$.

\begin{definition}\label{Def: S-overlap-free}
We say that $\Gamma$ with $S \subseteq F(\Gamma)$ is $S$-\textit{overlap-free} if there do not exist any two (non-empty) paths $p_1$ and $p_2$ in $\Gamma$ such that:
\begin{enumerate}
\item $p_1$ begins in $\ro$;
\item $p_2$ begins in an element of $V_S$ and ends in an element not in $V_S$; and
\item $\ell(p_1) \equiv \ell(p_2)$.
\end{enumerate}
\end{definition}

As an important example to illustrate a pair satisfying this property, we have the following. A set of words is said to be \textit{cross-bifix-free} if no non-empty prefix of any element is a suffix of any other; see e.g.\ \cite{Bernini2017} for a more detailed study of such sets. We will not need any properties of such sets other than their definition.  

\begin{lemma}\label{Lem: Bifix-free set is S-overlap free}
Let $A$ be a finite alphabet, and let $X \subseteq A^\ast$ be a cross-bifix-free set and finite subset. Let $\Gamma$ be the flower graph associated to $X$. Let $S = V(\Gamma) \setminus \{ \ro\}$. Then $\Gamma$ is $S$-overlap-free. 
\end{lemma}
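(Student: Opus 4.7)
My plan is to argue by contradiction: suppose paths $p_1, p_2$ in $\Gamma$ satisfying conditions (1)--(3) of Definition~\ref{Def: S-overlap-free} exist, and derive a contradiction with the cross-bifix-free property of $X$.

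The first step is a structural observation about the flower graph. By construction, $\Gamma$ is a wedge of $|X|$ directed cycles through $\ro$, one for each $w \in X$, so $\ro$ is the unique vertex lying on more than one petal, and every non-root vertex has exactly one outgoing edge---the next edge around its petal. Combined with the definition of a path (no repeated vertices except possibly the first and last), this forces any directed path in $\Gamma$ to stay inside a single petal and to visit $\ro$ only at its endpoints. I also note that the choice $S = V(\Gamma) \setminus \{\ro\}$ yields $V_S = V(\Gamma) \setminus \{\ro\}$, so condition (2) exactly means that $p_2$ starts at a non-root vertex and ends at $\ro$.

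Applying this structural observation, I can describe the two labels explicitly: $\ell(p_1)$ is a non-empty prefix of some $w_1 \in X$ (possibly all of $w_1$, if $p_1$ completes a full loop back to $\ro$), while $\ell(p_2)$ is a non-empty \emph{proper} suffix of some $w_2 \in X$ (proper because $p_2$ starts at an internal vertex of its petal, so its label has length strictly less than $|w_2|$). Imposing $\ell(p_1) \equiv \ell(p_2)$ then says that a non-empty prefix of $w_1$ coincides with a non-empty proper suffix of $w_2$. If $w_1 \neq w_2$, this directly contradicts the cross-bifix-free hypothesis on $X$. If instead $w_1 = w_2 =: w$, then the equality $|\ell(p_1)| = |\ell(p_2)| < |w|$ forces $\ell(p_1)$ to also be a proper prefix of $w$, and we obtain a non-empty proper prefix of $w$ equal to a non-empty proper suffix of $w$---again forbidden by cross-bifix-freeness applied to the pair $(w,w)$.

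The only delicate point is verifying that directed paths in $\Gamma$ are confined to a single petal; this is immediate from $\ro$ being the unique branching vertex combined with each internal vertex of a petal having unique incoming and outgoing edges. Beyond this, the argument is purely a bookkeeping contradiction, so I anticipate no serious obstacle.
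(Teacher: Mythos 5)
Your proposal is correct and follows essentially the same route as the paper's proof: reduce condition (2) to ``$p_2$ starts off-root and ends at $\ro$'', observe that a path cannot revisit $\ro$ internally so $\ell(p_2)$ is a proper suffix of a petal word while $\ell(p_1)$ is a non-empty prefix of one, and contradict cross-bifix-freeness. Your explicit treatment of the $w_1 = w_2$ case is a small extra care the paper leaves implicit, but it does not change the argument.
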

\begin{proof}
In this case, we evidently have that $V_S = S$, which remains finite. Assume for contradiction that $p_1$ and $p_2$ are two paths contradicting the $S$-overlap-free property. Then $p_2$ necessarily ends in $\ro$. Since $p_2$ does not begin in $\ro$, it must label a proper suffix of some $x_2 \in X$, as it is a path, and can therefore not pass through $\ro$. But $\ell(p_1) \equiv \ell(p_2)$ is also a non-empty prefix of some $x_1 \in X$, as $p_1$ begins in $\ro$, and hence $X$ is not cross-bifix-free.
\end{proof}

Hence one may reasonably claim that if $\Gamma$ is an $S$-overlap free graph, then this captures the idea of paths ``not overlapping with respect to $S$'', much the same as a subset $L$ of $A^\ast$ being a biprefix code captures the idea of words over $A^\ast$ ``not overlapping with respect to $L$''. 

One of the most important applications of being $S$-overlap-free is that it permits a great deal of control over the depth of vertices in the determinised form of a tree of copies of a graph. 

\begin{lemma}\label{Lem: Overlap-free means vertices are same, or one different.}
Assume that $\Gamma$ is a graph, with $S \subseteq F(\Gamma)$ a set of attachment points. Suppose $\Gamma$ is $S$-overlap-free. Let $(u_0, \dots, u_{k-1}, u_k)$ be an arbitrary vertex of $\T(\Gamma, S)$. Denote by $\Gamma'$ the subgraph of $\T(\Gamma, S) / \eta$ induced on the set of vertices
\[ 
\{ [(u_0, \dots, u_{k-1}, u')] \mid u' \in V(\Gamma) \}.
\]
If $m := \min_{v \in V(\Gamma')} d([v])$ denotes the minimal depth of a vertex of $\Gamma'$, then for every $w \in V(\Gamma')$, we have that 
\[
d([w]) \in \{ m, m+1 \}.
\]
\end{lemma}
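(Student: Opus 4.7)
My plan is to prove the bound on depths via the following standard characterisation of the least determinising congruence $\eta$: for $y_1, y_2 \in V(\T(\Gamma, S))$ one has $y_1 \sim_\eta y_2$ if and only if there exist walks in $\T(\Gamma, S)$ from $\ro$ to $y_1$ and to $y_2$ sharing the same walk label. (This is standard for Stallings/Stephen-style foldings: the forward direction is an induction on chains of direct identifications generating $\eta$, and the reverse is an induction on walk length using determinism of the quotient.) This translates the depth computation into $d([x]) = \min d(Y)$ taken over all $Y \in V(\T(\Gamma, S))$ reachable from $\ro$ by a walk whose label also labels some walk from $\ro$ to $x$.

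The case $k = 0$ is immediate: here $\Gamma'$ consists of the classes $[u']$ for $u' \in V(\Gamma)$, each containing the depth-$0$ vertex $u'$, so all depths equal $0$ and $m = 0$. For $k \geq 1$, writing $P = (u_0, \ldots, u_{k-1})$, I would prove the two inequalities $d([(P,u')]) \geq d([P])$ and $d([(P,u')]) \leq d([P]) + 1$ for every $u' \in V(\Gamma)$, which together force $m = d([P])$ and yield the conclusion. For the lower bound, given a representative $Y \in [(P, u')]$ of minimal depth, the characterisation produces a walk from $\ro$ to $Y$ with a label of the form $W_P \cdot w_{u'}$, where $w_{u'}$ labels a walk from $\ro$ to $u'$ in $\Gamma$ and $W_P$ labels some walk from $\ro$ to $P$ in $\T(\Gamma,S)$; the intermediate vertex $Y_0$ reached after $W_P$ satisfies $Y_0 \sim_\eta P$, so $d(Y_0) \geq d([P])$. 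I would then use $S$-overlap-freeness to show that the tail walk starting at $Y_0$ and labelled $w_{u'}$ cannot decrease the tree-depth below $d(Y_0)$. For the upper bound, I would pick $Q \in [P]$ with $d(Q) = d([P])$ and use the characterisation to concatenate a witnessing walk from $\ro$ to $Q$ with the walk label $w_{u'}$; a similar overlap-free analysis bounds the terminus at depth $d(Q) + 1 = d([P]) + 1$, and this terminus lies in $[(P, u')]$.

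The hard step throughout is the $S$-overlap-free analysis of walks in $\T(\Gamma,S)$ labelled by $w_{u'}$, which is where the geometric hypothesis is genuinely used. A descent in tree-depth along such a walk requires traversing an edge from a freshly attached child copy back into its parent copy; chaining this to achieve a net descent amounts to exhibiting a walk in $\Gamma$ starting from a vertex of $V_S$ and ending outside $V_S$ whose label coincides with some walk label from $\ro$ in $\Gamma$, which is precisely what Definition~\ref{Def: S-overlap-free} forbids. A careful case analysis of how the walk transitions between parent and child copies—each descent forced to terminate within $V_S$, and each branch-point traversal bookkept against the $S$-overlap-free constraint—should convert this obstruction into the required tight bound of one on the depth range across $\Gamma'$.
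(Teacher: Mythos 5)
Your proposal is correct in outline and takes essentially the same route as the paper's proof: both reduce the depth computation to the walk-label description of the determinising congruence $\eta$, factor the relevant walks through the branch point $(u_0,\dots,u_{k-1})$, and observe that any further descent in depth would exhibit a path starting in $V_S$ and ending outside $V_S$ whose label agrees with an initial segment of a path from $\ro$ --- precisely the configuration that $S$-overlap-freeness forbids. The only difference is organisational: the paper packages the argument as the single claim that determinisation lowers any vertex's depth by at most one, followed by an induction on depth, whereas you sandwich $d([(u_0,\dots,u_{k-1},u')])$ between $d([(u_0,\dots,u_{k-1})])$ and $d([(u_0,\dots,u_{k-1})])+1$; the overlap-freeness step doing the real work is identical, and your appeal to the ``same walk label from the root'' characterisation of $\eta$ is no less (and no more) rigorous than the paper's implicit use of the same fact.
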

\begin{proof}
Recall that if $u \in \T(\Gamma, S)$, then $d([u]) := \min_{w \in [u]} d(w)$. It follows that $d(u) \geq d([u])$, and so determinising $\T(\Gamma, S)$ cannot increase the depth of a vertex. Consequently the claim directly follows by induction on the depth of vertices if we can show that if $(\Gamma, S)$ is $S$-overlap-free, then a vertex can only be identified with a vertex of depth at most one less than itself as a result of determinisation. First, note that $(u_0, \dots, u_{k-1})$ is an branch point. Thus $u_{k-1} \neq \ro$, as $\ro \not\in V_S$. If $w = (u_0, \dots, u_{k-1}, w')$, then let $p_1 \colon \ro \xrightarrow{\alpha} w'$ be a path, where $\alpha \in A^\ast$. Assume for contradiction that there is some $\hat{w} \in [w]$ with $d(\hat{w}) = d(w) - 2$. Then necessarily there is some path $p_2 \colon u_{k-1} \xrightarrow{\alpha_1} \ro$, where $\alpha_1 \in A^+$ and $\alpha \equiv \alpha_1 \alpha_2$ for some $\alpha_2 \in A^\ast$; this follows from the fact that all paths  from $(u_0, \dots, u_{k-1})$ to a lower depth vertex must pass through $(u_0, \dots, u_{k-2})$. As we are determinising, there must hence also be a path $p_1' \colon \ro \xrightarrow{\alpha_1} w''$ where $w'' \in V(\Gamma)$ is the last entry of $\hat{w}$. Thus $p_1'$ and $p_2$ contradict the $S$-overlap-free condition. 
\end{proof}

\subsection{Bounded folding}

Let $\Gamma$ be a labelled graph. Let Aut$(\Gamma)$ denote the group of all label-preserving graph automorphisms of $\Gamma$. We say that an action $\Phi_{H, \Gamma} \colon H \to \textnormal{Aut}(\Gamma)$ of a group $H$ on $\Gamma$ is \textit{almost-transitive} if the action is faithful and induces an almost-transitive action on the vertices of $\Gamma$, i.e. if $| V(\Gamma) / H| < \infty$. As $H$ acts on $\Gamma$, this means $\Gamma / H$ is a finite graph. We note that if $H \leq \textnormal{Aut}(\Gamma)$, then assuming the action is faithful is no restriction.

Let $H$ be a group acting almost-transitively on a graph $\Gamma$. Then we can express the vertices of $\Gamma$ as a disjoint union $V(\Gamma) = \bigsqcup_{h \in H} V_h(\Gamma / H)$, where $V_h(\Gamma / H) \subseteq V(\Gamma)$ is a copy of $V(\Gamma / H)$. Indeed, as the action is faithful, for any two $h_1, h_2 \in H$ with $h_1 \neq h_2$ we have $V_{h_1}(\Gamma / H) \cap V_{h_2}(\Gamma / H) = \varnothing$. 

Let $u, v \in V(\Gamma)$ be any two vertices, and let $h_1, h_2 \in H$ be the unique elements such that $u \in V_{h_1}(\Gamma / H)$ and $v \in V_{h_2}(\Gamma / H)$. If $h_1 = h_2$, then we will write $u \sim_{L, H} v$. Here $L$ is used to abbreviate the idea of a \textit{local neighbourhood} with respect to the action of $H$. Clearly $\sim_{L, H}$ is an equivalence relation. 

Assuming $S \neq \varnothing$ is a set of attachment points of $\Gamma$, there is a natural way to extend $\sim_{L, H}$ to $\T(\Gamma, S)$, by having $(u_0, u_1, \dots, u_k) \sim_{L, H} (u'_0, u'_1, \dots, u'_n)$ if and only if $u_k \sim_{L,H} u'_n$. We note that in general there are two distinct types of equivalence classes under this relation; one of which is isomorphic to the graph $\Gamma / H$, and the other of which is $(\Gamma / H) \setminus \{ \ro\}$. The first occurs as the equivalence class of any vertex of depth $0$, and the latter as the equivalence class of any vertex of depth greater than $0$. 

\begin{definition}
Let $\Gamma$ be a labelled graph with bounded degree, $H \leq \textnormal{Aut}(\Gamma)$ act almost-transitively on $\Gamma$, and $S \subseteq F(\Gamma)$ a set of attachment points. Then we say that the triple $(\Gamma, H, S)$  is $S$-\textit{full} if: for all edges $(u, a, v) \in E(\Gamma)$ such that $u$ and $v$ are in distinct $\sim_{L,H}$-classes, we have that $v \not\in V_S$.
\end{definition}

\begin{example}\label{Example: S-fullness ex}
Let $\Gamma$ be the finite graph with root $\ro$ and a single loop involving a total of four vertices. See Figure~\ref{Fig: S-fullness loop}, in which the root vertex is all black. Take $H \leq \textnormal{Aut}(\Gamma) = 1$ as the trivial action, and $S = \{ v_1, v_2, v_3 \}$, which gives $V_S = S$. For this graph, the relation $\sim_{L, H}$ is the identity relation on $\Gamma$, i.e. $u \sim_{L,H} v$ if and only if $u = v$. Then this triple $(\Gamma, H, S)$ is not $S$-full, as witnessed by e.g. the edge $v_1 \xrightarrow{b} v_2$, for $v_1$ and $v_2$ are in distinct $\sim_{L,H}$-classes, but $v \in V_S$. 
\end{example}

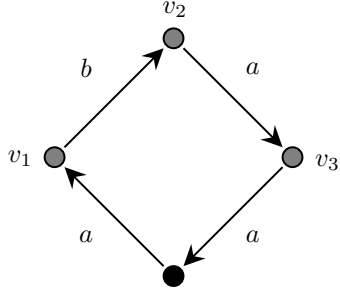
\begin{figure}
\tikzset{every picture/.style={line width=0.75pt}} 

\begin{tikzpicture}[x=0.75pt,y=0.75pt,yscale=-1,xscale=1]

\draw  [fill={rgb, 255:red, 0; green, 0; blue, 0 }  ,fill opacity=1 ] (70,135) .. controls (70,132.24) and (72.24,130) .. (75,130) .. controls (77.76,130) and (80,132.24) .. (80,135) .. controls (80,137.76) and (77.76,140) .. (75,140) .. controls (72.24,140) and (70,137.76) .. (70,135) -- cycle ;
\draw  [fill={rgb, 255:red, 128; green, 128; blue, 128 }  ,fill opacity=1 ] (10,75) .. controls (10,72.24) and (12.24,70) .. (15,70) .. controls (17.76,70) and (20,72.24) .. (20,75) .. controls (20,77.76) and (17.76,80) .. (15,80) .. controls (12.24,80) and (10,77.76) .. (10,75) -- cycle ;
\draw  [fill={rgb, 255:red, 128; green, 128; blue, 128 }  ,fill opacity=1 ] (70,15) .. controls (70,12.24) and (72.24,10) .. (75,10) .. controls (77.76,10) and (80,12.24) .. (80,15) .. controls (80,17.76) and (77.76,20) .. (75,20) .. controls (72.24,20) and (70,17.76) .. (70,15) -- cycle ;
\draw  [fill={rgb, 255:red, 128; green, 128; blue, 128 }  ,fill opacity=1 ] (130,75) .. controls (130,72.24) and (132.24,70) .. (135,70) .. controls (137.76,70) and (140,72.24) .. (140,75) .. controls (140,77.76) and (137.76,80) .. (135,80) .. controls (132.24,80) and (130,77.76) .. (130,75) -- cycle ;
\draw [color={rgb, 255:red, 0; green, 0; blue, 0 }  ,draw opacity=1 ]   (70,130) -- (22.12,82.12) ;
\draw [shift={(20,80)}, rotate = 405] [fill={rgb, 255:red, 0; green, 0; blue, 0 }  ,fill opacity=1 ][line width=0.08]  [draw opacity=0] (10.72,-5.15) -- (0,0) -- (10.72,5.15) -- (7.12,0) -- cycle    ;
\draw [color={rgb, 255:red, 0; green, 0; blue, 0 }  ,draw opacity=1 ]   (20,70) -- (67.88,22.12) ;
\draw [shift={(70,20)}, rotate = 495] [fill={rgb, 255:red, 0; green, 0; blue, 0 }  ,fill opacity=1 ][line width=0.08]  [draw opacity=0] (10.72,-5.15) -- (0,0) -- (10.72,5.15) -- (7.12,0) -- cycle    ;
\draw [color={rgb, 255:red, 0; green, 0; blue, 0 }  ,draw opacity=1 ]   (80,20) -- (127.88,67.88) ;
\draw [shift={(130,70)}, rotate = 225] [fill={rgb, 255:red, 0; green, 0; blue, 0 }  ,fill opacity=1 ][line width=0.08]  [draw opacity=0] (10.72,-5.15) -- (0,0) -- (10.72,5.15) -- (7.12,0) -- cycle    ;
\draw [color={rgb, 255:red, 0; green, 0; blue, 0 }  ,draw opacity=1 ]   (130,80) -- (82.12,127.88) ;
\draw [shift={(80,130)}, rotate = 315] [fill={rgb, 255:red, 0; green, 0; blue, 0 }  ,fill opacity=1 ][line width=0.08]  [draw opacity=0] (10.72,-5.15) -- (0,0) -- (10.72,5.15) -- (7.12,0) -- cycle    ;

\draw (31,115) node [inner sep=0.75pt]    {$\displaystyle a$};
\draw (31,30) node [inner sep=0.75pt]    {$\displaystyle b$};
\draw (115,30) node [inner sep=0.75pt]  {$\displaystyle a$};
\draw (115,115) node [inner sep=0.75pt]    {$\displaystyle a$};
\draw (-10,70) node [anchor=north west][inner sep=0.75pt]   [align=left] {$\displaystyle v_{1}$};
\draw (145,72) node [anchor=north west][inner sep=0.75pt]   [align=left] {$\displaystyle v_{3}$};
\draw (68, -5) node [anchor=north west][inner sep=0.75pt]   [align=left] {$\displaystyle v_{2}$};

\end{tikzpicture}

\caption{An example of a graph $\Gamma$, with $H$ the action of the trivial group and $S$ all three non-root vertices, which is not $S$-full See Example~\ref{Example: S-fullness ex}.}
\label{Fig: S-fullness loop}
\end{figure}

Hence $S$-fullness, with $\Gamma, H$, and $S$ as in the definition, ensures that if one has a path $p \colon u \to v$ in which $u$ and $v$ are in distinct $\sim_{L,H}$-classes, then one can factor the path as $p \colon u \to u' \to v$, where $u' \not\in V_S$. Thus this factoring gives a path $p_2 \colon u \to u'$, which can be used as the $p_2$ in Definition~\ref{Def: S-overlap-free} of $S$-overlap-freeness. In particular, if $u \in V_S$, then in $\T(\Gamma, S)$ one attaches a copy of $\Gamma$ to $u$. This gives a number of paths originating in $u$ and otherwise only involving vertices of greater depth; these paths are the only candidates for determinisation if one assumes $\Gamma$ is deterministic. On the other hand, these paths are also candidates for $p_1$ in Definition~\ref{Def: S-overlap-free}. This means all candidates for determinisation are controlled, if one assumes $S$-overlap-freeness and $S$-fullness.

This links together these two important concepts, and together we shall see that they indeed offer a great deal of control over the determinising of $\T(\Gamma, S)$. We first combine the two into a single definition.

\begin{definition}
We say that a triple $(\Gamma, H, S)$ with $H \leq \textnormal{Aut}(\Gamma)$ and $S \subseteq F(\Gamma)$ satisfies the \textit{bounded folding condition} if all of the following hold:
\begin{enumerate}
\item The action of $H$ on $\Gamma$ is almost-transitive;
\item $(\Gamma, H, S)$ is $S$-full;
\item $\Gamma$ is $S$-overlap-free.
\end{enumerate}
Furthermore, if $(\Gamma, H, S)$ satisfies the bounded folding condition, then we say that $\Omega := |V(\Gamma/H)|$ is the \textit{folding constant} of the triple.
\end{definition}

We will introduce a couple of pieces of notation, which will come in handy in a few circumstances below. Fix, for the remainder of the section, an arbitrary triple $(\Gamma, H, S)$ satisfying the bounded folding condition. We will denote by $\TG := \T(\Gamma, S)$, and by $\TGE := \TG / \eta$ the determinised form of $\TG$. 

Recall that as $(\Gamma, H, S)$ satisfies the bounded folding condition, then there are two distinct types of equivalence classes of graphs under the relation $\sim_{L, H}$ on $\TG$. The following technical lemma will demonstrate that in this setting, almost the same is true for $\TGE$, where we replace ``two'' by ``finitely many''. 

\begin{lemma}\label{Lem: Decompose TGE into finitely many types}
Suppose that $\Gamma$ is deterministic and context-free. Suppose further that if Aut$_S(\Gamma)$ denotes the subgroup of Aut$(\Gamma)$ consisting of automorphisms $\phi$ such that $(\phi(s) \in V_S \iff s \in V_S)$, then we have that $(\Gamma, \textnormal{Aut}_S(\Gamma), S)$ satisfies the bounded folding condition. 

Then there exists a finite set $U := \{ \Gamma_{V_0}, \dots, \Gamma_{V_m} \}$ of context-free, deterministic, pairwise non-isomorphic graphs with $V_i \subseteq V(\Gamma)$ such that:
\begin{enumerate}
\item For every $\Gamma_{V_i} \in U$, there exist sets $G_{V,j} \subseteq V(\TGE)$ such that for each $j$, the graph $\Gamma_{V_i}$ is isomorphic to the subgraph of $\TGE$ induced on $G_{V,j}$, and $d(g)$ is constant for all $g \in G_{V,j}$.
\item There exists a surjective function $f \colon V(\TGE) \to U$ such that $u$ appears as a vertex of exactly one of the above copies of $f(u)$ inside $\TGE$. This unique copy will be denoted $f^\ell(u)$.
\end{enumerate}
\end{lemma}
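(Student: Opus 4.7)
The plan is to partition $V(\TGE)$ into finitely many types of pieces, each realized as an induced subgraph of $\Gamma$. The pieces will be indexed by branch points in $\TG$, and the finiteness of types will follow from the almost-transitivity of $\textnormal{Aut}_S(\Gamma)$ on $V_S$.

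First, I would associate to each equivalence class $[u] \in V(\TGE)$ a canonical representative $u \in \TG$ of minimal depth $d(u) = d([u])$; uniqueness of this representative requires a short argument based on the determinism of $\Gamma$ and on Lemma~\ref{Lem: Overlap-free means vertices are same, or one different.}, which limits collapses to adjacent depth levels. Using Proposition~\ref{Prop: Tree(Gamma, S) is a tree, tuples}, this representative has the form $u = (u_0, \dots, u_{k-1}, u')$ with $u' \in V(\Gamma) \setminus \{\ro\}$ whenever $k \geq 1$. I would then partition $V(\TGE)$ by associating $[u]$ to its parent branch point $(u_0, \dots, u_{k-1})$ when $k \geq 1$, and to the ``root piece'' when $k = 0$. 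Each cell $G_b$ of this partition consists of vertices of constant depth in $\TGE$, as required by part (1) of the lemma.

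Second, for each branch point $b = (u_0, \dots, u_{k-1})$ with $u_{k-1} \in V_S$, I would define
\[
V(u_{k-1}) := \{\, u' \in V(\Gamma) \setminus \{\ro\} : [(u_0, \dots, u_{k-1}, u')] \text{ has depth exactly } k \text{ in } \TGE \,\}
\]
and show that the subgraph of $\TGE$ induced on $G_b$ is isomorphic to the subgraph of $\Gamma$ induced on $V(u_{k-1})$, via the map $u' \mapsto [(u_0, \dots, u_{k-1}, u')]$. Injectivity of this map within a single attached copy $\Gamma_{u_{k-1}}$ uses the determinism of $\Gamma$: any identification of two distinct vertices in the copy would require two walks in $\Gamma_{u_{k-1}}$ with a common source and identical label terminating at different vertices, contradicting determinism. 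The edge structure matches by construction. For the root piece the associated subgraph is all of $\Gamma$, taking $V_0 := V(\Gamma)$.

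Third, I would show that $V(u_{k-1})$ depends only on the $\textnormal{Aut}_S(\Gamma)$-orbit of $u_{k-1}$: any $\phi \in \textnormal{Aut}_S(\Gamma)$ carries walks in $\Gamma$ from $u_{k-1}$ to walks from $\phi(u_{k-1})$ preserving labels and $V_S$-membership, so the collapsing condition describing $V(u_{k-1})$ is $\textnormal{Aut}_S(\Gamma)$-equivariant. By almost-transitivity, $V_S$ splits into finitely many orbits, giving finitely many distinct $V_i$. This, together with the root piece, yields the finite set $U$. The surjective function $f \colon V(\TGE) \to U$ sends each $[u]$ to the piece type of its containing $G_b$, and $f^\ell(u)$ is the cell $G_b$ itself.

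The main obstacle will be verifying that each $\Gamma_{V_i}$ is context-free, which in turn requires showing that $V(u_{k-1})$ differs from $V(\Gamma)$ in a controlled way. The $S$-overlap-free hypothesis is precisely what restricts the collapsing mechanism: a vertex $u'$ is removed from $V(u_{k-1})$ only when a walk from $\ro$ to $u'$ in $\Gamma$ has the same label as a walk starting at $u_{k-1} \in V_S$ and leaving the copy, and $S$-fullness together with $S$-overlap-freeness confines such matching walks to a finite controlled region near $u_{k-1}$. Once finiteness of $V(\Gamma) \setminus V(u_{k-1})$ is established, context-freeness of $\Gamma_{V_i}$ follows from that of $\Gamma$, since deleting finitely many vertices perturbs only finitely many end-isomorphism classes. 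Determinism of each $\Gamma_{V_i}$ is inherited from $\Gamma$. A secondary subtlety, also handled by Lemma~\ref{Lem: Overlap-free means vertices are same, or one different.}, is ruling out identifications between pieces $G_b$ and $G_{b'}$ for distinct branch points $b, b'$, which would otherwise break the disjointness of the partition.
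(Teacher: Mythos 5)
Your decomposition is the same as the paper's: the pieces are the induced subgraphs of $\TGE$ on the constant-depth vertices of each attached copy of $\Gamma$, the deleted vertices are exactly those whose depth drops by one under determinisation, and the bounded folding condition confines the deleted set to the ball $B(\ro,\Omega)$ in the attached copy. Two of your steps, however, are asserted at precisely the points where the paper does its real work. First, the claim that ``deleting finitely many vertices perturbs only finitely many end-isomorphism classes'' is the crux, not a known reduction: after deleting $V_i$ the graph may disconnect and distances from any chosen root change, so the end-spaces of $\Gamma_{V_i}$ are not literally end-spaces of $\Gamma$. The paper proves exactly this fact by a re-rooting trick: adjoin a new root $\hat{1}$ with an edge to every vertex of $V_i$, observe that the resulting graph $\widehat{\Gamma}$ is still context-free (adding one vertex and finitely many edges preserves context-freeness, e.g.\ via pushdown transition graphs), and then realise $\Gamma_{V_i}$ as $\widehat{\Gamma}\setminus\widehat{\Gamma}^{(2)}$, a union of end-spaces of a context-free graph, which are context-free. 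You would need this construction, or an equivalent argument, to close your ``main obstacle.''

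Second, your finiteness argument routes through the claim that the collapsed set $V(\Gamma)\setminus V(u_{k-1})$ depends only on the $\Aut_S(\Gamma)$-orbit of $u_{k-1}$. This is harder to verify than your one-line equivariance argument suggests: the determinising congruence is a global fixed point, and whether a vertex of the attached copy folds down depends on which labelled walks issue from the branch point into the \emph{rest} of $\TGE$ (the already-modified parent copy, sibling copies, and so on), not only on walks from $u_{k-1}$ inside a pristine $\Gamma$. The paper avoids this entirely: since the deleted set is contained in $B(\ro,\Omega)$, there are only finitely many candidate subsets $V_i$ regardless of any orbit structure, and $U$ is taken inside the finite set $U'$ of all graphs $\Gamma_V$ with $V\subseteq B(\ro,\Omega)$. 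I would replace your orbit argument with this count; it is both simpler and immune to the equivariance question.
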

\begin{proof}
Before stating the claim directly, we summarise its philosophy as follows: just as it is possible to decompose $\TG$ into a disjoint collection of copies of $\Gamma$ and $\Gamma \setminus \{ \ro\}$, the bounded folding condition makes it possible to decompose $\TGE$ into a disjoint collection of copies of $\Gamma$ and boundedly modified copies of $\Gamma$, all of which are connected components of end-spaces of context-free graphs. We now make this formal.

Let $\Omega \in \mathbb{N}$ denote the folding constant of $(\Gamma, \textnormal{Aut}_S(\Gamma), S)$. If $S = \varnothing$, then $\TGE = \TG = \Gamma$, in which case there is nothing to show; we may take $U = \{ \Gamma \}$. Hence, assume $S \neq \varnothing$.

We will first construct a set $U'$ of graphs as the set of all graphs $\Gamma_V$ which can be obtained from $\Gamma$ by the following ``re-rooting'' procedure: 
\begin{enumerate}[(i)]
\item Select a set of vertices $V \subseteq V(\Gamma)$ with $1 \in V$ and such that all vertices $v \in V$ are at distance at most $\Omega$ from $1$. Such a set is clearly always finite. 
\item Add a new vertex $\hat{1}$ to the graph $\Gamma$ and an edge (arbitrarily labelled) from $\hat{1}$ to every vertex $v \in V$. Call this graph $\widehat{\Gamma}$.
\item Take as $\Gamma_V$ the graph $\widehat{\Gamma} \setminus \widehat{\Gamma}^{(2)}$. 
\end{enumerate}

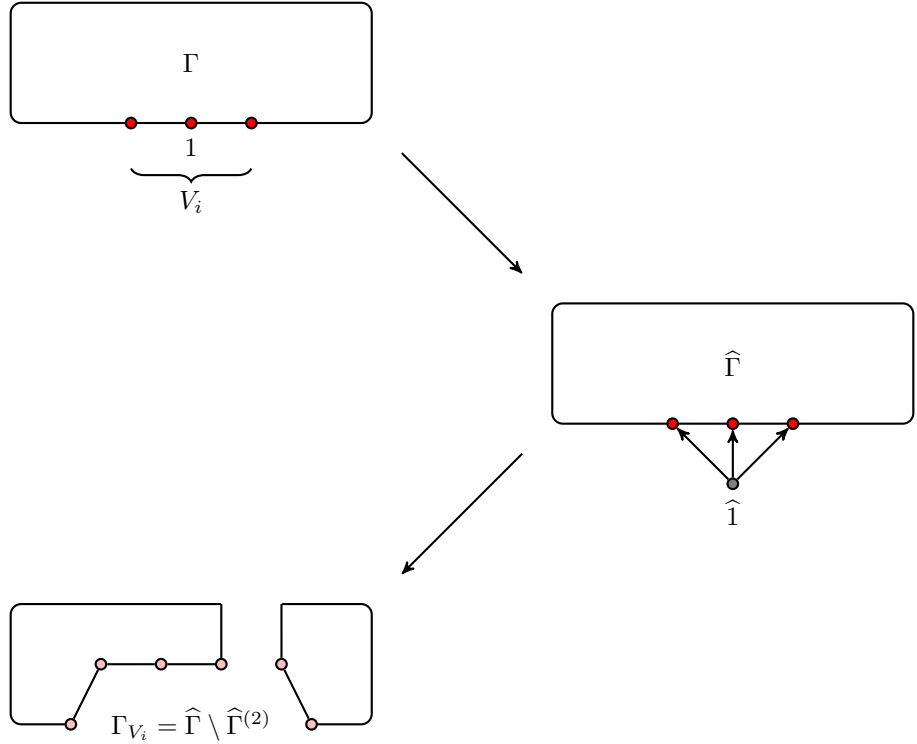
\begin{figure}
\begin{tikzpicture}[>=stealth',thick,scale=0.8,el/.style = {inner sep=2pt, align=left, sloped}]

\draw[rounded corners] (0, 0) rectangle (6, 2) {};
\draw (3, 1) node {$\Gamma$};

\node (v0)[][circle, draw, fill=red!100, inner sep=0pt, minimum width=4pt] at (2,0) {};
\node (v1)[label = below:$1$][circle, draw, fill=red!100, inner sep=0pt, minimum width=4pt] at (3,0) {};
\node (v2)[][circle, draw, fill=red!100, inner sep=0pt, minimum width=4pt] at (4,0) {};

\draw [decorate,decoration={brace,amplitude=5pt,mirror,raise=4ex}]
  (2,0) -- (4,0) node[midway,yshift=-3em]{$V_i$};
                        
\path[->]
(6.5,-0.5) edge (8.5,-2.5);

\draw[rounded corners] (9, -3) rectangle (15, -5) {};

\node (vv0)[][circle, draw, fill=red!100, inner sep=0pt, minimum width=4pt] at (11,-5) {};
\node (vv1)[circle, draw, fill=red!100, inner sep=0pt, minimum width=4pt] at (12,-5) {};
\node (vv2)[][circle, draw, fill=red!100, inner sep=0pt, minimum width=4pt] at (13,-5) {};
\node (1hat)[label = below:$\widehat{1}$][circle, draw, fill=black!50, inner sep=0pt, minimum width=4pt] at (12,-6) {};

\draw (12,-4) node {$\widehat{\Gamma}$};
                                             
\path[->] 
 (1hat)  edge (vv0)
 (1hat)  edge (vv1)
 (1hat)  edge (vv2);

\draw[rounded corners] (0, -8) rectangle (6, -10) {};
\fill [white] (1,-10.1) rectangle (5,-9.9);
\fill [white] (3.5,-8.1) rectangle (4.5,-7.9);

\node (vvv0)[][circle, draw, fill=pink!100, inner sep=0pt, minimum width=4pt] at (1,-10) {};
\node (vvv1)[][circle, draw, fill=pink!100, inner sep=0pt, minimum width=4pt] at (1.5,-9) {};
\node (vvv2)[][circle, draw, fill=pink!100, inner sep=0pt, minimum width=4pt] at (2.5,-9) {};
\node (vvv3)[][circle, draw, fill=pink!100, inner sep=0pt, minimum width=4pt] at (3.5,-9) {};
\node (vvv4)[][circle, draw, fill=pink!100, inner sep=0pt, minimum width=4pt] at (4.5,-9) {};
\node (vvv5)[][circle, draw, fill=pink!100, inner sep=0pt, minimum width=4pt] at (5,-10) {};

\draw (3,-10) node {$\Gamma_{V_i} = \widehat{\Gamma} \setminus \widehat{\Gamma}^{(2)}$};
                                              
\path[->]
(8.5,-5.5) edge (6.5,-7.5);

\draw
 (vvv0)  edge (vvv1)
 (vvv1)  edge (vvv2)
 (vvv2)  edge (vvv3)
 (vvv3)  edge (3.5,-8)
 (vvv4)  edge (4.5,-8)
 (vvv4)  edge (vvv5);    
    
\end{tikzpicture}
\caption{An example of the construction of a graph in $U'$ for a given set $V_i$. The light red vertices in the final step are the neighbours of the set $V_i$ inside $\Gamma$.}
\label{Fig: Constructing fUVi}
\end{figure}
An example of this procedure is given in Figure~\ref{Fig: Constructing fUVi}. As $\Gamma$ is context-free, it follows either by direct geometric considerations, or by considering $\Gamma$ as the \textit{transition graph of a pushdown automata} (see \cite{Muller1985} for precise definitions), that $\widehat{\Gamma}$ is also a context-free graph; for the latter transition graphs, we can add some initial transitions to a pushdown automata which $\Gamma$ is the transition graph of. Furthermore, by a similar argument, any end-space of a context-free graph is a context-free graph. Hence any end-space of $\widehat{\Gamma}$ is context-free, and hence in particular $\Gamma_{V_i}$ is context-free for all $i$. 

We will now prove by induction on the depth of vertices that there is a $U \subseteq U'$ of graphs with the claimed properties. First, note that $\Gamma_{\varnothing} \cong \Gamma$. The subgraph $\TGE(0)$ of vertices with depth $0$ is by assumption isomorphic to $\Gamma$. Hence $\Gamma_{\varnothing}$ appears as a subgraph of $\TGE$ induced on a set of vertices of fixed depth; namely $0$. As a consequence, if $u \in V(\TGE)$ with $d(u) = 0$, then we set $f(u) = \Gamma_\varnothing$.

Now, assume for induction that for some $n \in \mathbb{N}$ we have that there exists a set $U_n$ such that (1) is satisfied, and such that (2) is satisfied for all vertices $u \in \TGE$ with depth $\leq n$. That is, such that there exists a function $f_n$ mapping any vertex $u$ with $d(u) \leq n$ to some element of $U_n$, and any such $u$ appears as a vertex of exactly one of the copies of $f(u)$ in $\TGE$, which we denote by $f^\ell(u)$. Let $u$ be a vertex of $\TGE$ with $d(u) = n+1$. Let $u = [\big( u_0, \dots, u_{n+1} \big)]$. Now $( u_0, \dots, u_n ) \in V_n$, so there exists a copy of $\Gamma$ attached to $( u_0, \dots, u_n)$ in $\TG$. Hence, since $\TGE$ is obtained from $\TG$ by determinising, from $[(u_0, \dots, u_n)]$ in $\TGE$ there still, just as in $\TG$, exists a unique copy of $\Gamma$ attached, which we denote by $\Gamma'$. 

Now by Lemma~\ref{Lem: Overlap-free means vertices are same, or one different.}, as $\Gamma$ is $S$-overlap-free, all vertices of $\Gamma'$ have depth $n$ or depth $n+1$. The root $\ro'$ of $\Gamma'$ will certainly have depth $n$, but unlike the case in $\TG$, certain non-root vertices of $\Gamma'$ will also have depth $n$ as a result of determinisation. 

We will show that all vertices of $\Gamma'$ with depth $n$ are at $\Gamma'$-distance $\leq \Omega$ from $\ro'$. In particular, there are only boundedly many such vertices for any choice of $u$. 

Assume for contradiction that $v$ is a vertex of $\Gamma'$ at $\Gamma'$-distance $> \Omega$ from $\ro'$, such that $d(v) = n$. Then, there is a path $p \colon \ro' \to v_1' \to \cdots \to v_k' = v$ in $\Gamma'$, where $k > \Omega$, and all $\to$ correspond to passing over single edges. As $\Gamma$ is deterministic, we must also find a corresponding path $p \colon \ro \to v_1 \to \cdots \to v_k = v$ in $\Gamma$, for determinising $\TG$ does not affect the individual copies of $\Gamma$ within. But by the definition of $\Omega$, such a path must necessarily pass through some element of $V_S$, and we hence have a contradiction to $S$-overlap-freeness. Hence any vertex $v$ in $\Gamma'$ with $d(v) \leq n$ will be at $\Gamma'$-distance $\leq \Omega$ from $\ro'$.

Let $V' \subseteq V(\Gamma')$ be the set of vertices $v'$ of $\Gamma'$ with $d(v') = n$. Then (independently of $u$) the above implies that $|V'| \leq |B(\ro, \Omega)|$, where $B(\ro, \Omega)$ denotes the ball of radius $\Omega$ around $\ro$ inside $\Gamma$. Hence $V'$ is one of the sets $V$ chosen above, and so it follows that $\Gamma_{V'} \in U'$. We set $f^\ell(u)$ to be the the subgraph of $\TGE$ induced on $V(\Gamma') \setminus V'$, and $f(u) = \Gamma_{V'}$ as its isomorphism class. Thus by induction we may take $U \subseteq U'$ as the image of this function $f$. 
\end{proof}
\begin{remark}
We note that the graphs $\Gamma_{V_i}$ constructed above not be connected, and no roots have explicitly been specified for them yet. Furthermore, we recall that a graph is context-free if and only if all of its connected components are. 
\end{remark}

The set of vertices of depth $0$ in $\TG$ clearly induces a subgraph of $\TG$ isomorphic to $\Gamma$. However, the same statement need not be true when substituting $\TGE$ for $\TG$. For although the map sending $\Gamma$ to this subgraph of $\TGE$ is always a bijective homomorphism, it need not be faithful, as some edges may be missing from $\Gamma$. To enable adding conditions to avoid cases of this form, we let $\TGE(0)$ denote the subgraph of $\TGE$ induced on the set of vertices of depth $0$. We remark that the condition below that $\TGE(0) \cong \Gamma$ is by no means critical, and only helps simplify parts of the proof; one could without too much additional effort prove the below theorem without this condition, but this is not done in interest of some brevity.

\begin{theorem}\label{Thm: Bounded folding gives CF from T}
Let $\Gamma$ be a graph, and $S \subseteq F(\Gamma)$ a set of frontier points with $\ro \not\in S$. Let $\TG := \T(\Gamma, S)$, and let $\TGE$ denote the determinised form of $\TG$. Assume that the following hold: 
\begin{enumerate}
\item $\Gamma$ is deterministic and context-free;
\item $\TGE(0) \cong \Gamma$; 
\item If Aut$_S(\Gamma)$ denotes the subgroup of Aut$(\Gamma)$ consisting of automorphisms $\phi$ such that $(\phi(s) \in V_S \iff s \in V_S)$, then $(\Gamma, \textnormal{Aut}_S(\Gamma), S)$ satisfies the bounded folding condition.
\end{enumerate}
Then the determinised form $\TGE$ of $\TG$ is context-free.
\end{theorem}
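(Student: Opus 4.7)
The plan is to verify the two conditions in the definition of a context-free graph for $\TGE$: bounded degree, and finitely many end-isomorphism classes. The decomposition result Lemma~\ref{Lem: Decompose TGE into finitely many types} does the heavy structural lifting in both directions; the remaining work is to leverage it together with $S$-overlap-freeness and $S$-fullness to match end spaces.

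First I would dispose of bounded degree. Every vertex of $\TGE$ is a class $[u]$ under $\eta$, and by Lemma~\ref{Lem: Overlap-free means vertices are same, or one different.} together with the argument inside Lemma~\ref{Lem: Decompose TGE into finitely many types}, only boundedly many pre-images of $[u]$ in $\TG$ can be identified (the bound depends only on $\Omega = |V(\Gamma/\textnormal{Aut}_S(\Gamma))|$, via the ball $B(\ro, \Omega) \subseteq \Gamma$). Since $\TG$ inherits bounded degree from $\Gamma$, so does $\TGE$.

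The substantive step is finitely many end-isomorphism classes. Apply Lemma~\ref{Lem: Decompose TGE into finitely many types} to obtain the finite set $U = \{\Gamma_{V_0}, \dots, \Gamma_{V_m}\}$ and the function $f\colon V(\TGE) \to U$, with the associated local copy $f^\ell(u)$ realising $u$ inside its block. Declare $u \approx v$ if $f(u)=f(v)=\Gamma_{V_i}$ for some $i$ and there is a label-preserving isomorphism $\Gamma_{V_i} \to \Gamma_{V_i}$ carrying $u$'s position in $f^\ell(u)$ to $v$'s position in $f^\ell(v)$ while preserving the distinguished subset $V_S \cap V(\Gamma_{V_i})$. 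Only finitely many $\approx$-classes exist because $U$ is finite and each $\Gamma_{V_i}$ has finitely many vertices relevant as positions. I would then show that $u \approx v$ implies $\TGE(u) \sim \TGE(v)$ by constructing the end-isomorphism inductively on the block structure of $\TGE$: the isomorphism starts on $f^\ell(u) \to f^\ell(v)$ using the $\approx$-witnessing block automorphism, then for each outward attachment point (which by $S$-fullness lies in $V_S$ and produces one of finitely many branch behaviours determined by the attached copy of $\Gamma$), one extends by picking a matching block of the same type and recursing.

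The main obstacle will be verifying that this recursive construction actually yields a well-defined graph isomorphism that sends frontier points onto frontier points. In particular, one must check that the ``outward'' branching structure visible from a block of type $\Gamma_{V_i}$ depends only on the type and the position, not on the particular embedding inside $\TGE$; this is exactly what the bounded folding condition provides. The $S$-full condition ensures that all cross-block edges leave from and enter vertices in $V_S$, so that the branching happens only at predictable places; the $S$-overlap-free condition, via Lemma~\ref{Lem: Overlap-free means vertices are same, or one different.}, ensures that determinisation inside any attached copy of $\Gamma$ only redirects edges back at most one level, preventing the recursion from ``leaking'' into deeper levels in an uncontrolled fashion. Finally, the hypothesis $\TGE(0) \cong \Gamma$ together with almost-transitivity of $\textnormal{Aut}_S(\Gamma)$ guarantees the base case: any two depth-$0$ vertices occupying isomorphic $\Gamma_{V_i}$-positions already have end-isomorphic end spaces in $\TGE$, from which context-freeness follows since every vertex is by construction $\approx$-equivalent to one of only finitely many representatives.
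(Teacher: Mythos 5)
Your overall architecture --- bounded degree plus a finite equivalence relation $\approx$ on $V(\TGE)$ refining the block decomposition of Lemma~\ref{Lem: Decompose TGE into finitely many types}, with $u \approx v \implies \TGE(u) \sim \TGE(v)$ --- is the same as the paper's, but your $\approx$ is too coarse and the key implication fails for it. The first problem is that ``same block type and same position up to a $V_S$-preserving automorphism of $\Gamma_{V_i}$'' does not determine the end space, because $\TGE(u)$ and its frontier points are defined via distance from the global root $\ro$ of $\TGE$, not via the intrinsic metric of the block. The block $f^\ell(u)$ is entered through the finitely many lower-depth vertices produced by determinisation, and these entry vertices can sit at different distances from $\ro$ (the paper shows the spread is at most $2\Omega$, but it need not vanish and need not agree between $u$ and $v$). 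Different offsets change which vertices of the block lie in $\TGE(u)$ and which of them are frontier points, so your witnessing block automorphism need not induce an end-isomorphism. The paper devotes its entire part $(\mathcal{B})$ to recording exactly this data as a finite-valued labelling $g(u)$ and to building the auxiliary rooted graph $f_+^\ell(u)$ so that intrinsic and extrinsic distances agree; your relation omits this entirely.

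The second problem is that your recursion only extends the isomorphism \emph{outward}, through attachment points in $V_S$, i.e.\ to vertices of depth $\geq d(u)$. But after determinisation $\TGE(u)$ can contain vertices of depth strictly less than $d(u)$: the per-level ``drop by at most one'' of Lemma~\ref{Lem: Overlap-free means vertices are same, or one different.} accumulates over successive levels, and the paper must prove separately (part $(\mathcal{E})$) that the total drop is bounded by a constant $N$ coming from the uniform bound on $\sim_{L,H}$-class sizes, and must then record the type of the length-$\leq N+1$ ancestor sequence $s(u)$ inside the equivalence relation. Your relation carries no information about the ancestors of $u$, so two vertices you declare equivalent can have end spaces whose lower-depth portions are not isomorphic, and your inductive construction never visits that portion at all. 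Both missing pieces of data are finitary, so the repair is to intersect your relation with the paper's $\approx_B$ and $\approx_E$ (and to add the base-versus-inductive-step treatment of the backward region); as written, however, the proposal has a genuine gap.
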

\begin{proof}
Let $\Omega \in \mathbb{N}$ denote the folding constant of $(\Gamma, \textnormal{Aut}_S(\Gamma), S)$. If $S = \varnothing$, then $\TGE = \TG = \Gamma$, in which case there is nothing to show. Hence, assume $S \neq \varnothing$. We note that the conditions of Lemma~\ref{Lem: Decompose TGE into finitely many types} are satisfied, and hence we will speak of the set $U = \{ \Gamma_{V_0}, \dots, \Gamma_{V_m} \}$ constructed in the conclusion of the same, as well as the function $f$ and the associated $f^\ell$. 

Concretely, for $u \in V(\TGE)$, we let $f^\ell(u)$ denote the unique copy of $f(u)$ in which $u$ appears. As it will be useful to consider $f^\ell(u)$ as a graph with its own ``intrinsic geometry'', as opposed from the ``extrinsic'' distances resulting from its embedding in $\TGE$, we will introduce the notation $u^\ell \in V(f^\ell(u))$ for the image of $u$ in $f^\ell(u)$ under the bijection between $f^\ell(u)$ and the induced subgraph of $\TGE$. If $f(u) = V_i$, then we will denote by $V_i^\ell$ the copy of the vertices $V_i$ appearing in $f^\ell(u)$. The main idea of the proof is now the following. We will introduce on the vertices of $\TGE$ a series of equivalence relations, which together induce an equivalence relation $\approx$ on $V(\TGE)$ with finitely many classes. This equivalence relation will have the powerful property that for any $u, v \in V(\TGE)$, we have 
\[
u \approx v \implies \TGE(u) \sim \TGE(v).
\] Since $\approx$ has only finitely many classes, proving the above will demonstrate that $\TGE$ only has finitely many end-isomorphism classes, and hence is context-free, completing the proof. We make the relation concrete as follows, with letters next to each new relation introduced for ease of reading. ´

\vspace{0.5cm}

($\mathcal{A}$) First, since the range of $f$ is finite, we will introduce an equivalence relation $\approx_A$ with classes  
\[
\mathcal{A}_i = \{ u \in V(\TGE) \mid f(u) = \Gamma_{V_i} \}.
\] for $1 \leq i \leq |U|$. We will fix a class $\mathcal{A} := \mathcal{A}_\alpha$. For ease of notation, we will denote $\Gamma_{V_\alpha}$ by $\Gamma_\alpha$. 

\vspace{0.5cm}

($\mathcal{B}$) The next equivalence relation is based on the ``extrinsic'' geometry of $f^\ell(u)$ with respect to its embedding in $\TGE$, where $u$ is some vertex. We have not yet explicitly rooted $f^\ell(u)$, so the notation $f^\ell(u)(u^\ell)$ is not well defined; furthermore, intuitively, this ``intrinsic'' end of $f^\ell(u)$ may not necessarily be compatible to the ``extrinsic'' end of $u$ inside $\TGE$ arising from the embedding of $f^\ell(u)$ inside $\TGE$.

We will now remedy this formally, by introducing \textit{labellings} on some of the vertices of each $f^\ell(u)$, where $u$ ranges over the vertices of $\TGE$, which will capture the behaviour of how the intrinsic geometry of $f^\ell(u)$ may differ from the extrinsic geometry resulting from its embedding in $\TG$. The labelling map, which we will denote as $g \colon \{ f^\ell(u) \mid u \in V(\TGE) \} \to \{ 1, \dots, K\}$ is defined as follows, where $K \in \mathbb{N}$ is a constant determined by the below definition. First, let $L$ be the set containing the set of all partiaully vertex-labelled graphs $G$ satisfying the following conditions: $G$ is isomorphic to some $f^\ell(u)$, and some subset of the vertices of $G$ within a $2 \Omega$-radius from the root of $G$ has been labelled by a natural number in $\{ 0, \dots, 2 \Omega \}$. Clearly, $|L| < \infty$, as $\{ f^\ell(u) \mid u \in V(\TGE) \}$ is a finite set, and all graphs considered have bounded degree.

Let $u \in \mathcal{A}$. We will show that the ``extrinsic geometry'' of the embedding of $f^\ell(u)$ in $\TGE$ is completely captured by some element of $L$. Consider $f^\ell(u)$. As in the proof of Lemma~\ref{Lem: Decompose TGE into finitely many types}, since $f(u) = \Gamma_{\alpha}$, the vertices $V_A^\ell \subseteq V(f^\ell(u))$ all have depth $d(u) -1$ in $\TGE$. Furthermore, every vertex in $V_\alpha^\ell$ is within a $2 \Omega$ radius, in the intrinsic metric on $f^\ell(u)$, from the root $\ro^\ell$ of $f^\ell(u)$, and includes $\ro^\ell$. Enumerate the finite set $V_\alpha^\ell = \{ \ro^\ell, v_1^\ell, \dots, v_\nu^\ell\}$, and for all $0 \leq j \leq \nu$ we set $v_j$ to be the (unique) corresponding vertex of $\TGE$. We then label $v_j^\ell$ by the natural number $|v_j|_{\TGE} - \min_i |v_i|_{\TGE}$. As $\TGE$ is obtained from $\TG$ by determinising, it is clear that none of these natural numbers exceeds $2 \Omega$. Thus this labelling of $f^\ell(u)$ is contained in $L$. Fix a bijection between $L$ and $\{1, \dots, |L|\}$. We define $g(u)$ as the natural number corresponding to the labelling of $f^\ell(u)$ under this bijection. We thus introduce an equivalence relation $\approx_B$ on $\mathcal{A}$ with classes
\[
\mathcal{B}_i := \{ u \in V(\TGE) \mid g(u) = i \}.
\]
As in part ($\mathcal{A}$), we will fix an arbitrary $\beta \in \{ 1, \dots, |L| \}$, and denote $\mathcal{B} := \mathcal{B}_\beta$. Given this fixed labelling $g(u) = \beta$ it is straightforward, by adding a new root and some (finitely many) paths of finite length from this root to each of the vertices in $V^\ell_\alpha$, to construct a context-free graph $f_+^\ell(u)$, rooted at a new root $1_+$, such that the intrinsic distance from $1_+$ to any vertex $v^\ell_i \in V^\ell_\alpha$ is precisely that given by the label of $v^\ell_i$. An example of this construction is shown in Figure~\ref{Fig: f(ell)+ giving f(ell) as end}. 

Thus we fix this graph $f_+^\ell(u)$. In particular, we may now without ambiguity speak of the end-space $f_+^\ell(u)(u^\ell)$. Furthermore, since this labelling is constructed precisely based on the difference in distances caused by embedding $f^\ell(u)$ in $\TGE$, it is easy to see that we have $|w|_{\TGE} = c(u) + |w^\ell|_{f_+^\ell(u)}$ for all vertices $w^\ell \in V(f^\ell(u))$, where $c(u)$ is some constant depending only on $u$. See Figure~\ref{Fig: Embedding f(ell)(endspace) into TGE.}.

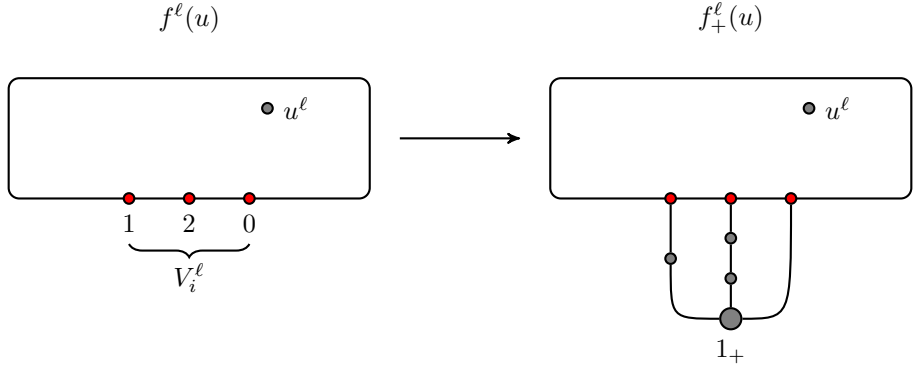
\begin{figure}
\begin{tikzpicture}[>=stealth',thick,scale=0.8,el/.style = {inner sep=2pt, align=left, sloped}]

\draw[rounded corners] (0, 0) rectangle (6, 2) {};
\draw (3, 3) node {$f^\ell(u)$};

\node (v0)[label = below:$1$][circle, draw, fill=red!100, inner sep=0pt, minimum width=4pt] at (2,0) {};
\node (v1)[label = below:$2$][circle, draw, fill=red!100, inner sep=0pt, minimum width=4pt] at (3,0) {};
\node (v2)[label = below:$0$][circle, draw, fill=red!100, inner sep=0pt, minimum width=4pt] at (4,0) {};
\node (u)[label = right:$u^\ell$][circle, draw, fill=black!50, inner sep=0pt, minimum width=4pt] at (4.3,1.5) {};

\draw [decorate,decoration={brace,amplitude=5pt,mirror,raise=4ex}]
  (2,0) -- (4,0) node[midway,yshift=-3em]{$V^\ell_i$};
                        
\path[->]
(6.5,1) edge (8.5,1);

\draw[rounded corners] (9, 0) rectangle (15, 2) {};
\draw (12, 3) node {$f_+^\ell(u)$};

\node (w0)[circle, draw, fill=red!100, inner sep=0pt, minimum width=4pt] at (11,0) {};
\node (w1)[circle, draw, fill=red!100, inner sep=0pt, minimum width=4pt] at (12,0) {};
\node (w2)[circle, draw, fill=red!100, inner sep=0pt, minimum width=4pt] at (13,0) {};
\node (uu)[label = right:$u^\ell$][circle, draw, fill=black!50, inner sep=0pt, minimum width=4pt] at (13.3,1.5) {};

\node (vv0)[][circle, draw, fill=black!50, inner sep=0pt, minimum width=4pt] at (11,-1) {};
\node (vv1)[][circle, draw, fill=black!50, inner sep=0pt, minimum width=4pt] at (12,-1.33) {};
\node (vv2)[][circle, draw, fill=black!50, inner sep=0pt, minimum width=4pt] at (12,-0.66) {};
\node (1+)[label = below:$1_+$][circle, draw, fill=black!50, inner sep=0pt, minimum width=8pt] at (12,-2) {};

\draw (1+) .. controls (11,-2) .. (vv0); 
\draw (1+) edge (vv1)  
 	  (vv1) edge (vv2)
 	  (vv2) edge (w1)
 	  (vv0) edge (w0); 
\draw (1+) .. controls (13,-2) .. (w2);

\end{tikzpicture}
\caption{The construction of the end-space $f^\ell_+(u)(u^\ell)$ from a given labelling of the vertices of $f^\ell(u)$.}
\label{Fig: f(ell)+ giving f(ell) as end}
\end{figure}

\begin{figure}
\begin{tikzpicture}[scale = 0.75]

\fill[rounded corners, green!50] (3.1,2) -- (6,0.8) -- (6,2)-- cycle;
\draw[rounded corners] (0, 0) rectangle (6, 2) {};
\draw (3, 1) node {$f_+^\ell(u)$};

\node (w0)[circle, draw, fill=red!100, inner sep=0pt, minimum width=4pt] at (2,0) {};
\node (w1)[circle, draw, fill=red!100, inner sep=0pt, minimum width=4pt] at (3,0) {};
\node (w2)[circle, draw, fill=red!100, inner sep=0pt, minimum width=4pt] at (4,0) {};
\node (uu)[label = right:$u^\ell$][circle, draw, fill=black!50, inner sep=0pt, minimum width=4pt] at (4.3,1.5) {};

\node (vv0)[][circle, draw, fill=black!50, inner sep=0pt, minimum width=4pt] at (2,-1) {};
\node (vv1)[][circle, draw, fill=black!50, inner sep=0pt, minimum width=4pt] at (3,-1.33) {};
\node (vv2)[][circle, draw, fill=black!50, inner sep=0pt, minimum width=4pt] at (3,-0.66) {};
\node (1+)[label = below:$1_+$][circle, draw, fill=black!50, inner sep=0pt, minimum width=8pt] at (3,-2) {};

\draw (1+) .. controls (2,-2) .. (vv0); 
\draw (1+) edge (vv1)  
 	  (vv1) edge (vv2)
 	  (vv2) edge (w1)
 	  (vv0) edge (w0); 
\draw (1+) .. controls (4,-2) .. (w2);  
\path[->] 
(5.3, 1.6) edge (6.2, 2.5);
\draw (6.5, 2.8) node {$f_+^\ell(u)(u^\ell)$};


\fill[rounded corners, green!50] (12.1,4) -- (15,2.8) -- (15,4)-- cycle;
\draw[rounded corners] (9, 2) rectangle (15, 4) {};

\node (w0)[circle, draw, fill=red!100, inner sep=0pt, minimum width=4pt] at (11,2) {};
\node (w1)[circle, draw, fill=red!100, inner sep=0pt, minimum width=4pt] at (12,2) {};
\node (w2)[circle, draw, fill=red!100, inner sep=0pt, minimum width=4pt] at (13,2) {};
\node (uu)[label = right:$u^\ell$][circle, draw, fill=black!50, inner sep=0pt, minimum width=4pt] at (13.3,3.5) {};

\draw (12,6 ) node {$\TGE$};
\path[draw,use Hobby shortcut,closed=true]
(9,-3) .. (11,-2) .. (13,-3) .. (16, 3) .. (15, 6) .. (12, 5) .. (11, 5) .. (8,4) .. (8,2);

\node (root)[label = below:$1$][circle, draw, fill=black!50, inner sep=0pt, minimum width=8pt] at (11,-2) {};
\path[dashed, ->]
(root) edge (w0)
(root) edge (w1)
(root) edge (w2);
\end{tikzpicture}
\caption{Embedding the end-space $f_+^\ell(u)(u^\ell)$, marked as a green shaded area in the graph, into $\TGE$. The distance from $1$ to the red vertices is reflected entirely in the labellings of the vertices, ensuring the intrinsic geometry of the green shaded area is compatible with its extrinsic geometry arising from the embedding. In particular, any geodesics inside $f_+^\ell(u)$ (involving none of the finitely many added vertices or $1_+$) remain geodesics when embedded inside $\TGE$.}
\label{Fig: Embedding f(ell)(endspace) into TGE.}
\end{figure}
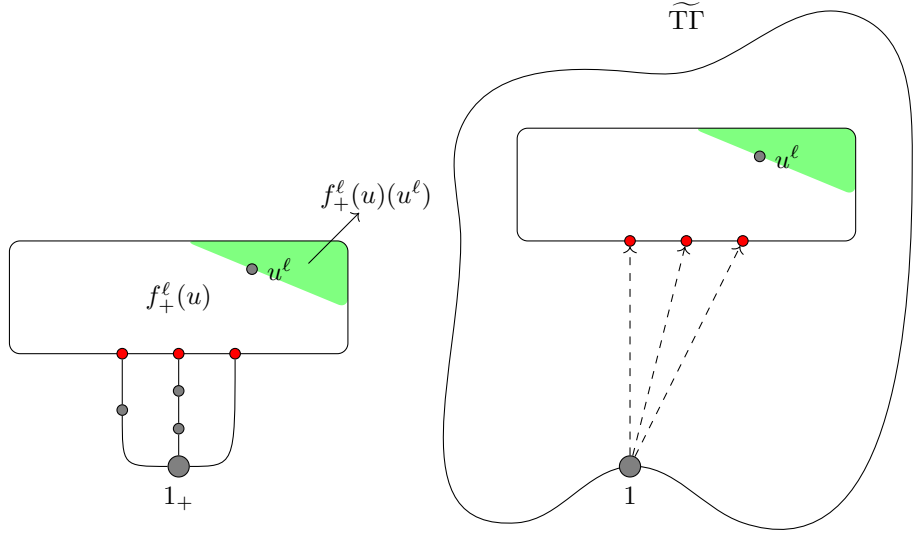

\vspace{0.5cm}

($\mathcal{C}$) Let $u \in \mathcal{B} \subseteq \mathcal{A}$. As shown in Lemma~\ref{Lem: Decompose TGE into finitely many types}, we have that $f(u) = f(v) = \Gamma_{\alpha}$ is a context-free graph. Let $\{ \Gamma_{\alpha, 0}, \Gamma_{\alpha, 1}, \dots, \Gamma_{\alpha, \mu}\}$ be a complete set of representatives of the end-isomorphism classes of $\Gamma_{\alpha}$ for some $\mu \in \mathbb{N}$, and let $f^\ell_i(u)$ denote the copy of $\Gamma_{\alpha, i}$ inside $f^\ell(u)$. We introduce an equivalence relation $\approx_C$ on $\mathcal{B}$ with classes
\[
\mathcal{C}_i = \{ u \in V(\TGE) \mid f^\ell(u)(u^\ell) \cong f^\ell_i(u)\}
\]
for $0 \leq i \leq \mu$, and fix some $\mathcal{C} := \mathcal{C}_\gamma$. In particular, for $u, v \in \mathcal{A} \cap \mathcal{B} \cap \mathcal{C}$, there is now an end-isomorphism $\Phi \colon f^\ell(u)(u^\ell) \to f^\ell(v)(v^\ell)$. 

\vspace{0.5cm}

($\mathcal{D}$) Let $u, v \in \mathcal{C} \subseteq \mathcal{B} \subseteq \mathcal{A}$. There is a natural way to extend the definition of $\sim_{L, H}$ to $\TGE$, by setting $u \sim_{L, H} v$ if and only if $u^\ell$ and $v^\ell$ are $\sim_{L,H}$-related in $f^\ell(u) = f^\ell(v)$. We can use this relation to relate vertices of $f^\ell(u)(u^\ell)$ and $f^\ell(v)(v^\ell)$ via $\sim_{L, H}$. It is clear that since $u, v \in \mathcal{A}$ we have that there is a graph isomorphism $\phi_{u^\ell, v^\ell} : [u^\ell]_{\sim_{L,H}} \to [v^\ell]_{\sim_{L, H}}$. However, this graph isomorphism may not compose with the earlier end-isomorphism $\Phi$ to form a new end-isomorphism, as the frontier points may differ. Thus, we will add the assumption that for all frontier points $f_i \in f^\ell(u)(u^\ell)$, we have that $\phi_{u^\ell, v^\ell}(f_i) = \Phi(f_i)$, where $\Phi$ is the earlier end-isomorphism. Since there are only finitely many frontier points, and since the size of each $\sim_{L,H}$-class is bounded, we note that this induces an equivalence relation $\approx_D$ on $\mathcal{C}$ with finitely many classes. For later, we fix a single equivalence class $\mathcal{D}$ of this relation. 

\vspace{0.5cm}

$(\mathcal{E})$ For any vertex $u \in V(\TG)$, it is clear that $\TG(u)$ only contains vertices of depth $j \geq d(u)$. However, it is possible that $\TGE(u)$ contains vertices of depth $j$ strictly less than $d(u)$. To deal with this, we will be actively using the bounded folding condition. In particular, we will show that there exists a fixed constant $N$ such that for all $u \in V(\TGE)$, we have $d(w) \geq d(u) - N$ for any vertex $w \in \TGE(u)$. We will use this property to construct our final equivalence relation. Thus, choose an arbitrary $u \in V(\TGE)$, let $d(u) = k$, and let $u = [(u_0, \dots, u_k)]$. Let $s_0 = [(u_0, \dots, u_{k-1})]$. Then $d(s_0) = k-1$, as it cannot be any shorter without making $d(u) < k$. Similarly, for $1 \leq i \leq k+1$, let $s_i = [(u_0, \dots, u_{k-i+1}]$, in which case $d(s_i) = k - i -1$. Note that in $\TG$, any walk from the root to $(u_0, \dots, u_k)$ must necessarily pass through $(u_0, \dots, u_i)$ for all $0 \leq i \leq k$. As $\TGE$ is obtained from $\TG$ by determinising, and since the bounded folding condition holds, it follows that any shortest walk from $\ro$ to $s_i$ must necessarily pass through an element of the $\sim_{L, H}$-class of $s_{i-1}$. Furthermore, it follows that the number of elements in the $\sim_{L,H}$-class of $s_{i}$ is monotonically decreasing as $i$ increases. Since the size of any $\sim_{L,H}$-class is uniformly bounded across $\TGE$, say by $N$, it follows that $i$ can only decrease $N$ steps from $k$. Hence the only $s_i$ that may be present in $\TGE(u)$ are those with $d(s_i) \geq d(u) - N$. Finally, if $w$ is a vertex such that $w \in \TGE(u)$ and $d(w) < u$, then all shortest walks from $u$ to $w$ in $\TGE(u)$ must pass through $s_{d(u)-d(w)-1}$, by lifting any such walks to $\TG$ and applying the bounded folding condition again. Hence $d(u)-d(w)-1 \leq N$, and hence $d(w) \geq d(u) - N - 1$, and we have our claim.

We now introduce an equivalence relation on $\mathcal{D}$, based on the above claim. We will assign to $u$ the tuple
\[
s(u) := (s_{\max(0, d(u)-N)}, s_{\max(0, d(u)-N)+1}, \dots, s_{d(u)-1}, s_{d(u)})
\]
and note that the length of $s(u)$ is bounded uniformly above by $N+1$. Assume $u, v \in \mathcal{D} \subseteq \mathcal{C} \subseteq \mathcal{B} \subseteq \mathcal{A}$. We say that $s(u) \sim_s s(v)$ if and only if:
\begin{enumerate}[(a)]
\item The sequences $s(u)$ and $s(v)$ have the same length;
\item $f(s(u)_j) = f(s(v)_j)$, and $f_+^\ell(s(u)_j)(s(u)^\ell_j) \sim f_+^\ell(s(v)_j)(s(v)^\ell_j)$, for all $j$.
\item For all $j$, we have $s(u)_j$ and $s(v)_j$ represent the same vertex of the graphs $[s(u)_j]_{\sim_{L, H}}$ and $[s(v)_j]_{\sim_{L, H}}$, respectively. That is, we have that the fixed isomorphism $\phi_{s(u)_j, s(v)_j}$, which exists by (b), maps $s(u)_j$ to $s(v)_j$. 
\end{enumerate}
Clearly, each of the above conditions induces an equivalence relation with finitely many classes. Hence, intersecting them all to a relation $\sim_s$, it follows that the total number of different equivalence classes of sequences $s(u)$ under $\sim_s$ is uniformly bounded. We define the relation $\approx_E$ on $\mathcal{D}$ by, for any $u, v \in \mathcal{D}$, setting $u \approx_E v$ if and only if $s(u) \sim_s s(v)$. This has finitely many equivalence classes, of which we fix a single one, denoted $\mathcal{E}$.

We have now defined all our equivalence relations. This induces an equivalence relation on all of $V(\TGE)$ in the following way. Let $u, v \in V(\TGE)$. If we have $u \not\approx_A v$, then we also say that $u \not\approx v$. On the other hand, if $u \approx_A v$, with both of them, say, in the equivalence class $\mathcal{A}_j$, then there is an equivalence relation $\approx_B$ defined on this equivalence class, constructed as in part $(\mathcal{B})$ above. If $u \not\approx_B v$, then we say that $u \not\approx v$, and otherwise we proceed in the same way, continuing with $\approx_C$, then $\approx_D$, and finally $\approx_E$; at this stage, we say $u \approx v$ if and only if $\approx_E$. 

Thus, by construction $\approx$ only has finitely many equivalence classes on $V(\TGE)$. Let now $u, v \in V(\TGE)$ be such that $u \approx v$. Fix an equivalence class $\mathcal{E}$ of $\approx$, such that $u, v \in \mathcal{E}$.

We will show by induction on the depth $d(u)$ (and, by symmetry, of $d(v)$) that we can extend the fixed end-isomorphism $\Phi \colon f^\ell(u)(u^\ell) \to f^\ell(v)(v^\ell)$ to an end-isomorphism $\Phi^\ast \colon \TGE(u) \to \TGE(v)$, which would complete the proof of the theorem.

\textbf{Base Case ($d(u) = 0$).}

First, note that all the vertices $w \in \TGE$ at the same depth as $u$ is by definition the vertices of the graph $f^\ell(u)$. Since $d(u) = 0$ in this base case, we must have that $f^\ell(u) \cong \TGE(0) \cong \Gamma$ by assumption. This copy clearly receives the labelling of $L$ corresponding to only labelling the root by $0$, and nothing else. However, since the below argument applies to all labellings $p \in L$ and consequently will also be useful in the inductive step, we will write it out in full generality. 

Since by part $(\mathcal{B})$ we have that $g(u) = g(v) = \beta$, we know by the argument at the end of part $(\mathcal{B})$ that for all $w^\ell$ in $f^\ell(u)$ we have $|w|_{\TGE} = c_1 + |w^\ell|_{f_+^\ell(u)}$ for some fixed constant $c_1$, depending only on $u$. Accordingly, for $w^\ell$ in $f^\ell(v)$, we have $|w|_{\TUE} = c_2 + |w^\ell|_{f_+^\ell(v)}$, for some fixed $c_2$. Thus, defining $\Phi^\ast(w) := \Phi(w^\ell)$ for all $w$ in $f^\ell(u)$, we see that for any two vertices $w_1, w_2 \in f_+^\ell(u)(u^\ell)$, we have
\begin{align*}
d_{\TGE}(w_1, w_2) &= \bigg| |\Phi^\ast(w_1)|_{\TGE} - |\Phi^\ast(w_2)|_{\TGE}\bigg| \\
&= \bigg| \left( c_2 + |\Phi^\ast(w_1)^\ell|_{f_+^\ell(v)} \right) - \left( c_2 + |\Phi^\ast(w_2)^\ell|_{f_+^\ell(v)}\right) \bigg| \\
&= \bigg| |\Phi(w_1^\ell)|_{f_+^\ell(v)} - |\Phi(w_2^\ell)|_{f_+^\ell(v)}\bigg| \\
&= d_{f^\ell(v)}(w^\ell_1, w^\ell_2).
\end{align*}
Thus $\Phi^\ast$ preserves the intrinsic geometry between the vertices in $f^\ell_+(u)(u^\ell)$, and since $f_+^\ell(u)(u^\ell) \cong f_+^\ell(u)(v^\ell)$, it follows that $\Phi^\ast$ extends to an end-isomorphism on the vertices of depth $0$. See Figure~\ref{Fig: Embedding f(ell)(endspace) into TGE.}.

We now just need to check that $\Phi$ can also be extended to the vertices of depth greater than $0$. Note that it is possible to find a copy of a graph isomorphic to $f_+^\ell(u)(u^\ell)$ in $\TG$. This is done by considering the set of all vertices which map to $f_+^\ell(u)(u^\ell)$ under the quotient by $\eta$, and taking (any) single connected component of the graph induced on the subset of vertices of smallest depth in this set. Now, for every branch point in this copy one finds a graph isomorphic to $\TG$ attached; furthermore, the choice of connected component above clearly does not affect this. Also, if $w$ is an branch point in our copy, then if we can find vertices $w_1 = (w, w_1')$ and $w_2 = (w, w_2')$, where $w_1', w_2'$ denote some (possibly empty) tuples of elements of $\Gamma$, such that $w_1 \eta w_2$, then we will also have $\hat{w}_1 \eta \hat{w}_2$, where $\hat{w}_1 = (w', w_1')$ and $\hat{w}_2 = (w', w_2')$ for any other branch $w'$ in the copy with $w \sim_{L, H} w'$. Furthermore, $(w, w_1') \eta (w', w_2')$ only if $w \sim_{L,H} w'$; this is because no folding can start inside, but then end up outside of the $\sim_{L,H}$-class of $w$, or vice versa, as any such folding would by $S$-fullness necessarily pass through one of the vertices in the orbit of $\ro \not\in S$, contradicting the fact that $\Gamma$ is $S$-overlap-free.

The facts in the above paragraph have two major consequences. The first is that the image under $\eta$ of our copy of $f_+^\ell(u)(u^\ell)$, together with its attached copies of $\TG$ at every branch point, is precisely $\TGE(u)$ restricted to vertices of depth $\geq 0$ (and hence isomorphic to all of $\TGE(u)$). The second is that $\eta$ is entirely determined by its behaviour on the individual $\sim_{L,H}$-classes and the copies of $\TG$ attached therein. 

In view of this, we now divide $f_+^\ell(u)(u^\ell)$ into $\sim_{L,H}$-classes. It is clear that of these, only finitely many are such that they are missing some of the vertices contained in the corresponding $\sim_{L,H}$-class in $f^\ell(u)$. Each such class missing some vertices must clearly all contain at least one frontier point of $f_+^\ell(u)(u^\ell)$. Analogously considering $f_+^\ell(v)(v^\ell)$, we see that for each such frontier point $f_i$ in $f_+^\ell(u)(u^\ell)$ that, by the assumption that $u, v \in \mathcal{D}$, that $\phi_{u^\ell, v^\ell}(f_i) = \Phi(f_i)$. In particular, it follows that $\Phi$ maps any $\sim_{L, H}$-class $K$ of $f_+^\ell(u)(u^\ell)$ to a $\sim_{L,H}$-class isomorphic to $K$ in $f_+^\ell(v)(v^\ell)$. 

Thus combining this information, i.e.\ that the $\sim_{L, H}$-classes of $f_+^\ell(u)(u^\ell)$ are identical to those of $f_+^\ell(v)(v^\ell)$, with the two facts above, it follows that $\Phi$ also extends to an end-isomorphism between the graph induced on the vertices of depth $> 0$ in $\TGE(u)$ to those of depth $> 0$ in $\TGE(v)$, and hence, by the earlier argument that it extends to one on the vertices of depth $=0$, extends to an end-isomorphism $\Phi^\ast \colon \TGE(u) \to \TGE(v)$. 

\vspace{0.5cm}

\textbf{Inductive step; $d(u) = n > 0$, holds for all $i < n$.}

Now, when $d(u) = n > 0$, it is possible that some vertices of depth strictly less than $n$ are included in $\TGE(u)$, unlike in the case when $d(u) = 0$. We must therefore ensure that $\TUE(v)$ has such vertices present, too, and that we may map the former to the latter by an end-isomorphism. This will make heavy use of the relation associated with the set $\mathcal{E}$. 

First, if there are no such lower depth vertices, then the same argument as for the base case directly applies, in precisely the same manner, with the only modification necessary being to replace the vertices $w, w'$ by tuples of vertices rather than single vertices; this is well-defined as $\sim_{L, H}$ can be extended to all of $\TG$. 

Now, assume instead there is some vertex $w \in \TGE(u)$ with $d(w) < n$. By the argument given when defining $\mathcal{E}$ and since $u, v \in \mathcal{E}$, we see that we can find some vertex $w_u$ with the properties that
\begin{enumerate}
\item $|w_u|_{\TGE} = |u|_{\TGE}$;
\item $w_u$ is such that $d(w_u) < d(u)$ is minimised.
\end{enumerate}
In the notation from earlier, the above follows from the fact that at least one of the vertices $s'$ in the $\sim_{L,H}$-class of $s_0$ must satisfy $|s'|_{\TGE} < |u|_{\TGE}$, and thus for any vertex $w \in \TGE(u)$ with $d(w) < u$ and $|w|_{\TGE} > |u|_{\TGE}$, there lies on the shortest path from $u$ to $w$ a vertex $w' \in \TGE(u)$ with $|w|_{\TGE} = |u|_{\TGE}$. Inductively continuing this argument for $s_0, s_1, \dots, s_N$, we find our vertex $w_u$ with the desired properties. Thus as a direct consequence, since $s(u) \sim_s s(v)$ by assumption, we must have that we can also find a vertex $w_v$ satisfying the properties above, and furthermore, by definition of $\sim_s$, we must first of all have $f(w_u) = f(w_v)$. Indeed, by condition (b) and (c) imposed when defining $\mathcal{E}$, we even have that $f_+^\ell(w_u)(w_u^\ell) \cong f_+^\ell(w_v)(w_v^\ell)$. Since $d(w_u) < d(u)$, it follows by the inductive hypothesis that we have an end-isomorphism $\Phi^\ast \colon \TGE(w_u) \to \TGE(w_v)$. But since $|w_u|_{\TGE} = |u|_{\TGE}$ and $|w_v|_{\TGE} = |v|_{\TGE}$, we have 
\[
\TGE(u) = \TGE(w_u) \sim \TGE(w_v) = \TGE(v)
\]
and hence we have an end-isomorphism $\Phi^\ast \colon \TGE(u) \to \TGE(v)$. 

Hence we have shown that $(u \approx v) \implies (\TGE(u) \sim \TGE(v))$, and since $\approx$ only has finitely many equivalence classes, it follows that $\TGE$ has only finitely many ends under end-isomorphism, and hence is a context-free graph.
\end{proof}

We will now turn towards our main focus of study, to which we will apply the above theorem.

\section{Special Monoids}\label{Sec: Special Monoids}

We will begin this section by fixing some notation pertaining to monoids, which will be used throughout without further explanation. A \textit{monoid presentation} for a monoid $M$ is a presentation $\pres{Mon}{A}{u_i = v_i \: (i \in I)}$ such that the monoid presented by this presentation is isomorphic to $M$. A good introduction to the theory of monoid presentations can be found in e.g. \cite{Book1993}. If $v_i$ is the empty word, then we will write the relation $u_i = v_i$ as $u_i = 1$, for notational convenience. Every group presented by a group presentation $\pres{Gp}{A}{R_i = 1 \: (i \in I)}$ admits a monoid presentation as $\pres{Mon}{A \sqcup \bar{A}}{(R_i' = 1 \: (i \in I)) \cup (a \bar{a} = 1, \: \bar{a}a = 1 \mid (a \in A))}$ where $\bar{A}$ is in involutive correspondence with $A$ such that $A \cap \bar{A} = \varnothing$, and $R_i'$ is the word obtained from $R_i$ by replacing any occurrence of a letter $a^{-1}$ by $\bar{a}$, for all $a \in A$. In a monoid presentation as above, each pair $(u_i, v_i)$ is called a \textit{defining relation}. If every defining relation is of the form $(u_i, \varepsilon)$, where $\varepsilon$ denotes the empty word, then we say that the monoid presentation is \textit{special}, and we say that such $u_i$ is a \textit{relator word}. If $M$ is a monoid admitting a special presentation, then we say that $M$ is a \textit{special monoid}. Clearly, every group is a special monoid. If $M$ is a special monoid, then we will often slightly abusively identify this monoid with a special presentation for it. 

For the remainder of this section, we will fix a special monoid $M = \pres{Mon}{A}{R_i = 1, \: (i \in I)}$ be a special monoid. We let $A^\ast$ denote the free monoid on $A$, with identity element the empty word $\varepsilon \in A^\ast$. Then there is a canonical homomorphism $\pi : A^\ast \to M$ associated to this presentation for $M$. If $u, v \in A^\ast$ are graphically equal, then we will write $u \equiv v$, and if $\pi(u) = \pi(v)$ in $M$ then we will write $u =_M v$. 

Let $m \in M$. If there exists $n \in M$ with $mn = 1$, then we say that $m$ is \textit{right invertible}. If there exists $n \in M$ with $nm = 1$, then we say that $m$ is \textit{left invertible}. If $m$ is left and right invertible, then we say that $m$ is \textit{invertible}. We note that $m$ being invertible is equivalent to the existence of a unique $n \in M$ such that $mn = nm = 1$. The set of all invertible elements of $M$ form a submonoid of $M$ under the same multiplication. This monoid is called \textit{the group of units} of $M$, and is denoted $U(M)$. Analogously, the (left) right invertible elements also form a submonoid, the \textit{right units} $M$, which is denoted $R(M)$, and analogously for the \textit{left units} $L(M)$. We note that in all three cases, the identity element is always $1 \in M$. 

\subsection{String rewriting}\label{Subsec: String rewriting}

The application of the techniques of string rewriting techniques to semigroup theory has been incredibly fruitful. Particularly in the work of McNaughton, Narendran, Otto, and Zhang around the 1990s, many fundamental results were shown, forming a solid basis for the connection of the two areas. Some of the main articles constituting their work on applying string rewriting to special monoids can be summarised by the following list:  \cite{Kapur1985, McNaughton1987, Narendran1991, Otto1984, Otto1991, Otto1992, Otto1995, Zhang1991, Zhang1992, Zhang1992b, Zhang1992c, Zhang1992d, Zhang1994, Zhang1996}.

We now give a few brief examples of purely semigroup-theoretical results that have been derived through string rewriting, illustrating its importance to the area. These can all be found in the above listed references. Let $M$ be a finitely presented special monoid. If the centre of $M$ is non-trivial, then $M$ is a group or the infinite cyclic monoid $\mathbb{N}$. If $M$ has only finitely many Green's $\mathscr{R}$- or $\mathscr{L}$-classes, then $M$ is a group. It is undecidable, given a finite special presentation for $M$, whether $M$ is a group. Assume that every generator of $M$ appears in some defining relation of $M$. Then $M$ is cancellative if and only if it is left cancellative, if and only if it is right cancellative, if and only if it is a group. Furthermore, if $M$ is not a group, then $M$ contains a non-trivial idempotent.

Let $A$ be an alphabet. A \textit{string-rewriting system} $R$ on $A$ is a subset of $A^\ast \times A^\ast$. The elements of $R$ are called the \textit{rules} of the system. A string-rewriting system is called \textit{special} if for all rules $(\ell, r) \in R$, we that $\ell$ is non-empty, and $r \equiv \varepsilon$. A string-rewriting on $A$ is thus named because it induces a number of binary relations on $A^\ast$. We define the \textit{single-step reduction} $\xr{R}$ for two words $u, v \in A^\ast$ as follows: 
\[
u \xr{R} v \qquad \iff \qquad \exists x, y \in A^\ast, \: \exists (\ell, r) \in R \: : u \equiv x\ell y \quad \text{and} \quad v \equiv xry.
\]
The symmetric closure of $\xr{R}$ is denoted $\lr{R}$. The \textit{reduction relation} $\xra{R}$ induced by $R$ is the reflexive and transitive closure of the relation $\xr{R}$. Analogously, we let $\lra{R}$ denote the reflexive and transitive closure of $\xr{R}$. Now, $\lra{R}$ is by definition an equivalence relation on $A^\ast$. However, it is also compatible with the concatenation of words in $A^\ast$, and hence is a congruence on $A^\ast$. This congruence is called the \textit{Thue congruence} generated by $R$. 

For the remainder of this Section~\ref{Subsec: String rewriting}, let $M$ denote the finitely presented special monoid defined by the presentation $\pres{Mon}{A}{R_i = 1 \: (i \in I)}$. Then $\{ (w_i, \varepsilon) \}$ forms a special string-rewriting system on $A$. Clearly, $M$ is isomorphic to the quotient of $A^\ast$ by the Thue congruence generated by this rewriting system. We will denote the Thue congruence associated to this string rewriting system by $\lra{M}$. Let $\pi \colon A^\ast \to M$. Then, for a word $u \in A^\ast$, we will identify the congruence class $[u]_{\lra{M}}$ with the element $\pi(u) \in M$. We say that a word $u \in A^\ast$ is \textit{invertible modulo $M$} if $[u]_{\lra{M}}$ is invertible, and analogously for left and right invertibility.

The notation $\lra{M}$ is slightly abusive, as a given special monoid $M$ can of course in general be defined by many different presentations, each of which would give rise to a different Thue congruence, making $\lra{M}$ ill-defined. However, as all the special monoids considered in this paper will be associated with a fixed special presentation, this concern can be waivered, and since the notation aids in readability, it will therefore be opted for throughout the remainder. We note that this abuse is no greater than denoting the canonical homomorphism associated to the above presentation for $M$ by $\pi_M$. 

As an example of the type of result which is not difficult to show using string-rewriting techniques, we have the following lemma, which appears in the literature as \cite[Lemma~1.6]{Otto1984}, and which will prove a useful tool for later sections. 

\begin{lemma}\label{Lem: Special monoid has ancestors}
Assume that $u, v \in A^\ast$ are such that $u \lra{M} v$. Then there exists some word $w \in A^\ast$ such that $w \xra{M} u$ and $w \xra{M} v$. 
\end{lemma}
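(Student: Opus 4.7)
The plan is to induct on the length $n$ of a sequence $u = u_0 \lr{M} u_1 \lr{M} \cdots \lr{M} u_n = v$ witnessing $u \lra{M} v$, establishing the slightly stronger statement that one can choose a single $w \in A^\ast$ with $w \xra{M} u_i$ for every $0 \leq i \leq n$. The base case $n=0$ is immediate with $w := u$. For the inductive step, the hypothesis applied to $u_0, \ldots, u_{n-1}$ yields a common ancestor $w'$. If the last edge is a forward rewrite $u_{n-1} \xr{M} u_n$, then $w' \xra{M} u_{n-1} \xr{M} u_n$ and $w := w'$ suffices.

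The substantive case is when $u_n \xr{M} u_{n-1}$, i.e.\ $u_n \equiv \alpha R_i \beta$ and $u_{n-1} \equiv \alpha \beta$ for some relator word $R_i$. Here I would rely on the following factorisation claim: whenever $z \xra{M} \alpha \beta$ in a special rewriting system, there exist $z_1, z_2 \in A^\ast$ with $z \equiv z_1 z_2$, $z_1 \xra{M} \alpha$, and $z_2 \xra{M} \beta$. Granting this, apply it to $w'$ to obtain a factorisation $w' \equiv w'_1 w'_2$ with $w'_1 \xra{M} \alpha$ and $w'_2 \xra{M} \beta$, and set $w := w'_1 R_i w'_2$. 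Then $w \xr{M} w'_1 w'_2 = w' \xra{M} u_j$ for every $j \leq n-1$, while $w \xra{M} \alpha R_i \beta = u_n$, closing the induction.

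The main obstacle is the factorisation claim, which I would prove by a secondary induction on the length of the reduction $z \xra{M} \alpha \beta$. The base case $z \equiv \alpha \beta$ is trivial. Otherwise, peel off the first step to write $z \equiv x R_j y \xr{M} xy \xra{M} \alpha \beta$, and apply the inductive hypothesis to $xy$ to obtain $xy \equiv z'_1 z'_2$ with $z'_1 \xra{M} \alpha$ and $z'_2 \xra{M} \beta$. Now compare $|x|$ with $|z'_1|$: if $|x| \geq |z'_1|$, write $x \equiv z'_1 x'$ and take $z_1 := z'_1$, $z_2 := x' R_j y$, noting that $z_2 \xr{M} x' y = z'_2 \xra{M} \beta$; otherwise $z'_1 \equiv x y'$ with $y \equiv y' z'_2$, and take $z_1 := x R_j y'$, $z_2 := z'_2$, noting that $z_1 \xr{M} x y' = z'_1 \xra{M} \alpha$. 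In each case exactly one factor absorbs the deleted relator and then rewrites in one extra step to the matching side of $z'_1 z'_2$; the subtlety to track is precisely this straddling-boundary case, and the argument works smoothly because rules are only ever of the form $(R_i, \varepsilon)$, so a deletion cannot split letters belonging to both of the target factors.
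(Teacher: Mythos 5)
Your argument is correct. Note, however, that the paper does not prove this lemma at all: it is stated as a quotation of \cite[Lemma~1.6]{Otto1984}, with only the remark that the proof is constructive. Your proof is precisely the standard constructive argument one expects behind that citation: induct on the length of a derivation $u = u_0 \lr{M} \cdots \lr{M} u_n = v$, and in the backward-step case lift the inserted relator through the reduction $w' \xra{M} \alpha\beta$ via the factorisation claim. The factorisation claim itself is sound, and you correctly identify the one place where speciality of the system is essential: because every rule has right-hand side $\varepsilon$, a deletion occurs at a single position of $xy$, which must fall on one side of the boundary between $z'_1$ and $z'_2$, so the reinserted relator lands entirely in one factor; for a general (non-special) system the replaced factor could straddle the boundary and the lemma is in fact false in general, as the paper notes.
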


That is, any two congruent words have a common ancestor. This is not true in general for string-rewriting systems. Furthermore, this proof is constructive, even if one cannot solve the word problem: given two words $u, v \in A^\ast$ for which one knows that $u \lra{M} v$, it is easy to explicitly construct a $w \in A^\ast$ satisfying the above properties.

\subsection{Identical subwords}

We will now mention briefly that, given a special presentation $\pres{Mon}{A}{R_i = 1 \: (i \in I)}$, we can without loss of generality assume that none of the $R_i$ contains a proper non-empty subword $u$ such that $\pi(u) = 1$. For if $R_i \equiv w u w^\prime$ such that $\pi(u) = 1$, then we may replace the relation $R_i = 1$ by the two relations $ww^\prime = 1$ and $u = 1$, generating the same congruence on $A^\ast$, and one may hence, starting with a presentation for $M$, obtain a presentation defining the same congruence on $A$ (and hence in particular also defining $M$), but which also satisfies the above condition on the defining relations. We remark, however, that this procedure is not effective; for that, we would require a solution to the word problem. 

As an aside, of slightly independent interest, we note that in the one-relator case, every presentation already satisfies this condition by the following chain of reasoning. The following theorem appears as \cite[Theorem~2]{Weinbaum1972}.

\begin{theorem}[Weinbaum]\label{Thm: One-relator, no subword is = 1}
Let $G = \pres{Gp}{A}{w = 1}$ be a one-relator group such that $w$ is a cyclically reduced word involving all the generators. Then no proper subword $w'$ of $w$ satisfies $w' =_G 1$.
\end{theorem}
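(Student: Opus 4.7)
The plan is to argue by contradiction via van Kampen diagrams. Suppose $w'$ is a proper non-empty subword of $w$ with $w' =_G 1$. Since $w$ is cyclically reduced, every subword of $w$ is freely reduced, so $w'$ is non-trivial in the free group $F(A)$. By van Kampen's lemma there exists a reduced van Kampen diagram $D$ over $\langle A \mid w \rangle$ with at least one $2$-cell and boundary label $w'$. Every $2$-cell of $D$ is labelled by some cyclic conjugate of $w^{\pm 1}$, and the hypothesis that all generators occur in $w$ ensures that no degenerate reductions trivialise $D$.

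The heart of the argument is the following diagram lemma, which is the main combinatorial content of Weinbaum's paper: in any reduced van Kampen diagram $D$ over a single cyclically reduced relator $w$ with at least one $2$-cell, there is a face $F$ of $D$ such that $\partial F \cap \partial D$ is a connected arc of length strictly greater than $|w|/2$. Granting this lemma, choose such a face $F$ with boundary label a cyclic conjugate $w_F$ of $w^{\pm 1}$, and let $\alpha$ be the long exterior arc. The label of $\alpha$ is a subword of $w_F$ of length $>|w|/2$, and is simultaneously a subword of $w'$, hence of $w$. A Fine--Wilf type periodicity argument, together with $w$ being cyclically reduced (so that $w$ admits no long self-overlaps shifting it into a distinct cyclic rotation), then forces the cyclic rotation to coincide with $w$ on $\alpha$ and pins the occurrence; this in turn forces $w'$ to contain the entire conjugate $w_F$, contradicting $|w'|<|w|$.

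The main obstacle is the diagram lemma itself. Unlike Greendlinger's lemma, no small-cancellation hypothesis is assumed on $w$, so the standard curvature-style counts do not apply. One must instead analyse the \emph{pieces} of $w$ (maximal common subwords shared by two distinct cyclic conjugates of $w^{\pm 1}$) and run a global combinatorial argument on the dual graph of $D$ ruling out configurations in which every face has only short exterior arcs. An alternative route bypasses this entirely by Magnus's hierarchical induction: after a Nielsen change of generators one arranges that some generator $a \in A$ has exponent sum zero in $w$, rewrites $G$ as an HNN extension of a one-relator group on a strictly shorter relator, and then invokes the Freiheitssatz together with the inductive hypothesis applied to subwords of the shortened relator to derive the required contradiction.
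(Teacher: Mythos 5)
First, a point of comparison: the paper does not prove this statement at all --- it is quoted verbatim as \cite[Theorem~2]{Weinbaum1972}, with only the remark that the (long) proof ultimately rests on Magnus' Freiheitssatz. So your attempt is being measured against Weinbaum's argument, not anything reproduced here.

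Your proposal has a genuine gap, and it is exactly where you locate it: the ``diagram lemma''. As stated --- every reduced diagram over a single cyclically reduced relator has a face whose exterior arc is connected and of length strictly greater than $|w|/2$ --- the lemma is false. If it held, then every freely reduced nontrivial word representing $1$ would contain more than half of a cyclic conjugate of $w^{\pm 1}$, i.e.\ every one-relator presentation would be a Dehn presentation and every one-relator group would be hyperbolic. Concretely, take $w = aba^{-1}b^{-1}$, so that $G \cong \mathbb{Z}^2$: the word $a^2b^2a^{-2}b^{-2}$ is cyclically reduced, trivial in $G$, and contains no subword of length $3 > |w|/2$ of any cyclic conjugate of $w^{\pm 1}$ (all such length-$3$ subwords alternate letters, whereas every length-$3$ subword of $a^2b^2a^{-2}b^{-2}$ does not). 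Hence no reduced diagram for it has a face with a long exterior arc. Since no small-cancellation hypothesis is available, no ``global combinatorial argument on the dual graph'' can rescue the statement; this is not a technical obstacle but a false intermediate claim, and the subsequent Fine--Wilf step has nothing to stand on. Your fallback route via Magnus' hierarchy is the one that actually works (and is essentially what underlies \cite{Weinbaum1972}), but as written it is a single sentence that skips the hard point: after passing to exponent-sum zero and rewriting $G$ as an HNN extension over a shorter relator, a subword $w'$ of $w$ does \emph{not} transform into a subword of the new relator --- one must track how the pieces of $w'$ distribute over the conjugates $a^iba^{-i}$ and argue via the Freiheitssatz about which of the new generators can appear in a trivial word. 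Without that bookkeeping the induction does not close, so neither branch of the proposal constitutes a proof.
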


The proof of this theorem is rather long, but we mention that the fundamental difference in the one-relator case, when compared to the general, which permits a proof of the above theorem is, not entirely unexpectedly, Magnus' \textit{Freiheitssatz} \cite{Magnus1930}.

\begin{corollary}
Let $M = \pres{Mon}{A}{w=1}$ be a special one-relator monoid with $w$ a word involving all the generators. Then no proper subword $w'$ of $w$ satisfies $w' =_M 1$. 
\end{corollary}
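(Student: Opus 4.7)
The plan is to reduce the monoid statement to the corresponding group statement and then invoke Theorem~\ref{Thm: One-relator, no subword is = 1}. First I would introduce the one-relator group $G = \pres{Gp}{A}{w = 1}$ built from the same relator word as $M$. Since the defining relation $w = 1$ of $M$ is also a relation of $G$, the universal property of the monoid presentation yields a well-defined monoid homomorphism $\phi \colon M \to G$ which is the identity on generators. Consequently, for any word $u \in A^\ast$, the equation $u =_M 1$ forces $u =_G 1$.

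Next I would check that Weinbaum's theorem applies to $w$, viewed now as a positive word in the free group on $A$. Since $w \in A^\ast$ contains no letter of $A^{-1}$, no cancellation of the form $a a^{-1}$ or $a^{-1} a$ can occur in $w$, nor in any cyclic permutation of $w$. Hence $w$ is automatically freely reduced and cyclically reduced as a group word, and by hypothesis $w$ also involves every letter of $A$. Thus Theorem~\ref{Thm: One-relator, no subword is = 1} guarantees that no proper subword of $w$ represents the identity of $G$.

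The corollary is then immediate by contraposition: if some proper subword $w'$ of $w$ satisfied $w' =_M 1$, then applying $\phi$ would give $w' =_G 1$, contradicting Weinbaum's theorem. Therefore no proper subword of $w$ represents $1$ in $M$. There is no real obstacle in this argument; the substantive geometric/combinatorial content has already been absorbed into Weinbaum's theorem (and ultimately the Freiheitssatz), and what remains is simply to notice that an identity in the monoid propagates to an identity in the universal group and that positive words are automatically cyclically reduced.
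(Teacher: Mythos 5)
Your proof is correct and follows essentially the same route as the paper: pass to the one-relator group $G = \pres{Gp}{A}{w=1}$ via the canonical homomorphism $M \to G$, observe that a positive word involving all generators is automatically cyclically reduced, and invoke Theorem~\ref{Thm: One-relator, no subword is = 1}. No gaps.
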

\begin{proof}
Let $G = \pres{Gp}{A}{w=1}$, and let $\pi_M, \pi_G$ be the homomorphisms associated to the presentations for $M$ and $G$, respectively. Let $\phi \colon M \to \pres{Gp}{A}{w=1}$ be defined by $\pi_M(a_i) \mapsto \pi_G(a_i)$ for all $a_i \in A$. Then $\phi$ clearly extends to a homomorphism. Let $w'$ be a subword of $w$ such that $\pi_M(w') = 1$. But then $\pi_G(w') = \phi(\pi_M(w')) = \phi(1) = 1$. Hence $w' =_G 1$. Furthermore, $w$ is cyclically reduced as it is a positive word, and all generators $a_i \in A$ appear in $w$. Thus we have a contradiction to Theorem~\ref{Thm: One-relator, no subword is = 1}, and no such subword $w'$ exists. 
\end{proof}

The analogue of the above proposition fails already for two relations, and indeed even for special one-relator monoids defined by presentations with more than one relator; an obvious counterexample is the monoid presented by the presentation $\pres{Mon}{a,b,c}{abc = 1, b = 1}$, isomorphic to the bicyclic monoid. We finally note that the analogue of Theorem~2 holds for $C'(1/6)$-groups too, by \cite[p. 558]{Weinbaum1966}. However, the tempting generalisation of the above corollary to special monoids $M$ with $U(M)$ a $C'(1/6)$-group is false by virtue of the above example, as the group of units of the bicyclic monoid is trivial and hence trivially $C'(1/6)$.

\subsection{Invertible Pieces}\label{Subsec: Invertible pieces}

The invertible pieces of a special presentation are some of the most fundamental objects of study. We first recall some terminology from \cite{Zhang1992}. Throughout the remainder of Section~\ref{Sec: Special Monoids}, we fix $M = \pres{Mon}{A}{R_i = 1 \: (i \in I)}$ be a finitely presented special monoid, which satisfies the assumption from the previous section that no $R_i$ contains a subword congruent to $1$. A non-empty invertible word is called \textit{minimal} if its length does not exceed the longest relator word of the presentation, and none of its non-empty prefixes is invertible. The set of all minimal words forms a biprefix code as a subset of $A^\ast$. As every relator word $R_i$ is invertible, we may uniquely decompose each word $R_i$ into invertible factors $R_{i,j} \in A^\ast$, all of which are minimal words. We shall denote this factorisation as
\[
R_i \equiv R_{i,1} R_{i,2} \cdots R_{i, \ell}.
\]
We will refer to these minimal invertible factors $R_{i,j}$ as the \textit{invertible pieces} of the presentation. We define $\Delta$ as the set of all minimal words that are congruent to some invertible piece. The partition of $\Delta$ induced by the equivalence relation $\lra{M}$ is denoted $\Delta_1 \cup \Delta_2 \cup \cdots \cup \Delta_\kappa$. 

In \cite{Zhang1992}, the set $\Delta$ is the main set of study for the invertible elements of $M$. This set has many nice properties; in particular, using it one may construct a complete rewriting system $S(M)$ for solving the word problem in $M$. However, for the geometric properties which we will investigate in later sections, we will primarily be interested in a smaller set. Let $\Lambda$ be the collection of all minimal invertible pieces $R_{i,j}$. For brevity, will refer to elements of $\Lambda$ as \textit{pieces}. To ensure this definition is clear, $\Lambda \subseteq \Delta$ consists of all words in $\Delta$ which furthermore also appear in the above factorisation of some $R_i$. Note that every element of $\Lambda^\ast$ is invertible. In general, not every element of $\Delta$ is a piece, and so $\Lambda$ consists of fewer elements than $\Delta$. In general $\Lambda$ also may have more than one representative for each $\Delta_i$. That is, two distinct pieces may represent the same element of $M$. Occasionally, for ease of notation, we will denote by $\Lambda_\varepsilon$ the set $\Lambda \cup \{ \varepsilon \}$.

For every $\lambda \in \Lambda$, by definition there exists exactly one $\Delta_i$ such that $\lambda \in \Delta_i$. We accordingly write $\Lambda = \{ \lambda_{1,1}, \dots, \lambda_{i,j}, \dots\}$ to indicate when $\lambda_{i,j} \in \Delta_i$. Let $\fB = \{b_{1,1}, \dots, b_{i,j}, \dots \}$ be a set in bijective correspondence with $\Lambda$ via $b_{i,j} \mapsto \lambda_{i,j}$. The set $\Lambda / \lra{M}$ contains $\kappa$ elements, since $\Delta = \Delta_1 \cup \Delta_2 \cdots \cup \Delta_\kappa$, each one a unique representative for each of the $\Delta_i$, as we by definition of $\Delta_i$ have $\lambda_{i,j} \lra{M} \lambda_{k, \ell}$ if and only if $i=k$. The following proposition is more or less obvious, but we spell out its proof to ensure the distinction between $\Delta$ and $\Lambda$ is facilitated.

\begin{prop}\label{Prop: Lambda generates the invertibles}
$\Lambda$ generates the set of invertible words. More specifically, for every invertible word $u \in A^\ast$ there exists $\lambda \in \Lambda^\ast$ such that $u \lra{M} \lambda$.  
\end{prop}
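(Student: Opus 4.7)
The plan is to prove the proposition in two steps: first reduce to the analogous statement for $\Delta$ (essentially established in Zhang~\cite{Zhang1992}), and then translate from $\Delta$ to $\Lambda$ using the partition structure $\Delta = \Delta_1 \cup \cdots \cup \Delta_\kappa$.

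The forward containment is immediate: every $\lambda \in \Lambda^{\ast}$ is invertible, since each piece $R_{i,j} \in \Lambda$ is invertible by definition and products of invertible elements are invertible. So the content is the reverse direction, for which the plan runs as follows. First, I would appeal to Zhang's $\Delta$-version: for any invertible $u \in A^{\ast}$, there exist $\delta_1, \dots, \delta_k \in \Delta$ with $u \lra{M} \delta_1 \cdots \delta_k$. The underlying argument proceeds via the common-ancestor property (Lemma~\ref{Lem: Special monoid has ancestors}): choosing a right inverse $v$ for $u$, Lemma~\ref{Lem: Special monoid has ancestors} produces a word $w$ with $w \xra{M} uv$ and $w \xra{M} \varepsilon$; the nested structure of relator-insertions witnessing $w \xra{M} \varepsilon$ decomposes $w$ (and hence $uv$) into ``blocks'', each congruent to a single minimal invertible factor, from which the desired $\delta$-decomposition of $u$ can be extracted.

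Next I would upgrade from $\Delta$ to $\Lambda$. Each piece $R_{i,j} \in \Lambda$ is itself a minimal invertible word congruent to itself, so $\Lambda \subseteq \Delta$. Moreover, every $\lra{M}$-class $\Delta_i$ contains at least one piece, since the pieces are precisely the seeds used to define $\Delta$ in the first place. Choose, once and for all, a representative $\lambda^{(i)} \in \Lambda \cap \Delta_i$ for each $i$; then for any $\delta \in \Delta_i$ we have $\delta \lra{M} \lambda^{(i)}$. Applying this to each $\delta_j$ above, and using that $\lra{M}$ is a congruence on $A^{\ast}$ and therefore compatible with concatenation, I would conclude
\[
u \;\lra{M}\; \delta_1 \delta_2 \cdots \delta_k \;\lra{M}\; \lambda^{(i_1)} \lambda^{(i_2)} \cdots \lambda^{(i_k)} \;\in\; \Lambda^{\ast},
\]
as required.

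I do not anticipate any serious obstacle here: as the author remarks, the proposition is ``more or less obvious'', and the only real work is contained in the $\Delta$-version already present in Zhang's analysis of invertibility in special monoids. The step from $\Delta$ to $\Lambda$ is a routine re-indexing that simply records which piece is chosen as the distinguished representative of each congruence class $\Delta_i$.
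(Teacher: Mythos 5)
Your proposal is correct and follows essentially the same route as the paper: invoke Zhang's result that every invertible word factors over $\Delta$, note that each congruence class $\Delta_i$ contains a piece from $\Lambda$, and replace each $\Delta$-factor by a congruent piece using compatibility of $\lra{M}$ with concatenation. The only cosmetic difference is that the paper uses the graphical factorisation $u \in \Delta^\ast$ from \cite[Lemma~3.4]{Zhang1992} rather than a factorisation up to congruence, which changes nothing.
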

\begin{proof}
Let $u \in A^\ast$ be invertible. By Lemma~3.4 of \cite{Zhang1992}, we have $u \in \Delta^\ast$. If we write $u \equiv u_1 \cdots u_k$ for $u_i \in \Delta$, then for every $u_i$ there exists a $\Delta_j$ such that $u_i \in \Delta_j$, by definition. By definition, for every $\Delta_i$ there exists $\lambda_i \in \Lambda$ such that $\lambda_i \in \Delta_i$. In particular, since all elements of $\Delta_j$ represent the same element of $M$, for every $u_i$ there exists $\lambda_i \in \Lambda^\ast$ such that $u_i \lra{M} \lambda_i$. Hence \[
u \equiv u_1 \cdots u_k \lra{M} \lambda_1 \lambda_2 \cdots \lambda_k \in \Lambda^\ast
\]
and since $u \in A^\ast$ was arbitrary, we are done.
\end{proof}

If $u \in A^\ast$, then we say that an invertible subword of $u$ is \textit{maximal} if it is not contained in any larger invertible subword of $u$. As the invertible words of $A^\ast$ form a biprefix code (see e.g. \cite[Corollary~3.3]{Kobayashi2000}) it follows that every invertible subword of any word is contained in a unique maximal invertible subword. 

\begin{lemma}[Normal Form Lemma \cite{Otto1991}]\label{Lem: Zhang's Lemma}
Let $u, v \in A^\ast$ be any two words. Suppose that $u \lra{M} v$. Then $u$ and $v$ can be uniquely factored as
\[
u \equiv u_0 a_1 u_1 \cdots a_n u_n \quad \text{and} \quad v \equiv v_0 a_1 v_1 \cdots a_n v_n
\]
such that both the following conditions hold: 
\begin{enumerate}
\item $a_i \in A$;
\item all $u_i$ (resp. $v_i$) are maximal invertible factors of $u$ (resp. $v$) such that $u_i \lra{M} v_i$ for $0 \leq i \leq n$, and at least one of $u_j$ and $v_j$ is non-empty for $1 \leq j \leq n-1$. 
\end{enumerate}
\end{lemma}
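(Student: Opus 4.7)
The plan is to exploit the common-ancestor lemma (Lemma~\ref{Lem: Special monoid has ancestors}) to track a canonical factorization through directed reductions rather than through the undirected congruence $\lra{M}$. Since $u \lra{M} v$, that lemma gives a $w \in A^\ast$ with $w \xra{M} u$ and $w \xra{M} v$. Because the set of invertible words of $A^\ast$ forms a biprefix code, every $z \in A^\ast$ admits a unique decomposition
\[
z \equiv z_0 c_1 z_1 c_2 \cdots c_m z_m
\]
in which each $z_i$ is a (possibly empty) maximal invertible factor of $z$ and each $c_i \in A$ is a letter of $z$ that lies in no invertible subword of $z$; I will call the sequence $c_1 \cdots c_m$ the \emph{backbone} of $z$. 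Write the canonical decomposition of the common ancestor as $w \equiv w_0 a_1 w_1 \cdots a_n w_n$.

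The heart of the argument is the following claim, which I would prove by induction on the length of the reduction: if $w \xra{M} w'$, then $w'$ admits a decomposition $w' \equiv w_0' a_1 w_1' \cdots a_n w_n'$ having the same backbone $a_1 \cdots a_n$ as $w$, with $w_i' \lra{M} w_i$ and with each $w_i'$ a maximal invertible factor of $w'$. It suffices to verify this for a single step $w \xr{M} w'$, which removes an occurrence of some relator $R_i$. Since $R_i$ is an invertible word and the backbone letters $a_j$ lie in no invertible subword of $w$, the occurrence of $R_i$ cannot straddle any $a_j$, so it is contained in a single factor $w_k$. Writing $w_k \equiv p R_i q$ and setting $w_k' := pq$ gives $w_k' \lra{M} w_k$, so $w_k'$ is empty or invertible, while every other $w_i$ is unchanged.

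The one subtle point, and the main obstacle, is verifying that the backbone letters $a_j$ remain backbone letters of $w'$: a priori some previously non-invertible $a_j$ could be engulfed by a newly-arisen invertible subword of $w'$. To rule this out, suppose $s$ is an invertible subword of $w'$ containing some $a_j$. Substituting $w_k$ back in place of $w_k'$ inside $s$ produces a contiguous subword $\hat{s}$ of $w$ with $\hat{s} \lra{M} s$; since $s$ is invertible so is $\hat{s}$, and $\hat{s}$ still contains $a_j$, contradicting the fact that $a_j$ is a backbone letter of $w$. This is where the biprefix code property and the invertibility of each relator $R_i$ are both used essentially.

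Applying the claim along the two reductions $w \xra{M} u$ and $w \xra{M} v$ produces factorizations $u \equiv u_0 a_1 u_1 \cdots a_n u_n$ and $v \equiv v_0 a_1 v_1 \cdots a_n v_n$ with common backbone $a_1 \cdots a_n$, each $u_i$ (resp.\ $v_i$) a maximal invertible factor of $u$ (resp.\ $v$), and $u_i \lra{M} w_i \lra{M} v_i$. Uniqueness of the aligned factorization is inherited from uniqueness of the canonical decomposition of each of $u$ and $v$ individually, after contracting any interior index $j$ for which $u_j$ and $v_j$ are both empty so as to match the stated non-degeneracy condition on the interior factors.
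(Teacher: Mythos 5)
The paper offers no proof of this lemma: it is imported verbatim from Otto--Zhang \cite{Otto1991} and used as a black box, so there is no in-paper argument to measure yours against. Judged on its own terms, your proof is correct in substance and essentially complete. The reduction to a common ancestor via Lemma~\ref{Lem: Special monoid has ancestors}, the observation that an occurrence of a relator is an invertible subword and therefore cannot straddle a backbone letter, and the lifting argument for the one genuinely delicate step (an invertible subword $s$ of $w'$ containing a backbone letter $a_j$ re-inflates, by re-inserting $R_i$, to an invertible subword $\hat{s} \lra{M} s$ of $w$ containing $a_j$) are exactly the right ingredients. Together with the easy converse --- the surviving non-backbone letters of $w$ still sit inside the invertible words $w_i'$, which cannot extend past the backbone letters --- this identifies the maximal invertible factors of $w'$ as precisely the $w_i'$, and the induction along the two reductions $w \xra{M} u$ and $w \xra{M} v$ then produces aligned factorisations with $u_i \lra{M} w_i \lra{M} v_i$. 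Uniqueness is forced as you say, since any factorisation of a fixed word into single letters and maximal invertible factors must be the canonical backbone decomposition.

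The one sentence that does not hold together is the final remark about ``contracting'' an interior index $j$ with $u_j \equiv v_j \equiv \varepsilon$: no such operation exists, since the $a_i$ are single letters and cannot be merged while keeping the surrounding factors invertible. In fact the canonical decomposition can genuinely have $u_j \equiv v_j \equiv \varepsilon$ at an interior position (take $u \equiv v \equiv a_1 a_2$ with neither letter lying in any invertible subword), so condition (2) as literally stated is not always attainable. This is best read as an imprecision in the paper's paraphrase of the Otto--Zhang normalisation rather than a defect in your argument; the substantive content --- identical backbones and pairwise $\lra{M}$-congruent maximal invertible factors, which is all the paper ever invokes, e.g.\ in Proposition~\ref{Prop: sim_M => sim_i} and Proposition~\ref{Prop: Missing edges} --- is exactly what you establish.
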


As an illustration of the power of this Normal Form Lemma, we have the following proposition, which does not appear in the literature.

\begin{prop}\label{Prop: Invertible contains a piece}
Assume that $u \in A^\ast$ is invertible and non-empty. Then $u$ contains some piece $\lambda \in \Lambda$ as a subword.
\end{prop}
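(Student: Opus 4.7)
My plan is to split the argument according to whether $u$ is $\xr{M}$-reducible. If $u$ contains some relator $R_i$ as a subword, then since $R_i \equiv R_{i,1} R_{i,2} \cdots R_{i,\ell}$ is the factorisation of $R_i$ into pieces, the word $u$ contains $R_{i,1} \in \Lambda$ as a subword and we are done. The substantive case is thus when $u$ is irreducible, and for this I would aim to show the stronger statement that $u \in \Lambda^\ast$, from which the proposition follows immediately since $u$ is non-empty.

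In the irreducible case, I would first apply Proposition~\ref{Prop: Lambda generates the invertibles} to obtain pieces $\lambda_1,\ldots,\lambda_k$ (with $k \geq 1$, forced by non-emptiness of $u$) such that $u \lra{M} Z$ where $Z \equiv \lambda_1\cdots\lambda_k$, and then Lemma~\ref{Lem: Special monoid has ancestors} to produce a common ancestor $w \in A^\ast$ with $w \xra{M} u$ and $w \xra{M} Z$. The key technical input is the following backward-preservation property for a single reduction step: if $v \xr{M} v'$ and $v'$ contains some piece as a subword, then $v$ does too. The proof is a short case analysis on the position of the piece in $v' \equiv xy$ relative to the location of the deleted relator in $v \equiv x R_j y$: either the piece lies entirely within $x$ or entirely within $y$ and is therefore already a subword of $v$, or it straddles the deletion, in which case $v$ contains the whole of $R_j$ and hence the piece $R_{j,1}$ as a subword. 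Iterating this preservation property along $w \xra{M} Z$, starting from the fact that $Z$ contains the piece $\lambda_1$, forces $w$ to contain some piece as a subword.

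What remains, and what I expect to be the main obstacle, is transferring this piece from $w$ down to $u$, because forward propagation along $\xr{M}$ can destroy pieces. The approach I would take is to choose $w$ of minimum length among all common ancestors of $u$ and any $Z' \in \Lambda^\ast$ with $u \lra{M} Z'$, and then exploit the irreducibility of $u$ to force $w \equiv u$: if the reduction $w \xra{M} u$ were non-trivial, its first deletion could be commuted past the reduction $w \xra{M} Z$ using local confluence of non-overlapping single-step deletions in the special rewriting system, producing a strictly shorter common ancestor and contradicting the minimality of $w$. Granting $w \equiv u$ gives $u \xra{M} Z$; a final appeal to irreducibility of $u$ forces this reduction to have zero steps, so $u \equiv Z = \lambda_1 \cdots \lambda_k \in \Lambda^\ast$ and $u$ contains the piece $\lambda_1$ as a subword, completing the proof.
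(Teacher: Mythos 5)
Your reduction to the irreducible case and the backward-preservation observation are fine (the latter is in fact trivial: any $\xr{M}$-reducible word contains a relator $R_j$ and hence its first piece $R_{j,1}$), but the entire weight of the argument then rests on the step forcing $w \equiv u$, and that step does not work. The special rewriting system $\{(R_i,\varepsilon)\}$ is not locally confluent: two occurrences of relator words in $w$ may overlap, and these are precisely the critical pairs that need not resolve (this is why Zhang must construct a separate complete system $S(M)$). So the first deletion of $w \xra{M} u$ cannot in general be commuted past the reduction $w \xra{M} Z$; your hedge ``non-overlapping'' excludes exactly the hard case rather than handling it. Even where all the relevant occurrences are pairwise disjoint, the word $Z'$ obtained from $Z$ by deleting the surviving occurrence of $R_j$ need not lie in $\Lambda^\ast$, since that occurrence need not be aligned with the piece-factorisation of $Z$; so the shorter ancestor you produce need not belong to the set over which you minimised, and no contradiction is obtained.

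More fundamentally, the stronger statement you are aiming for --- that an irreducible invertible word lies graphically in $\Lambda^\ast$ --- is false in general. By \cite[Lemma~3.4]{Zhang1992} invertible words lie in $\Delta^\ast$, and $\Delta$ may properly contain $\Lambda$; a minimal invertible word $\delta \in \Delta \setminus \Lambda$ can never lie in $\Lambda^\ast$, since a factorisation $\delta \equiv \lambda_1 \cdots \lambda_m$ with $m \geq 2$ would exhibit a non-empty invertible proper prefix of $\delta$, contradicting minimality, while $m = 1$ would put $\delta$ in $\Lambda$. The paper's own proof is structured around exactly this obstruction: it takes a derivation $u \lra{M} Z$ with $Z \in \Lambda^\ast$ and splits on whether some step deletes a relator containing a letter of $u$. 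In that second case the deleted relator is assembled partly from letters of $u$ and partly from inserted letters, and the standing assumption that no relator contains a proper subword equal to $1$ forces the letters of $u$ inside it to be consecutive and to form a product of pieces --- which is where the piece inside $u$ actually comes from, and it yields only ``$u$ contains a piece'', not ``$u \in \Lambda^\ast$''. Your argument has no mechanism for this case.
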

\begin{proof}
Let $u \in A^\ast$ be invertible and non-empty. Then as by Proposition~\ref{Prop: Lambda generates the invertibles} $\Lambda$ generates the set of invertible elements, there exists some $w \in \Lambda^\ast$ such that $u \lra{M} w$. In particular there exists a sequence of invertible words $u \equiv u_0, u_1, \dots, u_k \equiv w$ such that
\[
u_0 \lr{M} u_1 \lr{M} \cdots \lr{M} u_{k-1} \lr{M} u_k.
\]
Write $u_0 \equiv a_1 \cdots a_n$ for $a_i \in A$. We split into two (very similar) cases.

\begin{enumerate}
\item None of the applications of $\lr{M}$ involve any of the $a_i$. Then we may write
\[
w \equiv w_1 a_1 w_2 a_2 \cdots w_{n} a_n w_{n+1}, \quad \textnormal{where} \: w_i \lra{M} \varepsilon \: \textnormal{for $1 \leq i \leq {n+1}$}.
\]
Now $w \in \Lambda^\ast$, and so we may factor $w$ uniquely into minimal invertible pieces as $\Lambda^\ast$ is a biprefix code. By our assumption on the presentation, no piece contains a proper subword that is equal to $1$ in $M$. Hence in a factorisation of $w$ into minimal invertible pieces $\lambda_1 \cdots \lambda_m$ we must have that if a piece $\lambda_i$ contains more than one $a_j$, then these are all adjacent. Hence each $\lambda_i$ can be written in the form $s_j a_j a_{j+1} \cdots a_{j+\ell} p_{j+\ell+1}$, where $s_j$ is some suffix of $w_j$ and $p_{j+\ell+1}$ is some prefix of $w_{j+\ell+1}$. But now $s_j$ is invertible, being a prefix of $\lambda_i$ and a suffix of $w_j$. Symmetrically, $p_{j+\ell+1}$ is invertible; consequently $a_j a_{j+1} \cdots a_{j+\ell}$ is invertible. Furthermore, as $\lambda_i$ cannot be written as a product of non-empty invertible words by assumption of minimality, it thus follows that $s_j \equiv p_{j+1} \equiv \varepsilon$, and $\lambda_i \equiv a_j a_{j+1} \cdots a_{j+\ell}$. Decomposing all such $\lambda_i$ we see that each $a_j$ must occur exactly once inside some $\lambda_i$, and all will appear in the same order as in $a_1, \dots, a_n$. Thus we even have $a_1 \cdots a_n \in \Lambda^\ast$, and since $u \equiv a_1 \cdots a_n \in \Lambda^\ast$ we are done. 

\item There exists a smallest $0 \leq N \leq k-1$ such that $u_N \xr{M} u_{N+1}$ by deleting a relator word $R$ containing some $a_i$. Then we may again write 
\[
u_N \equiv w_1 a_1 w_2 a_2 \cdots w_{n} a_n w_{n+1}, \quad \textnormal{such that} \quad w_i \lra{M} \varepsilon \: \textnormal{for $1 \leq i \leq {n+1}$}
\]
where $R$ now appears as a subword of $u_N$. Since $R$ does not contain any $w_i$ as a subword by our assumption on the presentation, it follows that all occurrences of some $a_i$ inside $R$ must all be consecutive, by the same argument as in $(1)$. Hence we can again write $R \equiv s_j a_j a_{j+1} \cdots a_{j+\ell} p_{j+\ell+1}$, where $s_j$ is some suffix of $w_j$ and $p_{j+\ell+1}$ is some prefix of $w_{j+\ell+1}$. Thus we see that $s_j$ and $p_{j+\ell+1}$ are invertible, and hence so too is $a_j a_{j+1} \cdots a_{j+\ell}$. Factorising $R$ into minimal invertible pieces will hence recognise $a_j a_{j+1} \cdots a_{j+\ell}$ as a product of pieces. Thus $a_j a_{j+1} \cdots a_{j+\ell}$ contains a piece, and hence so too does $u \equiv a_1 \cdots a_n$. 
\end{enumerate}
This completes the proof.
\end{proof}

Note that if our assumption that the relator words $R_i$ do not contain subwords equal to $1$ is violated, then Proposition~\ref{Prop: Invertible contains a piece} is false as stated, as the following example shows.

\begin{example}
Let $M = \pres{Mon}{a,b,c}{b = 1, abc = 1}$. Then $\Lambda = \{ b, abc \}$, and $ac = 1$ is an invertible word containing no piece as a subword. Note that we clearly have that $M \cong \pres{Mon}{a,c}{ac = 1}$, the bicyclic monoid.
\end{example}

Now it is a classical result, due to Makanin, that the group of units of a finitely presented special monoid is again finitely presented. This appears in the context of string-rewriting as \cite[Theorem~3.7]{Zhang1992}, where a finite monoid presentation for the group of units is constructed by means of the set $\Delta$. We will now briefly mention that such a presentation can be obtained by means of the set of pieces $\Lambda$, too.

Let $B = \{ b_1, \dots, b_i, \dots \}$ be a set in bijective correspondence with $\Lambda / \lra{M}$, via the bijection induced by $[\lambda_{i,j}]_{\lra{M}} \mapsto b_i$. We extend this map to a homomorphism $\phi : \Lambda^\ast \to B^\ast$. We note that this $\phi$ is the restriction to $\Lambda^\ast \subseteq \Delta^\ast$ of the map $\phi : \Delta^\ast \to B^\ast$ defined in Zhang \cite{Zhang1992} via $\phi(\delta) = b_i$ if $\delta \in \Delta_i$, and to maintain consistency we use the same symbol for both maps, as the two sets $B$ in question are the same. We define a rewriting system with rules $T_0$ defined as 

\[ T_0 := \{ (s, \varepsilon) \mid \text{$s$ is a cyclic permutation of some $\phi(R_i)$}\}.\]

Note that here $R_j$ denotes a relator word in the presentation of $M$; in particular $T_0$ is a rewriting system over $\phi(\Lambda^\ast)$.
Note that every left-hand side of a rule in $T_0$ is the image under $\phi$ of a word over $\Lambda$. Let $\chi : \fB^\ast \to B^\ast$ be the homomorphism induced by the mapping $b_{i,j} \mapsto b_i$. Then $\chi$ has finite fibers, and 
\[
\mathfrak{T}_0 = \{ (w, \varepsilon) \mid w \in \chi^{-1}(s) \text{ where $s$ is a cyclic permutation of some $\phi(R_j)$} \}
\]
is a string-rewriting system with finitely many rules. Let $\ell(\mathfrak{T}_0)$ denote the set of left-hand sides of rules in $\mathfrak{T}_0$. Then, since by \cite[Theorem~3.7]{Zhang1992} we have $U(M) \cong B^\ast / \lra{T_0}$, it follows that
\[
\pres{Mon}{\fB}{\{ b_{i,j} = b_{i,k} \mid 1 \leq i \leq \kappa; \: 1 \leq j, k \leq |\Delta_i| \} \cup \{ \mathfrak{t} = 1 \mid \mathfrak{t} \in \ell(\mathfrak{T}_0) \} }
\]
is a finite monoid presentation for $U(M)$, via the isomorphism induced by the map $\chi : \fB^\ast \to B^\ast$ defined as earlier by $b_{i,j} \mapsto b_i$. We make a few remarks regarding this presentation. First, it is not a special monoid presentation, as the relations $b_{i,j} = b_{i,k}$ indicate. However, any monoid presentation for a group can always be transformed into a special monoid presentation with the same alphabet, and which also defines the same group, so this is not a restriction. Furthermore, many generators of this presentation are redundant, as again witnessed by the relations $b_{i,j} = b_{i,k}$ indicate. However, we will not use e.g. Tietze transformations to simplify the presentation by removing these generators; their presence is crucial for the construction of the Cayley graph of $M$.

\subsection{Right Invertible Elements}

We turn to right invertible elements. These will play a key r\^{o}le in the proof of the main theorem of this article. First, define a set $\Xi$ from the set $\Lambda$ as the set of non-empty prefixes of $\Lambda$, i.e.\ 
\[
\Xi := \{ w \in A^+ \mid \exists y \in A^\ast : xy \in \Lambda \}
\]

Clearly, all elements of $\Xi$ are right invertible. In \cite{Zhang1992}, another set, there denoted as $I$, is used, which is obtained from $\Delta$ exactly as $\Xi$ is obtained from $\Lambda$, and it is proven that this set generates the right invertible words. However, as $\Delta$ is a rather complicated set, it can be difficult to get a handle on this set $I$. Instead, we show the following useful proposition, showing that the prefixes of pieces is enough to generate the right invertible elements.

\begin{prop}\label{Prop: Xi generates the right invertibles}
$\Xi$ generates the set of all right invertible words. That is, for any right invertible word $u \in A^\ast$, there exists $w \in \Xi$ such that $u \lra{M} w$.
\end{prop}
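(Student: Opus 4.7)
I will prove the statement by strong induction on $|u|$, reading the conclusion as $w \in \Xi^\ast$ (since, for instance, the trivial right-invertible $u = \varepsilon$ cannot be congruent to any single non-empty prefix of a piece, so ``$w \in \Xi$'' must be shorthand for membership in the submonoid generated by $\Xi$). The base case $u = \varepsilon$ is then trivial. For $|u|>0$ right invertible, I fix $v \in A^\ast$ with $uv \lra{M} \varepsilon$ and apply Lemma~\ref{Lem: Special monoid has ancestors} to obtain a common ancestor $w \in A^\ast$ with $w \xra{M} uv$ and $w \xra{M} \varepsilon$.

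The relator-deletions witnessing $w \xra{M} \varepsilon$, read in reverse, exhibit $w$ as built from the empty word by successive insertions of relator words, giving $w$ a nested/tree structure in which each letter of $w$ belongs to a unique innermost relator occurrence. Refining by the unique factorisation $R_i \equiv R_{i,1}\cdots R_{i,\ell_i}$ of each relator into pieces (unambiguous because $\Lambda^\ast$ is a biprefix code), each letter of $w$ then lies in a unique piece occurrence. The further reduction $w \xra{M} uv$ deletes a nested collection of relator occurrences compatible with this tree; let $p$ denote the shortest prefix of $w$ whose surviving letters under this reduction are exactly $u$.

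I then argue that the right end of $p$ either coincides with the end of a piece occurrence or sits strictly in the interior of one. In the former case, $p$ is the concatenation of completed piece occurrences together with completed deleted-relator occurrences, so $p \xra{M}$ some word in $\Lambda^\ast \subseteq \Xi^\ast$. In the latter case, $p$ is as before followed by a proper non-empty prefix of a single piece, which by definition lies in $\Xi$, so $p \xra{M}$ some word in $\Xi^\ast$. Either way $u \lra{M} p \xra{M}$ some element of $\Xi^\ast$, which closes the induction.

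The main obstacle is ruling out configurations where the $u$-$v$ boundary in $w$ falls inside a deleted relator occurrence at a position that is neither a piece boundary nor strictly inside a single piece of that relator. The standing assumption that no relator contains a proper non-empty subword congruent to $1$, together with the biprefix property of $\Lambda^\ast$ and the minimality of pieces, should force this boundary into one of the two clean cases above, much as in the case-split in the proof of Proposition~\ref{Prop: Invertible contains a piece}. An alternative route would be to start from Zhang's analogous result that the non-empty prefixes of $\Delta$ generate the right invertibles and then refine each $\Delta$-prefix to a $\Xi^\ast$-word via Proposition~\ref{Prop: Lambda generates the invertibles}; but this faces the extra difficulty that a congruence $\delta \lra{M} \lambda$ with $\delta\in\Delta$, $\lambda\in\Lambda$ need not descend to prefixes, so I expect the common-ancestor approach above to be cleaner.
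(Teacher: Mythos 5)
Your argument is not the paper's. The paper follows exactly the route you considered and rejected: it starts from Zhang's result that the prefixes of elements of $\Delta$ generate the right units, and it overcomes the difficulty you correctly identify (that a congruence $\delta \lra{M} \lambda$ need not descend to prefixes) by an invariance argument rather than by descending the congruence directly. Namely, it shows that the \emph{property} ``every prefix of this word is $\lra{M}$-equivalent to a word of $\Xi^\ast$'' is preserved by a single insertion or deletion of a relator word (the insertion case uses only that every prefix of a relator word is a product of pieces followed by a prefix of a piece, hence lies in $\Xi^\ast$), verifies it for words of $\Lambda^\ast$, and transports it along a derivation from $\lambda \in \Lambda^\ast$ to $\delta \in \Delta$. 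That single idea is what your dismissal of this route overlooks, and it is much lighter than what you attempt instead.

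As written, your common-ancestor argument has a genuine gap at its centre. The two reduction forests you invoke are independent: the piece occurrences are defined from a chosen reduction $w \xra{M} \varepsilon$, while the surviving letters and the prefix $p$ are defined from a \emph{different} reduction $w \xra{M} uv$, and your assertion that the latter ``deletes a nested collection of relator occurrences compatible with this tree'' is unjustified and false in general (the paper's own word $W \equiv aqapaaqaqa$ reduces to $p$ and to $q$ via incompatible decompositions into relator occurrences). Your case analysis then mixes the two forests (``completed piece occurrences together with completed deleted-relator occurrences''), and the step you flag as ``the main obstacle'' is precisely the missing content of the proof. The approach can be repaired, but by keeping the two forests separate rather than by the boundary analysis you sketch: from the $uv$-reduction one gets $p \xra{M} u$, since the maximal deleted intervals meeting $p$ lie wholly inside $p$ (the last letter of $u$ survives) and each reduces to $\varepsilon$; from the $\varepsilon$-reduction one gets $p \xra{M} \sigma^{(k)} \cdots \sigma^{(0)}$ by deleting, innermost first, every relator block wholly contained in $p$, where each $\sigma^{(i)}$ is a prefix of one of the relator occurrences whose block contains the last position of $p$ --- and every prefix of a relator word already lies in $\Xi^\ast$. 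No case split on piece boundaries is needed, and the induction on $|u|$, which your argument never actually invokes, can be dropped.
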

\begin{proof}
Given any $u \in \Delta$, we may find $\lambda \in \Lambda^\ast$ such that $u \xleftrightarrow{\ast}_{T} \lambda$, since $u$ is invertible and $\Lambda$ generates the set of invertible words by Proposition~\ref{Prop: Lambda generates the invertibles}. Hence we may obtain $u$ from $\lambda \in \Lambda^\ast$ by some finite sequence of insertions and deletions of relator words. Now, any proper prefix of $\lambda$ is by definition in $\Xi$. We claim that the property of having any prefix equivalent to a word in $\Xi^\ast$ is preserved under $\lra{M}$, and note that from this it follows that any prefix of $u$ is equivalent to a word in $\Xi^\ast$; and that, since $u \in \Delta$ was arbitrary, any prefix of an element of $\Delta$ is equivalent to a word in $\Xi^\ast$. By \cite[Lemma~4.1]{Zhang1992}, the set of prefixes of elements of $\Delta$ generates the set of all right invertible words, and hence the proposition would follow.

We will thus prove our claim. Let $\xi \in A^\ast$ be a word such that any prefix of $\xi$ is equivalent to a word in $\Xi^\ast$. We write $\xi \equiv a_1 \cdots a_n$ for $a_i \in A$. First, let $\xi^\prime$ be the result of inserting of a relator word $w_\ell$ with $(w_\ell, \varepsilon) \in T$ in $\xi$. Then $\xi^\prime \equiv a_1 \cdots a_{i-1} w_\ell a_{i} \cdots a_n$ for some $i$, with appropriate interpretations of e.g.\ $a_1 \cdots a_{i-1} \equiv \varepsilon$ if $i=1$. Then any prefix $p$ of $\xi^\prime$ satisfies either $p \equiv a_1 \cdots a_j$, or $p \equiv a_1 \cdots a_{i-1} w^\prime$ for some prefix of $w_\ell$, or $p \equiv a_1 \cdots a_{i-1} w_\ell a_i \cdots a_j$. In all three cases, it is clear that $p$ is equivalent to an element of $\Xi^\ast$, as $w^\prime \in \Xi$.

Second, assume that $\xi$ contains a relator word $w_\ell$ as a subword. Then if we write $\xi \equiv a_1 \cdots a_{i-1} w_\ell a_i \cdots a_n$ with $a_i \in A$ as before, let $\xi^\prime \equiv a_1 \cdots a_n$ denote the result of deleting $w_\ell$ from $\xi$. Any prefix $p$ of $\xi^\prime$ now either satisfies $p \equiv a_1 \cdots a_{i-1}$, or $p \equiv a_{1} \cdots a_{k}$ for $k \leq n$. In the first case, $p$ is a prefix of $\xi$, and hence equivalent to an element of $\Xi^\ast$ by assumption. In the second case, $p$ is equivalent to $a_1 \cdots a_{i-1} w_\ell a_i \cdots a_k$, which is a prefix of $\xi$, and hence equivalent to an element of $\Xi^\ast$. This concludes the proof of the claim.
\end{proof}

\begin{example}
Not every right invertible word is graphically an element of $\Xi$. In the bicyclic monoid $\pres{Mon}{b,c}{bc = 1}$, we have that $b(bc)c \xleftrightarrow{\ast}_{T} \varepsilon$ and so $bbcc$ is certainly right invertible, but this word of course cannot be written graphically as a product of elements of $\Xi = \{ b, bc\}$. 
\end{example}

We introduce some notation for convenience. We let $\fP = \Xi \setminus \Lambda \subseteq A^\ast$ be the finite set of \textit{non-empty proper prefixes} of pieces, and set $\fPe = \fP \cup \{ \varepsilon \}$ to be the finite set of \textit{proper prefixes} of pieces. We remark that it is a consequence of the proof of Proposition~\ref{Prop: Invertible contains a piece} that the first letter $a_1$ of an invertible word must be the first letter of some piece; this is clear in case $(1)$, and by the analysis given in case $(2)$, the first removal of a relator word containing $a_1$ must begin in $a_1$, for it could not contain $w_1 \lra{M} \varepsilon$. We record this in the following.

\begin{corollary}\label{Cor: First letter of RI word is prefix}
Let $u \in A^\ast$ be right invertible and non-empty. Then the first letter of $u$ is the first letter of some piece $\lambda \in \Lambda$. 
\end{corollary}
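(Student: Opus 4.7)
The plan is to reduce the right-invertible statement to the already-established invertible case, which the author has extracted in the remark immediately preceding the corollary. Given a non-empty right-invertible word $u \equiv a_1 u'$ with $a_1 \in A$ and $u' \in A^\ast$, I would choose any $v \in A^\ast$ with $uv \lra{M} \varepsilon$; such a $v$ exists by the definition of right-invertibility. Then $w := uv$ is a non-empty invertible word whose first letter, by graphical concatenation, is still $a_1$. Applying the invertible-word form of the statement to $w$ produces a piece $\lambda \in \Lambda$ whose first letter is $a_1$, which is the desired conclusion.

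For completeness I would briefly recall the invertible-word statement, which is the content of the passing remark. Following the proof of Proposition~\ref{Prop: Invertible contains a piece}, in Case $(1)$ the invertible word $w$ itself lies in $\Lambda^\ast$, so its first minimal piece (in the unique biprefix factorisation) begins with $a_1$. In Case $(2)$, one inspects the first reduction step $u_N \xr{M} u_{N+1}$ in a chain connecting $w$ to a word in $\Lambda^\ast$ whose deleted relator word $R$ contains the letter $a_1$. At this stage the word has the form $w_1' a_1 \cdots$ with $w_1' \lra{M} \varepsilon$; since by our standing assumption no relator word contains a proper non-empty subword equal to $1$ in $M$, the relator $R$ cannot begin strictly to the left of $a_1$, so it begins at $a_1$. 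Factorising $R$ into minimal invertible pieces therefore produces a piece whose first letter is $a_1$.

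The main obstacle, such as it is, is really only the careful bookkeeping in the preceding proof; once the invertible-word version is granted, the passage to right-invertibles is formal, with the key (trivial) observation being that appending $v$ on the right does not disturb the first letter of $u$. No additional structural work — in particular no appeal to Proposition~\ref{Prop: Xi generates the right invertibles} — is needed, which would otherwise be tempting but more cumbersome, since rewriting $u$ to a word in $\Xi^\ast$ can in principle alter the leading letter.
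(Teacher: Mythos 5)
Your proposal is correct and follows essentially the same route as the paper: append a right inverse $v$ to obtain an invertible word $uv$ with the same first letter, then invoke the observation (extracted from the proof of Proposition~\ref{Prop: Invertible contains a piece}) that the first letter of a non-empty invertible word is the first letter of a piece. Your recap of why that observation holds in cases $(1)$ and $(2)$ also matches the paper's remark preceding the corollary.
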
 
\begin{proof}
Since $u$ is right invertible, there exists $v \in A^\ast$ such that $uv \lra{M} \varepsilon$ is invertible. But the first letter of $uv$ is the first letter of $u$, since $u$ is non-empty, and the claim follows from the above remark.
\end{proof}

The following definition will be central to the remainder of the article.

\begin{definition}
We denote by $\fR_1 := \fR_1(A)$ and call \textit{the Sch\"{u}tzenberger graph of $1$} the subgraph of $\MCG{M}{A}$ induced on $R(M)$, the set of all right invertible elements of $M$.
\end{definition}

It is clear that $\fR_1$ is the connected component of the identity in $\MCG{M}{A}$. The theory of more general Sch\"{u}tzenberger graphs is studied in great detail in work by Stephen \cite{Stephen1987}, wherein it coincides with the so-called \textit{basic graph} of $1$. For the purposes of this article, however, the above definition is all that will be necessary. As we will generally assume that we have fixed a given finite generating set $A$ for $M$, we will often suppress the $A$ in $\fR_1(A)$ and simply denote it as $\fR_1$. 

We now have sufficiently much background information on special monoids to study the structural properties of their Cayley graphs. One graph in particular, which we denote by $\fU$ and is closely associated to the group of units, will be central to the main results of this paper. We now introduce this graph. 

\section{The Sch\"{u}tzenberger Graph of the Units}\label{Sec: Constructing fU}

Throughout the entirety of this section, we will fix a finitely presented special monoid $M = \pres{Mon}{A}{R_i = 1 \: (i \in I)}$, satisfying only the condition that no $R_i$ contain a subword equal to $1$, and inherit all other notation from the previous sections. Let $U_\fB(M)$ denote the monoid defined by the monoid presentation 
\[
\pres{Mon}{\fB}{\{ b_{i,j} = b_{i,k} \mid 1 \leq i \leq \kappa; \: 1 \leq j, k \leq |\Delta_i| \} \cup \{ \mathfrak{t} = 1 \mid \mathfrak{t} \in \ell(\mathfrak{T}_0) \} }
\]
from Section~\ref{Subsec: Invertible pieces}. Then $U_\fB(M) \cong U(M)$. Let $\MCG{U_\fB(M)}{\fB}$ denote the right monoid Cayley graph of $U(M)$ with respect to the generating set $\fB$, and let $\GCG{U_\fB(M)}{\fB}$ denote the group Cayley graph of $U(M)$ with respect to the generating set $\fB$. These two graphs are very closely related to each other. Indeed, we have the following general and essentially obvious claim.

\begin{prop}\label{Prop: lud MCG is iso to GCG}
Let $G$ be a group generated as a monoid by the set $A$. As labelled graphs, $\textnormal{lud}(\MCG{G}{A}) \cong \GCG{G}{A}$. In particular, $\MCG{G}{A}$ is context-free if and only if $\GCG{G}{A}$ is.
\end{prop}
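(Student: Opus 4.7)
My plan is to exhibit an explicit label-preserving graph isomorphism $\Phi \colon \textnormal{lud}(\MCG{G}{A}) \to \GCG{G}{A}$. I would take $\Phi$ to be the identity on vertices (both vertex sets are $G$) and, on labels, set $\Phi(a) = a$ for $a \in A$ and $\Phi(\overline{a}) = a^{-1}$ for $\overline{a} \in \overline{A}$, using the canonical identification between the formal inverse alphabet $\overline{A}$ (arising from the $\textnormal{lud}$ construction) and the formal inverse alphabet $A^{-1}$ (arising from the definition of the group Cayley graph). The single group-theoretic fact driving the entire argument is that, because $G$ is a group, $u \cdot \pi(a) = v$ in $G$ if and only if $v \cdot \pi(a)^{-1} = u$, i.e.\ $v \cdot \pi(a^{-1}) = u$.

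The first step is to enumerate both edge sets explicitly. By definition $\MCG{G}{A}$ contains the edge $(u, a, v)$ precisely when $u \pi(a) = v$; hence $\textnormal{lud}(\MCG{G}{A})$ contains both $(u, a, v)$ and $(v, \overline{a}, u)$ for each such triple, and no others. On the other hand, the group Cayley graph $\GCG{G}{A}$ contains, by the conventions set out earlier in the text, an edge $(u, a, v)$ whenever $u \pi(a) = v$ together with its formal inverse $(v, a^{-1}, u)$, and again no others. Under $\Phi$ the two edge sets correspond term-by-term, which is precisely where the group-theoretic identity above is invoked; this shows $\Phi$ is a bijection on edges that preserves labels, so it is an isomorphism of labelled graphs.

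For the `in particular' clause, I would appeal to the remark made earlier in the excerpt that the context-freeness of a labelled graph is preserved under the $\textnormal{lud}$ construction, because context-freeness is phrased in terms of the underlying undirected graph, bounded degree, and the end-isomorphism relation $\sim$, all of which transfer verbatim under $\textnormal{lud}$. Combining this with the labelled isomorphism above yields the chain of equivalences
\[
\MCG{G}{A} \text{ context-free} \iff \textnormal{lud}(\MCG{G}{A}) \text{ context-free} \iff \GCG{G}{A} \text{ context-free,}
\]
proving the second assertion. I do not anticipate a genuine obstacle; the only point that requires a moment of care is the bookkeeping identification $\overline{A} \leftrightarrow A^{-1}$ between the two formal inverse alphabets, which is what dictates the choice of $\Phi$ on labels and makes the whole argument essentially a definition-unpacking.
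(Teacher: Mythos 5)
Your argument is correct and is exactly the definition-unpacking the paper has in mind: the paper states this proposition as ``essentially obvious'' and gives no proof at all, so your explicit isomorphism (identity on vertices, $\overline{a} \leftrightarrow a^{-1}$ on labels, driven by $u\pi(a)=v \iff v\pi(a)^{-1}=u$) together with the paper's own remark that $\textnormal{lud}$ preserves context-freeness supplies precisely the missing details. No gap.
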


The goal of this section is to define a new graph $\fU$ from the monoid Cayley graph $\MCG{U_\fB(M)}{\fB}$, and we will do so in a way to ensure the existence of a properly discontinuous and co-compact action of $U(M)$ on $\fU$. By a straightforward application of the Svar\u{c}-Milnor lemma, we will thus be able to conclude that the graph $\fU$ is quasi-isometric to the Cayley graph of $U(M)$ (as undirected graphs). Along the way, we will also capture many of the algebraic properties of $U(M)$ in the algebraic properties of $\fU$. This graph $\fU$ will be called the \textit{Sch\"{u}tzenberger graph of the units of} $M$.

Starting with $\MCG{U_\fB(M)}{\fB}$, we first replace every edge labelled $b_{i,j}$ by a directed path of length $|\lambda_{i,j}|$ with path label $\lambda_{i,j}$, as indicated below.
\vspace{0.5cm}

\begin{center}
\begin{tikzpicture}[>=stealth',thick,scale=0.8,el/.style = {inner sep=2pt, align=left, sloped}]%
                        \node (l0)[label=below:$u$][circle, draw, fill=black!50,
                        inner sep=0pt, minimum width=8pt] at (0,0) {};
                        \node (l1)[label=below:$v$][circle, draw, fill=black!50,
                        inner sep=0pt, minimum width=8pt] at (2,0) {};
\path[->] 
    (l0)  edge node[el,below]{$b_{i,j}$}         (l1);

    					\node (dots) at (1, -2) {$\cdots$};
    					\node (lab) at (1, -3) {$(\lambda_{i,j} \equiv a_1 a_2 \cdots a_n)$};
                        \node (r0)[label=below:$u$][circle, draw, fill=black!50,
                        inner sep=0pt, minimum width=8pt] at (-3,-2) {};
                        \node (r1)[circle, draw, fill=black!50,
                        inner sep=0pt, minimum width=4pt] at (-1,-2) {};
                        \node (r2)[circle, draw, fill=black!50,
                        inner sep=0pt, minimum width=4pt] at (3,-2) {};
                        \node (r3)[label=below:$v$][circle, draw, fill=black!50,
                        inner sep=0pt, minimum width=8pt] at (5,-2) {};
\path[->] 
    (r0)  edge node[el,below]{$a_1$}         (r1)
    (r1)  edge node[el,below]{$a_2$}         (dots)
    (dots)  edge node[el,below]{$a_{n-1}$}         (r2)
    (r2)  edge node[el,below]{$a_n$}         (r3);

\end{tikzpicture}
\end{center}

We denote the resulting graph $\fU_0$, and note that its labelling alphabet is now $A$. Some subset of the vertices of $\fU_0$ will have been present already in $\MCG{U_\fB(M)}{\fB}$ -- we call this subset the \textit{locally invertible} vertices. In the above, the two enlarged vertices are locally invertible vertices. Every vertex of $\fU_0$ is locally invertible if and only if $M$ is a group. For every vertex $v$ of $\fU_0$, there exists a unique locally invertible vertex $\widetilde{v}$ such that $v$ is reachable from $\widetilde{v}$ by a path with path label in $\fPe$. This follows directly from the fact that no non-empty prefix of an element of $\Lambda$ appears as a proper prefix of another element of $\Lambda$. We then write $i(v) := \widetilde{v}$, letting $i : V(\fU_0) \to V(\fU_0)$ denote the function which, on input $v \in V(\fU_0)$, returns the \textit{locally invertible vertex associated to $v$}. Note that the prescribed path from $i(v)$ to $v$ is unique. 

Thus the set of vertices of $\fU_0$ is in bijective correspondence with the set of all triples \[(m, \xi, \lambda) \in U(M) \times \fPe \times \Lambda_\varepsilon\] with the property that $\xi$ is a proper prefix of $\lambda$. The bijection, written out explicitly, sends a vertex $v$ of $\fU_0$ to the triple $(i_m(v), \xi, \lambda)$, where $i_m(v)$ is the vertex of $\MCG{U_\fB(M)}{\fB}$ corresponding to the locally invertible vertex $i(v)$; and $\xi$ is the path label of the path from $i(v)$ to $v$; and $\lambda$ is either $\varepsilon$, if $i(v) = v$, or else is the unique piece which, when subdivided, gave rise to the path of which the path from $i(v)$ to $v$ is an initial segment. We remark that the information contained in $\xi$ and $\lambda$ cannot be condensed into just providing $\xi$, as $\xi$ may be a prefix of many different pieces.

The locally invertible vertices are precisely those of the form $(m, \varepsilon, \varepsilon)$. There is a natural partition induced by an equivalence relation $\sim_i$ on $V(\fU_0)$, which is now identified with the set of all above triples, where 
\[
(m_1, \xi, \lambda_1) \sim_i (m_2, \zeta, \lambda_2) \iff m_1 = m_2.
\]
We extend the resulting equivalence relation to the edges of $\fU_0$ in the obvious way, and denote the extended relation on $\fU_0$ by $\sim_i$. Note that all equivalence classes under $\sim_i$ are of equal and finite cardinality, and carry the structure of the same connected directed finite labelled graph. Indeed, this graph isomorphic to the subgraph of $\fU_0$ induced on the set $\{ m \} \times \fPe \times \Lambda_\varepsilon$ for any $m \in U(M)$, which is evidently finite. Furthermore, $\MCG{U_\fB(M)}{\fB}$ can be naturally identified with $\fU_0 / \sim_i$ with any loops removed.
 
\begin{example}\label{Example: abc=1, ac=1}
Let $M = \pres{Mon}{a,b,c}{abc = 1, ac = 1}$. Then we have that $\Lambda = \{ abc, ac \}, \fB = \{ b_{1,1}, b_{2,1} \}$, and $U(M) \cong 1$. The Cayley graph $\MCG{U_\fB(M)}{\fB}$ is hence just a single vertex with two loops, one labelled $b_{1,1}$, and the other labelled $b_{2,1}$. Now $\fU_0$ has $5$ vertices, with one locally invertible vertex. See Figure~\ref{Fig: sim_T is needed}. The graph $\fU_0 / \sim_i$ is a single vertex, and no edges.

\begin{figure}[h]
\begin{tikzpicture}[>=stealth',thick,scale=0.8,el/.style = {inner sep=2pt, align=left, sloped}]%
						\node (q0) [circle, draw, fill=black!50,
                        inner sep=0pt, minimum width=8pt] at (0,0) {};

                        \node (r0)[label=right:{$(1,\varepsilon, \varepsilon)$}][circle, draw, fill=black!50,
                        inner sep=0pt, minimum width=8pt] at (6,0) {};
                        \node (r1)[label=above:{$(1,a,abc)$}] [circle, draw, fill=black!50,
                        inner sep=0pt, minimum width=4pt] at (4,2) {};
						\node (r2)[label=above:{$(1,ab,abc)$}][circle, draw, fill=black!50,
                        inner sep=0pt, minimum width=4pt] at (8,2) {};                 
                        \node (r3)[label=below:{$(1,a,ac)$}] [circle, draw, fill=black!50,
                        inner sep=0pt, minimum width=4pt] at (6,-2) {};


\path[->] 
    (r0)  edge node[below]{$a$}         (r1)
    (r1)  edge node[above]{$b$}         (r2)
    (r2)  edge node[below]{$c$}         (r0);

\draw[->] (r0) to [bend right] node [left] {$a$} (r3);
\draw[->] (r3) to [bend right] node [right] {$c$} (r0);
\path[->] (q0) edge [out=220,in=320,looseness=20] node[below] {$b_{2,1}$} (q0);
\path[->] (q0) edge [out=140,in=40,looseness=20] node[above] {$b_{1,1}$} (q0);

\end{tikzpicture}
\caption{Left: The Cayley graph $\MCG{U_\fB(M)}{\fB}$ of Example~\ref{Example: abc=1, ac=1}. Right: $\fU_0$ of the same example.}
\label{Fig: sim_T is needed}
\end{figure}
\end{example}  
  
We will now define a second equivalence relation $\sim_M$ on $\fU_0$, and show several important properties of it. This equivalence relation will capture graphically the congruence $\lra{M}$ on $A^\ast$. We define $\sim_M$ on the vertices of $\fU_0$ by
\[
(m_1, \xi, \xi^\prime) \sim_M (m_2, \zeta, \zeta^\prime) \iff m_1 \cdot \pi(\xi) = m_2 \cdot \pi(\zeta)
\] 
In Figure~\ref{Fig: sim_T is needed}, we have that the vertices labelled by $(1,a,abc)$ and $(1,a,ac)$ satisfy $(1,a,abc) \sim_M (1,a,ac)$. Now, $\sim_M$-equivalence is highly controlled, and takes place entirely within each $\sim_i$-equivalence class, as the below proposition will show. 

In order to apply rewriting techniques, we will introduce a piece of notation for the vertices of $\fU_0$. If $u = (m_1, \xi, \lambda) \in V(\fU_0)$, then let $u_0 \in \Lambda^\ast$ be any word over the pieces such that $\pi(u_0) = m_1$. We then say that a \textit{literal form} of $u$ is the triple $(u_0, \xi, \lambda)$. Note that any literal form of a vertex is an element of $A^\ast \times A^\ast \times A^\ast$, whereas the vertices of $\fU_0$ are elements of $M \times A^\ast \times A^\ast$. If $u, v \in V(\fU_0)$ have literal forms $(u_0, \xi, \xi^\prime)$ and $(v_0, \zeta, \zeta^\prime)$, respectively, then we have that 
\[
u \sim_M v \quad \iff \quad u_0 \xi \lra{M} v_0 \zeta.
\]
Since the right-hand side is independent of choice of literal form for $u$ and $v$, we are free to fix an arbitrary literal form for either and speak of \textit{the literal form} of a vertex in $\fU_0$, where the definite article is always only with respect to $\sim_M$. Hence, once vertices are given to us in a literal form, we will almost exclusively use the above biconditional to verify that $u \sim_M v$. The following proposition shows that to consider $\sim_M$-equivalence of two triples is to consider equality in $U(M)$ and equality of the second element of the triples in $M$. In particular, the third element of the triple becomes entirely redundant in the context of $\sim_M$. 

\begin{prop}\label{Prop: sim_M => sim_i}
Let $u, v \in V(\fU_0)$ with $u = (m_1, \xi, \lambda_1)$ and $v =(m_2, \zeta, \lambda_2)$. Then 
\[
u \sim_M v \iff m_1 = m_2 \quad \textnormal{and} \quad \xi \lra{M} \zeta
\]
In particular, $(u \sim_M v) \implies (u \sim_i v)$. 
\end{prop}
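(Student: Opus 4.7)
The plan is to handle the reverse implication first, which is immediate: if $m_1 = m_2$ in $U(M)$ and $\xi \lra{M} \zeta$, then $m_1 \pi(\xi) = m_2 \pi(\zeta)$ in $M$, which is the defining condition of $u \sim_M v$. So all of the work lies in the forward direction, and the ``in particular'' clause is then free, since $u \sim_i v$ is by definition $m_1 = m_2$.

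For the forward direction, I would first lift the condition $u \sim_M v$ from an equality in $M$ to a Thue-congruence in $A^\ast$: pick literal forms $u_0, v_0 \in \Lambda^\ast$ with $\pi(u_0) = m_1$ and $\pi(v_0) = m_2$. Then $u \sim_M v$ is equivalent to $u_0 \xi \lra{M} v_0 \zeta$. The central tool I will apply is the Normal Form Lemma (Lemma~\ref{Lem: Zhang's Lemma}), which provides, for each side, a unique factorisation into maximal invertible factors separated by single letters, and forces the separating letters to coincide on both sides with pairwise congruent invertible factors.

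The key step, and the main (though not too deep) obstacle, is to identify the first maximal invertible factor on each side exactly as $u_0$ and $v_0$ respectively. I would argue as follows: since $u_0 \in \Lambda^\ast$ is invertible, the maximal invertible prefix of $u_0 \xi$ is at least $u_0$; if it extended to $u_0 \eta$ with $\eta$ a non-empty prefix of $\xi$, then invertibility of $u_0 \eta$ together with invertibility of $u_0$ would force $\eta$ to be invertible (multiply by $\pi(u_0)^{-1}$ in $U(M)$). But $\xi$ is a proper prefix of the piece $\lambda_1$, so $\eta$ would be a non-empty proper prefix of $\lambda_1$, contradicting the minimality of $\lambda_1$ built into the definition of $\Lambda$. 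The symmetric argument works for $v_0 \zeta$.

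Once the initial factors are pinned down, the Normal Form Lemma immediately gives $u_0 \lra{M} v_0$ (hence $m_1 = m_2$ in $U(M)$) and matching decompositions $\xi \equiv a_1 U_1 \cdots a_n U_n$, $\zeta \equiv a_1 V_1 \cdots a_n V_n$ with $U_i \lra{M} V_i$, so $\xi \lra{M} \zeta$. The edge case $\xi \equiv \varepsilon$ is handled in the same framework: then $u_0 \xi = u_0$ is invertible and the normal form has $n = 0$, so $v_0 \zeta$ must also be invertible; applying the maximal-prefix argument to $v_0 \zeta$ then forces $\zeta \equiv \varepsilon$, and again $u_0 \lra{M} v_0$ yields $m_1 = m_2$.
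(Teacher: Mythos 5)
Your proposal is correct and follows essentially the same route as the paper: both directions are handled identically, and the forward implication rests on the Normal Form Lemma after pinning down $u_0$ and $v_0$ as the leftmost maximal invertible factors, using the fact that no non-empty proper prefix of a piece is invertible (you phrase this via multiplying by $\pi(u_0)^{-1}$, the paper via left-invertibility of a prefix of $\xi$, but it is the same overlap argument). The only cosmetic difference is that the paper runs the forward direction as a contrapositive-plus-contradiction while you argue directly, which if anything reads more cleanly.
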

\begin{proof}
Let $u, v$ have literal forms $(u_0, \xi, \lambda_1)$ and $(v_0, \zeta, \lambda_2)$, respectively. We will 

$(\impliedby)$ If $m_1 = m_2$, then $u_0 \lra{M} v_0$ and consequently if also $\xi \lra{M} \zeta$, then we must have $u_0 \xi \lra{M} v_0 \zeta$. Hence $u \sim_M v$. 

$(\implies)$ We make use of the Normal Form Lemma, Lemma~\ref{Lem: Zhang's Lemma}. Assume for the contrapositive that $u_0 \not\lra{M} v_0$ or $\xi \not\lra{M} \zeta$, and assume for contradiction that $u_0 \xi \lra{M} v_0 \zeta$. Now no maximal invertible factor of $u_0 \xi$ can overlap with $\xi$, as otherwise some prefix of $\xi$ would be left invertible and hence also invertible, contradicting the maximality of elements of $\Lambda$. Hence any decomposition of $u_0 \xi$ into maximal invertible factors must contain $u_0$ as a maximal invertible factor. Analogously, any decomposition of $v_0 \zeta$ must contain $v_0$ as a factor. By Lemma~\ref{Lem: Zhang's Lemma}, we may factor $u_0 \xi$ and $v_0 \zeta$ as
\begin{align*}
u_0 \xi &\equiv u_1 w_1 \cdots \\
v_0 \zeta &\equiv v_1 w_1 \cdots
\end{align*}
where $u_1$ and $v_1$ are maximal invertible factors of $u_0 \xi$ and $v_0 \zeta$, respectively, with the property that $u_1 \lra{M} v_1$. But since $u_0$ is, by the above, the leftmost maximal invertible factor of $u_0 \xi$ we must have $u_1 \equiv u_0$ and $v_1 \equiv v_0$. This contradicts the assumption that $u_0 \not\lra{M} v_0$ or $\xi \not\lra{M} \zeta$. Consequently we cannot have that $u_0 \xi \lra{M} v_0 \zeta$, and so $u \not\sim_{T} v$, which is what was to be shown.
\end{proof}

We extend $\sim_M$ to the edges of $\fU_0$ in the obvious way, by identifying two edges if and only if their labels are graphically equal, their origins are identified under $\sim_M$, and their terminuses are identified under $\sim_M$. We set $\fU' := \fU_0 / \sim_M$. The vertices of $\fU'$ are thus equivalence classes under $\sim_M$ of pairs $(m, \xi)$ for $m \in U(M)$ and $\xi \in \fPe$, where $(m_1, \xi) \sim_M (m_2, \zeta)$ if and only if $m_1 \pi(\xi) = m_2 \pi(\zeta)$, which by Proposition~\ref{Prop: sim_M => sim_i} is equivalent to $m_1 = m_2$ and $\pi(\xi) = \pi(\zeta)$. We will denote vertices of $\fU_0$ as $[(m, \xi)]$ rather than $[(m, \xi)]_{M}$ for notational brevity. 

Again, as in the case of $\fU_0$, to enable the use of string-rewriting techniques, we will define the \textit{literal form} of a vertex $[(m, \xi)]$ as $[(u_0 ; \xi)]$, where $u_0 \in \Lambda^\ast$ is such that $\pi(u_0) = m$. The use of the definite article is justified, as it was for $\fU_0$, in that we are only interested in vertices up to $\sim_T$-equivalence. The use of a semicolon in writing the pair $[(u_0 ; \xi)]$ is to emphasise that it is distinct from $[(m, \xi)]$, but has no further significance; note that the former is the equivalence class of an element of $A^\ast \times A^\ast$, whereas the latter is the equivalence class of an element of $M \times A^\ast$.

\begin{figure}
\begin{tikzpicture}[x=0.75pt,y=0.75pt,yscale=-2,xscale=2]

\draw    (51,56.06) -- (91,56.06) ;
\draw [shift={(71,56.06)}, rotate = 180] [color={rgb, 255:red, 0; green, 0; blue, 0 }  ][line width=0.75]    (4.37,-1.32) .. controls (2.78,-0.56) and (1.32,-0.12) .. (0,0) .. controls (1.32,0.12) and (2.78,0.56) .. (4.37,1.32)   ;
\draw    (51,96.06) -- (91,96.06) ;
\draw [shift={(71,96.06)}, rotate = 180] [color={rgb, 255:red, 0; green, 0; blue, 0 }  ][line width=0.75]    (4.37,-1.32) .. controls (2.78,-0.56) and (1.32,-0.12) .. (0,0) .. controls (1.32,0.12) and (2.78,0.56) .. (4.37,1.32)   ;
\draw    (91,96.06) -- (51,74) ;
\draw [shift={(71,85.03)}, rotate = 388.88] [color={rgb, 255:red, 0; green, 0; blue, 0 }  ][line width=0.75]    (4.37,-1.32) .. controls (2.78,-0.56) and (1.32,-0.12) .. (0,0) .. controls (1.32,0.12) and (2.78,0.56) .. (4.37,1.32)   ;
\draw    (91,56.06) -- (131,56.06) ;
\draw [shift={(111,56.06)}, rotate = 180] [color={rgb, 255:red, 0; green, 0; blue, 0 }  ][line width=0.75]    (4.37,-1.32) .. controls (2.78,-0.56) and (1.32,-0.12) .. (0,0) .. controls (1.32,0.12) and (2.78,0.56) .. (4.37,1.32)   ;
\draw    (131,56.06) -- (171,56.06) ;
\draw [shift={(151,56.06)}, rotate = 180] [color={rgb, 255:red, 0; green, 0; blue, 0 }  ][line width=0.75]    (4.37,-1.32) .. controls (2.78,-0.56) and (1.32,-0.12) .. (0,0) .. controls (1.32,0.12) and (2.78,0.56) .. (4.37,1.32)   ;
\draw    (91,96.06) -- (131,96.06) ;
\draw [shift={(111,96.06)}, rotate = 180] [color={rgb, 255:red, 0; green, 0; blue, 0 }  ][line width=0.75]    (4.37,-1.32) .. controls (2.78,-0.56) and (1.32,-0.12) .. (0,0) .. controls (1.32,0.12) and (2.78,0.56) .. (4.37,1.32)   ;
\draw    (131,96.06) -- (171,96.06) ;
\draw [shift={(151,96.06)}, rotate = 180] [color={rgb, 255:red, 0; green, 0; blue, 0 }  ][line width=0.75]    (4.37,-1.32) .. controls (2.78,-0.56) and (1.32,-0.12) .. (0,0) .. controls (1.32,0.12) and (2.78,0.56) .. (4.37,1.32)   ;
\draw    (91,56.06) -- (91,96.06) ;
\draw [shift={(91,76.06)}, rotate = 270] [color={rgb, 255:red, 0; green, 0; blue, 0 }  ][line width=0.75]    (4.37,-1.32) .. controls (2.78,-0.56) and (1.32,-0.12) .. (0,0) .. controls (1.32,0.12) and (2.78,0.56) .. (4.37,1.32)   ;
\draw    (171,96.06) -- (91,56.06) ;
\draw [shift={(131,76.06)}, rotate = 386.57] [color={rgb, 255:red, 0; green, 0; blue, 0 }  ][line width=0.75]    (4.37,-1.32) .. controls (2.78,-0.56) and (1.32,-0.12) .. (0,0) .. controls (1.32,0.12) and (2.78,0.56) .. (4.37,1.32)   ;
\draw  [fill={rgb, 255:red, 155; green, 155; blue, 155 }  ,fill opacity=1 ][line width=0.75]  (88.94,56.06) .. controls (88.94,54.92) and (89.86,54) .. (91,54) .. controls (92.14,54) and (93.06,54.92) .. (93.06,56.06) .. controls (93.06,57.2) and (92.14,58.12) .. (91,58.12) .. controls (89.86,58.12) and (88.94,57.2) .. (88.94,56.06) -- cycle ;
\draw  [fill={rgb, 255:red, 155; green, 155; blue, 155 }  ,fill opacity=1 ][line width=0.75]  (88.94,96.06) .. controls (88.94,94.92) and (89.86,94) .. (91,94) .. controls (92.14,94) and (93.06,94.92) .. (93.06,96.06) .. controls (93.06,97.2) and (92.14,98.12) .. (91,98.12) .. controls (89.86,98.12) and (88.94,97.2) .. (88.94,96.06) -- cycle ;
\draw    (171,56.06) -- (211,56.06) ;
\draw [shift={(191,56.06)}, rotate = 180] [color={rgb, 255:red, 0; green, 0; blue, 0 }  ][line width=0.75]    (4.37,-1.32) .. controls (2.78,-0.56) and (1.32,-0.12) .. (0,0) .. controls (1.32,0.12) and (2.78,0.56) .. (4.37,1.32)   ;
\draw    (211,56.06) -- (251,56.06) ;
\draw [shift={(231,56.06)}, rotate = 180] [color={rgb, 255:red, 0; green, 0; blue, 0 }  ][line width=0.75]    (4.37,-1.32) .. controls (2.78,-0.56) and (1.32,-0.12) .. (0,0) .. controls (1.32,0.12) and (2.78,0.56) .. (4.37,1.32)   ;
\draw    (171,96.06) -- (211,96.06) ;
\draw [shift={(191,96.06)}, rotate = 180] [color={rgb, 255:red, 0; green, 0; blue, 0 }  ][line width=0.75]    (4.37,-1.32) .. controls (2.78,-0.56) and (1.32,-0.12) .. (0,0) .. controls (1.32,0.12) and (2.78,0.56) .. (4.37,1.32)   ;
\draw    (211,96.06) -- (251,96.06) ;
\draw [shift={(231,96.06)}, rotate = 180] [color={rgb, 255:red, 0; green, 0; blue, 0 }  ][line width=0.75]    (4.37,-1.32) .. controls (2.78,-0.56) and (1.32,-0.12) .. (0,0) .. controls (1.32,0.12) and (2.78,0.56) .. (4.37,1.32)   ;
\draw    (171,56.06) -- (171,96.06) ;
\draw [shift={(171,76.06)}, rotate = 270] [color={rgb, 255:red, 0; green, 0; blue, 0 }  ][line width=0.75]    (4.37,-1.32) .. controls (2.78,-0.56) and (1.32,-0.12) .. (0,0) .. controls (1.32,0.12) and (2.78,0.56) .. (4.37,1.32)   ;
\draw    (251,96.06) -- (171,56.06) ;
\draw [shift={(211,76.06)}, rotate = 386.57] [color={rgb, 255:red, 0; green, 0; blue, 0 }  ][line width=0.75]    (4.37,-1.32) .. controls (2.78,-0.56) and (1.32,-0.12) .. (0,0) .. controls (1.32,0.12) and (2.78,0.56) .. (4.37,1.32)   ;
\draw  [fill={rgb, 255:red, 155; green, 155; blue, 155 }  ,fill opacity=1 ][line width=0.75]  (168.94,56.06) .. controls (168.94,54.92) and (169.86,54) .. (171,54) .. controls (172.14,54) and (173.06,54.92) .. (173.06,56.06) .. controls (173.06,57.2) and (172.14,58.12) .. (171,58.12) .. controls (169.86,58.12) and (168.94,57.2) .. (168.94,56.06) -- cycle ;
\draw  [fill={rgb, 255:red, 155; green, 155; blue, 155 }  ,fill opacity=1 ][line width=0.75]  (168.94,96.06) .. controls (168.94,94.92) and (169.86,94) .. (171,94) .. controls (172.14,94) and (173.06,94.92) .. (173.06,96.06) .. controls (173.06,97.2) and (172.14,98.12) .. (171,98.12) .. controls (169.86,98.12) and (168.94,97.2) .. (168.94,96.06) -- cycle ;
\draw    (251,56.06) -- (291,56.06) ;
\draw [shift={(271,56.06)}, rotate = 180] [color={rgb, 255:red, 0; green, 0; blue, 0 }  ][line width=0.75]    (4.37,-1.32) .. controls (2.78,-0.56) and (1.32,-0.12) .. (0,0) .. controls (1.32,0.12) and (2.78,0.56) .. (4.37,1.32)   ;
\draw    (251,96.06) -- (291,96.06) ;
\draw [shift={(271,96.06)}, rotate = 180] [color={rgb, 255:red, 0; green, 0; blue, 0 }  ][line width=0.75]    (4.37,-1.32) .. controls (2.78,-0.56) and (1.32,-0.12) .. (0,0) .. controls (1.32,0.12) and (2.78,0.56) .. (4.37,1.32)   ;
\draw    (251,56.06) -- (251,96.06) ;
\draw [shift={(251,76.06)}, rotate = 270] [color={rgb, 255:red, 0; green, 0; blue, 0 }  ][line width=0.75]    (4.37,-1.32) .. controls (2.78,-0.56) and (1.32,-0.12) .. (0,0) .. controls (1.32,0.12) and (2.78,0.56) .. (4.37,1.32)   ;
\draw    (291,74) -- (251,56.06) ;
\draw [shift={(271,65.03)}, rotate = 384.15] [color={rgb, 255:red, 0; green, 0; blue, 0 }  ][line width=0.75]    (4.37,-1.32) .. controls (2.78,-0.56) and (1.32,-0.12) .. (0,0) .. controls (1.32,0.12) and (2.78,0.56) .. (4.37,1.32)   ;
\draw  [fill={rgb, 255:red, 155; green, 155; blue, 155 }  ,fill opacity=1 ][line width=0.75]  (248.94,56.06) .. controls (248.94,54.92) and (249.86,54) .. (251,54) .. controls (252.14,54) and (253.06,54.92) .. (253.06,56.06) .. controls (253.06,57.2) and (252.14,58.12) .. (251,58.12) .. controls (249.86,58.12) and (248.94,57.2) .. (248.94,56.06) -- cycle ;
\draw  [fill={rgb, 255:red, 155; green, 155; blue, 155 }  ,fill opacity=1 ][line width=0.75]  (248.94,96.06) .. controls (248.94,94.92) and (249.86,94) .. (251,94) .. controls (252.14,94) and (253.06,94.92) .. (253.06,96.06) .. controls (253.06,97.2) and (252.14,98.12) .. (251,98.12) .. controls (249.86,98.12) and (248.94,97.2) .. (248.94,96.06) -- cycle ;
\draw    (131,54.14) -- (131,57.88) ;
\draw    (131,94.14) -- (131,97.88) ;
\draw    (211,54.14) -- (211,57.88) ;
\draw    (211.29,94.43) -- (211.29,98.16) ;
\draw    (291,53.86) -- (291,57.59) ;
\draw    (291.29,94.14) -- (291.29,97.88) ;
\draw    (51,54.43) -- (51,58.16) ;
\draw    (51,94.43) -- (51,98.16) ;

\draw (168.33,46.64) node [anchor=north west][inner sep=0.75pt]  [font=\tiny] [align=left] {1};

\end{tikzpicture}
\caption{A portion of the graph $\fU'$ for the monoid $M = \pres{Mon}{a,b,c}{a(bc)a = 1}$. Note that $U(M) \cong \mathbb{Z}$. In the graph, horizontal movement corresponds to reading $bc$, whereas vertical and diagonal movement corresponds to reading $a$. The gray and enlarged vertices are the locally invertible vertices, and the small vertical dashes are  the non-locally invertible vertices. }
\end{figure}
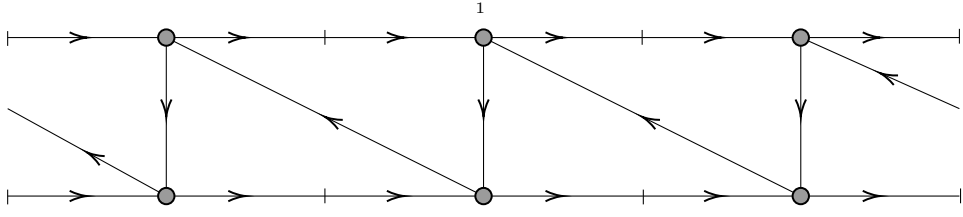

\subsection{Missing Edges}

Under stronger assumptions on the presentations involved, the graph $\fU'$ would already correctly describe an induced subgraph of the Cayley graph of $M$. This happens, for example, if the set of prefixes $\fP$ forms a code as a subset of $A^\ast$. In general, however, certain edges will be missing; this is a consequence of the fact that whereas $\Lambda \cdot \fP \cap \fP = \varnothing$ by a simple overlap argument, it might happen that $\fP \cdot \Lambda \cap \fP \neq \varnothing$. The goal of this subsection is then to describe what edges are missing from $\fU'$, and that these are evenly distributed across all $\sim_i$-classes.

We begin with a proposition which follows from an application of the Normal Form Lemma.

\begin{prop}\label{Prop: Missing edges}
Let $u_0, v_0 \in A^\ast$ be invertible, and $\xi, \zeta$ be proper prefixes of pieces. Let $a \in A$. Then $(u_0 \xi) \cdot a \lra{M} (v_0 \zeta)$ if and only if exactly one of the following occurs:
\begin{enumerate}
\item $\zeta \equiv \varepsilon$, there is some piece $\lambda \in \Lambda$ such that $\xi \cdot a \lra{M} \lambda$, and $u_0 \cdot \lambda \lra{M} v_0$.
\item $\xi \cdot a \lra{M} \zeta$ and $u_0 \lra{M} v_0$. 
\end{enumerate}
\end{prop}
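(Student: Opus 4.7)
The $(\Leftarrow)$ direction is immediate: substituting the listed congruences yields $u_0 \xi a \lra{M} v_0 \zeta$ in each case. For the converse, assume $u_0 \xi a \lra{M} v_0 \zeta$. The key preliminary observation, used throughout, is that no non-empty prefix of $\xi$ or $\zeta$ is invertible: such a prefix would be a non-empty invertible proper prefix of a piece, contradicting the minimality in the definition of a piece (for the one-letter case one uses Proposition~\ref{Prop: Invertible contains a piece}, which forces a one-letter invertible word to be a piece itself).

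Suppose first that $\zeta \equiv \varepsilon$, so $v_0\zeta = v_0$ is invertible, forcing $u_0 \xi a$ and hence $\xi a$ to be invertible. Since $\xi$ is a proper prefix of a piece, $|\xi a|$ does not exceed the maximum piece length, and by the preliminary observation $\xi a$ admits no non-empty proper invertible prefix. Invoking Lemma~3.4 of \cite{Zhang1992} gives $\xi a \in \Delta^{\ast}$, and any non-trivial factorisation into elements of $\Delta$ would supply a non-empty proper invertible prefix; hence $\xi a \in \Delta$, and so is congruent to a single piece $\lambda \in \Lambda$. Then $u_0 \lambda \lra{M} u_0 \xi a \lra{M} v_0$, establishing case (1).

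Suppose instead that $\zeta \not\equiv \varepsilon$. Being a non-empty proper prefix of a piece, $\zeta$ is not invertible, hence neither is $v_0\zeta$, and so $\xi a$ is not invertible either. Applying the Normal Form Lemma (Lemma~\ref{Lem: Zhang's Lemma}) produces decompositions
\[ u_0\xi a \equiv U_0 A_1 U_1 \cdots A_n U_n, \qquad v_0\zeta \equiv V_0 A_1 V_1 \cdots A_n V_n, \]
with $A_i \in A$, each $U_i, V_i$ a maximal invertible factor, and $U_i \lra{M} V_i$. The preliminary observation, together with $\xi a$ being non-invertible, forces the leftmost maximal invertible factor of $u_0 \xi a$ to be $u_0$ itself, so $U_0 \equiv u_0$; likewise $V_0 \equiv v_0$, yielding $u_0 \lra{M} v_0$. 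The matching decompositions of the residual words then give $\xi a \equiv A_1 U_1 \cdots A_n U_n \lra{M} A_1 V_1 \cdots A_n V_n \equiv \zeta$, which is case (2).

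The main obstacle is the claim in the case $\zeta \equiv \varepsilon$ that $\xi a$ is congruent to a \emph{single} piece rather than merely to a product of pieces; this is where the length bound inherited from $\xi$ being a proper prefix of a piece becomes essential, and is the only place beyond the Normal Form Lemma and the preliminary observation where non-trivial structural input is required.
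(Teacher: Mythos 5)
Your proof is correct and follows essentially the same route as the paper's: the $(\Leftarrow)$ direction is immediate, and the $(\Rightarrow)$ direction identifies $u_0$ and $v_0$ as the leftmost maximal invertible factors (via the observation that no non-empty prefix of a proper prefix of a piece is invertible) and then invokes the Normal Form Lemma, with the $\zeta \equiv \varepsilon$ case handled by noting that $\xi a$ must be congruent to a single piece. Your treatment is in fact slightly more explicit than the paper's at the one genuinely delicate point --- justifying via Lemma~3.4 of Zhang and minimality that $\xi a$ cannot be congruent to a product of more than one piece, which the paper dispatches in a single clause.
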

\begin{proof}
$(\impliedby)$ In either case (1) or (2), it is clear that $(u_0 \xi) \cdot a \lra{M} (v_0 \zeta)$. 

$(\implies)$ We know that we have the equality
\[
u_0 \xi a \lra{M} v_0 \zeta
\]
and, seeking to use Lemma~\ref{Lem: Zhang's Lemma}, we will first detect the maximal invertible factors. The left-most maximal invertible factor in the right-hand side is $v_0$, as if this invertible factor were to extend into $\zeta$, then some prefix of $\zeta$ will be a suffix of an invertible factor. If this prefix were empty, then $\zeta \equiv \varepsilon$, and we have that both sides are invertible; thus we are in case (1) above after observing that $\xi \cdot a$ cannot possibly be congruent to a product of more than one piece. Now, if this prefix were non-empty, then by a standard overlap argument we have a contradiction. Thus, in this case, the left-most maximal invertible factor of the right-hand side must be $v_0$. As a consequence, $\zeta \not\equiv \varepsilon$. Hence $v_0 \zeta$ is not invertible, so neither is $u_0 \xi a$. Applying the same argument to $u_0 \xi a$, we see that $u_0$ must thus be the left-most maximal invertible factor of $u_0 \xi a$. By Lemma~\ref{Lem: Zhang's Lemma}, we hence have $u_0 \lra{M} v_0$ and $\xi \cdot a \lra{M} \zeta$, and we are in case (2).
\end{proof}

With this proposition in hand, we are prepared to add in the missing edges, and to define the graph $\fU$, our final object of study. 

\begin{definition}\label{Def: Adding missing edges}
We define $\fU = (V(\fU), E(\fU))$ to be the graph obtained from $\fU'$ in the following way: if there exist prefixes $\xi, \zeta \in \fPe$ and $a \in A$ such that $\xi \cdot a \lra{M} \zeta$, then for all $m \in U(M)$ we add an edge $[(m, \xi)] \xrightarrow{a} [(m , \zeta)]$ if one does not already exist. We call $\fU$ the \textit{Sch\"utzenberger graph of the units of} $M$.
\end{definition}

Note that if such an edge as in the above definition does not exist for \textit{some} $m \in U(M)$, then it does not exist for \textit{any} $m \in U(M)$, by case (2) of Proposition~\ref{Prop: Missing edges}, and is hence added in all these cases. This ensures that the edges added by Definition~\ref{Def: Adding missing edges} are added uniformly across all $\sim_i$-classes. We illustrate this in Figure~\ref{Fig: Missing edges example}. Furthermore, the vertex set of $\fU$ is the same as that of $\fU'$.

For the following proposition, we recall the definition of $\fR_1$ as the connected component of the identity element in $\MCG{M}{A}$, and that a graph homomorphism $\phi \colon G \to H$ is \textit{faithful} if $\phi(G)$ is an induced subgraph of $H$. Note that a faithful injective graph homomorphism is equivalent to a \textit{full} injective graph homomorphism, i.e. an injective graph homomorphism in which $\phi(u)$ is adjacent to $\phi(v)$ if and only if $u$ is adjacent to $v$.

\begin{prop}\label{Prop: fU is an induced subgraph of fR1}
The map
\begin{align*}
\phi : V(\fU) &\to \{ m \cdot \pi(\xi) \: | \: m \in U(M),  \xi \in \fPe  \} \subseteq V(\fR_1) \\
[(m, \xi)] &\mapsto m \cdot \pi(\xi)
\end{align*}
extends to a faithful injective labelled graph homomorphism $\phi : \fU \hookrightarrow \fR_1$. In other words, $\fU$ is isomorphic to the subgraph of $\fR_1$ induced on the set of vertices of the form $m \cdot \pi(\xi)$, where $m \in U(M)$ and $\xi \in \fP$. 
\end{prop}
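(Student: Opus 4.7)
The proof naturally splits into five components: well-definedness of $\phi$ on $\sim_M$-classes; verification that the image lies in $V(\fR_1)$; injectivity; preservation of edges (homomorphism); and the more delicate reflection of edges (required for the image to be induced). The first three are formal consequences of the definition of $\sim_M$, and edge-preservation is a routine case-check; the crux is the edge-reflection step, which relies on Proposition~\ref{Prop: Missing edges}.

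For the preliminary items, the biconditional $(m_1,\xi_1) \sim_M (m_2,\xi_2) \iff m_1\pi(\xi_1) = m_2 \pi(\xi_2)$ simultaneously gives well-definedness on $\sim_M$-classes and injectivity. The image lies in $V(\fR_1)$ because $m \in U(M)$ is right invertible, $\pi(\xi)$ for $\xi \in \fPe$ is right invertible as a prefix of a piece, and products of right invertible elements of $M$ are right invertible. For edge preservation, edges of $\fU$ inherited from $\fU_0$ correspond to extending a proper prefix $\xi$ by a letter $a$ along the subdivided path of some piece $\lambda$, mapping either to $[(m,\xi)] \xrightarrow{a} [(m,\xi a)]$ or to $[(m,\xi)] \xrightarrow{a} [(m \cdot [\lambda]_M, \varepsilon)]$; applying $\phi$ yields $m\pi(\xi)\pi(a) = m\pi(\xi a)$, which is the requisite $a$-edge in $\fR_1$. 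For edges added by Definition~\ref{Def: Adding missing edges}, the defining condition $\xi \cdot a \lra{M} \zeta$ immediately gives $m\pi(\xi)\pi(a) = m\pi(\zeta)$ in $M$, again an $a$-edge in $\fR_1$.

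For edge reflection, suppose $\phi(u)$ and $\phi(v)$ are joined in $\fR_1$ by an $a$-labelled edge, where $u = [(m_1,\xi_1)]$ and $v = [(m_2, \xi_2)]$. Choosing literal representatives $u_0, v_0 \in \Lambda^\ast$ with $\pi(u_0) = m_1$ and $\pi(v_0) = m_2$, the edge condition reads $u_0\xi_1 a \lra{M} v_0 \xi_2$, so Proposition~\ref{Prop: Missing edges} applies and produces a dichotomy. In its case~(2) we have $m_1 = m_2$ and $\xi_1 a \lra{M} \xi_2$: if $\xi_1 a \equiv \xi_2$ graphically, then $\xi_1$ and $\xi_1 a$ are both prefixes of a common piece, so the edge appears in $\fU_0$ and descends to $\fU$ through the $\sim_M$-quotient; otherwise it is precisely one of the edges explicitly added by Definition~\ref{Def: Adding missing edges}. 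In case~(1) we get $\xi_2 \equiv \varepsilon$ and $\xi_1 a \lra{M} \lambda$ for some piece $\lambda$, with $m_2 = m_1 [\lambda]_M$.

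The main obstacle is this last case, because Definition~\ref{Def: Adding missing edges} only adds edges that preserve the unit-component $m$, so a priori the required edge $[(m_1,\xi_1)] \to [(m_2,\varepsilon)]$ between \emph{different} units need not be present. The strategy is to show that $\xi_1 a$ is in fact graphically equal to some piece $\lambda' \in \Lambda$ (necessarily with $[\lambda']_M = [\lambda]_M$), so that the edge is inherited from $\fU_0$. For this one applies Proposition~\ref{Prop: Invertible contains a piece} to obtain a piece $\lambda''$ occurring as a subword of $\xi_1 a$, and then performs a case analysis on the position of $\lambda''$: the biprefix-code property of $\Lambda$ rules out $\lambda''$ being a proper prefix of $\xi_1 a$ (since that would force it to be a proper prefix of the piece $\mu_0$ of which $\xi_1$ is a prefix), and the minimality condition on $\mu_0$ rules out $\lambda''$ occupying an interior or proper-suffix position (which would produce an invertible proper prefix of $\mu_0$). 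Only the full-word position $\lambda'' = \xi_1 a$ remains, completing the proof.
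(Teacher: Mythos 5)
Your skeleton is the same as the paper's: the routine components (well\-/definedness on $\sim_M$-classes, injectivity, image landing in $V(\fR_1)$, edge preservation) are correct, and edge reflection is organised, as in the paper, around the dichotomy of Proposition~\ref{Prop: Missing edges}. You are also right that case~(1) of that dichotomy is where the real content lies. But your resolution of it has a genuine gap: the intermediate claim that $\xi_1 a$ must be \emph{graphically} a piece is false, and the flaw sits exactly in the ``interior position'' branch of your analysis. If $\xi_1 a \equiv p\,\lambda''\, s$ with $p$ and $s$ both non-empty, then $p$ is merely a prefix of an invertible word, hence right invertible; nothing forces it to be invertible, so no invertible proper prefix of $\mu_0$ is produced and the minimality of $\mu_0$ is not violated. (Your argument is sound in the proper-suffix branch, where $[p]=[\xi_1 a][\lambda'']^{-1}$ genuinely is invertible.) Concretely, take
\[
M=\pres{Mon}{s,t,u,v,z,a}{uv=1,\ tu=1,\ stz\cdot sva=1,\ sva\cdot stz=1}.
\]
Here $u$ is invertible with $[t]=[v]=[u]^{-1}$; the homomorphism to the bicyclic monoid $\pres{Mon}{p,q}{pq=1}$ sending $s\mapsto p$, $z\mapsto q$, $a\mapsto q$ and $t,u,v\mapsto 1$ shows that $s,z,a,st,sv$ are not invertible, whence $\Lambda=\{u,v,t,stz,sva\}$ and the standing assumption on subwords of relators holds. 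Then $st\in\fP$ and $st\cdot a$ is invertible, since $[sta]=[s][t][a]=[s][v][a]=[sva]$, yet $sta$ is not a piece: the only piece it contains as a subword is $t$, sitting in the interior between the non-invertible letters $s$ and $a$, which is precisely the sub-case your argument does not close.

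The proposition survives in this example only because $[(m,st)]$ is a $\sim_M$-class possessing the alternative literal representative $(m,sv,sva)$, and $sv\cdot a\equiv sva$ \emph{is} graphically a piece, so the required $a$-edge into the locally invertible vertex is already carried by $\fU_0$ and survives the quotient to $\fU'$. That is the statement case~(1) actually demands: not that $\xi_1 a$ itself is a piece, but that there exists a piece $\nu\equiv\nu' a$ ending in the letter $a$ with $\nu'\lra{M}\xi_1$. You exploit exactly this change of literal representative in your case~(2) sub-analysis (where $\xi_1 a\equiv\xi_2$ may be a prefix of a piece other than $\mu_0$), but your case~(1) argument never engages with the $\sim_M$-class, and repairing the interior branch alone cannot deliver the needed conclusion. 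For what it is worth, the paper's own proof dismisses case~(1) with ``clearly present by construction,'' so you have correctly isolated the one point where nontrivial work is required; the work you supply just does not close it.
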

\begin{proof}
We set $V_0 :=\{ m \cdot \pi(\xi) \: | \: m \in U(M),  \xi \in \fPe  \}$. It is clear that $\phi$ is injective on the set of vertices, by definition of $\fU'$ as a quotient of $\fU_0$ modulo $\sim_M$-equivalence. We define $\phi$ on $E(\fU)$ in the obvious way. 

We first show $\phi$ is injective on the set of edges. Let $u = [(m_1, \xi)]$ and $v = [(m_2, \zeta)]$ be arbitrary vertices of $\fU$. If $e : u \xrightarrow{a} v$ is an edge of $\fU$ then we will show that there is an edge in $\fR_1$ from $\phi(u)$ to $\phi(v)$ labelled by $a$, i.e.\ that $\phi(u) \cdot \pi(a) = \phi(v)$. Hence assume $e : u \xrightarrow{a} v$ is an edge of $\fU$. Let $u$ and $v$ have literal forms $[(u_0 ; \xi)]$ and $[(v_0 ; \zeta)]$, respectively. Then by definition $\phi(u) = \pi(u_0 \xi)$ and $\phi(v) = \pi(v_0 \zeta)$. Since $e$ is an edge of $\fU$ we must have that either $\xi$ and $\zeta$ are prefixes of some piece such that $\xi a \equiv \zeta$, in which case $u_0 \lra{M} v_0$, or $a$ is the final letter of a piece $\xi a$, in which case $v_0 \lra{M} u_0 \xi \cdot a$. In either case, we have 
\[
u_0 \xi \cdot a \lra{M} v_0 \zeta
\]
whence $\phi(u) \cdot \pi(a) = \phi(v)$, and $\phi$ is an injective graph homomorphism. 

To see that it is faithful, it suffices to note that if $\pi(u_0 \xi) \xrightarrow{a} \pi(v_0 \zeta)$ is an edge of $\fR_1$, then we are in exactly one of the cases (1) or (2) of Proposition~\ref{Prop: Missing edges}. In case (1), then the edge $[(m_1, \xi)] \xrightarrow{a} [(m_2, \zeta)]$ is clearly present in $\fU'$ by construction of $\fU'$ from $\fU_0$, and hence is also an edge of $\fU$. In case (2), we have that $[(m_1, \xi)] \xrightarrow{a} [(m_2, \zeta)]$ is an edge added to $\fU$ by Definition~\ref{Def: Adding missing edges}. Thus, in both cases $\phi$ is faithful, and hence $\phi$ has all the desired properties.
\end{proof}

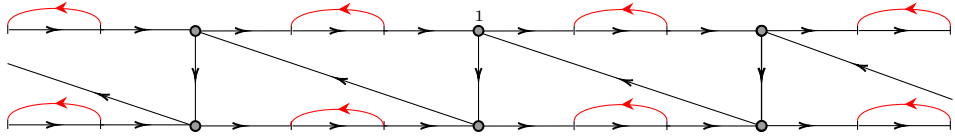
\begin{figure}[h]

\begin{tikzpicture}[x=0.75pt,y=0.75pt,yscale=-1.2,xscale=1.2]

\draw    (171,96) -- (211,96) ;
\draw [shift={(191,96)}, rotate = 180] [color={rgb, 255:red, 0; green, 0; blue, 0 }  ][line width=0.75]    (4.37,-1.32) .. controls (2.78,-0.56) and (1.32,-0.12) .. (0,0) .. controls (1.32,0.12) and (2.78,0.56) .. (4.37,1.32)   ;
\draw    (250,96) -- (290,96) ;
\draw [shift={(270,96)}, rotate = 180] [color={rgb, 255:red, 0; green, 0; blue, 0 }  ][line width=0.75]    (4.37,-1.32) .. controls (2.78,-0.56) and (1.32,-0.12) .. (0,0) .. controls (1.32,0.12) and (2.78,0.56) .. (4.37,1.32)   ;
\draw    (171,56.06) -- (171,96.06) ;
\draw [shift={(171,76.06)}, rotate = 270] [color={rgb, 255:red, 0; green, 0; blue, 0 }  ][line width=0.75]    (4.37,-1.32) .. controls (2.78,-0.56) and (1.32,-0.12) .. (0,0) .. controls (1.32,0.12) and (2.78,0.56) .. (4.37,1.32)   ;
\draw    (290,96) -- (171,56) ;
\draw [shift={(230.5,76)}, rotate = 378.58000000000004] [color={rgb, 255:red, 0; green, 0; blue, 0 }  ][line width=0.75]    (4.37,-1.32) .. controls (2.78,-0.56) and (1.32,-0.12) .. (0,0) .. controls (1.32,0.12) and (2.78,0.56) .. (4.37,1.32)   ;
\draw  [fill={rgb, 255:red, 155; green, 155; blue, 155 }  ,fill opacity=1 ][line width=0.75]  (168.94,56.06) .. controls (168.94,54.92) and (169.86,54) .. (171,54) .. controls (172.14,54) and (173.06,54.92) .. (173.06,56.06) .. controls (173.06,57.2) and (172.14,58.12) .. (171,58.12) .. controls (169.86,58.12) and (168.94,57.2) .. (168.94,56.06) -- cycle ;
\draw  [fill={rgb, 255:red, 155; green, 155; blue, 155 }  ,fill opacity=1 ][line width=0.75]  (168.94,96.06) .. controls (168.94,94.92) and (169.86,94) .. (171,94) .. controls (172.14,94) and (173.06,94.92) .. (173.06,96.06) .. controls (173.06,97.2) and (172.14,98.12) .. (171,98.12) .. controls (169.86,98.12) and (168.94,97.2) .. (168.94,96.06) -- cycle ;
\draw    (290,56.06) -- (290,96.06) ;
\draw [shift={(290,76.06)}, rotate = 270] [color={rgb, 255:red, 0; green, 0; blue, 0 }  ][line width=0.75]    (4.37,-1.32) .. controls (2.78,-0.56) and (1.32,-0.12) .. (0,0) .. controls (1.32,0.12) and (2.78,0.56) .. (4.37,1.32)   ;
\draw  [fill={rgb, 255:red, 155; green, 155; blue, 155 }  ,fill opacity=1 ][line width=0.75]  (287.94,56) .. controls (287.94,54.86) and (288.86,53.94) .. (290,53.94) .. controls (291.14,53.94) and (292.06,54.86) .. (292.06,56) .. controls (292.06,57.14) and (291.14,58.06) .. (290,58.06) .. controls (288.86,58.06) and (287.94,57.14) .. (287.94,56) -- cycle ;
\draw  [fill={rgb, 255:red, 155; green, 155; blue, 155 }  ,fill opacity=1 ][line width=0.75]  (287.94,96.06) .. controls (287.94,94.92) and (288.86,94) .. (290,94) .. controls (291.14,94) and (292.06,94.92) .. (292.06,96.06) .. controls (292.06,97.2) and (291.14,98.12) .. (290,98.12) .. controls (288.86,98.12) and (287.94,97.2) .. (287.94,96.06) -- cycle ;
\draw    (211.29,94) -- (211.29,97.73) ;
\draw    (211.8,96) -- (251.8,96) ;
\draw [shift={(231.8,96)}, rotate = 180] [color={rgb, 255:red, 0; green, 0; blue, 0 }  ][line width=0.75]    (4.37,-1.32) .. controls (2.78,-0.56) and (1.32,-0.12) .. (0,0) .. controls (1.32,0.12) and (2.78,0.56) .. (4.37,1.32)   ;
\draw    (250.29,94) -- (250.29,97.73) ;
\draw    (173.06,56.06) -- (211,56) ;
\draw [shift={(192.03,56.03)}, rotate = 539.9100000000001] [color={rgb, 255:red, 0; green, 0; blue, 0 }  ][line width=0.75]    (4.37,-1.32) .. controls (2.78,-0.56) and (1.32,-0.12) .. (0,0) .. controls (1.32,0.12) and (2.78,0.56) .. (4.37,1.32)   ;
\draw    (250,56) -- (287.94,56) ;
\draw [shift={(268.97,56)}, rotate = 180] [color={rgb, 255:red, 0; green, 0; blue, 0 }  ][line width=0.75]    (4.37,-1.32) .. controls (2.78,-0.56) and (1.32,-0.12) .. (0,0) .. controls (1.32,0.12) and (2.78,0.56) .. (4.37,1.32)   ;
\draw    (211.29,54) -- (211.29,57.73) ;
\draw    (211.8,56) -- (251.8,56) ;
\draw [shift={(231.8,56)}, rotate = 180] [color={rgb, 255:red, 0; green, 0; blue, 0 }  ][line width=0.75]    (4.37,-1.32) .. controls (2.78,-0.56) and (1.32,-0.12) .. (0,0) .. controls (1.32,0.12) and (2.78,0.56) .. (4.37,1.32)   ;
\draw    (250.29,54) -- (250.29,57.73) ;
\draw    (292.06,96.06) -- (329.89,96.02) ;
\draw [shift={(310.98,96.04)}, rotate = 539.94] [color={rgb, 255:red, 0; green, 0; blue, 0 }  ][line width=0.75]    (4.37,-1.32) .. controls (2.78,-0.56) and (1.32,-0.12) .. (0,0) .. controls (1.32,0.12) and (2.78,0.56) .. (4.37,1.32)   ;
\draw    (368.89,96.02) -- (408.89,96.02) ;
\draw [shift={(388.89,96.02)}, rotate = 180] [color={rgb, 255:red, 0; green, 0; blue, 0 }  ][line width=0.75]    (4.37,-1.32) .. controls (2.78,-0.56) and (1.32,-0.12) .. (0,0) .. controls (1.32,0.12) and (2.78,0.56) .. (4.37,1.32)   ;
\draw    (408.89,96.02) -- (291.89,57.23) ;
\draw [shift={(350.39,76.63)}, rotate = 378.34000000000003] [color={rgb, 255:red, 0; green, 0; blue, 0 }  ][line width=0.75]    (4.37,-1.32) .. controls (2.78,-0.56) and (1.32,-0.12) .. (0,0) .. controls (1.32,0.12) and (2.78,0.56) .. (4.37,1.32)   ;
\draw    (408.89,56.08) -- (408.89,96.08) ;
\draw [shift={(408.89,76.08)}, rotate = 270] [color={rgb, 255:red, 0; green, 0; blue, 0 }  ][line width=0.75]    (4.37,-1.32) .. controls (2.78,-0.56) and (1.32,-0.12) .. (0,0) .. controls (1.32,0.12) and (2.78,0.56) .. (4.37,1.32)   ;
\draw  [fill={rgb, 255:red, 155; green, 155; blue, 155 }  ,fill opacity=1 ][line width=0.75]  (406.83,56.02) .. controls (406.83,54.88) and (407.75,53.96) .. (408.89,53.96) .. controls (410.03,53.96) and (410.95,54.88) .. (410.95,56.02) .. controls (410.95,57.16) and (410.03,58.08) .. (408.89,58.08) .. controls (407.75,58.08) and (406.83,57.16) .. (406.83,56.02) -- cycle ;
\draw  [fill={rgb, 255:red, 155; green, 155; blue, 155 }  ,fill opacity=1 ][line width=0.75]  (406.83,96.08) .. controls (406.83,94.95) and (407.75,94.02) .. (408.89,94.02) .. controls (410.03,94.02) and (410.95,94.95) .. (410.95,96.08) .. controls (410.95,97.22) and (410.03,98.15) .. (408.89,98.15) .. controls (407.75,98.15) and (406.83,97.22) .. (406.83,96.08) -- cycle ;
\draw    (330.17,94.02) -- (330.17,97.76) ;
\draw    (330.69,96.02) -- (370.69,96.02) ;
\draw [shift={(350.69,96.02)}, rotate = 180] [color={rgb, 255:red, 0; green, 0; blue, 0 }  ][line width=0.75]    (4.37,-1.32) .. controls (2.78,-0.56) and (1.32,-0.12) .. (0,0) .. controls (1.32,0.12) and (2.78,0.56) .. (4.37,1.32)   ;
\draw    (369.17,94.02) -- (369.17,97.76) ;
\draw    (291.95,56.08) -- (329.89,56.02) ;
\draw [shift={(310.92,56.05)}, rotate = 539.9100000000001] [color={rgb, 255:red, 0; green, 0; blue, 0 }  ][line width=0.75]    (4.37,-1.32) .. controls (2.78,-0.56) and (1.32,-0.12) .. (0,0) .. controls (1.32,0.12) and (2.78,0.56) .. (4.37,1.32)   ;
\draw    (368.89,56.02) -- (406.83,56.02) ;
\draw [shift={(387.86,56.02)}, rotate = 180] [color={rgb, 255:red, 0; green, 0; blue, 0 }  ][line width=0.75]    (4.37,-1.32) .. controls (2.78,-0.56) and (1.32,-0.12) .. (0,0) .. controls (1.32,0.12) and (2.78,0.56) .. (4.37,1.32)   ;
\draw    (330.17,54.02) -- (330.17,57.76) ;
\draw    (330.69,56.02) -- (370.69,56.02) ;
\draw [shift={(350.69,56.02)}, rotate = 180] [color={rgb, 255:red, 0; green, 0; blue, 0 }  ][line width=0.75]    (4.37,-1.32) .. controls (2.78,-0.56) and (1.32,-0.12) .. (0,0) .. controls (1.32,0.12) and (2.78,0.56) .. (4.37,1.32)   ;
\draw    (369.17,54.02) -- (369.17,57.76) ;
\draw    (409,95.89) -- (449,95.89) ;
\draw [shift={(429,95.89)}, rotate = 180] [color={rgb, 255:red, 0; green, 0; blue, 0 }  ][line width=0.75]    (4.37,-1.32) .. controls (2.78,-0.56) and (1.32,-0.12) .. (0,0) .. controls (1.32,0.12) and (2.78,0.56) .. (4.37,1.32)   ;
\draw    (409,55.95) -- (409,95.95) ;
\draw [shift={(409,75.95)}, rotate = 270] [color={rgb, 255:red, 0; green, 0; blue, 0 }  ][line width=0.75]    (4.37,-1.32) .. controls (2.78,-0.56) and (1.32,-0.12) .. (0,0) .. controls (1.32,0.12) and (2.78,0.56) .. (4.37,1.32)   ;
\draw    (489.19,84.9) -- (409,55.89) ;
\draw [shift={(449.1,70.39)}, rotate = 379.89] [color={rgb, 255:red, 0; green, 0; blue, 0 }  ][line width=0.75]    (4.37,-1.32) .. controls (2.78,-0.56) and (1.32,-0.12) .. (0,0) .. controls (1.32,0.12) and (2.78,0.56) .. (4.37,1.32)   ;
\draw  [fill={rgb, 255:red, 155; green, 155; blue, 155 }  ,fill opacity=1 ][line width=0.75]  (406.94,55.95) .. controls (406.94,54.81) and (407.86,53.89) .. (409,53.89) .. controls (410.14,53.89) and (411.06,54.81) .. (411.06,55.95) .. controls (411.06,57.09) and (410.14,58.01) .. (409,58.01) .. controls (407.86,58.01) and (406.94,57.09) .. (406.94,55.95) -- cycle ;
\draw  [fill={rgb, 255:red, 155; green, 155; blue, 155 }  ,fill opacity=1 ][line width=0.75]  (406.94,95.95) .. controls (406.94,94.81) and (407.86,93.89) .. (409,93.89) .. controls (410.14,93.89) and (411.06,94.81) .. (411.06,95.95) .. controls (411.06,97.09) and (410.14,98.01) .. (409,98.01) .. controls (407.86,98.01) and (406.94,97.09) .. (406.94,95.95) -- cycle ;
\draw    (449.29,93.89) -- (449.29,97.62) ;
\draw    (449.8,95.89) -- (488.29,95.89) ;
\draw [shift={(469.04,95.89)}, rotate = 180] [color={rgb, 255:red, 0; green, 0; blue, 0 }  ][line width=0.75]    (4.37,-1.32) .. controls (2.78,-0.56) and (1.32,-0.12) .. (0,0) .. controls (1.32,0.12) and (2.78,0.56) .. (4.37,1.32)   ;
\draw    (488.29,93.89) -- (488.29,97.62) ;
\draw    (411.06,55.95) -- (449,55.89) ;
\draw [shift={(430.03,55.92)}, rotate = 539.9100000000001] [color={rgb, 255:red, 0; green, 0; blue, 0 }  ][line width=0.75]    (4.37,-1.32) .. controls (2.78,-0.56) and (1.32,-0.12) .. (0,0) .. controls (1.32,0.12) and (2.78,0.56) .. (4.37,1.32)   ;
\draw    (449.29,53.89) -- (449.29,57.62) ;
\draw    (449.8,55.89) -- (488.29,55.89) ;
\draw [shift={(469.04,55.89)}, rotate = 180] [color={rgb, 255:red, 0; green, 0; blue, 0 }  ][line width=0.75]    (4.37,-1.32) .. controls (2.78,-0.56) and (1.32,-0.12) .. (0,0) .. controls (1.32,0.12) and (2.78,0.56) .. (4.37,1.32)   ;
\draw    (488.29,53.89) -- (488.29,57.62) ;
\draw    (130.86,95.55) -- (168.94,95.55) ;
\draw [shift={(149.9,95.55)}, rotate = 180] [color={rgb, 255:red, 0; green, 0; blue, 0 }  ][line width=0.75]    (4.37,-1.32) .. controls (2.78,-0.56) and (1.32,-0.12) .. (0,0) .. controls (1.32,0.12) and (2.78,0.56) .. (4.37,1.32)   ;
\draw    (168.94,95.55) -- (92.05,69.76) ;
\draw [shift={(130.49,82.65)}, rotate = 378.53999999999996] [color={rgb, 255:red, 0; green, 0; blue, 0 }  ][line width=0.75]    (4.37,-1.32) .. controls (2.78,-0.56) and (1.32,-0.12) .. (0,0) .. controls (1.32,0.12) and (2.78,0.56) .. (4.37,1.32)   ;
\draw    (92.14,93.55) -- (92.14,97.28) ;
\draw    (92.66,95.55) -- (132.66,95.55) ;
\draw [shift={(112.66,95.55)}, rotate = 180] [color={rgb, 255:red, 0; green, 0; blue, 0 }  ][line width=0.75]    (4.37,-1.32) .. controls (2.78,-0.56) and (1.32,-0.12) .. (0,0) .. controls (1.32,0.12) and (2.78,0.56) .. (4.37,1.32)   ;
\draw    (131.14,93.55) -- (131.14,97.28) ;
\draw    (130.86,55.55) -- (168.8,55.55) ;
\draw [shift={(149.83,55.55)}, rotate = 180] [color={rgb, 255:red, 0; green, 0; blue, 0 }  ][line width=0.75]    (4.37,-1.32) .. controls (2.78,-0.56) and (1.32,-0.12) .. (0,0) .. controls (1.32,0.12) and (2.78,0.56) .. (4.37,1.32)   ;
\draw    (92.14,53.55) -- (92.14,57.28) ;
\draw    (92.66,55.55) -- (132.66,55.55) ;
\draw [shift={(112.66,55.55)}, rotate = 180] [color={rgb, 255:red, 0; green, 0; blue, 0 }  ][line width=0.75]    (4.37,-1.32) .. controls (2.78,-0.56) and (1.32,-0.12) .. (0,0) .. controls (1.32,0.12) and (2.78,0.56) .. (4.37,1.32)   ;
\draw    (131.14,53.55) -- (131.14,57.28) ;
\draw [color={rgb, 255:red, 255; green, 0; blue, 0 }  ,draw opacity=1 ]   (211.29,54) .. controls (211.57,44.86) and (250.62,44.52) .. (250.29,54) ;
\draw [shift={(231.02,47.02)}, rotate = 356.85] [fill={rgb, 255:red, 255; green, 0; blue, 0 }  ,fill opacity=1 ][line width=0.08]  [draw opacity=0] (5.36,-2.57) -- (0,0) -- (5.36,2.57) -- (3.56,0) -- cycle    ;
\draw [color={rgb, 255:red, 255; green, 0; blue, 0 }  ,draw opacity=1 ]   (330.17,54.02) .. controls (330.46,44.88) and (369.51,44.54) .. (369.17,54.02) ;
\draw [shift={(349.91,47.04)}, rotate = 356.85] [fill={rgb, 255:red, 255; green, 0; blue, 0 }  ,fill opacity=1 ][line width=0.08]  [draw opacity=0] (5.36,-2.57) -- (0,0) -- (5.36,2.57) -- (3.56,0) -- cycle    ;
\draw [color={rgb, 255:red, 255; green, 0; blue, 0 }  ,draw opacity=1 ]   (211,96) .. controls (211.29,86.86) and (250.33,86.52) .. (250,96) ;
\draw [shift={(230.73,89.02)}, rotate = 356.85] [fill={rgb, 255:red, 255; green, 0; blue, 0 }  ,fill opacity=1 ][line width=0.08]  [draw opacity=0] (5.36,-2.57) -- (0,0) -- (5.36,2.57) -- (3.56,0) -- cycle    ;
\draw [color={rgb, 255:red, 255; green, 0; blue, 0 }  ,draw opacity=1 ]   (330.17,94.02) .. controls (330.46,84.88) and (369.51,84.54) .. (369.17,94.02) ;
\draw [shift={(349.91,87.04)}, rotate = 356.85] [fill={rgb, 255:red, 255; green, 0; blue, 0 }  ,fill opacity=1 ][line width=0.08]  [draw opacity=0] (5.36,-2.57) -- (0,0) -- (5.36,2.57) -- (3.56,0) -- cycle    ;
\draw [color={rgb, 255:red, 255; green, 0; blue, 0 }  ,draw opacity=1 ]   (92.14,93.55) .. controls (92.43,84.4) and (131.48,84.07) .. (131.14,93.55) ;
\draw [shift={(111.88,86.56)}, rotate = 356.85] [fill={rgb, 255:red, 255; green, 0; blue, 0 }  ,fill opacity=1 ][line width=0.08]  [draw opacity=0] (5.36,-2.57) -- (0,0) -- (5.36,2.57) -- (3.56,0) -- cycle    ;
\draw [color={rgb, 255:red, 255; green, 0; blue, 0 }  ,draw opacity=1 ]   (92.14,53.55) .. controls (92.43,44.4) and (131.48,44.07) .. (131.14,53.55) ;
\draw [shift={(111.88,46.56)}, rotate = 356.85] [fill={rgb, 255:red, 255; green, 0; blue, 0 }  ,fill opacity=1 ][line width=0.08]  [draw opacity=0] (5.36,-2.57) -- (0,0) -- (5.36,2.57) -- (3.56,0) -- cycle    ;
\draw [color={rgb, 255:red, 255; green, 0; blue, 0 }  ,draw opacity=1 ]   (449.29,53.89) .. controls (449.57,44.75) and (488.62,44.41) .. (488.29,53.89) ;
\draw [shift={(469.02,46.91)}, rotate = 356.85] [fill={rgb, 255:red, 255; green, 0; blue, 0 }  ,fill opacity=1 ][line width=0.08]  [draw opacity=0] (5.36,-2.57) -- (0,0) -- (5.36,2.57) -- (3.56,0) -- cycle    ;
\draw [color={rgb, 255:red, 255; green, 0; blue, 0 }  ,draw opacity=1 ]   (449.29,93.89) .. controls (449.57,84.75) and (488.62,84.41) .. (488.29,93.89) ;
\draw [shift={(469.02,86.91)}, rotate = 356.85] [fill={rgb, 255:red, 255; green, 0; blue, 0 }  ,fill opacity=1 ][line width=0.08]  [draw opacity=0] (5.36,-2.57) -- (0,0) -- (5.36,2.57) -- (3.56,0) -- cycle    ;

\draw (287.22,46.66) node [anchor=north west][inner sep=0.75pt]  [font=\tiny] [align=left] {1};

\end{tikzpicture}
\caption{An example of the missing edges added by Definition~\ref{Def: Adding missing edges}. This particular example comes from the monoid $M = \pres{Mon}{a,b,c,d}{b(abc)b = 1, bd = 1}$. Horizontal movement corresponds to reading $abc$, whereas vertical and diagonal movement both correspond to reading $b$. The added red edges correspond to reading $d$. We note that we have omitted adding the loops corresponding to $bd = 1$ to each locally invertible vertex for ease of drawing.}
\label{Fig: Missing edges example}
\end{figure}

As a convention, we will henceforth refer to elements of $V(\fU)$ by specifying literal forms $[(u_0 ; \xi)]$, where $u_0 \in \Lambda^\ast$ and $\xi \in \fPe$. The construction of $\fU$ guarantees that this specifies precisely as much information as a pair $[(m, \xi)]$, where $m \in U(M)$. We will now turn towards studying the properties of $\fU$ as a graph with respect to the properties of the group of units $U(M)$. In particular, we will see that the context-freeness of $\fU$ is closely connected to that of $U(M)$. 

\subsection{Context-free properties of $\fU$}

We will now prove a key lemma about $\fU$. The group $U(M)$ acts properly discontinuously and transitively via left multiplication on $\MCG{U_\fB(M)}{\fB} \cong \fU_0 / \sim_i$, and this action hence extends to a properly discontinuous action of $U(M)$ on $\fU_0$ with finite quotient. As $\sim_M$ is determined entirely on the equivalence classes of $\sim_i$, and since the edges added to $\fU$  by Definition~\ref{Def: Adding missing edges} are added uniformly within each $\sim_i$-class, the above action extends to a properly discontinuous action of $U(M)$ on $\fU$. By considering the associated undirected and unlabelled graphs ud$(\fU)$ and ud$(\MCG{U_\fB(M)}{\fB})$, we hence obtain the following by the Svar\u{c}-Milnor lemma.

\begin{lemma}\label{Lem: U(M) is qi to fU}
There exists a quasi-isometry $\textnormal{ud}(\MCG{U_\fB(M)}{\fB}) \xrightarrow{\sim} \textnormal{ud}(\fU)$.
\end{lemma}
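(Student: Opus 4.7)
The plan is to verify the hypotheses of the Svar\u{c}-Milnor lemma for the natural left-multiplication action of $U(M)$ on $\fU$, and then invoke the lemma. The three paragraphs preceding the statement already reduce the task to three technical points, which I would address in turn.

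First, I would make the action explicit: for $h \in U(M)$ and a vertex $[(m, \xi)]$, define $h \cdot [(m, \xi)] := [(hm, \xi)]$. Well-definedness at the vertex level follows from Proposition~\ref{Prop: sim_M => sim_i}: if $(m_1, \xi) \sim_M (m_2, \zeta)$ then $m_1 = m_2$ and $\pi(\xi) =_M \pi(\zeta)$, and both properties persist after replacing $m_i$ by $hm_i$. To extend to edges, I would treat the two kinds of edges separately. Edges inherited from $\fU_0 / \sim_M$ arise from sequentially reading the letters of some $\lambda \in \Lambda$ starting at a locally invertible vertex, which is invariant under $h$ since the local structure within each $\sim_i$-class is identical and independent of $m$. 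Edges added by Definition~\ref{Def: Adding missing edges} are present at $\bigl([(m, \xi)], [(m, \zeta)]\bigr)$ with label $a$ precisely when $\xi \cdot a \lra{M} \zeta$ for $\xi, \zeta \in \fPe$, a condition blind to $m$ and hence preserved under $h$. Thus left multiplication by $h$ carries edges to edges of the same label, giving an action by label-preserving graph automorphisms.

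Next I would verify the two geometric hypotheses. For properness: if $h \cdot [(m, \xi)] = [(m, \xi)]$, then $hm = m$ in $U(M)$, forcing $h = 1$; hence all vertex stabilisers are trivial. Since $\textnormal{ud}(\fU)$ has bounded degree (each $\sim_i$-class is finite, only boundedly many edges leave each class, and Definition~\ref{Def: Adding missing edges} adds boundedly many edges per vertex), its geometric realisation is a proper geodesic metric space, and a free action on such a locally finite graph is automatically properly discontinuous. For cocompactness: the set of vertex orbits is in bijection with $\{ [(1, \xi)] \mid \xi \in \fPe \}$, which is finite because $\fPe$ is; combined with bounded degree, the quotient graph is finite. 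Connectedness of $\textnormal{ud}(\fU)$ follows by observing that any $[(m, \xi)]$ is reached from $[(1, \varepsilon)]$ by first traversing a path spelling any $w \in \Lambda^\ast$ with $\pi(w) = m$ and then reading $\xi$.

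Applying the Svar\u{c}-Milnor lemma, the group $U(M)$ equipped with any word metric arising from a finite generating set is quasi-isometric to the geometric realisation of $\textnormal{ud}(\fU)$. Choosing the generating set $\fB$, the word metric is exactly the path metric of $\textnormal{ud}(\MCG{U_\fB(M)}{\fB})$ (viewed as a graph with all edges of length one, and identifying each vertex with the corresponding group element), giving the desired quasi-isometry. The main potential pitfall is the interaction of the action with the ``missing'' edges added by Definition~\ref{Def: Adding missing edges}; but the uniform way in which these are inserted across $\sim_i$-classes makes their equivariance immediate, so no additional difficulty arises.
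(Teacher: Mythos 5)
Your proposal is correct and follows essentially the same route as the paper: the paper's argument is precisely to observe that left multiplication by $U(M)$ gives a properly discontinuous, cocompact action on $\fU$ (using that $\sim_M$ and the edges added in Definition~\ref{Def: Adding missing edges} are uniform across $\sim_i$-classes) and then to apply the Svar\u{c}-Milnor lemma. Your write-up simply makes explicit the verifications that the paper leaves as a sketch in the paragraph preceding the lemma.
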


It is well worth comparing this lemma with the following due to Garretta \& Gray; this may shed some light on the r\^ole played by $\fU$ in the context of $M$.

\begin{lemma}[\cite{Garreta2019}]\label{Lem: R1 is qi to fR}
There exists a quasi-isometry $\textnormal{ud}(\MCG{\RU}{I_0}) \xrightarrow{\sim} \textnormal{ud}(\fR_1)$.
\end{lemma}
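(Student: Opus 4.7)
The plan is to establish the quasi-isometry via the identity map on the common vertex set $R(M) = \mathscr{R}_1$ and show that the two word metrics are bi-Lipschitz up to additive constants. I first fix a finite generating set $I_0$ for $R(M)$ as a monoid; by Proposition~\ref{Prop: Xi generates the right invertibles} we may take $I_0 = \Xi$, so every generator is realised as a word over $A$ each of whose prefixes is itself right invertible.

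For one direction, $d_{\fR_1}(u,v) \leq K \cdot d_{\MCG{\RU}{I_0}}(u,v)$: every directed edge $u \to u \cdot \pi(\xi)$ in $\MCG{\RU}{I_0}$ subdivides into a walk of length $|\xi|$ in $\fR_1$. Indeed, reading $\xi \equiv a_1 \cdots a_k$ letter by letter from $u$ keeps the walk inside $R(M)$, since every prefix of $\xi$ lies in $\Xi \subseteq R(M)$ and $R(M)$ is closed under multiplication. Taking $K := \max_{\xi \in I_0} |\xi|$ suffices, and the same bound holds after passing to the undirected graphs.

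The other direction, $d_{\MCG{\RU}{I_0}}(u,v) \leq K' \cdot d_{\fR_1}(u,v)$, is the technical heart. Given an undirected walk $u = u_0, u_1, \ldots, u_d = v$ in $\fR_1$, I replace it with an undirected walk in $\MCG{\RU}{I_0}$ of length $O(d)$. The key tool is Corollary~\ref{Cor: First letter of RI word is prefix} (every first letter of a right invertible word begins a piece) together with the biprefix code property of $\Lambda$. These force the word spelled by the walk in $\fR_1$ to decompose canonically into alternating runs of ``interior'' letters staying within a single piece $\lambda \in \Lambda$, and ``boundary'' letters that complete a piece and so cross into the next $\Xi$-segment of the canonical form of the current element of $R(M) \cong \Sigma^\ast \ast U(M)$ (via Squier~\cite{Squier1987} and Zhang~\cite{Zhang1992}). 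Each such run contributes at most one edge of $\MCG{\RU}{I_0}$, and the total number of runs is bounded above by a fixed multiple of $d$, with the constant controlled by $\max_i |R_i|$.

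The main obstacle will be the bookkeeping in this second direction: specifically, verifying that an undirected walk, which may ``zigzag'' between right invertible words differing by more than a single generator of $I_0$, can genuinely be chopped at piece boundaries in an $O(d)$-efficient manner. A clean way to execute this is via the Normal Form Lemma (Lemma~\ref{Lem: Zhang's Lemma}), which aligns the canonical $\Lambda^\ast$-decompositions of successive vertices along the walk, so that each single-letter step either extends the current piece-prefix or closes it and opens a new one. As an alternative, one might try to invoke Svarc-Milnor via the properly discontinuous action of $U(M)$ on both graphs; however, since this action is not cocompact on $R(M)$, one would have to pass to a common coarse model (such as the Bass-Serre tree of the free-product decomposition $R(M) \cong \Sigma^\ast \ast U(M)$), which seems to require more machinery than the direct combinatorial argument above.
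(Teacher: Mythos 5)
The paper offers no proof of this lemma to compare against: it is imported verbatim from Garreta--Gray \cite{Garreta2019} as a known result. Judged on its own terms, your strategy --- the identity map on the common vertex set $R(M)$, subdividing $I_0$-edges into $A$-paths in one direction and a bounded-displacement estimate in the other --- is the natural one and is essentially sound. The first direction is complete as written: every prefix of an element of $\Xi$ is again in $\Xi$, hence right invertible, so the subdivided walk stays inside $\fR_1$, and $K = \max_{\xi \in \Xi} |\xi|$ works. Replacing $I_0$ by $\Xi$ is also harmless, since the undirected quasi-isometry type of a monoid Cayley graph is independent of the finite generating set (each generator of one set is a positive word over the other, so directed edges subdivide both ways).

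The second direction is where I would push back --- not because it fails, but because the global bookkeeping you identify as the ``main obstacle'' is avoidable. Since the map is the identity, it suffices to bound the displacement of a \emph{single} edge $p \xrightarrow{a} q$ of $\fR_1$ inside $\textnormal{ud}(\MCG{\RU}{\Xi})$; undirected, zigzagging walks are then handled by the triangle inequality, with no need to align normal forms along the whole walk via Lemma~\ref{Lem: Zhang's Lemma}. The local bound goes as follows: by Theorem~\ref{Thm: R_1 is TU determinised} one can write $p = r \cdot \pi(\xi)$ with $r$ right invertible and $\xi \in \fPe$ a trailing proper piece-prefix, and the analogue of Proposition~\ref{Prop: Missing edges} (transported from $\fU$ into $\fR_1$) says that either $\xi a$ is congruent to another element of $\fPe$, so $q = r\cdot\pi(\zeta)$ with $\zeta \in \fPe$, or $\xi a$ completes a piece $\lambda \in \Lambda$, so $q = r\cdot\pi(\lambda)$. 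In every case both $p$ and $q$ lie at distance at most $1$ from $r$ in $\MCG{\RU}{\Xi}$, since $\fPe \setminus \{\varepsilon\}$ and $\Lambda$ are contained in the generating set $\Xi$; it is essential here that the graph is undirected, because one must step \emph{backwards} from $p$ to $r$. Hence $d(p,q) \le 2$ per edge and $K' = 2$ suffices. Your run-based accounting arrives at the same estimate, so I would not call this a gap, but the per-edge formulation is what you should actually write down: it makes the ``zigzag'' worry evaporate rather than requiring you to resolve it.
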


We can now state the central theorem about the graph $\fU$. 

\begin{theorem}\label{Thm: Big fat equivalence list}
Let $M$ be a finitely presented special monoid, with group of units $U(M)$. Let $\MCG{U_\fB(M)}{\fB}$ denote the right Cayley graph of $U_\fB(M) \cong U(M)$, and let $\fU$ be the Sch\"utzenberger graph of the units of $M$. Then the following are equivalent: 
\begin{enumerate}
\item $U(M)$ is virtually free.
\item $U(M)$ is context-free.
\item $\MCG{U_\fB(M)}{\fB}$ is a context-free graph.
\item $\textnormal{ud}(\MCG{U_\fB(M)}{\fB})$ is quasi-isometric to a tree.
\item $\textnormal{ud}(\fU)$ is quasi-isometric to a tree.
\item $\textnormal{ud}(\fU)$ has finite tree-width.
\item $\fU$ is a context-free graph.
\end{enumerate}
\end{theorem}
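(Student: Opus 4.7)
The plan is to prove the cycle of equivalences by first disposing of the purely group-theoretic implications among $(1)$--$(4)$ using the Muller-Schupp theorem and Dunwoody's characterisation of virtually free groups, and then to transport these equivalences across to the graph $\fU$ using the quasi-isometry machinery and the structural description of $\fU$ developed above.

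For the first block of equivalences $(1) \iff (2) \iff (3) \iff (4)$, I would appeal directly to the classical results. The equivalence $(1) \iff (2)$ is the Muller-Schupp theorem together with Dunwoody's contribution. For $(2) \iff (3)$, note that by Proposition~\ref{Prop: lud MCG is iso to GCG} the labelled undirected graph $\textnormal{lud}(\MCG{U_\fB(M)}{\fB})$ is isomorphic to the group Cayley graph $\GCG{U_\fB(M)}{\fB}$, and context-freeness of a group Cayley graph is precisely equivalent to context-freeness of the word problem. Finally $(1) \iff (4)$ is the standard geometric characterisation: a finitely generated group is virtually free if and only if its Cayley graph is quasi-isometric, as an undirected graph, to a tree (this is a consequence of Stallings--Dunwoody, or can be read off from Muller-Schupp by noting that context-free groups are precisely those whose Cayley graphs have finite tree-width, which for bounded-degree graphs is equivalent to being quasi-isometric to a tree).

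For the remaining equivalences, the key bridge is the quasi-isometry of Lemma~\ref{Lem: U(M) is qi to fU}, giving $\textnormal{ud}(\MCG{U_\fB(M)}{\fB}) \xrightarrow{\sim} \textnormal{ud}(\fU)$. Since being quasi-isometric to a tree is a quasi-isometry invariant for bounded-degree graphs, this immediately yields $(4) \iff (5)$. For $(5) \iff (6)$, I would invoke the characterisation, going back to Kuske--Lohrey and the references therein, that a connected graph of bounded degree is quasi-isometric to a tree if and only if it has finite tree-width. Both $\textnormal{ud}(\fU)$ and $\textnormal{ud}(\MCG{U_\fB(M)}{\fB})$ are of bounded degree, so this applies directly. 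The implication $(7) \Rightarrow (6)$ is essentially built into the definitions used in \cite{Muller1985}: a context-free graph has finite tree-width (in fact, bounded tree-width, and one may even extract a tree decomposition directly from the context-free grammar generating the ends).

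The main obstacle, and the only genuinely new content, is the implication $(6) \Rightarrow (7)$, or equivalently the direct construction of context-freeness of $\fU$ from context-freeness of $\MCG{U_\fB(M)}{\fB}$. For this I would argue structurally: $\fU$ is obtained from $\MCG{U_\fB(M)}{\fB}$ by (i) subdividing each edge labelled $b_{i,j}$ into a directed path of length $|\lambda_{i,j}|$, (ii) quotienting by $\sim_M$, and (iii) adding the finitely many missing edges from Definition~\ref{Def: Adding missing edges}. Operation (i) inflates each edge by a bounded amount since the lengths $|\lambda_{i,j}|$ are uniformly bounded by the length of the longest relator; this preserves context-freeness as one may replace each edge transition in the pushdown automaton recognising the ends of $\MCG{U_\fB(M)}{\fB}$ by a bounded sequence of transitions. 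Operation (ii) collapses only vertices within a single $\sim_i$-class, and by Proposition~\ref{Prop: sim_M => sim_i} these classes are uniformly finite; such a bounded identification preserves context-freeness. Operation (iii) adds at most a bounded number of edges locally, again preserving context-freeness. Thus $(3) \Rightarrow (7)$. Combined with the already-established chain $(7) \Rightarrow (6) \Rightarrow (5) \Rightarrow (4) \Rightarrow (3)$, this closes the cycle and completes the proof. The hard part, as noted, is the careful bookkeeping in showing that each of the three operations producing $\fU$ from $\MCG{U_\fB(M)}{\fB}$ preserves context-freeness; one should take care in particular with operation (iii), as the added edges could in principle produce new cycles that destroy the end-isomorphism structure, but the uniformity of Proposition~\ref{Prop: Missing edges} across $\sim_i$-classes ensures that the ends are modified in a translation-invariant way under the $U(M)$-action on $\fU$.
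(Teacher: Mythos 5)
Your reduction of $(1)$--$(4)$ to the classical group-theoretic results, and the transfer $(4)\iff(5)$ along the quasi-isometry of Lemma~\ref{Lem: U(M) is qi to fU}, match the paper. The genuine gap is in your step $(5)\iff(6)$, specifically the direction $(6)\Rightarrow(5)$, which your cycle of implications cannot do without: the statement that a connected graph of bounded degree has finite tree-width if and only if it is quasi-isometric to a tree is \emph{false} for arbitrary graphs. The forward direction (quasi-tree plus bounded degree implies finite tree-width) is fine, since finite tree-width is a quasi-isometry invariant among bounded-degree graphs and trees have tree-width $1$; but the converse fails. For a counterexample, take an infinite ray and attach to its $n$-th vertex, by a single edge, a cycle of length $n$: this graph is connected, has degree at most $3$ and tree-width $2$, yet it violates Manning's bottleneck criterion on arbitrarily large cycles and so is not quasi-isometric to any tree. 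The version of this equivalence in Kuske--Lohrey (their Theorem~3.10) carries the additional hypothesis that $\textnormal{Aut}(\Gamma)$ has finitely many orbits on $\Gamma$, and that hypothesis is doing real work. The paper closes exactly this hole by \emph{not} arguing on $\fU$ in isolation: from $(6)$ it transfers finite tree-width across the quasi-isometry to $\textnormal{ud}(\MCG{U_\fB(M)}{\fB})$, which \emph{is} vertex-transitive, applies Kuske--Lohrey there to get $(3)$, and then returns around the already-established chain; alternatively one can apply Kuske--Lohrey directly to $\fU$ using the co-compact $U(M)$-action constructed before Lemma~\ref{Lem: U(M) is qi to fU}. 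Your argument is repairable by inserting the almost-transitivity of $\fU$ (or of the Cayley graph) at this point, but as written the step is unjustified.

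A secondary remark on your $(3)\Rightarrow(7)$: this is a genuinely different route from the paper, which instead gets $(6)\iff(7)$ in one line from Kuske--Lohrey applied to the almost-transitive graph $\fU$. Your constructive approach (bounded edge-subdivision, quotient by $\sim_M$, adding the missing edges) is plausible, but note that quotienting a context-free graph by an equivalence relation with uniformly finite classes does \emph{not} in general preserve context-freeness -- one can pair up distant vertices and destroy the end structure -- so the argument must lean on the fact that each $\sim_M$-class lies inside a single $\sim_i$-class of uniformly bounded diameter, i.e.\ on locality and $U(M)$-equivariance rather than mere finiteness. You gesture at this, but it is the point that would need to be made precise.
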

\begin{proof}
Let $M = \pres{Mon}{A}{R_i = 1 \: (i \in I)}$ be a finitely presented special monoid. We begin by noting that it follows from e.g.\ \cite{Zhang1992} that the group of units of a finitely presented special monoid is finitely generated, so we will throughout the proof assume that $U(M)$ is finitely generated.

$(1 \iff 2)$ This is the Muller-Schupp Theorem \cite{MullerSchupp1983}.

$(2 \iff 3)$ A finitely generated group $G$ is context-free if and only if $\GCG{G}{S}$ is a context-free graph for some (any) finite set $S$ which generates $G$ as a group; this is precisely the statement of Theorem~2.9 of \cite{Muller1985}. When additionally $S$ generates $G$ as a monoid, then by Proposition~\ref{Prop: lud MCG is iso to GCG}, $\GCG{G}{S}$ is context-free if and only if $\MCG{G}{S}$ is context-free. Since $\fB$ generates $U(M)$ as a monoid, we are done.

$(1 \iff 4)$ A group is virtually free if and only if its group Cayley graph with respect to (any) finite generating set is quasi-isometric to a tree; the forward implication is obvious, and the converse is well-known, e.g.\ as Theorem~7.19 of \cite{Ghys1990}. Clearly, the undirected monoid Cayley graph of a group is quasi-isometric to its group Cayley graph, and we have the equivalence. As an aside, it may be noted that we have $(3 \implies 4)$ for \textit{any} context-free graph, a statement which appears at the end of the proof of Lemma~8.4 of \cite{Chalopin2017}.

$(4 \iff 5)$ This is immediate by Lemma~\ref{Lem: U(M) is qi to fU}.

$(5 \iff 6)$ We begin with a proposition: let $\Gamma$ and $\Gamma^\prime$ be graphs such that $\Gamma$ has finite tree-width and such that the degree of $\Gamma$ and $\Gamma^\prime$ are uniformly bounded by some constant $d$. Then Proposition~5.17 of \cite{Diekert2017} says that if $\Gamma^\prime$ is quasi-isometric to $\Gamma$, then $\Gamma^\prime$ has finite tree-width too. Hence, for the forward implication, if ud$(\fU)$ is quasi-isometric to a tree, then as ud$(\fU)$ has bounded degree by finiteness of $\Lambda$, and as trees clearly have finite tree-width, we may use the aforementioned proposition to conclude that ud$(\fU)$ has finite tree-width. For the converse implication, assume that ud$(\fU)$ has finite tree-width. Then as ud$(\fU)$ has bounded degree and is quasi-isometric to ud$(\MCG{U_\fB(M)}{\fB})$, we have that ud$(\MCG{U_\fB(M)}{\fB})$ also has finite tree-width, again by the aforementioned Proposition. But ud$(\MCG{U_\fB(M)}{\fB})$ is a vertex-transitive connected graph of bounded degree. Hence by Theorem~3.10 of \cite{Kuske2005} we have that $\MCG{U_\fB(M)}{\fB})$ is context-free, and so by the equivalence $(3 \iff 5)$ we are done.

$(6 \iff 7)$ By Theorem~3.10 of \cite{Kuske2005}, if $\Gamma$ is a connected graph of bounded degree such that Aut$(\Gamma)$ has only finitely many orbits on $\Gamma$, then ud$(\Gamma)$ having finite tree-width is equivalent to $\Gamma$ being context-free. Hence it suffices to show that $\fU$ has only finitely many orbits under automorphism, as $\fU$ is connected and has bounded degree since $\Lambda$ is finite. But a co-compact (and hence co-finite) action of $U(M)$ by automorphisms on $\fU$ was constructed earlier when proving Lemma~\ref{Lem: U(M) is qi to fU}, and we are done.
\end{proof}

Structurally, $\fU$ has a rather close connection with $\mathscr{H}_1$, the Green's $\mathscr{H}$-class of the identity element. First, note that $\mathscr{H}_1 = U(M)$, and thus taking the subgraph of $\MCG{M}{A}$ induced on $\mathscr{H}_1$ will in general produce a disconnected graph. To remedy this, for a word $w \in A^\ast$, let $\Gamma_w$ denote the subgraph of $\MCG{M}{A}$ induced on the set of vertices $\{ \pi(w') \mid \exists w'' \in A^\ast, w \equiv w' w''\}$. For example, if $v \in A^\ast$ is a word with $\pi(v) = 1$, then $\Gamma_v$ is a walk from $1$ to $1$, visiting a number of $\mathscr{H}$-classes inside the $\mathscr{R}$-class of $1$. For a right invertible word $v \in A^\ast$, let $H_v$ (note that this is distinct from $\mathscr{H}_v$) denote the set of all $\mathscr{H}$-classes visited by $\Gamma_v$. Then, by a standard application of Green's Lemma via the left action of $U(M)$ on $\mathscr{R}_1$, it follows that $\fU$ is isomorphic to $\bigcup_{\lambda \in \Lambda} H_\lambda$. We note further that $\Lambda$ is a finite set, and that $\bigcup_{\lambda \in \Lambda} H_\lambda \cong \bigcup_{i=1}^{|\Lambda|} \mathscr{H}_1$, yielding another proof of Lemma~\ref{Lem: U(M) is qi to fU}.

\subsection{Ends of $\fU$} There are now two more lemmata which fall out of elementary discussion, both of which relate to the constructions in Section~\ref{Sec: Trees and Determinisations}.

\begin{lemma}\label{Lem: F = I(F) cup N(F)}
For all $w_1, w_2 \in V(\fU)$ we have
\[
\fU(w_1) \sim \fU(w_2) \implies \left( w_1 \: \textnormal{locally invertible} \iff w_2 \: \textnormal{locally invertible} \right)
\]
In particular any set $F(\fU)$ consisting of representatives of all frontier points of $\fU$ can be written as a disjoint union $I(\fU) \sqcup N(\fU)$ of the locally invertible and not locally invertible representatives, respectively.
\end{lemma}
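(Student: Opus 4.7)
The plan is to identify a structural property that characterizes locally invertible vertices in a manner preserved by end-isomorphisms. My candidate characterization is the following: \emph{a vertex $w = [(m, \xi)] \in V(\fU)$ is locally invertible if and only if, for every piece $\lambda \in \Lambda$, there is a directed path in $\fU$ starting at $w$ whose path label is exactly $\lambda$.} The forward direction is immediate, since $\lambda = a_1 \cdots a_n$ can be read from $[(m, \varepsilon)]$ by traversing the sequence of vertices $[(m, a_1)], [(m, a_1 a_2)], \ldots, [(m, a_1 \cdots a_{n-1})], [(m \pi(\lambda), \varepsilon)]$, each of which is a valid element of $V(\fU)$ since every $a_1 \cdots a_k$ is a non-empty proper prefix of $\lambda$ and hence lies in $\fP \subseteq \fPe$.

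For the converse direction, given $w = [(m, \xi)]$ with $\xi \neq \varepsilon$, I would use Proposition~\ref{Prop: Missing edges} (which tells us that an outgoing $a$-edge at $w$ exists precisely when $\xi a \lra{M} \mu$ for some $\mu \in \fPe \cup \Lambda$) to exhibit a piece $\lambda = a_1 \cdots a_n$ for which tracing the partial products $\xi, \xi a_1, \xi a_1 a_2, \ldots$ eventually produces a word outside every equivalence class of $\fPe \cup \Lambda$, blocking the continuation of the path. The essential combinatorial input is the biprefix-code structure of $\Lambda$, together with Proposition~\ref{Prop: Invertible contains a piece}, which together prevent a non-empty $\xi$ from being compatibly absorbed into the prefix structure of every piece at once.

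Once the characterization is in hand, its invariance under end-isomorphism follows by a standard transfer argument. An end-isomorphism $\psi \colon \fU(w_1) \to \fU(w_2)$ is a label-preserving graph isomorphism, so any $\lambda$-labelled path starting at $w_1$ that lies within $\fU(w_1)$ is carried to a corresponding path starting at $\psi(w_1)$ within $\fU(w_2)$. By the root-independence of context-freeness (\cite[Corollary~2.7]{Muller1985}) one may, if necessary, reposition the root so that the short ($\leq \Omega$-length) paths required by the characterization remain within the relevant ends. The main biconditional then transfers, and the ``In particular'' clause follows since for any representative end $\Gamma_i$ with frontier set $\Delta_i$, every $f \in \Delta_i$ satisfies $\fU(f) = \Gamma_i$ and hence belongs to the same end-isomorphism class, forcing a common locally-invertible status across $\Delta_i$.

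The main obstacle will be a rigorous justification of the converse direction of the characterization. The combinatorics become subtle when $\xi \in \fP$ is a common prefix of several pieces, or when the congruence $\lra{M}$ identifies short words in unexpected ways; a complete treatment would likely proceed by an induction on $|\xi|$, repeatedly leveraging the structural properties of $\Lambda$, $\fPe$, and the normal form lemma (Lemma~\ref{Lem: Zhang's Lemma}) developed in Section~\ref{Sec: Special Monoids}.
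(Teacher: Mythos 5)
Your overall strategy --- isolate a property that distinguishes locally invertible vertices from the rest and show it is preserved by end-isomorphisms --- is the right one, and is also the paper's. But your particular choice of invariant leaves two genuine gaps. First, the converse direction of your characterisation (that a vertex $[(m,\xi)]$ with $\xi \neq \varepsilon$ must fail to read \emph{some} piece $\lambda \in \Lambda$ as a directed path label) is precisely the hard combinatorial content, and you explicitly leave it unproved. It is not automatic that the partial products $\xi a_1 \cdots a_k$ eventually leave $\fPe \cup \Lambda$ modulo $\lra{M}$: for $M = \pres{Mon}{a,b,c}{b(abc)b=1}$ the piece $b$ \emph{is} readable from the non-locally-invertible vertex $[(m,a)]$ (since $a\cdot b \equiv ab \in \fP$), so any argument must carefully produce the failing piece, and nothing in the proposal does so. Second, even granting the characterisation, the transfer step is flawed: a $\lambda$-labelled directed path starting at $w_1$ need not lie inside $\fU(w_1)$ (its edges may run back towards the root and out of the end), and an end-isomorphism only controls the subgraphs $\fU(w_1)$ and $\fU(w_2)$ themselves. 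Appealing to \cite[Corollary~2.7]{Muller1985} does not repair this, because that result concerns context-freeness of the whole graph; the hypothesis $\fU(w_1) \sim \fU(w_2)$ is stated relative to a fixed root, and re-rooting changes which subgraphs the ends $\fU(w_i)$ are, hence potentially the hypothesis itself.

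The paper's proof is far more local and avoids both issues. Assuming $w_1$ locally invertible and $w_2$ not, it takes a non-trivial \emph{undirected} path in $\textnormal{ud}(\fU(w_2))$ from $w_2$ to the nearest locally invertible vertex, passing through no other locally invertible vertex; such a path is labelled either by a proper suffix $s$ of a piece or by the involution $\bar{p}$ of a proper prefix $p$ of a piece. Transporting this single short path through the end-isomorphism makes the same label readable from the locally invertible vertex $w_1$, which forces $s$ (respectively $p$) to be invertible; but a piece with a non-empty proper invertible prefix or suffix contradicts minimality of the elements of $\Lambda$. To salvage your approach you would need to (i) supply a rigorous proof of the converse of your characterisation and (ii) replace your global piece-reading paths by ones guaranteed to remain inside the ends --- at which point you would essentially have rediscovered the paper's argument.
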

\begin{proof}
Assume that $\fU(w_1) \sim \fU(w_2)$. Assume that $w_1$ is locally invertible and that $w_2$ is not locally invertible. Then in $\textnormal{ud}(\fU(w_2))$ there is a set of undirected non-trivial paths from $w_2$ to a locally invertible vertex passing through no other locally invertible vertices. Choose one such path. This path will be labelled either by a suffix $s$ of a piece, or by the involution of a prefix $p$ of a piece, denoted $\bar{p}$. In either case, since $w_2$ is not locally invertible, $s$ or $p$, respectively, will be proper. Since $\fU(w_1) \sim \fU(w_2)$, this same path can be found from $w_2$ in $\textnormal{ud}(\fU(w_2))$. But in the first case, $s$ will now be readable from a locally invertible vertex and hence a prefix of a piece; thus $s$ is invertible, and since it is a proper suffix of a piece we have a contradiction. In the second case, $\bar{p}$ will be readable from a locally invertible vertex, and hence $p$ is also a suffix of a piece; thus $p$ is invertible, and since it is a proper prefix of a piece we have a contradiction. Thus $w_2$ must be locally invertible. By symmetry, we have the claim. 
\end{proof}

Clearly, any $\sim_i$-class is isomorphic to the quotient of $\fU$ by the action of $U(M)$, and in each $\sim_i$-class there is only a single locally invertible vertex, and this corresponds to the orbit of the root $1$ of $\fU$. In particular, $\fU$ is $N(\fU)$-full, in the terminology of Section~\ref{Sec: Trees and Determinisations}. Furthermore, we have the following graphical interpretation of the fact that a proper prefix of a piece cannot be a suffix of a piece.

\begin{lemma}\label{Lem: fU is N(fU) overlap free}
The graph $\fU$ is $N(\fU)$-overlap-free.
\end{lemma}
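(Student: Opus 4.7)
The plan is to argue by contradiction. Assume paths $p_1$ and $p_2$ in $\fU$ violate Definition~\ref{Def: S-overlap-free}: $\ell(p_1) \equiv \ell(p_2) =: \alpha$, $p_1$ starts at the root $\ro = [(\varepsilon; \varepsilon)]$, and $p_2$ begins at some $[(u_0; \xi)] \in V_{N(\fU)}$ and ends at a vertex $[(v_0; \varepsilon)] \notin V_{N(\fU)}$. By Lemma~\ref{Lem: F = I(F) cup N(F)}, $V_{N(\fU)}$ is just the set of non-locally-invertible vertices, so $\xi \in \fP$ is a non-empty proper prefix of some piece. I will derive a contradiction using only the minimality of pieces and Proposition~\ref{Prop: fU is an induced subgraph of fR1}.

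The central claim will be that $\pi(\alpha) \in U(M)$. Translating $p_2$ into monoid algebra yields $\pi(u_0)\pi(\xi)\pi(\alpha) = \pi(v_0)$, and since $u_0, v_0 \in \Lambda^\ast$ both represent elements of $U(M)$, rearranging gives $\pi(\xi)\pi(\alpha) = g$ for some $g \in U(M)$. This exhibits $g^{-1}\pi(\xi)$ as a left inverse of $\pi(\alpha)$. On the other hand, $p_1$ is a walk in $\fU$, which by Proposition~\ref{Prop: fU is an induced subgraph of fR1} is an induced subgraph of $\fR_1 \subseteq \MCG{M}{A}$. Hence reading $\alpha$ from the identity vertex stays inside $R(M)$, so $\pi(\alpha)$ is right invertible. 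Being both left and right invertible, $\pi(\alpha)$ is invertible.

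The contradiction is then immediate: $\pi(\xi) = g \pi(\alpha)^{-1}$ is a product of elements of $U(M)$, hence lies in $U(M)$, so $\xi$ is invertible. However, $\xi \in \fP$ is a non-empty proper prefix of some piece $\lambda \in \Lambda$, and pieces are minimal invertible words by definition, so no such prefix can be invertible. This last step is the only point that needs care; it is precisely where the minimality requirement on pieces is doing the work, isolating $\pi(\xi)$ from $U(M)$ on purely algebraic grounds, which is what makes the whole overlap-free argument succeed. No use of $p_1$ beyond its existence (supplying right-invertibility of $\pi(\alpha)$) is required.
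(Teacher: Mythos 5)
Your proof is correct, and it takes a genuinely different route from the paper's. The paper argues combinatorially on path labels: it extracts from $p_2$ an initial subpath to the nearest locally invertible vertex, observes that its label must be a proper suffix of a piece, notes that the same word is readable from the root and must therefore end in a prefix of a piece, and derives a contradiction from the (implicit) fact that a non-empty word which is simultaneously a proper suffix and a prefix of pieces would be invertible. You instead push everything through the embedding $\phi \colon \fU \hookrightarrow \fR_1$ of Proposition~\ref{Prop: fU is an induced subgraph of fR1} and work algebraically in $M$: $p_1$ yields right invertibility of $\pi(\alpha)$, $p_2$ yields left invertibility, hence $\pi(\alpha) \in U(M)$, hence $\pi(\xi) \in U(M)$, contradicting the minimality of pieces. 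Both arguments ultimately rest on the same fact --- a non-empty proper prefix of a piece cannot be invertible --- but yours makes explicit the left/right-invertibility bookkeeping that the paper's ``standard overlap argument'' leaves implicit, and it avoids analysing how $p_2$ decomposes inside $\fU$; in particular it is insensitive to the extra edges introduced in Definition~\ref{Def: Adding missing edges}, which the paper's appeal to ``the nearest locally invertible vertex'' quietly has to accommodate. The only cost is an explicit dependence on Proposition~\ref{Prop: fU is an induced subgraph of fR1} (and on Lemma~\ref{Lem: F = I(F) cup N(F)} to identify $V_{N(\fU)}$ with the non-locally invertible vertices), both of which are available at this point in the paper.
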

\begin{proof}
Assume for contradiction that $p_1$ and $p_2$ are two paths contradicting the $N(\fU)$-overlap-free property, i.e.\ such that $p_1$ is a non-empty path starting in $1$ and $p_2$ is a path starting in a non-locally invertible vertex, but ending in a locally invertible vertex, and with $\ell(p_1) \equiv \ell(p_2) \equiv \alpha$. Then $p_2$ must begin with a subpath to the nearest locally invertible vertex, having path label $\alpha'$, which must therefore be a proper suffix of a piece. But since $p_2$ begins with such a subpath, so too does $p_1$; hence $\alpha'$ also labels a path from $1$, and therefore it must have a suffix which is a prefix of a piece, and we have a contradiction. 
\end{proof}

Let $\mathcal{N}$ denote the set of all non-locally invertible vertices. Then it is clear from the definition of the action of $U(M)$ that this action respects the structure of $\mathcal{N}$.  

\begin{corollary}\label{Cor: fU satisfies bounded folding}
The triple $(\fU, U(M), N(\fU))$ satisfies the bounded folding condition. Furthermore, for every $\phi \in U(M)$, we have that $(v \in \mathcal{N}) \iff (\phi(v) \in \mathcal{N})$.
\end{corollary}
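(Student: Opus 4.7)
The plan is to verify each of the three clauses of the bounded folding condition separately, drawing on the results already established in this section, and then to deduce the second claim as a direct observation about how $U(M)$ acts on $\fU$.

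First, for almost-transitivity: $U(M)$ acts by left multiplication on $\MCG{U_\fB(M)}{\fB}\cong\fU_0/\sim_i$ faithfully and transitively on vertices, and as noted in the paragraph preceding Lemma~\ref{Lem: U(M) is qi to fU} this extends to a properly discontinuous action on $\fU_0$. I would then check that this action descends to $\fU$. Descent to $\fU'=\fU_0/\sim_M$ follows because $(m_1,\xi,\lambda_1)\sim_M(m_2,\zeta,\lambda_2)$ if and only if $m_1\pi(\xi)=m_2\pi(\zeta)$, a condition plainly preserved by left multiplication; and the edges added by Definition~\ref{Def: Adding missing edges} are added uniformly across $\sim_i$-classes, so the action respects them. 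A fundamental domain is parametrised by the finite set $\fPe$ (quotiented by $\sim_M$), so the quotient $\fU/U(M)$ is finite, giving almost-transitivity.

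Second, $N(\fU)$-overlap-freeness is precisely the content of Lemma~\ref{Lem: fU is N(fU) overlap free}, so there is nothing further to prove.

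Third, for $N(\fU)$-fullness, I would argue as follows. Let $e=(u,a,v)\in E(\fU)$ be an edge such that $u,v$ lie in distinct $\sim_{L,U(M)}$-classes; since the action of $U(M)$ has $\sim_i$-classes as its orbits, this means $u$ and $v$ are in distinct $\sim_i$-classes. By the construction of $\fU$ (both via subdivided piece-edges of $\MCG{U_\fB(M)}{\fB}$ in $\fU'$ and via the edges added in Definition~\ref{Def: Adding missing edges}), any edge whose endpoints lie in different $\sim_i$-classes must correspond to reading the \emph{final} letter of a piece, so its terminus is locally invertible. Hence $v\in I(\fU)$, so in particular $v\notin V_{N(\fU)}$ by Lemma~\ref{Lem: F = I(F) cup N(F)}. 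This is the main technical point to nail down, and I expect it to be the principal obstacle; it amounts to a careful case analysis using Proposition~\ref{Prop: Missing edges} together with the observation that a proper prefix of a piece is never invertible.

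Finally, for the second claim, the action of $\phi\in U(M)$ acts by left multiplication on the first coordinate of a literal form $[(u_0;\xi)]$ and leaves $\xi$ untouched. Local invertibility of a vertex is determined solely by whether it admits the literal form $[(u_0;\varepsilon)]$, equivalently whether $\xi=\varepsilon$, so $\phi$ preserves the sets $I(\fU)$ and $\mathcal{N}$, yielding $v\in\mathcal{N}\iff\phi(v)\in\mathcal{N}$.
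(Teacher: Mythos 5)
Your proposal is correct and follows essentially the same route as the paper, which treats the corollary as a summary of the immediately preceding discussion: almost-transitivity comes from the left-multiplication action of $U(M)$ constructed before Lemma~\ref{Lem: U(M) is qi to fU}, $N(\fU)$-overlap-freeness is Lemma~\ref{Lem: fU is N(fU) overlap free}, $N(\fU)$-fullness follows because every edge of $\fU$ crossing between distinct $\sim_i$-classes must complete a piece and therefore terminates at a locally invertible vertex (which cannot lie in $V_{N(\fU)}$ by Lemma~\ref{Lem: F = I(F) cup N(F)}), and the final claim is immediate from the action touching only the $U(M)$-coordinate. The fullness step you flag as the "principal obstacle" is exactly the observation the paper makes, and your case analysis via the construction of $\fU'$ and Definition~\ref{Def: Adding missing edges} closes it.
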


This completes the description of $\fU$. Using this graph, which captures a great deal of graphical information regarding the group of units of $M$, we will now turn to a description of the right units. 

\section{The Sch\"{u}tzenberger Graph of $1$}\label{Sec: Constructing fR1}

The submonoid $R(M)$ of right invertible elements of a finitely presented special monoid is in many ways similarly behaved to the group of units of $M$. Zhang \cite{Zhang1992} completely described the algebraic properties of this submonoid, which can be summarised in the following proposition.

\begin{prop}[Zhang \cite{Zhang1992}]
Let $M = \pres{Mon}{A}{R_i = 1 \: (i \in I)}$ be a finitely presented special monoid. Then 
\[
R(M) \cong \operatorname{FM}(k) \ast U(M)
\]
where $\operatorname{FM}(k)$ is a free monoid of rank $k$, for some $k \in \mathbb{N}$. In particular, if $M$ is finitely presented, then $R(M)$ is finitely presented.
\end{prop}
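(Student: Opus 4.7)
The plan is to exhibit the isomorphism explicitly by exploiting the generating set $\Xi = \Lambda \cup \fP$ for $R(M)$ from Proposition~\ref{Prop: Xi generates the right invertibles}, together with the Normal Form Lemma (Lemma~\ref{Lem: Zhang's Lemma}). Recall that $\Lambda$ is the finite set of pieces (which generates $U(M)$ by Proposition~\ref{Prop: Lambda generates the invertibles}) and $\fP = \Xi \setminus \Lambda$ is the finite set of non-empty proper prefixes of pieces. I will first identify the integer $k$ by choosing a finite transversal $I_0 \subseteq \fP$ for the equivalence on $\fP$ which identifies $\xi_1$ with $\xi_2$ whenever there exists $u \in U(M)$ with $\pi(\xi_1) = u \cdot \pi(\xi_2)$. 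Finiteness of $I_0$ follows because $\fP$ itself is finite. Setting $k := |I_0|$ gives a canonical identification of $\operatorname{FM}(k)$ with the free monoid on $I_0$.

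Next, the natural inclusions $U(M) \hookrightarrow R(M)$ and $I_0 \hookrightarrow R(M)$ induce, via the universal property of the free product, a monoid homomorphism $\phi \colon \operatorname{FM}(k) \ast U(M) \to R(M)$. Surjectivity follows by taking any right invertible word $w \in A^\ast$, using Proposition~\ref{Prop: Xi generates the right invertibles} to rewrite $w \lra{M} \xi_1 \xi_2 \cdots \xi_n$ with $\xi_i \in \Xi$, and then grouping consecutive factors from $\Lambda$ into single elements of $U(M)$ while replacing each intervening factor $\xi_j \in \fP$ by the pair (left $U(M)$-correction, element of $I_0$) obtained from the definition of $I_0$. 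This yields an alternating word in the free product whose $\phi$-image is $r$.

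For injectivity, suppose two alternating normal form words of the free product,
\[
\omega_1 = u_0 f_1 u_1 \cdots f_m u_m, \qquad \omega_2 = u_0' f_1' u_1' \cdots f_n' u_n',
\]
with $u_i, u_i' \in U(M)$ and $f_i, f_i' \in I_0$, map to the same element of $R(M)$. Lift each $u_i, u_i'$ to a word over $\Lambda$, so that $\omega_1, \omega_2$ are represented by actual words $W_1, W_2 \in A^\ast$ with $W_1 \lra{M} W_2$. Lemma~\ref{Lem: Zhang's Lemma} forces a unique decomposition of each $W_i$ into maximal invertible factors separated by single letters; the key point is that each $f_j \in I_0$ cannot be absorbed into any adjacent invertible factor, because by construction $I_0$ consists of proper prefixes of pieces, which form a biprefix code relative to invertibility (this is the standard overlap argument underlying Proposition~\ref{Prop: sim_M => sim_i}). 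Matching the two Normal Form decompositions term-by-term yields $m = n$, $f_i \lra{M} f_i'$, and $u_i = u_i'$ in $U(M)$; combined with the transversal property defining $I_0$, this forces $f_i = f_i'$, proving $\omega_1 = \omega_2$ in the free product.

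The main obstacle will be checking that the transversal $I_0$ is well-defined and that the overlap analysis in the injectivity step correctly rules out any "hidden" rewriting that would break the alternation; here the assumption that no relator contains a proper invertible subword (Subsection on identical subwords) is essential, as without it one could have a prefix $\xi \in \fP$ admitting a nontrivial decomposition through $U(M)$ internally. For the "in particular" clause: $U(M)$ is finitely presented by the construction of Section~\ref{Subsec: Invertible pieces} (Makanin's theorem), $\operatorname{FM}(k)$ is finitely presented since $k$ is finite, and the free product of two finitely presented monoids is finitely presented, so $R(M)$ is finitely presented.
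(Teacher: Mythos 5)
The paper offers no proof of this proposition at all --- it is imported wholesale from Zhang \cite{Zhang1992} --- so your attempt has to stand on its own, and it contains a genuine gap in the identification of the rank $k$, which then propagates into the injectivity argument. You build the transversal $I_0$ of $\fP$ using the relation ``$\pi(\xi_1)=u\cdot\pi(\xi_2)$ for some $u\in U(M)$''. But by the paper's own Proposition~\ref{Prop: sim_M => sim_i}, \emph{left} multiplication by a unit can never identify two proper prefixes of pieces unless the unit is trivial, so your $I_0$ is just a transversal of $\fP$ modulo $\lra{M}$. The identifications that actually matter come from \emph{right} multiplication by units, which your relation does not see. Concretely, take $M=\pres{Mon}{a,b,c}{b(abc)b=1}$ (Example~\ref{Ex: b(abc)b = 1} of the paper): there $\Lambda=\{b,abc\}$, $U(M)=\langle\beta\rangle\cong\mathbb{Z}$ with $\beta=\pi(b)$, and $\fP=\{a,ab\}$ with $\pi(a)\neq\pi(ab)$, so your $I_0$ has two elements. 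Yet $\pi(ab)=\pi(a)\beta$, so your $\phi$ sends the two distinct free-product normal forms $f_{ab}$ and $f_{a}\beta$ to the same element of $R(M)$; since $R(M)$ is generated by $U(M)$ together with the single non-unit $\pi(a)$, the correct decomposition here is $\operatorname{FM}(1)\ast\mathbb{Z}$, and $\operatorname{FM}(2)\ast\mathbb{Z}$ is not even abstractly isomorphic to it.

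The same example shows exactly where your injectivity step fails: the ``key point'' that an element of $I_0$ cannot be absorbed into an adjacent invertible factor is false. Minimality of pieces guarantees that no non-empty \emph{prefix} of a piece is invertible, but a proper prefix of a piece may well have an invertible proper \emph{suffix} (above, $ab$ has the invertible suffix $b$). Hence the Zhang normal form of $W_1=\hat u_0 f_1\hat u_1\cdots$ does not break at the boundaries of the $f_j$: invertible suffixes of $f_j$ merge into $\hat u_j$, and the term-by-term matching does not recover the data $(u_i,f_i)$ you started from. To repair the argument one must quotient by the two-sided $U(M)$--$U(M)$ action \emph{and} discard those prefixes whose images decompose as products of shorter non-invertible right units and units, i.e.\ take the free generators to be a transversal of the double cosets of the \emph{indecomposable} elements of $R(M)\setminus U(M)$; this is in essence what Zhang does. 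Your final step (finite presentability of $R(M)$ from finite presentability of $U(M)$, finiteness of $k$, and closure of finite presentability under free products) is fine once the isomorphism is in place.
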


However, whereas this isomorphism induces a quasi-isometric embedding, once suitably defined, of $R(M)$ into $M$ by Lemma~\ref{Lem: R1 is qi to fR}, it does not describe the structure of the subgraph of the Cayley graph $\MCG{M}{A}$ induced on the set of right invertible elements. The goal of this section is to describe the structure of this graph.

\subsection{Graphically representing right units}

Recall the definition of $\fR_1$ as the connected component of the identity element in the right Cayley graph of $M$. We are interested in the connections between the context-freeness of $U(M)$ and $\fR_1$. The following (incorrect!) argument along this line is very tempting. Assume that $U(M)$ is virtually free. Then the Cayley graph of $U(M)$ has decidable monadic second-order theory. It is well known that the Cayley graph of any finitely generated free monoid $F$ has decidable monadic second-order theory. Hence, by \cite[Theorem~6.3(b)]{Kuske2003}, the Cayley graph of the free product $U(M) \ast F$ has decidable monadic second order theory. Therefore, the Cayley graph of the monoid of right units $\mathscr{R}_1$ has decidable monadic second order theory. One would now wish to conclude that the graph $\fR_1$ has decidable monadic second order theory. But this isomorphism of monoids is \textit{not} an isomorphism of graphs. Indeed, there is not even any natural way to embed the Cayley graph of $U(M) \ast F$ into the Cayley graph of $M$. The best we are able to say in this direction is Lemma~\ref{Lem: R1 is qi to fR}, i.e.\ that there is a quasi-isometry between the two graphs, considered as undirected graphs. But there are many examples of graphs quasi-isometric to $\mathbb{N}$ which have undecidable monadic second-order theory, see e.g.\ \cite{Elgot1966}, despite the fact that $\mathbb{N}$ is even context-free. We therefore have no \textit{a priori} reason to expect that there should be any connection at all between the context-freeness of $U(M)$ and $\fR_1$. 

\begin{remark}
It is at this stage, and throughout the remainder of the paper, key to note the structural differences between $\fR_1$ and the Cayley graph of the right units $R(M)$; to illustrate this very concretely, we note that $\fR_1$ is not a Cayley graph of anything, unless $M$ is a group. Indeed, if $M$ is not a group, then $\fR_1$ is not even a regular graph.
\end{remark}

The reader might now be sufficiently convinced that another approach, based on $\fU$ and the tools developed in Section~\ref{Sec: Constructing fU}, is needed. We now present such an approach, starting with a rather straightforward, but central theorem. Let $\TU$ denote the graph $\T(\fU, N(\fU))$, where $N(\fU)$ as earlier denotes a set of representatives of the non-locally invertible frontier points of $\fU$. As the right invertible elements of $M$ can all be represented by elements of $\Xi^\ast$, it should not be surprising that $\TU$ has a representative for every right invertible element. Indeed, in certain restricted cases (viz. precisely when $\Xi$ is a code as a subset of $A^\ast$) we have that $\fR_1$ is isomorphic to $\TU$. In a more general setting, however, it is fully possible for the graph $\TU$ to be non-deterministic. We now show that, modulo this non-determinism, $\TU$ completely describes $\fR_1$. 

\begin{theorem}\label{Thm: R_1 is TU determinised}
Let $\eta$ be the determinising congruence on $\TU$. Then $\TU / \eta \cong \fR_1$. 
\end{theorem}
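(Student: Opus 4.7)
The plan is to construct a natural labelled graph morphism $\psi : \TU \to \fR_1$, verify it is surjective and factors through the determinising congruence $\eta$, and then argue the induced map $\bar\psi : \TU/\eta \to \fR_1$ is injective. Since $\fR_1$ is deterministic and $\bar\psi$ is then a faithful bijective label-preserving graph morphism, this yields the desired isomorphism.

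To define $\psi$, I would use Proposition~\ref{Prop: Tree(Gamma, S) is a tree, tuples} to identify a vertex of $\TU$ with a tuple $(w_0, w_1, \ldots, w_k)$ in which $w_0, \ldots, w_{k-1}$ are branch points (necessarily non-locally invertible by Lemma~\ref{Lem: F = I(F) cup N(F)}) and $w_k \in V(\fU)$. Composing with the embedding $\phi : \fU \hookrightarrow \fR_1$ of Proposition~\ref{Prop: fU is an induced subgraph of fR1}, set
\[
\psi((w_0, w_1, \ldots, w_k)) := \phi(w_0) \cdot \phi(w_1) \cdots \phi(w_k) \in R(M),
\]
and extend to edges in the obvious way: an edge inside the copy of $\fU$ attached at the branch point $(w_0, \ldots, w_{k-1})$ maps to the edge of $\fR_1$ obtained from its $\phi$-image by left-multiplication by $\phi(w_0) \cdots \phi(w_{k-1})$. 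This is a well-defined labelled graph morphism. For surjectivity, given $m \in R(M) = V(\fR_1)$, Proposition~\ref{Prop: Xi generates the right invertibles} yields a word $w \in \Xi^\ast$ with $\pi(w) = m$, and factoring $w \equiv \lambda_1 \cdots \lambda_\ell \xi$ with $\lambda_i \in \Lambda$ and $\xi \in \fPe$ traces out an explicit walk in $\TU$ whose terminus maps to $m$: each completed piece $\lambda_i$ takes the walk into a new copy of $\fU$ attached at the appropriate non-locally invertible vertex. Since $\fR_1$ is a subgraph of $\MCG{M}{A}$ it is deterministic, so the kernel congruence of $\psi$ is determinising; by minimality of $\eta$, $\psi$ factors to give a surjective labelled graph morphism $\bar\psi : \TU/\eta \twoheadrightarrow \fR_1$ between deterministic graphs.

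The heart of the argument, and the main obstacle, is the injectivity of $\bar\psi$. For this I would show that whenever $\psi(u) = \psi(v)$ for vertices $u = (u_0, \ldots, u_k)$ and $v = (v_0, \ldots, v_n)$, one has $u \,\eta\, v$. In literal form, $\psi(u) = \psi(v)$ becomes a Thue equivalence in $A^\ast$, and by Lemma~\ref{Lem: Special monoid has ancestors} the two sides share a common ancestor under one-sided reduction. Repeatedly applying the Normal Form Lemma (Lemma~\ref{Lem: Zhang's Lemma}) together with the case analysis of Proposition~\ref{Prop: Missing edges} allows one to peel off successive branch-point segments: the leftmost maximal invertible factor of each side must represent the same element of $U(M)$, and the proper prefix of a piece that follows must be $\lra{M}$-equivalent on each side, forcing $u_0$ and $v_0$ to be $\sim_M$-identified in $\fU$. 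Induction on depth shows that all $u_i, v_i$ are $\sim_M$-identified, so $u$ and $v$ agree in $\TU$ up to the identifications inside individual copies of $\fU$ together with the folding required across branch-point boundaries, all of which lie in $\eta$. Edges are handled by applying the same analysis one reduction step at a time. This shows $\bar\psi$ is injective on both vertices and edges, completing the isomorphism.
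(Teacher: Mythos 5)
Your construction of the map, your surjectivity argument, and your proof of the containment $\eta \subseteq \ker\psi$ (via the observation that $\ker\psi$ is a determinising congruence because $\fR_1$ is deterministic, together with minimality of $\eta$) all match the paper's proof in substance; the minimality argument is in fact a slightly cleaner route to that containment than the paper's explicit comparison of walk labels.

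The gap is in the reverse containment $\ker\psi \subseteq \eta$, which is, as you say, the heart of the matter. Your plan is to peel off leftmost maximal invertible factors and conclude by induction that the components $u_i$ and $v_i$ are pairwise $\sim_M$-identified. But the vertices of $\fU$ already \emph{are} the $\sim_M$-classes, so if every $u_i$ were $\sim_M$-identified with $v_i$ the two tuples would be equal and $\TU$ would already be deterministic --- which it is not (see Example~\ref{Ex: b(abc)b = 1}). Concretely, in $\pres{Mon}{a,b,c}{b(abc)b=1}$ the depth-$0$ vertex $([(b;ab)])$ and the depth-$1$ vertex $([(b;a)],[(b;\varepsilon)])$ both map to $\pi(bab)$; the tuples have different lengths, and the ``prefix following the first invertible factor'' is $ab$ on one side and $a$ on the other, so the component-by-component matching cannot even get started. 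The underlying obstruction is that the factorisation of the traced word $u_0\xi_0\cdots u_k\xi_k$ into maximal invertible factors need not respect the tuple decomposition, precisely because $\fP\cdot\Lambda\cap\fP$ may be non-empty. The paper's proof avoids this entirely: it takes the common ancestor $W$ of Lemma~\ref{Lem: Special monoid has ancestors} and shows that $W$ labels a walk in $\TU$ from the root to $u$ \emph{and} a walk from the root to $v$, using the key observation that every relator word can be read as a loop at \emph{every} vertex of $\TU$ (at a locally invertible vertex one reads it inside the current copy of $\fU$; at a non-locally invertible vertex one reads it into the freshly attached copy, whose root is locally invertible). Two vertices reached from the root by walks with the same label must be identified by any determinising congruence, so $(u,v)\in\eta$. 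This loop-reading observation is the missing idea: you do invoke the common-ancestor lemma, but you never exploit it in this way, and the Normal Form Lemma induction you substitute for it does not go through.
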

\begin{proof}
First, note that there is a map $\phi \colon \TU \to \fR_1$ which, if we let $\hat{u}_i = [(u_i ; \xi_i)]$, is given by 
\[
\phi((\hat{u}_0, \hat{u}_1, \dots, \hat{u}_k)) = \pi(u_0 \xi_0 u_1 \xi_1 \cdots u_k \xi_k).
\]
This is well-defined as the definition of $\fU$ ensures independence of the choice of representatives $u_i$ and $x_i$ for each $[(u_i ; \xi_i)]$. Furthermore, as every right invertible element can be expressed as an element of $\Xi^\ast$, it follows that $\phi$ is surjective. We will now show that $\ker(\phi) = \{ (u, v) \mid \phi(u) = \phi(v)\} = \eta$, which will yield the claim. We only show that the two coincide for the vertex sets, as for the edge sets the proof is very similar. 

\vspace{0.5cm}

$(\eta \subseteq \ker(\phi))$. Assume that $(u,v) \in \eta$ in $V(\TU)$. Then there exists walks $p_1 \colon 1 \xrightarrow{\alpha} u$ and $p_2 \colon 1 \xrightarrow{\alpha} v$, for some $\alpha \in A^\ast$. In particular there exist $u_i', v_i' \in A^\ast$ invertible and $\xi_i', \zeta_i' \in \fP$ such that 
\begin{align*}
u = ([(u_0' ; \xi_0')], \dots, [(u_k' ; \xi_k')]) \\
v = ([(v_0' ; \zeta_0')], \dots, [(v_n' ; \zeta_n')])
\end{align*}
and $\alpha \equiv u_0' \xi_0' \cdots u_k' \xi_k' \equiv v_0' \zeta_0' \cdots v_n' \zeta_n'$. Hence 
\[
\phi(u) = \pi(\alpha) = \phi(v)
\]
and so $(u, v) \in \ker(\phi)$. 

\vspace{0.5cm}

$(\ker(\phi)) \subseteq \eta)$. Assume $(u, v) \in \ker(\phi)$. Let $u = [(u_0 ; \xi_0), \dots, (u_k ; \xi_k)]$ and $v = [(v_0 ; \zeta_0), \dots, (v_n ; \zeta_n]$. Now since $u_0 \xi_0 \cdots u_k \xi_k \lra{M} v_0 \zeta_0 \cdots v_n \zeta_n$, we have by Lemma~\ref{Lem: Special monoid has ancestors} that there is some $W \in A^\ast$ s.t. 
\begin{align*}
W &\xra{M} u_0 \xi_0 \cdots u_k \xi_k \\
W &\xra{M} v_0 \zeta_0 \cdots v_n \zeta_n.
\end{align*}
We now make the claim that at every vertex $w \in V(\TU)$ and for every relator word $R_i \: (i \in I)$, there is a walk $p \colon u' \xrightarrow{R_i} u'$. Assume that $w = ([w_0 ; \mu_0], \dots, [(w_\ell ; \mu_\ell)])$, and let $R_i$ be a relator word. Then if $\mu_\ell \in \fP$ satisfies $\mu_\ell \equiv \varepsilon$, then $[(w_\ell ; \mu_\ell)]$ is locally invertible and hence within the same copy of $\fU$, one may read $R_i$, as in $\fU$ one may from any locally invertible vertex read any relator word. On the other hand, if $\mu_\ell$ is a non-trivial prefix, then onto it is attached a copy of $\fU$; thus, as the root of $\fU$ is locally invertible, one may by the same argument as above read $R_i$ into this attached copy. 

Thus, since $W$ is obtained from $u_0 \xi_0 \cdots u_k \xi_k$ by inserting a number of relator words, there is a walk from $1$ to $u$ in $\TU$ labelled by $W$. But $W$ is also obtained from $v_0 \zeta_0 \cdots v_n \zeta_n$ by inserting a number of relator words; hence there is a walk from $1$ to $v$ in $\TU$ labelled by $W$. Determinising $\TU$ will hence identify $u$ and $v$; in other words, $(u, v) \in \eta$. Hence $\eta = \ker(\phi)$.
\end{proof}

The technique of obtaining a word $W$ which rewrites in a single direction to both $u$ and $v$ in the second part of the above proof is quite elementary, but ties in with work by Stephen \cite{Stephen1987}. For brevity, and the next example alone, we define the \textit{loop graph} of $M$ as follows. Let $L$ be the flower graph of the relator words of $M$, rooted at $1$. Then the loop graph $L^\ast$ of $M$ is defined as $\T(L, V(L) \setminus \{ 1\})$. In other words, it is the graph obtained by starting with a single vertex, and then adding a loop traversing the length of all relator words from this vertex; we then iterate the procedure on all of the new vertices obtained, and define the loop graph as the direct limit of this procedure.  It should be noted that it follows from work due to Stephen \cite{Stephen1987} that the determinised form of this graph is isomorphic to $\fR_1$. However, we will not use this result, and instead illustrate the folding in $\TU$ rather elementarily in the example below. 

\begin{example}
Let $M = \pres{Mon}{a, p, q}{apa = 1, aqa = 1}$. Then $M \cong \mathbb{Z}$. Indeed, it is not difficult to realise that $p =_M q$, both being equal to the unique inverse of $a^2$. Formally, we have the following chain of deductions: 
\[
p \leftarrow p(aqa) \leftarrow (aqa)paqa \rightarrow aqqa \leftarrow aq(aqa)qa \rightarrow qaqa \rightarrow q.
\]
Applying the method given in the proof of Lemma~\ref{Lem: Special monoid has ancestors}, we see that we can pick $W \equiv aqapaaqaqa$. This word can either be factored as $(aqa)p(aaqaqa)$, where $aqa =_M aaqaqa =_M 1$, or as $(aqapaa)q(aqa)$, where $aqapaa =_M aqa =_M 1$. In the first case, rewriting both factors equal to $1$ to the empty word, we obtain $p$; in the second, we obtain $q$. We can graphically represent this as is done in Figure~\ref{Fig: p and q aqaqpapqapq}, by drawing a walk labelled by $p$ (resp. $q$) from $1$, and to every vertex along the path attaching loops equal to $1$, as prescribed by the rewriting.

\begin{figure}[h]
\begin{tikzpicture}[>=stealth',thick,scale=0.8,el/.style = {inner sep=2pt, align=left, sloped}]%
                        \node (l0) [circle, draw, fill=black!50,
                        inner sep=0pt, minimum width=8pt] at (0,0) {};
                        \node (l1) [circle, draw, fill=black!50,
                        inner sep=0pt, minimum width=4pt] at (-1,2) {};
                        \node (l2) [circle, draw, fill=black!50,
                        inner sep=0pt, minimum width=4pt] at (1,2) {};                        
                        \node (l3) [circle, draw, fill=black!50,
                        inner sep=0pt, minimum width=4pt] at (4,0) {};
                        \node (l4) [circle, draw, fill=black!50,
                        inner sep=0pt, minimum width=4pt] at (3,2) {};
                        \node (l5) [circle, draw, fill=black!50,
                        inner sep=0pt, minimum width=4pt] at (5,2) {};
                        \node (l6) [circle, draw, fill=black!50,
                        inner sep=0pt, minimum width=4pt] at (2.5,3) {};
                        \node (l7) [circle, draw, fill=black!50,
                        inner sep=0pt, minimum width=4pt] at (3.5,3) {};
\path[->] 
    (l0)  edge node[below]{$p$}         (l3)
    (l0)  edge node[below=-2pt, xshift=-4pt]{$a$}         (l1)
    (l1)  edge node[above]{$q$}         (l2)
    (l2)  edge node[below=-2pt, xshift=4pt]{$a$}         (l0)
    (l3)  edge node[below=-2pt, xshift=-4pt]{$a$}         (l4)
    (l4)  edge node[above]{$q$}         (l5)
    (l5)  edge node[below=-2pt, xshift=4pt]{$a$}         (l3)
    (l4)  edge node[below=-4pt, xshift=-4pt]{$a$}         (l6)
    (l6)  edge node[above]{$q$}         (l7)
    (l7)  edge node[below=-4pt, xshift=4pt]{$a$}         (l4);

    \node (r0) [circle, draw, fill=black!50,
    inner sep=0pt, minimum width=8pt] at (7,0) {};
    \node (r1) [circle, draw, fill=black!50,
    inner sep=0pt, minimum width=4pt] at (6,2) {};
    \node (r2) [circle, draw, fill=black!50,
    inner sep=0pt, minimum width=4pt] at (8,2) {};                        
    \node (r3) [circle, draw, fill=black!50,
    inner sep=0pt, minimum width=4pt] at (11,0) {};
    \node (r4) [circle, draw, fill=black!50,
    inner sep=0pt, minimum width=4pt] at (10,2) {};
    \node (r5) [circle, draw, fill=black!50,
    inner sep=0pt, minimum width=4pt] at (12,2) {};
    \node (r6) [circle, draw, fill=black!50,
    inner sep=0pt, minimum width=4pt] at (7.5,3) {};
    \node (r7) [circle, draw, fill=black!50,
	inner sep=0pt, minimum width=4pt] at (8.5,3) {};

\path[->] 
    (r0)  edge node[below]{$q$}         (r3)
    (r0)  edge node[below=-2pt, xshift=-4pt]{$a$}         (r1)
    (r1)  edge node[above]{$q$}         (r2)
    (r2)  edge node[below=-2pt, xshift=4pt]{$a$}         (r0)
    (r3)  edge node[below=-2pt, xshift=-4pt]{$a$}         (r4)
    (r4)  edge node[above]{$q$}         (r5)
    (r5)  edge node[below=-2pt, xshift=4pt]{$a$}         (r3)
    (r2)  edge node[below=-4pt, xshift=-4pt]{$a$}         (r6)
    (r6)  edge node[above]{$p$}         (r7)
    (r7)  edge node[below=-4pt, xshift=4pt]{$a$}         (r2);    

\end{tikzpicture}
\caption{Left: The first factorisation of $W$, giving $p$, considered as a subgraph of the loop graph of $M$. Right: The second factorisation of $W$, giving $q$. In both cases, the larger vertex corresponds to the root of the loop graph.}
\label{Fig: p and q aqaqpapqapq}
\end{figure}
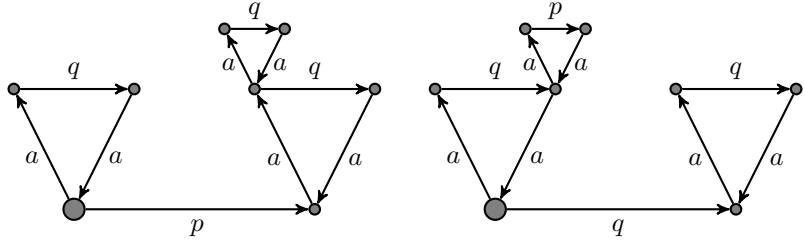

We can then assemble these two factorisations into a single graph; it is clear from the definition of $W$ that determinising this graph will identify the two vertices marked by $v_p$ and $v_q$. This is shown in Figure~\ref{Fig: p and q determinised!}. The key step in the second part of the proof of Theorem~\ref{Thm: R_1 is TU determinised} is that a graph associated in this manner to \textit{any} pair $u, v$ such that $\phi(u) = \phi(v)$ appears as a quotient graph inside $\TU$; hence determinising $\TU$ will identify all such pairs.

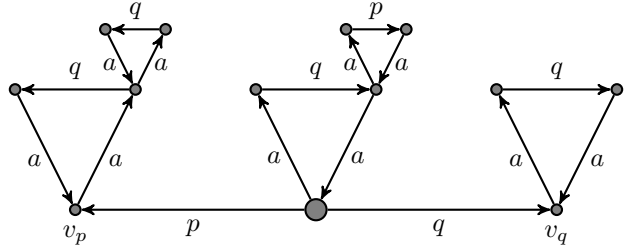
\begin{figure}[h]
\begin{tikzpicture}[>=stealth',thick,scale=0.8,el/.style = {inner sep=2pt, align=left, sloped}]%
                        \node (l0) [circle, draw, fill=black!50,
                        inner sep=0pt, minimum width=8pt] at (0,0) {};

                        \node (l3)[label=below:$v_p$] [circle, draw, fill=black!50,
                        inner sep=0pt, minimum width=4pt] at (-4,0) {};
                        \node (l4) [circle, draw, fill=black!50,
                        inner sep=0pt, minimum width=4pt] at (-3,2) {};
                        \node (l5) [circle, draw, fill=black!50,
                        inner sep=0pt, minimum width=4pt] at (-5,2) {};
                        \node (l6) [circle, draw, fill=black!50,
                        inner sep=0pt, minimum width=4pt] at (-2.5,3) {};
                        \node (l7) [circle, draw, fill=black!50,
                        inner sep=0pt, minimum width=4pt] at (-3.5,3) {};
\path[->] 
    (l0)  edge node[below]{$p$}         (l3)
    (l3)  edge node[below=-2pt, xshift=4pt]{$a$}         (l4)
    (l4)  edge node[above]{$q$}         (l5)
    (l5)  edge node[below=-2pt, xshift=-4pt]{$a$}         (l3)
    (l4)  edge node[below=-4pt, xshift=4pt]{$a$}         (l6)
    (l6)  edge node[above]{$q$}         (l7)
    (l7)  edge node[below=-4pt, xshift=-4pt]{$a$}         (l4);

    \node (r1) [circle, draw, fill=black!50,
    inner sep=0pt, minimum width=4pt] at (-1,2) {};
    \node (r2) [circle, draw, fill=black!50,
    inner sep=0pt, minimum width=4pt] at (1,2) {};                        
    \node (r3)[label=below:$v_q$][circle, draw, fill=black!50,
    inner sep=0pt, minimum width=4pt] at (4,0) {};
    \node (r4) [circle, draw, fill=black!50,
    inner sep=0pt, minimum width=4pt] at (3,2) {};
    \node (r5) [circle, draw, fill=black!50,
    inner sep=0pt, minimum width=4pt] at (5,2) {};
    \node (r6) [circle, draw, fill=black!50,
    inner sep=0pt, minimum width=4pt] at (0.5,3) {};
    \node (r7) [circle, draw, fill=black!50,
	inner sep=0pt, minimum width=4pt] at (1.5,3) {};

\path[->] 
    (l0)  edge node[below]{$q$}         (r3)
    (l0)  edge node[below=-2pt, xshift=-4pt]{$a$}         (r1)
    (r1)  edge node[above]{$q$}         (r2)
    (r2)  edge node[below=-2pt, xshift=4pt]{$a$}         (l0)
    (r3)  edge node[below=-2pt, xshift=-4pt]{$a$}         (r4)
    (r4)  edge node[above]{$q$}         (r5)
    (r5)  edge node[below=-2pt, xshift=4pt]{$a$}         (r3)
    (r2)  edge node[below=-4pt, xshift=-4pt]{$a$}         (r6)
    (r6)  edge node[above]{$p$}         (r7)
    (r7)  edge node[below=-4pt, xshift=4pt]{$a$}         (r2);    

\end{tikzpicture}
\caption{Determinising the graph in the picture will identify the two vertices labelled by $v_p$ and $v_q$, as $p =_M q$ in the monoid $M~=~\pres{Mon}{a,p,q}{apa = 1, aqa = 1}$.}
\label{Fig: p and q determinised!}
\end{figure}
\end{example}

The fact that $\TU$ determinised yields $\fR_1$, together with the fact that $\fU$ satisfies some very strong properties as shown in Section~\ref{Sec: Constructing fU}, we are now in a state where we are ready to apply Theorem~\ref{Thm: Bounded folding gives CF from T}. 

\begin{corollary}\label{Cor: fR1 CF iff U(M) VF}
Let $M = \pres{Mon}{A}{R_i = 1 \: (i \in I)}$ be a finitely presented special monoid. Then $\fR_1$ is a context-free graph if and only if $U(M)$ is a virtually free group.
\end{corollary}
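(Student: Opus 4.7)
The plan is to combine Theorem~\ref{Thm: R_1 is TU determinised} with Theorem~\ref{Thm: Bounded folding gives CF from T} for the backward direction, and to exploit Proposition~\ref{Prop: fU is an induced subgraph of fR1} together with the hereditary nature of strong tree-width for the forward direction.

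For the forward direction, suppose $\fR_1$ is context-free. Context-free graphs have bounded degree and finite tree-width, and for bounded-degree graphs finite tree-width is equivalent to finite strong tree-width. By Proposition~\ref{Prop: fU is an induced subgraph of fR1}, $\fU$ embeds faithfully as an induced subgraph of $\fR_1$. Since strong tree-width is hereditary under induced subgraphs (as recalled from \cite{Kuske2005} in Section~\ref{Sec: Background on Graphs}), $\textnormal{ud}(\fU)$ has finite strong tree-width, hence finite tree-width. By the equivalence $(6) \Leftrightarrow (1)$ in Theorem~\ref{Thm: Big fat equivalence list}, $U(M)$ is virtually free.

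For the backward direction, suppose $U(M)$ is virtually free. By Theorem~\ref{Thm: Big fat equivalence list}, $\fU$ is a context-free graph, and it is deterministic since it embeds as an induced subgraph of the right Cayley graph $\MCG{M}{A}$ via Proposition~\ref{Prop: fU is an induced subgraph of fR1}. Set $\Gamma = \fU$ and $S = N(\fU)$. Corollary~\ref{Cor: fU satisfies bounded folding} tells us that $(\fU, U(M), N(\fU))$ satisfies the bounded folding condition, and furthermore that the action of $U(M)$ preserves the partition into locally invertible and non-locally invertible vertices, so $U(M) \leq \textnormal{Aut}_{N(\fU)}(\fU)$. Almost-transitivity, $N(\fU)$-fullness, and $N(\fU)$-overlap-freeness (Lemma~\ref{Lem: fU is N(fU) overlap free}) all pass up to this larger group, so $(\fU, \textnormal{Aut}_{N(\fU)}(\fU), N(\fU))$ also satisfies the bounded folding condition. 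The remaining hypothesis of Theorem~\ref{Thm: Bounded folding gives CF from T} is that $\TGE(0) \cong \fU$; this follows from Theorem~\ref{Thm: R_1 is TU determinised}, because the restriction of $\phi \colon \TU \to \fR_1$ to depth-zero vertices is precisely the embedding of $\fU$ in $\fR_1$, which is injective, so no two depth-zero vertices of $\TU$ are identified by $\eta$.

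Applying Theorem~\ref{Thm: Bounded folding gives CF from T} now yields that $\TGE := \TU/\eta$ is context-free, and by Theorem~\ref{Thm: R_1 is TU determinised} this graph is isomorphic to $\fR_1$. The main obstacle is purely bookkeeping: checking the three hypotheses of Theorem~\ref{Thm: Bounded folding gives CF from T} in the setting at hand; all the genuine combinatorial work has already been done when establishing that $(\fU, U(M), N(\fU))$ satisfies the bounded folding condition and that the depth-zero embedding is faithful.
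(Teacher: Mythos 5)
Your proof is correct, and the backward direction is essentially identical to the paper's: both establish that $\fU$ is deterministic and context-free, verify the bounded folding hypotheses via Corollary~\ref{Cor: fU satisfies bounded folding} and Lemma~\ref{Lem: fU is N(fU) overlap free}, identify $\TGE(0)$ with $\fU$ using Proposition~\ref{Prop: fU is an induced subgraph of fR1} and Theorem~\ref{Thm: R_1 is TU determinised}, and then apply Theorem~\ref{Thm: Bounded folding gives CF from T}. (Your explicit remark that the bounded folding condition must be transported from $U(M)$ to the possibly larger group $\textnormal{Aut}_{N(\fU)}(\fU)$ is a point the paper passes over silently; note, though, that $S$-fullness is stated relative to $\sim_{L,H}$ and so does not formally ``pass up'' for free when $H$ is enlarged --- this is a gap shared with, not worse than, the paper's own argument.) Where you genuinely diverge is the forward direction. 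The paper argues algebraically: $\fR_1$ is deterministic and context-free, so its labelled automorphism group is virtually free by a result of P\'elecq, and by Stephen's theorem $\textnormal{Aut}(\fR_1)$ is the Sch\"utzenberger group of $\mathscr{H}_1$, hence isomorphic to $U(M)$ itself. You instead argue geometrically: context-freeness of $\fR_1$ gives finite tree-width (via quasi-isometry to a tree and bounded degree), this is inherited by the induced subgraph $\fU$, and Theorem~\ref{Thm: Big fat equivalence list}$(6)\Leftrightarrow(1)$ finishes. This is precisely the argument the paper itself later uses for Corollary~\ref{Cor: M qi to a tree iff U(M) VF}, so it is certainly sound in context; it has the advantage of staying entirely within the tree-width machinery already assembled in Theorem~\ref{Thm: Big fat equivalence list} and avoiding the external citations to P\'elecq and Stephen, at the cost of routing through strong tree-width (the equivalence of finite tree-width and finite strong tree-width for bounded-degree graphs is true but not recalled in the paper; you could avoid it entirely, since ordinary tree-width is already monotone under subgraphs). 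The paper's automorphism-group route, by contrast, yields the bonus observation recorded in the remark following the corollary, namely that $\fR_1$ context-free implies $U(M)$ context-free without any finite presentability hypothesis.
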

\begin{proof}
$(\impliedby)$ Assume $U(M)$ is virtually free. Then by Theorem~\ref{Thm: Big fat equivalence list}, $\fU$ is a context-free graph. Clearly, $\fU$ is deterministic. Furthermore, by combining Proposition~\ref{Prop: fU is an induced subgraph of fR1} and Theorem~\ref{Thm: R_1 is TU determinised}, we have $\widetilde{\TU}(0) \cong \fR_1(0) \cong \fU$. By Corollary~\ref{Cor: fU satisfies bounded folding}, $(\fU, \textnormal{Aut}_{N(\fU)}, N(\fU))$ satisfies the bounded folding condition. Hence we can apply Theorem~\ref{Thm: Bounded folding gives CF from T}, and conclude that $\widetilde{\TU} \cong \fR_1$ is context-free.

$(\implies)$ Assume that $\fR_1$ is context-free. Since $M$ is finitely presented, so is $U(M)$ by \cite[Theorem~5]{Makanin1966}. Therefore, by the Muller-Schupp theorem, it suffices to show that $U(M)$ is context-free. By \cite[Proposition~25]{Pelecq1996} the automorphism group Aut$(\fR_1)$ of $\fR_1$ as a labelled graph is virtually free, as $\fR_1$ is deterministic, and context-free by assumption. Furthermore, by \cite[Theorem~3]{Stephen1996}, the automorphism group of $\fR_1$ is isomorphic to the Sch\"{u}tzenberger group of $\mathscr{H}_1$. As $\mathscr{H}_1$ is a group $\mathscr{H}$-class, the Sch\"{u}tzenberger group of $\mathscr{H}_1$ is hence isomorphic to $\mathscr{H}_1$ itself \cite{Schutzenberger1957, Clifford1961}. Since $\mathscr{H}_1 = U(M)$, we have that $U(M) \cong \textnormal{Aut}(\fR_1)$ is a context-free group.
\end{proof}
\begin{remark}
Note that in the forward claim of the above result, the assumption that $M$ be finitely presented is only used to make the results easier to state, as we are only generally interested in the finitely presented case. However, \cite[Proposition~25]{Pelecq1996} actually proves that the automorphism group of a deterministic context-free graph is context-free. Hence we always have the implication ($\fR_1$ context-free $\implies$ $U(M)$ context-free). 
\end{remark}

As a finitely generated group is context-free with respect to any finite generating set, and as the group of units of $M$ is independent of the generating set of $M$, we additionally have the following corollary. 

\begin{corollary}
Let $M$ be a finitely presented special monoid, and let $A$ and $B$ be two finite generating sets for $M$. Then $\fR_1(A)$ is a context-free graph if and only if $\fR_1(B)$ is context-free.
\end{corollary}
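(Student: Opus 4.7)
The plan is to chain together the preceding Corollary~\ref{Cor: fR1 CF iff U(M) VF} with the observation that virtual freeness of a group is an intrinsic algebraic invariant, not tied to any particular generating set. Concretely, I would argue as follows. Fix the two finite generating sets $A$ and $B$ for $M$. Each of these equips $M$ with a (generally different) Cayley graph, and hence with two a priori different subgraphs $\fR_1(A)$ and $\fR_1(B)$ on the set of right invertible elements. Nonetheless, the group of units $U(M)$ is a purely algebraic object associated to $M$, so it is one and the same group whether we work with the generators $A$ or with $B$.

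Now I would apply Corollary~\ref{Cor: fR1 CF iff U(M) VF} twice. Applied with the generating set $A$, it gives that $\fR_1(A)$ is a context-free graph if and only if $U(M)$ is virtually free. Applied with the generating set $B$, it gives that $\fR_1(B)$ is a context-free graph if and only if $U(M)$ is virtually free. Combining these two biconditionals through the common middle term yields
\[
\fR_1(A) \text{ context-free} \iff U(M) \text{ virtually free} \iff \fR_1(B) \text{ context-free},
\]
which is exactly the statement of the corollary.

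There is essentially no obstacle here, since the deep content has already been established: the forward direction of Corollary~\ref{Cor: fR1 CF iff U(M) VF} rested on Theorem~\ref{Thm: Bounded folding gives CF from T} together with Theorem~\ref{Thm: R_1 is TU determinised} and the bounded folding conditions verified in Section~\ref{Sec: Constructing fU}, while the reverse direction used Makanin's finite presentability of $U(M)$ and the identification of $\textnormal{Aut}(\fR_1)$ with $U(M)$ via the Sch\"utzenberger group. The only thing to be careful about is the implicit claim that $U(M)$ is well-defined independently of the choice of finite generating set for $M$; this is immediate, since $U(M)$ was defined as the set of two-sided invertible elements of $M$, a notion that refers only to the monoid structure on $M$ and makes no mention of generators.
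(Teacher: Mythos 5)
Your argument is exactly the one the paper intends: the corollary is stated immediately after the remark that a finitely generated group is context-free with respect to any finite generating set and that $U(M)$ is independent of the generating set of $M$, so the paper's (implicit) proof is precisely the double application of Corollary~\ref{Cor: fR1 CF iff U(M) VF} through the common middle term $U(M)$ virtually free. Your proposal is correct and takes essentially the same approach.
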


In particular, we may with impunity say that $\fR_1$ is a context-free graph or that it is not, without regard to the finite generating set chosen. We end this section with a couple of particularly illustrating example of some context-free Sch\"{u}tzenberger graphs; the first is isomorphic to $\T(\fU, N(\fU))$ without any determinisation necessary, whereas the second is not. 

\begin{example}
Let $M = \pres{Mon}{a,b,c,d,e,f}{abc = 1, def = 1}$. Then $\Lambda = \{ abc, def\}$ and $U(M) = 1$. Then $\fU$ is a graph consisting of two triangles, one of whose boundary is labelled by $abc$, and the other by $def$. The only locally invertible vertex in this graph is the shared vertex, i.e.\ the root $1$. It is not hard to see, but beneficial to verify, that $\T(\fU, N(\fU))$ is deterministic, so $\fR_1$ is congruent to this graph. This graph is shown in Figure~\ref{Fig: Schutz of red-blue triangles}.
\end{example}

\begin{figure}
\includegraphics[scale=0.7]{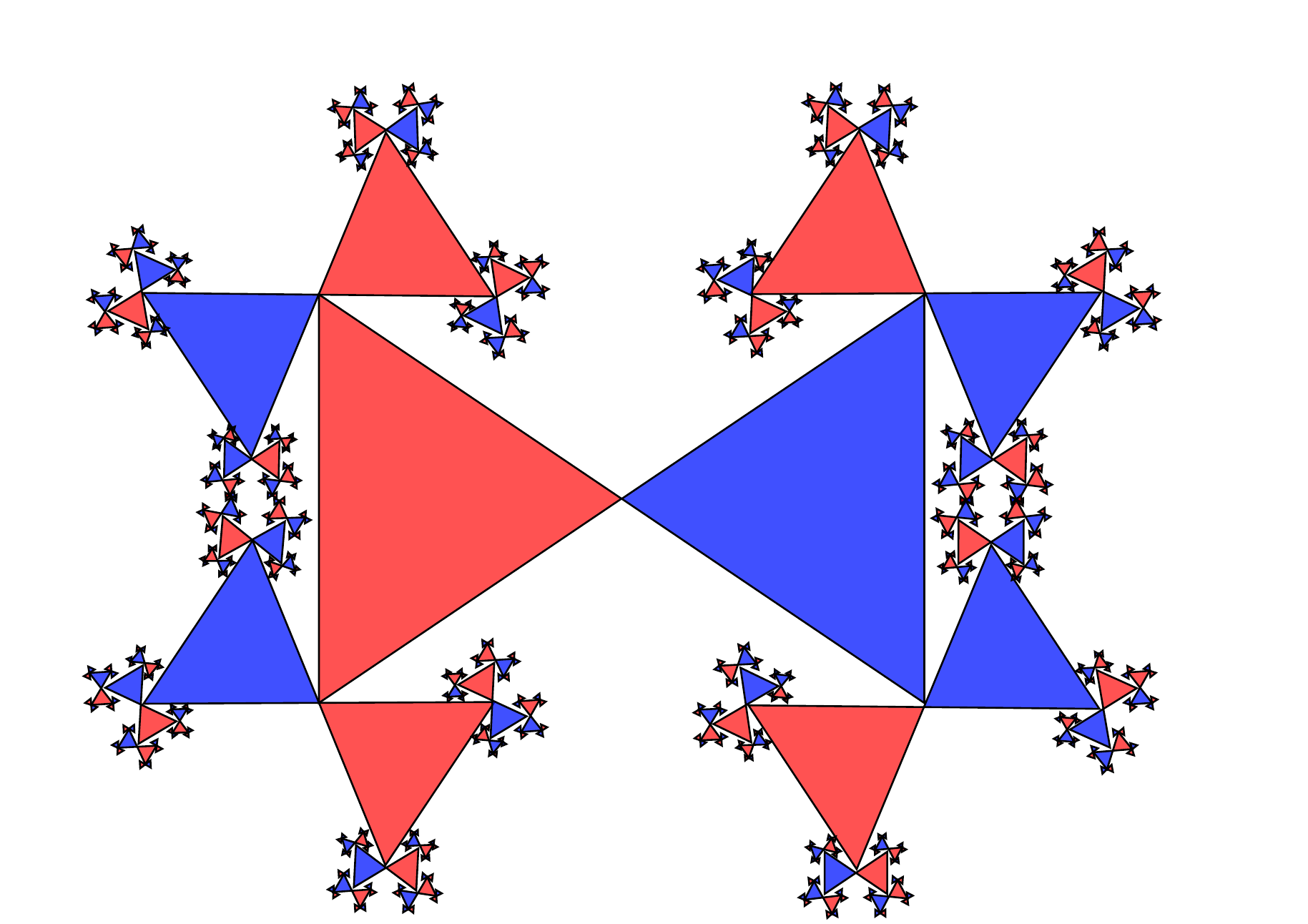}
\caption{The Sch\"{u}tzenberger graph of $1$ (with $1$ being the central vertex) for the special monoid $M$ defined by the presentation $\pres{Mon}{a,b,c,d,e,f}{abc = 1, def = 1}$. The blue triangle corresponds to reading $abc$, and the red to $def$. The graph $\fU$ is isomorphic to the graph with one red and one blue triangle attached to the root.}
\label{Fig: Schutz of red-blue triangles}
\end{figure}

\begin{example}\label{Ex: b(abc)b = 1}
Let $M = \pres{Mon}{a,b,c}{b(abc)b = 1}$. Then $\Lambda = \{ b, abc \}$, and we have $U(M) \cong \mathbb{Z}$. Then in each $\sim_i$-class of $\fU$ there are exactly two non-locally invertible vertices, corresponding to the two non-empty proper prefixes $a$ and $ab$. Attaching a copy of $\fU$ to the first within some $\sim_i$-class, we see that the resulting graph will be non-deterministic, as we can read $b$ from both inside $\fU$ and the attached copy. If we were to attach a copy of $\fU$ to the second non-locally invertible vertex within the same $\sim_i$-class, we see that the determinisation will end up identifying the two attached copies into a single one. The resulting graph is shown in Figure~\ref{Fig: Schutz of b(abc)b = 1}.
\end{example}

\begin{figure}
\begin{center}
\hspace*{-1.2cm}\includegraphics[scale=0.75]{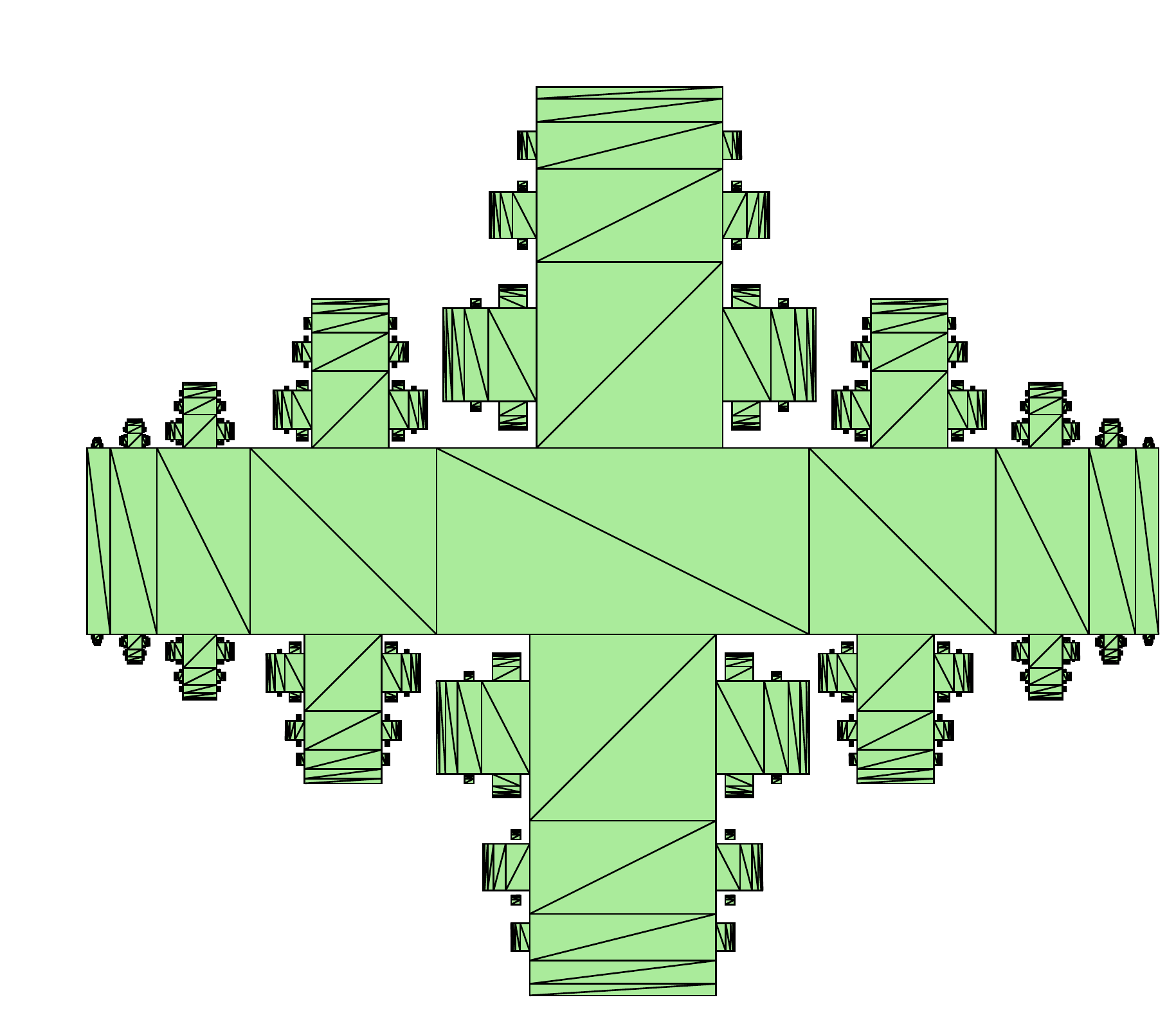}
\end{center}
\caption{The Sch\"{u}tzenberger graph of $1$ (with edge labels suppressed) for the special monoid in Example~\ref{Ex: b(abc)b = 1}. This representation is not entirely accurate, as, say, the vertical copy of $\fU$ in the upper centre continues ``behind'' the central strip infinitely far (but remains distinct from the copy in the lower centre). The reader might imagine that any copy of $\fU$ except the central one is folded in half along the line to which it is attached to the previous copy. We note that $\fU$ is embedded inside $\fR_1$ as the central horizontal strip; these are the vertices of depth $0$. }
\label{Fig: Schutz of b(abc)b = 1}
\end{figure}

\section{Building $\MCG{M}{A}$ From $\fR_1$}\label{Sec: Building MCG from fR1}

For the entirety of this section, let $M = \pres{Mon}{A}{R_i = 1 \: (i \in I)}$ denote a fixed finitely presented special monoid. This section concerns the building of the full monoid Cayley graph of $M$ from the Sch\"{u}tzenberger graph of $1$, and relies on the structural theorems of \cite{Gray2018}. Let $m \in M$ be an arbitrary element of $M$. Let $\mathcal{T} \subseteq A^\ast$ be the set of words irreducible mod $S(T)$ with no right invertible suffix; the notation $\mathcal{T}$ suggests that $\mathcal{T}$ is a form of \textit{transversal} of the $\mathscr{R}$-classes of $M$, which we will see below. By \cite[Proposition~3.7]{Gray2018}, we can then uniquely write $m = [m_0] m'$ where $m_0 \in \mathcal{T}$ and $m' \in \RU$. The following appears as \cite[Corollary~3.12]{Gray2018}.

\begin{theorem}[Gray \& Steinberg '18]\label{Thm: (GS) R_1 is iso to R_m}
Let $m \in M$. Then there is an isomorphism of labelled graphs $\fR_1 \to \fR_m$ sending $1$ to $[m_0]$. If $\Gamma_m$ is the induced subgraph of $\MCG{M}{A}$ consisting of all vertices accessible from $m$, then $\Gamma_m$ is isomorphic to $\MCG{M}{A}$ as labelled graphs via an isomorphism taking $1$ to $[m_0]$. 
\end{theorem}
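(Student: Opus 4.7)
The plan is to realise both isomorphisms as the left translation by $[m_0]$. Concretely, I will set $\psi\colon \fR_1 \to \fR_m$ by $r \mapsto [m_0] r$ on vertices and $(r \xrightarrow{a} r') \mapsto ([m_0] r \xrightarrow{a} [m_0] r')$ on edges, and then extend this to $\Psi\colon \MCG{M}{A} \to \Gamma_m$ by the same formula. Both maps manifestly send $1$ to $[m_0]$, so everything reduces to verifying that $\psi$ and $\Psi$ are bijective labelled graph isomorphisms. The decisive inputs will be the Gray--Steinberg decomposition from Proposition~3.7 of \cite{Gray2018}, together with the (also-Gray--Steinberg) fact that the $\mathcal{T}$-component of that decomposition is an invariant of the $\mathscr{R}$-class of its argument; I will use both of these ingredients without reproof.

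For $\psi$, I first observe that $r \in R(M)$ implies $[m_0] r \in V(\fR_m)$: picking right inverses $r^{-1}$ of $r$ and $(m')^{-1}$ of $m'$, the equalities $[m_0] r \cdot r^{-1} = [m_0] = m \cdot (m')^{-1}$ exhibit $[m_0] r$ and $m$ in a common $\mathscr{R}$-class. Vertex injectivity is immediate from the uniqueness of canonical forms: if $[m_0] r_1 = [m_0] r_2$ with $r_i \in R(M)$, both sides are already in canonical $\mathcal{T} \cdot R(M)$ form, so $r_1 = r_2$. Vertex surjectivity uses the $\mathscr{R}$-invariance of the $\mathcal{T}$-component: an arbitrary $n \in V(\fR_m)$ satisfies $n \mathscr{R} m$, hence its canonical form has the shape $[m_0] r$ with $r \in R(M)$, which is $\psi(r)$.

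For edges, the forward direction is trivial: $r \cdot a = r'$ in $M$ gives $[m_0] r \cdot a = [m_0] r'$. The delicate point — and the one I expect to require the most care — is the reverse direction, needed to make $\psi$ an isomorphism of \emph{induced} subgraphs rather than merely an injective homomorphism. Suppose $[m_0] r \cdot a = [m_0] r'$ with $r, r' \in R(M)$; I want $ra \in R(M)$ and $ra = r'$ in $M$. Writing $ra = [u_0] u'$ in canonical form, the product $[m_0 u_0] u'$ equals $[m_0] r'$ and hence lies in the $\mathscr{R}$-class of $m$, whose $\mathcal{T}$-component is $m_0$. The combinatorial content of $\mathcal{T}$ — in particular, that $m_0$ has no non-empty right-invertible suffix — then prevents $m_0$ from absorbing any non-trivial $u_0$, so $u_0 = \varepsilon$ and $ra \in R(M)$; the already-established injectivity then forces $ra = r'$.

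The second claim admits essentially the same proof, replacing $R(M)$ everywhere by $M$ and $\fR_m$ by $\Gamma_m$. The one additional ingredient is the identification of the vertex set of $\Gamma_m$: the elements directedly accessible from $m$ are exactly $mM = [m_0] m' M$, and since $m' \in R(M)$ admits a right inverse we have $m' M = M$, so $V(\Gamma_m) = [m_0] M$. The three verifications (vertex bijectivity, forward edges, and reverse edges by the $\mathcal{T}$-absorption argument) then run precisely as in the first part, but with the canonical decomposition of a general $n \in M$ in place of its restriction to $R(M)$.
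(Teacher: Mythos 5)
The paper does not actually prove this statement --- it imports it wholesale as \cite[Corollary~3.12]{Gray2018} --- so your proposal can only be compared with that external argument. Your strategy (realising both maps as left translation by $[m_0]$ and reducing everything to the unique $\mathcal{T}\cdot R(M)$ factorisation) is the right one, and the vertex-level work is sound given the two Gray--Steinberg facts you import: injectivity on $R(M)$ from uniqueness of the factorisation, surjectivity from $\mathscr{R}$-invariance of the $\mathcal{T}$-component, and $m'M = M$ for identifying $V(\Gamma_m)$.

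The gap is exactly where you predicted it, and it is not closed. In the reverse edge direction you write $[m_0]\cdot(ra) = [m_0 u_0]u'$ and then compare $\mathcal{T}$-components with $[m_0]r'$. But this comparison only has force if $[m_0u_0]u'$ is itself a canonical factorisation, i.e.\ if the concatenated word $m_0 u_0$ lies in $\mathcal{T}$: the concatenation of two elements of $\mathcal{T}$ need not be irreducible modulo the rewriting system (a left-hand side can straddle the junction), and even if irreducible it can acquire a right-invertible suffix crossing the junction. The sentence ``prevents $m_0$ from absorbing any non-trivial $u_0$'' is the conclusion you need, not an argument for it; establishing it requires showing that no reduction of $m_0w$ can eat into $m_0$ (this is where the hypothesis that $m_0$ has no non-empty right-invertible suffix is actually spent, via an overlap argument with the rewriting rules), or equivalently the strict descent $[t] >_{\mathscr{R}} [t][u]$ for $t,u \in \mathcal{T}$ with $u \neq \varepsilon$, which is essentially the content of Proposition~\ref{Prop: (GS) Exactly one edge entering R_m} and Theorem~\ref{Thm: (GS) M / R is a tree} and needs its own proof. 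The same problem recurs, more seriously, in the second claim: injectivity of $n \mapsto [m_0]n$ on all of $M$ cannot be read off from uniqueness of canonical forms, since $[m_0]n$ is not presented in canonical form when $n \notin R(M)$, so ``run the same three verifications'' elides precisely the step that does not transfer; and note that left cancellation by $[m_0]$ is part of what the second claim asserts, so it cannot be borrowed to finish the first.
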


This is already a great deal of structural information about $\MCG{M}{A}$ modulo $\fR_1$. However, for context-freeness, it does not give a whole lot; it is easy to construct an example of a graph whose strongly connected components are all pairwise isomorphic and context-free, and all $\Gamma_m$ (retaining the notation from the above theorem) are isomorphic, and yet the resulting graph is not context-free. Thus we need more structure. We say that an edge of a digraph \textit{enters} a strong component $C$ of the graph if its initial vertex is not in $C$ and its terminal vertex is in $C$. Dually, we say that an edge \textit{leaves} if its terminal vertex is not in $C$, and its initial vertex is in $C$. The following is \cite[Proposition~3.13]{Gray2018}.

\begin{prop}[\cite{Gray2018}]\label{Prop: (GS) Exactly one edge entering R_m}
Let $m \in \pi(\mathcal{T}) \setminus \{1\}$ (so $m = [m_0]$). Then if $m_0 \equiv x \cdot a$ with $a \in A$, we have $[x] >_{\mathscr{R}} m$, $[a] \not\in \RU$, and $[x] \xr{a} m$ is the unique edge entering $\fR_m$.
\end{prop}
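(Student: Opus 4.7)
My approach is to derive the three claims in the order $[a] \notin R(M)$, then $[x] >_\mathscr{R} m$, then uniqueness of the entering edge, using two principal tools: the unique decomposition $M = \pi(\mathcal{T}) \cdot R(M)$ from Gray--Steinberg's Proposition~3.7, and the Normal Form Lemma (Lemma~\ref{Lem: Zhang's Lemma}).

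The first statement is immediate from the definition: $a$ is a non-empty suffix of $m_0 \in \mathcal{T}$, and membership in $\mathcal{T}$ forbids any non-empty right invertible suffix, so $[a] \notin R(M)$. For the second, I would argue by contradiction. Assuming $[x] \mathrel{\mathscr{R}} m$ produces some $v \in M$ with $m \cdot v = [x]$, giving $x a v^* \lra{M} x$ for any word $v^* \in A^*$ representing $v$. The Normal Form Lemma factors both sides into alternations of maximal invertible factors and single letters $a_i \in A$, and demands that the letter sequences match. The right-hand side contributes only the letter sequence of $x$; the letter $a$ on the left-hand side, sitting just after $x$, must therefore be absorbed into a maximal invertible factor straddling position $|x|$. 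Splitting off the already-invertible trailing maximal invertible factor of $x$ would then exhibit $a$ together with some prefix of $v^*$ as invertible, forcing $[a]$ to be right invertible --- contradicting the first claim.

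For uniqueness, let $y \xrightarrow{b} z$ be an arbitrary edge of $\MCG{M}{A}$ entering $\fR_m$, so that $y \cdot [b] = z$, $z \mathrel{\mathscr{R}} m$, and $y \not\mathrel{\mathscr{R}} m$. Since $z \mathrel{\mathscr{R}} m$, Theorem~\ref{Thm: (GS) R_1 is iso to R_m} gives $z = [m_0] \cdot z'$ with $z' \in R(M)$, and this is then the canonical Gray--Steinberg decomposition of $z$. Writing $y = [y_0] \cdot y'$ similarly, the case $y' \cdot [b] \in R(M)$ can be ruled out at once: then $[y_0] \cdot (y' \cdot [b])$ would be a valid canonical decomposition of $z$, forcing $y_0 = m_0$ by uniqueness, hence $y = [m_0] \cdot y' \in \fR_m$, a contradiction. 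So $y' \cdot [b] \notin R(M)$; choosing word representatives $y_0^* \in \mathcal{T}$ and $(y')^* \in A^*$, the word $y_0^* (y')^* b$ must reduce, under stripping of the longest right invertible suffix, to a representative of $m_0 = x a$. A letter-counting argument via the Normal Form Lemma, combined with the non-absorption property of $a$ established in the proof of the previous claim, should then force $b = a$ and $y_0^* (y')^* =_M x$, so that $y = [x]$, $b = a$, and $z = m$.

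The main obstacle will be this last letter-counting step. One has to track carefully how the final letter $b$ of $y \cdot [b]$ survives the stripping of the longest right invertible suffix and ends up aligned with the final letter $a$ of $m_0$, despite the intervening reductions that may occur inside $y_0^* (y')^* b$. The rigidity of $m_0$ at its tail --- that $a$ cannot be absorbed into an enlarged invertible factor, by (b) --- is what I expect to pin the edge down uniquely, but propagating this local rigidity through the canonical decomposition of $z$ and the normal-form comparison will be the delicate part of the argument.
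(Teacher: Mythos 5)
The paper does not prove this statement: it is imported verbatim as Proposition~3.13 of Gray--Steinberg \cite{Gray2018}, so there is no in-paper argument to compare yours against and your proposal must stand on its own. Your first claim ($[a] \notin \RU$) is correct and immediate. For $[x] >_{\mathscr{R}} m$, your reduction to ``$a$ must be absorbed into a maximal invertible factor of $xav^*$ straddling the position of $a$'' is right (if $a$ remained one of the letters $a_i$ lying outside every maximal invertible factor, the factorisation of $xav^*$ in Lemma~\ref{Lem: Zhang's Lemma} would have at least $n+1$ such letters against $n$ for $x$), but your endgame only treats one of the two ways $a$ can sit inside that straddling factor $W \equiv s\,a\,p$. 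If the cut before $a$ falls between two minimal invertible pieces of $W$, then $ap$ is invertible, $[a]$ is right invertible, and you get your contradiction. But if $a$ lies strictly inside a minimal invertible piece $\delta \equiv s'ap'$ with $s'$ a non-empty suffix of $x$, then $[a]$ need not be right invertible at all; the contradiction in that branch comes from elsewhere: $s'a$ is a prefix of the invertible word $\delta$, hence right invertible, so $m_0 \equiv xa$ ends in the non-empty right invertible suffix $s'a$, contradicting $m_0 \in \mathcal{T}$. Your phrase ``splitting off the already-invertible trailing maximal invertible factor of $x$'' silently assumes the first branch, so the second claim has a genuine (though fixable) gap.

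The uniqueness of the entering edge is the substantive content of the proposition, and you have not proved it. Ruling out $y'[b] \in \RU$ via uniqueness of the decomposition $m = [m_0]m'$ is a correct first move, but everything after that --- the ``letter-counting argument'' that is supposed to force $b = a$ and $y = [x]$, and the reason $z$ is forced to equal $m$ itself rather than merely lie in $\fR_m$ --- is exactly where the proof lives, and you defer it, flagging it yourself as the delicate part. One has to control how the terminal letter of $y_0^*(y')^*b$ survives the stripping of the maximal right invertible suffix and lands on the terminal letter of $m_0$, which requires the same two-branch analysis of straddling invertible factors as above, applied to an arbitrary product rather than to $xa$. As written, your proposal is a plan for the third claim, not a proof of it.
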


This is a great deal more information. The final piece of the puzzle comes from the information of how $\MCG{M}{A}$ is built up from $\fR_1$. Let $\Gamma$ be the directed graph obtained from $\MCG{M}{A}$ by collapsing each strongly connected component (and its internal edges) to a point. The vertex set of $\Gamma$ is $M / \mathscr{R}$, and there is an edge $(m, a)$ from the $\mathscr{R}$-class $\mathscr{R}_m$ of $m$ to the $\mathscr{R}$-class $\mathscr{R}_{ma}$ of $ma$ if $m \in M, a \in A$, and $\mathscr{R}_m \neq \mathscr{R}_{ma}$. The following is \cite[Theorem~3.14]{Gray2018}.

\begin{theorem}[\cite{Gray2018}]\label{Thm: (GS) M / R is a tree}
The graph $\Gamma$ is isomorphic as a digraph to the Hasse diagram of $M / \mathscr{R}$ ordered
by $\geq_{\mathscr{R}}$. This graph is a regular rooted tree with root $\fR_1$.
\end{theorem}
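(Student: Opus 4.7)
The plan is to derive the statement from the two structural results above, namely Theorem~\ref{Thm: (GS) R_1 is iso to R_m} and Proposition~\ref{Prop: (GS) Exactly one edge entering R_m}, organised in four layered steps.

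First, I would identify the vertex set of $\Gamma$ with $\mathcal{T}$. By \cite[Proposition~3.7]{Gray2018} each $m \in M$ factors uniquely as $m = [m_0] m'$ with $m_0 \in \mathcal{T}$ and $m' \in R(M)$. Because right multiplication by a right-invertible element does not leave the $\mathscr{R}$-class of $m$, the $\mathcal{T}$-component $m_0$ is an invariant of $\mathscr{R}_m$, and conversely elements of the form $[m_0] m'$ for fixed $m_0$ and varying $m' \in R(M)$ form a single $\mathscr{R}$-class. Thus $\mathscr{R}_m \mapsto m_0$ is a bijection $M/\mathscr{R} \to \mathcal{T}$, which identifies the vertex sets of $\Gamma$ and of the Hasse diagram.

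Second, I would establish the tree structure with root $\mathscr{R}_1$. The class $\mathscr{R}_1$ has no incoming edges in $\Gamma$, since $1 \geq_{\mathscr{R}} m$ for every $m \in M$ means no $\mathscr{R}$-class lies strictly above it. For any other vertex $\mathscr{R}_m$ with $m = [m_0]$ and $m_0 \equiv x \cdot a$, Proposition~\ref{Prop: (GS) Exactly one edge entering R_m} supplies a \emph{unique} incoming edge $[x] \xrightarrow{a} m$. Iterating this back-tracking over successive prefixes produces a unique finite path up to $\mathscr{R}_1$, so $\Gamma$ is a rooted tree with root $\mathscr{R}_1$.

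Third, I would match edges of $\Gamma$ with covering relations of $\geq_{\mathscr{R}}$. Each edge $\mathscr{R}_m \to \mathscr{R}_{ma}$ witnesses $\mathscr{R}_m >_{\mathscr{R}} \mathscr{R}_{ma}$. To see this is a \emph{cover}, suppose there is an intermediate $\mathscr{R}_k$ with $\mathscr{R}_m >_{\mathscr{R}} \mathscr{R}_k >_{\mathscr{R}} \mathscr{R}_{ma}$. Since $\mathscr{R}_k \geq_{\mathscr{R}} \mathscr{R}_{ma}$, there is a word $w$ with $kw \in \mathscr{R}_{ma}$; taking the shortest prefix of $w$ that exits $\mathscr{R}_k$ yields a single-generator edge into $\mathscr{R}_{ma}$ whose tail is not $\mathscr{R}_m$, contradicting the uniqueness of the incoming edge from Proposition~\ref{Prop: (GS) Exactly one edge entering R_m}. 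Conversely, any cover $\mathscr{R}_1' \gtrdot \mathscr{R}_2'$ is realised by a single generator: pick any $m_1 \in \mathscr{R}_1'$ and $w$ with $m_1 w \in \mathscr{R}_2'$, then take the shortest prefix $u'a$ of $w$ for which $m_1 u' a \notin \mathscr{R}_1'$; by the cover hypothesis we must have $[m_1 u' a] \in \mathscr{R}_2'$. Hence the edge sets of $\Gamma$ and of the Hasse diagram coincide.

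Finally, regularity follows from Theorem~\ref{Thm: (GS) R_1 is iso to R_m}: the labelled-graph isomorphism $\fR_1 \xrightarrow{\sim} \fR_m$ carries the outgoing edges of $\fR_1$ in $\MCG{M}{A}$ bijectively to those of $\fR_m$. By the uniqueness-of-incoming-edge property again, outgoing edges from $\fR_m$ enter pairwise distinct child classes, so the out-degree of $\mathscr{R}_m$ in $\Gamma$ equals the out-degree of $\mathscr{R}_1$, a constant. Consequently $\Gamma$ is a regular rooted tree. The principal obstacle in this plan is the cover argument in the third step, since one must be careful that the minimal-prefix trick genuinely produces an edge into the desired $\mathscr{R}$-class rather than some larger one; this is handled precisely by the cover hypothesis combined with the uniqueness afforded by Proposition~\ref{Prop: (GS) Exactly one edge entering R_m}.
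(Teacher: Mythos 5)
A preliminary remark on the comparison itself: the paper offers no proof of this statement --- it is imported verbatim from Gray--Steinberg as \cite[Theorem~3.14]{Gray2018} --- so your derivation from Theorem~\ref{Thm: (GS) R_1 is iso to R_m} and Proposition~\ref{Prop: (GS) Exactly one edge entering R_m} must be judged on its own merits. Steps 1, 2 and 4 are essentially sound, though in Step 1 the claim that $m_0$ is an invariant of $\mathscr{R}_m$ needs slightly more than ``right multiplication by a right unit preserves the $\mathscr{R}$-class'': that implication only shows all elements $[m_0]m'$ lie in one class, not that the class contains nothing else; the converse (that $\mathscr{R}$-equivalent elements differ by a right unit) is part of what \cite[Proposition~3.7]{Gray2018} supplies and should be invoked rather than inferred from the one direction you state.

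The genuine gap is in the forward half of Step 3. Taking the shortest prefix of $w$ that exits $\mathscr{R}_k$ produces an edge from $\mathscr{R}_k$ into some class $\mathscr{R}_{k'}$ with $\mathscr{R}_k >_{\mathscr{R}} \mathscr{R}_{k'} \geq_{\mathscr{R}} \mathscr{R}_{ma}$, and nothing forces $\mathscr{R}_{k'} = \mathscr{R}_{ma}$; hence no conflict with the uniqueness of the edge entering $\fR_{ma}$ is obtained. Your proposed rescue --- ``handled by the cover hypothesis'' --- is circular at this point, since in this direction the cover property is exactly what is being proven. The repair is to look at the other end of the walk: let $j$ be the first index at which the walk $k,\, ka_1,\, ka_1a_2, \dots,\, kw$ lies in $\mathscr{R}_{ma}$; the $j$-th edge then enters $\fR_{ma}$, so by Proposition~\ref{Prop: (GS) Exactly one edge entering R_m} its initial vertex lies in $\mathscr{R}_m$, whereas by construction that vertex lies in a class $\leq_{\mathscr{R}} \mathscr{R}_k <_{\mathscr{R}} \mathscr{R}_m$ --- a contradiction. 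Better still, Step 3 can be dispensed with entirely: $\geq_{\mathscr{R}}$ is by definition the reachability preorder of $\Gamma$, and once Step 2 shows that every non-root vertex has a unique incoming edge and that iterated parents terminate at $\fR_1$, the graph $\Gamma$ is a rooted tree generating that order; the edge set of such a tree automatically coincides with the covering pairs of its ancestor order, which is precisely the Hasse diagram statement.
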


The tree in the above theorem can be of infinite degree. Armed with these results, we now just need one more lemma before we can prove the main theorem. First, note that $\fR_1$ is an induced subgraph of $\MCG{M}{A}$. However, since $\fR_1$ need not be a regular graph in general, there are inside $\MCG{M}{A}$ some edges leaving $\fR_1$. In principle, if this leaving were not in any way controlled, it might be the case that no end-isomorphism inside $\fR_1$ will extend to one between the same vertices inside $\MCG{M}{A}$. On the other hand, if we do know that it is controlled, in the sense that such end-isomorphism do extend, then Proposition~\ref{Prop: (GS) Exactly one edge entering R_m} together with Theorem~\ref{Thm: (GS) M / R is a tree} and Theorem~\ref{Thm: (GS) R_1 is iso to R_m} easily combine to show that $\MCG{M}{A}$ is a context-free graph. For, if $m \in M$, then by Theorem~\ref{Thm: (GS) M / R is a tree} we have that $\MCG{M}{A}(m)$ consists only of vertices $n \in M$ with $n \leq_\mathscr{R} m$, and by Proposition~\ref{Prop: (GS) Exactly one edge entering R_m} together with Theorem~\ref{Thm: (GS) R_1 is iso to R_m}, we must hence have that $\MCG{M}{A}(m) \sim \MCG{M}{A}(m')$, where $m = [m_0] m'$ as above. Hence, since each end-isomorphism between vertices inside $\fR_1$ extends to one between the same vertices in $\MCG{M}{A}$, we therefore would have that there are only finitely many end-isomorphism classes of vertices in $\MCG{M}{A}$. 

We refer the reader to Figure~\ref{Fig: Bicyclic full example} as a bicyclic companion throughout the remainder. The reader is invited to check that the statements made in the following paragraph are all true in this example. Now, to show the claim, let $\fU^{(h)}$ denote the graph obtain from $\fU$ by the following operation: if $u = [(u_0 ; \xi)] \in V(\fU)$ and $a \in A$ are such that there is \textit{no} edge originating in $u$ labelled by $a$, then we add a new vertex $u'$ to $\fU$ and an edge $u \xrightarrow{a} u'$. Adding all such edges produces a deterministic and regular graph which we will call the \textit{hairy graph} associated to $\fU$, and denote by $\fU^{(h)}$. Each added edge will be called an $a$-\textit{hair}, where $a \in A$ is the label of the added edge. Now, by the left action of $U(M)$, it is clear that if an $a$-hair is added from $[(u_0 ; \xi)]$, then an $a$-hair must also be added to $[(v_0 ; \xi)]$ for all $v_0 \in \Lambda^\ast$. Hence our usual action extends, and $\fU^{(h)}$ is an almost-transitive graph with respect to $U(M)$. Furthermore, as these hairs do not cross between $\sim_i$-classes, it is also clear that $\fU^{(h)}$ is $N(\fU)$-overlap-free, and that $(\fU^{(h)}, N(\fU))$ is $S$-full. Thus $(\fU^{(h)}, U(M), N(\fU))$ satisfies the bounded folding condition. Note that the added vertices (the tips of the hairs) are \textit{not} included as branch points. Furthermore, the action of $U(M)$ maps hairs to hairs, and non-hairs to non-hairs. Now, by an argument entirely analogous to that of the proof of Theorem~\ref{Thm: Big fat equivalence list}, $\fU^{(h)}$ is a context-free graph. It thus follows from Theorem~\ref{Thm: Bounded folding gives CF from T} that the determinised form of $\T(\fU^{(h)}, N(\fU))$ is context-free. Let $\TUH$ denote $\T(\fU^{(h)}, N(\fU))$, and let $\TUHE$ denote its determinised form. Now for all $a \in A$ and for all vertices $v \in \TUH$ that does not have degree one and was added as a hair, there is at least one edge outgoing from $v$ with label $a$ -- in general, there is more than one. Hence, determinising $\TUH$ will produce a graph in which one can also read an edge labelled by $a$ from every vertex of the above form. But $\TUHE$ is also an induced subgraph of $\MCG{M}{A}$, being the graph $\fR_1$ with an edge added for ways in which one may leave $\fR_1$. Hence, we have captured all ways in which one can leave $\fR_1$, and shown that the resulting graph is context-free. This yields the main theorem of this paper. 

\begin{theorem}\label{Thm: MULLER-SCHUPP!}
Let $M = \pres{Mon}{A}{R_i = 1 \: (i \in I)}$ be a finitely presented special monoid. Then the Cayley graph $\MCG{M}{A}$ is context-free if and only if $U(M)$ is virtually free.
\end{theorem}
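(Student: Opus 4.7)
The plan is to derive both directions from machinery already available. For ($\Rightarrow$), suppose $\MCG{M}{A}$ is context-free. Any context-free graph has finite tree-width, and tree-width is inherited by induced subgraphs, so $\fU$, which is an induced subgraph of $\MCG{M}{A}$ by Proposition~\ref{Prop: fU is an induced subgraph of fR1}, also has finite tree-width. Since $\fU$ has bounded degree (from finiteness of $\Lambda$) and the left action of $U(M) \leq \textnormal{Aut}(\fU)$ is almost-transitive, Theorem~3.10 of \cite{Kuske2005} (as invoked in the $(6\!\Leftrightarrow\!7)$ part of Theorem~\ref{Thm: Big fat equivalence list}) upgrades finite tree-width to context-freeness of $\fU$. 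Applying Theorem~\ref{Thm: Big fat equivalence list} then yields that $U(M)$ is virtually free.

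For ($\Leftarrow$), assume $U(M)$ is virtually free. The key new ingredient is an enrichment of $\fU$ that simultaneously records the intrinsic edges of $\fR_1$ and the extrinsic edges of $\MCG{M}{A}$ that leave $\fR_1$. I would define the \emph{hairy graph} $\fU^{(h)}$ by, for each vertex $u \in V(\fU)$ and each $a \in A$ with no outgoing $a$-edge at $u$, attaching a fresh pendant vertex joined to $u$ by an $a$-labelled edge. These hairs are added $U(M)$-equivariantly and do not cross $\sim_i$-classes, so $\fU^{(h)}$ remains deterministic, remains context-free by exactly the argument of Theorem~\ref{Thm: Big fat equivalence list} (the new pendants contribute only trivial ends), and the triple $(\fU^{(h)}, \textnormal{Aut}_{N(\fU)}(\fU^{(h)}), N(\fU))$ still satisfies the bounded folding condition from Corollary~\ref{Cor: fU satisfies bounded folding}. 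Theorem~\ref{Thm: Bounded folding gives CF from T} then forces the determinised form $\TUHE$ of $\TUH := \T(\fU^{(h)}, N(\fU))$ to be context-free, and the argument of Theorem~\ref{Thm: R_1 is TU determinised} identifies $\TUHE$ with $\fR_1$ together with exactly one pendant vertex for every edge of $\MCG{M}{A}$ that leaves $\fR_1$.

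To promote this to $\MCG{M}{A}$ itself I would invoke the Gray--Steinberg structure: by Theorem~\ref{Thm: (GS) M / R is a tree} the quotient $M/\mathscr{R}$ is a rooted tree with root $\fR_1$; by Proposition~\ref{Prop: (GS) Exactly one edge entering R_m} each non-root $\fR_m$ is entered by a unique labelled edge; and by Theorem~\ref{Thm: (GS) R_1 is iso to R_m} the descendant subgraph $\Gamma_m$ of each vertex $m = [m_0]m'$ is isomorphic to $\MCG{M}{A}$ via a labelled isomorphism sending $1 \mapsto [m_0]$. Hence the end $\MCG{M}{A}(m)$ is determined, up to end-isomorphism, by the $\TUHE$-neighbourhood of the image of $m'$ together with a uniform tree of copies of $\fR_1$ hanging off. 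Since $\TUHE$ has only finitely many end-isomorphism classes and the extension via the tree of $\mathscr{R}$-classes is uniform, $\MCG{M}{A}$ has only finitely many end-isomorphism classes and is therefore context-free.

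The hard part is the extension of end-isomorphisms from $\fR_1$ to $\MCG{M}{A}$: an end-isomorphism of $\fR_1$ need not respect which vertices admit extra exit edges into other $\mathscr{R}$-classes, so determinising $\T(\fU, N(\fU))$ on its own is insufficient. The hairy enrichment is precisely what forces the bounded determinisation to remember those exit directions, after which the tree structure on $M/\mathscr{R}$ and the uniform isomorphisms $\fR_1 \cong \fR_m$ make the remaining gluing trivial.
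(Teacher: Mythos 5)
Your proposal is correct and follows essentially the same route as the paper: the hairy graph $\fU^{(h)}$, the verification of the bounded folding condition, the application of Theorem~\ref{Thm: Bounded folding gives CF from T} to get $\TUHE$ context-free, and the Gray--Steinberg structure theorems (Theorems~\ref{Thm: (GS) R_1 is iso to R_m}, \ref{Thm: (GS) M / R is a tree} and Proposition~\ref{Prop: (GS) Exactly one edge entering R_m}) used to propagate finitely many end-isomorphism classes from $\TUHE$ to all of $\MCG{M}{A}$ are deployed exactly as in the paper. The only divergence is the forward direction, where you use finite tree-width and its heredity to the induced subgraph $\fU$ (the argument the paper itself uses for Corollary~\ref{Cor: M qi to a tree iff U(M) VF}) rather than the paper's route through Corollary~\ref{Cor: fR1 CF iff U(M) VF} via Pelecq's theorem on automorphism groups of deterministic context-free graphs; both are valid.
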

\begin{proof}
For brevity, let $\Gamma_M := \MCG{M}{A}$. By Corollary~\ref{Cor: fR1 CF iff U(M) VF}, it suffices to show that $\Gamma_M$ is context-free if and only if $\fR_1$ is context-free. Now using the same argument from automorphism groups as in the converse direction of the proof of Corollary~\ref{Cor: fR1 CF iff U(M) VF}, we know that $\fR_1$ is context-free if and only if $\TUHE$ is context-free. 

We first claim that if $m \in M$ is such that $m = [m_0]m'$, then $\Gamma_M(m) \sim \Gamma_M(m')$. By Theorem~\ref{Thm: (GS) M / R is a tree} together with Theorem~\ref{Thm: (GS) R_1 is iso to R_m}, we first know that $\Gamma_M(m) \subseteq \Gamma_{[m_0]}$, and furthermore that if $n \in \Gamma_{[m_0]}$, then we can write $n = [m_0][n_0'] n'$. Then, again applying Theorem~\ref{Thm: (GS) M / R is a tree} together with Proposition~\ref{Prop: (GS) Exactly one edge entering R_m}, we know that
\begin{align*}
d_{\Gamma_M}(1, n) &= d_{\Gamma_M}(1, [m_0]) + d_{\Gamma_M}(1, [n_0]) + d_{\Gamma_M}(1, n') \\
&= C_1 + d_{\Gamma_{[m_0]}}(1, [n_0]n').
\end{align*}
Here $C_1$ is some constant not depending on $n$. It now follows by a simple distance argument gives that since $\Gamma_{[m_0]}$ is isomorphic to $\Gamma_M$, we have that the prefix $[m_0]$ does not make a difference for the end-isomorphism type of $m$; that is, $\Gamma_M(m)~\sim~\Gamma_M(m')$. 

Now, assume that $u, v \in V(\TUHE)$ are such that $\TUHE(u) \sim \TUHE(v)$. Then as $\TUHE$ contains all edges leaving $\fR_1$ inside $\Gamma_M$, and since each such edge in $\Gamma_M$ continues with a copy of $\Gamma_M$ attached by Theorem~\ref{Thm: (GS) R_1 is iso to R_m}, we necessarily have that $\Gamma_M(u) \sim \Gamma_M(v)$. 

Thus combining the two above arguments, we have that $\Gamma_M$ and $\TUHE$ must have the same number of end-isomorphism classes of vertices. Thus one is context-free if and only if the other is, completing the proof.
\end{proof}

A full example of all the graphs involved is drawn out in Figure~\ref{Fig: Bicyclic full example}, where the bicyclic monoid is treated. As any context-free graph is quasi-isometric to a tree by \cite[Lemma~8.4]{Chalopin2017}, we have the following corollary, of independent interest.

\begin{corollary}\label{Cor: M qi to a tree iff U(M) VF}
Let $M = \pres{Mon}{A}{R_i = 1 \: (i \in I)}$ be a finitely presented special monoid. Then the right Cayley graph of $M$ is quasi-isometric to a tree if and only if $U(M)$ is virtually free.
\end{corollary}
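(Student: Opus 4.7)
The plan is to combine Theorem~\ref{Thm: MULLER-SCHUPP!} with the equivalences established in Theorem~\ref{Thm: Big fat equivalence list}, using as the bridge the fact (Proposition~5.17 of Diekert, already invoked in the proof of Theorem~\ref{Thm: Big fat equivalence list}) that quasi-isometry preserves finite tree-width between bounded-degree graphs.

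For the $(\Leftarrow)$ direction, suppose $U(M)$ is virtually free. By Theorem~\ref{Thm: MULLER-SCHUPP!}, $\MCG{M}{A}$ is a context-free graph, and by Lemma~8.4 of Chalopin--Chepoi--Dragan (cited in the paragraph immediately preceding the corollary), any context-free graph is quasi-isometric to a tree as undirected graphs. This direction is immediate.

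For the $(\Rightarrow)$ direction, suppose $\textnormal{ud}(\MCG{M}{A})$ is quasi-isometric to a tree. First observe that $\MCG{M}{A}$ has bounded degree: the out-degree is bounded by $|A|$, and the in-degree is bounded because of the tree-of-copies decomposition from Theorems~\ref{Thm: (GS) R_1 is iso to R_m} and~\ref{Thm: (GS) M / R is a tree} together with Proposition~\ref{Prop: (GS) Exactly one edge entering R_m} (so the in-degree at $m$ equals the in-degree at the corresponding vertex of $\fR_1$, augmented by at most one). Since any tree has finite tree-width, the cited Diekert proposition yields that $\MCG{M}{A}$ itself has finite tree-width. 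Now finite tree-width is inherited by induced subgraphs, since restricting each bag of a tree decomposition to the vertex set of the subgraph produces a valid tree decomposition of the subgraph of no greater width. By Proposition~\ref{Prop: fU is an induced subgraph of fR1}, $\fU$ embeds as an induced subgraph of $\fR_1$, which is itself an induced subgraph of $\MCG{M}{A}$; hence $\textnormal{ud}(\fU)$ has finite tree-width. Invoking condition~(6) of Theorem~\ref{Thm: Big fat equivalence list}, this is equivalent to $U(M)$ being virtually free, completing the proof.

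The only mild obstacle is the bounded-degree verification for $\MCG{M}{A}$. This is implicitly assumed throughout the paper (a context-free graph is, by definition, of bounded degree, so Theorem~\ref{Thm: MULLER-SCHUPP!} tacitly places $\MCG{M}{A}$ in this category), but in the converse direction it must be spelled out: the Gray--Steinberg structural decomposition of $\MCG{M}{A}$ as a regular tree of isomorphic copies of $\fR_1$ joined by single edges reduces the question to bounded degree of $\fR_1$, which in turn follows from the fact that every vertex of $\fR_1$ is right invertible and its in-neighbours in $\MCG{M}{A}$ are therefore also right invertible, so they must appear already as in-neighbours within $\fR_1 \hookrightarrow \MCG{R(M)}{\Xi}$, whose bounded degree comes from the free-product structure $R(M) \cong \operatorname{FM}(k) \ast U(M)$ established in Section~\ref{Sec: Constructing fR1}.
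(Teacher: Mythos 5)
Your proof is correct and follows essentially the same route as the paper's: the forward direction via Theorem~\ref{Thm: MULLER-SCHUPP!} and the Chalopin et al.\ lemma, and the converse via the Diekert tree-width/quasi-isometry proposition, the hereditarity of finite tree-width under induced subgraphs, the embedding $\fU \hookrightarrow \fR_1 \hookrightarrow \MCG{M}{A}$, and condition (6) of Theorem~\ref{Thm: Big fat equivalence list}. Your added justification of the bounded degree of $\MCG{M}{A}$ (which the paper simply asserts) is a welcome extra detail but does not change the argument.
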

\begin{proof}
As noted above, if $U(M)$ is virtually free, then $\MCG{M}{A}$ is context-free, and hence quasi-isometric to a tree. On the other hand, if $\MCG{M}{A}$ is quasi-isometric to a tree, then since $M$ has uniformly bounded degree, it follows that $\MCG{M}{A}$ has finite tree-width. Hence, as $\fU$ is an induced subgraph of $\fR_1$, and hence of $\MCG{M}{A}$, $\fU$ also has finite tree-width. Hence $U(M)$ is virtually free by Theorem~\ref{Thm: Big fat equivalence list}.
\end{proof}

This corollary could also be proven by noting that Lemma~\ref{Lem: R1 is qi to fR} reduces the problem of classifying when $\fR_1$ is quasi-isometric to a tree to the problem of classifying when the Cayley graph of $F \ast G$ is quasi-isometric to a tree, where $F$ is a finitely generated free monoid and $G$ is a group. This free product should be relatively straightforward to approach directly to show that this is equivalent to having $G$ be virtually free; the result would then follow by, following Gray-Steinberg, building the Cayley graph of $M$ in a tree-like way from $\fR_1$.

\begin{figure}
\begin{center}
\hspace*{-3cm}\includegraphics[scale=1]{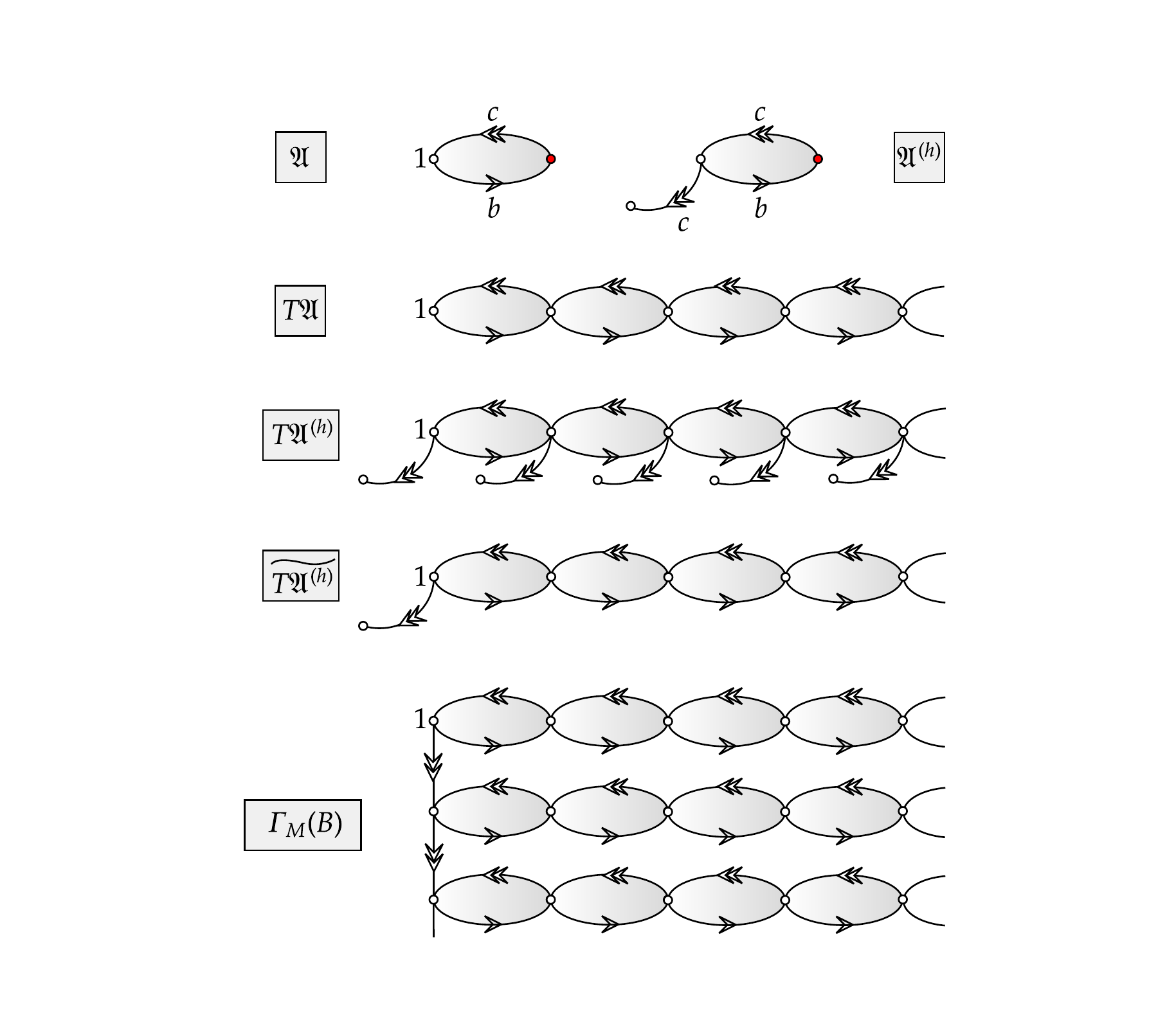}
\end{center}
\caption{All graphs involved in the proof of Theorem~\ref{Thm: MULLER-SCHUPP!} when applied to the bicyclic monoid $B = \pres{Mon}{b,c}{bc=1}$. The single red vertex in $\fU$ and $\fU^{(h)}$ denotes the single non-locally invertible vertex of either graph. Here $\Gamma_M(B)$ denotes $\MCG{B}{\{ b, c\}}$, the right Cayley graph of $B$. }
\label{Fig: Bicyclic full example}
\end{figure}

\section{Applications and Open Problems}\label{Sec: Applications and Open Problems}

We will now describe two main applications of Theorem~\ref{Thm: MULLER-SCHUPP!}, as well as some open problems and further research directions. The first is a complete characterisation of when the monadic second order theory of the Cayley graph of a special monoid is decidable; using this, we present the second application, which is to the rational subset membership problem. 

\subsection{Logic of Cayley graphs} 

Logically, we can consider a labelled graph as a formal logical structure $\Gamma$ with domain $V$ (the vertex set of the graph) and a single relation, the edge relation. A predicate of first-order logic in a graph involves vertices, the edge relation, equality, quantifiers $(\exists, \forall)$, and their boolean combinations $(\neg, \wedge, \vee, \rightarrow)$. Monadic second-order logic also allows quantification (both universal and existential) over subsets of the vertices; if such quantification is only allowed over finite subsets, then this is known as weak monadic second-order logic. 

The \textit{first-order (monadic second-order) theory} of a graph $\Gamma$ is the collection of all first-order (monadic second-order) predicates $\phi$ with no free variables such that $\Gamma \models \phi$. We say that the first-order (monadic second-order) theory of a graph is \textit{decidable} if, given any first-order (monadic second-order) predicate $\phi$, there is an algorithm which decides whether or not $\Gamma \models \phi$. For more detailed background on these notions, see e.g.\ \cite{Muller1985, Kuske2006, Schulz2010}.

Decidability of either the first-order or the monadic second-order theory of the Cayley graph of a f.g. monoid $M$ does not depend on the finite generating set chosen \cite{Kuske2006}. For this reason, we will generally omit reference to finite generating set below. There is a number of connections between decision problems for a given monoid $M$ and different theories associated to the Cayley graph of $M$. This is most apparent in the group case: the first-order theory of the Cayley graph of a group is decidable if and only if the word problem for the group is decidable \cite{Kuske2005}, and the monadic second-order theory of the Cayley graph of a group is decidable if and only if the group is virtually free \cite{Kuske2005, Muller1985}. 

However, the same need not be true for monoids in general; we only have implications in one direction. By \cite[Proposition~4]{Kuske2006}, if the first-order theory of the Cayley graph of a f.g. monoid is decidable, then the monoid has decidable word problem, but by \cite[Proposition~5]{Kuske2006} there exists a monoid with word problem decidable even in linear time, but the Cayley graph of which nonetheless has undecidable first-order theory. One of the main consequences of Theorem~\ref{Thm: MULLER-SCHUPP!} is the following, which lends credence to the adage that special monoids are, in many respects, very similarly behaved to groups.

\begin{theorem}\label{Thm: Dec MSO iff U(M) VF}
Let $M = \pres{Mon}{A}{R_i = 1 \: (i \in I)}$ be a finitely presented special monoid and $U(M)$ its group of units. Then the Cayley graph $\MCG{M}{A}$ has decidable monadic second order theory if and only if $U(M)$ is virtually free. 
\end{theorem}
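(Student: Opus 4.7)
The plan is to tether both directions to Theorem~\ref{Thm: MULLER-SCHUPP!} using classical MSO results on context-free graphs and on Cayley graphs of groups. For the easy forward direction, assume $U(M)$ is virtually free; then Theorem~\ref{Thm: MULLER-SCHUPP!} yields that $\MCG{M}{A}$ is a context-free graph, and by Muller \cite{Muller1985}, which reduces to Rabin's tree theorem \cite{Rabin1969}, every context-free graph has decidable monadic second-order theory.

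For the converse direction, assume that the MSO theory of $\MCG{M}{A}$ is decidable. By Theorem~\ref{Thm: (GS) M / R is a tree} the condensation of $\MCG{M}{A}$ is a rooted tree with root $\fR_1$, so $\fR_1$ coincides with the unique initial strongly connected component, i.e.\ the set of vertices $u$ such that every vertex reaching $u$ is also reachable from $u$. Reachability is MSO-expressible (via quantification over out-closed vertex sets), so $\fR_1$ is MSO-definable, and hence the MSO theory of the induced subgraph $\fR_1$ is decidable. Inside $\fR_1$ define $u \sim_{\mathscr{H}} v$ to hold iff there exist walks $u \to v$ and $v \to u$ with labels in the regular language $\Lambda^*$; this is MSO-expressible because regular-language-constrained reachability is, and by Proposition~\ref{Prop: Lambda generates the invertibles} together with freeness of the left $U(M)$-action on $\fR_1$, its classes are exactly the Green $\mathscr{H}$-classes $uU(M)$. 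Every such class, equipped with the derived edges ``there is a walk from $x$ to $y$ labelled by the piece $\lambda_{i,j}$'' (again MSO-expressible), is isomorphic as a labelled graph to $\MCG{U_\fB(M)}{\fB}$ via $ug \mapsto g$. Thus for any MSO sentence $\varphi$ about $\MCG{U_\fB(M)}{\fB}$, the translation $\widetilde{\varphi}$ obtained by relativising all quantifiers of $\varphi$ to an arbitrary $\sim_{\mathscr{H}}$-class and rewriting each edge predicate $b_{i,j}$ as the corresponding $\lambda_{i,j}$-walk predicate is an MSO sentence over $\fR_1$ equivalent to $\varphi$. Consequently the MSO theory of $\MCG{U_\fB(M)}{\fB}$ is decidable, and as $\fB$ is a finite generating set for the group $U_\fB(M) \cong U(M)$, the theorem of Kuske \& Lohrey \cite{Kuske2005} (building on Muller \& Schupp \cite{MullerSchupp1983}), namely that a finitely generated group has decidable MSO Cayley graph if and only if it is virtually free, gives that $U(M)$ is virtually free.

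The principal obstacle is the MSO-interpretation step, because the identity vertex $1$ and the distinguished class $\mathscr{H}_1 = U(M)$ are not individually MSO-definable in $\MCG{M}{A}$: the large automorphism group $\textnormal{Aut}(\fR_1) \cong U(M)$ renders the various $\mathscr{H}$-classes intrinsically indistinguishable as labelled substructures. The resolution is to exploit precisely this symmetry, namely that since all $\sim_{\mathscr{H}}$-classes are abstractly isomorphic copies of $\MCG{U_\fB(M)}{\fB}$, one need not pin down the specific coset $U(M)$ but may simply quantify over $\sim_{\mathscr{H}}$-classes, a manoeuvre firmly within the MSO formalism.
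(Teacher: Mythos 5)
Your forward direction is exactly the paper's: Theorem~\ref{Thm: MULLER-SCHUPP!} gives context-freeness of $\MCG{M}{A}$, and \cite[Theorem~4.4]{Muller1985} then gives decidability of its monadic second-order theory. Your converse, however, takes a genuinely different route. The paper disposes of it in two lines by citing the result of \cite{Kuske2003} that decidability of the MSO theory of the Cayley graph of a finitely generated monoid is inherited by finitely generated submonoids, applied to $U(M) \leq M$; you instead construct the required MSO interpretation of $\MCG{U_\fB(M)}{\fB}$ inside $\MCG{M}{A}$ by hand, which amounts to re-proving the relevant instance of that closure theorem. The interpretation is sound: $\fR_1$ is MSO-definable as the root strongly connected component (Theorem~\ref{Thm: (GS) M / R is a tree} guarantees it is the unique $\mathscr{R}$-maximal one), regular-language-constrained reachability is MSO-expressible, and quantifying over the $\Lambda^\ast$-reachability classes neatly sidesteps the fact that the vertex $1$ is not individually definable. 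What the paper's route buys is brevity; what yours buys is self-containedness and an explicit picture of why the submonoid $U(M)$ is ``visible'' in $\MCG{M}{A}$. One justification should be repaired: the classes of your relation are the orbits $uU(M)$ of the \emph{right} $U(M)$-action on the vertex set $R(M)$ of $\fR_1$, and the bijectivity of $ug \mapsto g$ requires that this right action be free. This does hold, but it follows from Zhang's decomposition $R(M) \cong \operatorname{FM}(k) \ast U(M)$ (the proposition opening Section~\ref{Sec: Constructing fR1}), not from freeness of the \emph{left} action, which is the trivial direction (cancel a right inverse of $u$) and is not what your map uses. Relatedly, these right translates need not coincide with the Green $\mathscr{H}$-classes inside $\mathscr{R}_1$, which are the left translates $U(M)u$; fortunately your argument never needs that identification, only that every class equipped with the derived $\lambda_{i,j}$-edges is an isomorphic copy of $\MCG{U_\fB(M)}{\fB}$.
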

\begin{proof}
$(\impliedby)$ If $U(M)$ is virtually free, then by Theorem~\ref{Thm: Big fat equivalence list} $\fU$ is context-free. Hence $\MCG{M}{A}$ is context-free by Theorem~\ref{Thm: MULLER-SCHUPP!}. By \cite[Theorem~4.4]{Muller1985}, the monadic second-order theory of $\MCG{M}{A}$ is hence decidable.

$(\implies)$ Assume that $\MCG{M}{A}$ has decidable monadic second order theory. It is shown in \cite{Kuske2003} that the class of finitely generated monoids with decidable monadic second order theory is closed under taking finitely generated submonoids. Hence $\MCG{U_\fB(M)}{\fB}$ has decidable monadic second order theory. Now by \cite[Corollary~4.1]{Kuske2005} together with Proposition~\ref{Prop: lud MCG is iso to GCG}, we have that the monoid Cayley graph of a group has decidable monadic second order theory if and only if $U(M)$ is context-free, and hence if and only if $U(M)$ is virtually free. 
\end{proof}

The question of characterising in general which monoids have decidable monadic second order theory was posed by Kuske \& Lohrey in \cite{Kuske2006}. While the fully general case remains (wide) open, the above theorem completely answers this question in the special case.

\subsection{Rational Subsets}

Our second main application lies in the rational subset membership problem. Recall that if $\pi \colon A^\ast \to M$ denotes the natural homomorphism, and $R \subseteq A^\ast$ is a regular language (in the sense of Kleene \cite{Kleene1951}), then we say that $\pi(R)$ is a \textit{rational subset} of $M$. We note that, just as Kleene's theorem tells us that a regular language is one that is accepted by a finite state automaton over $A$, so too is $\pi(R)$ accepted by a finite state automaton over $\pi(A)$. The most familiar examples of rational subsets are finitely generated submonoids. However, there are many rational subsets which are not submonoids. Any finite union of finitely generated submonoids, for example, will be rational, but will not itself be a submonoid in general.

The rational subset membership problem is the decision problem which asks: given $\pres{Mon}{A}{R}$, a regular language $R \subseteq A^\ast$, and a word $w \in A^\ast$; decide whether $\pi(w) \in \pi(R)$. For groups, this decision problem generalises the word problem, for deciding membership in the rational subset $\{ 1 \}$ is equivalent to the word problem. In general, however, there is little connection between the two problems; the two problems are independent. For elaboration on the connection with the word problem, we refer the reader to \cite{NybergBrodda2020b}.

One of the main appeals of decidability of the monadic second-order theory of the Cayley graph of a monoid is that it can be used to solve decision problems for the monoid itself, independently of any graph-theoretic considerations. This is illustrated by the following.

\begin{prop}
Let $M$ be a monoid generated by a finite set $A$. If the Cayley graph $\MCG{M}{A}$ has decidable monadic second-order theory, then $M$ has decidable rational subset membership problem.
\end{prop}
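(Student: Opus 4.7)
The plan is to reduce the rational subset membership problem for $M$ to satisfiability of closed monadic second-order sentences over $\MCG{M}{A}$ (regarded as a structure with a distinguished constant for the root $1$ and a binary relation $E_a$ for each $a \in A$), and then invoke the assumed decidability. An instance of the rational subset membership problem consists of a word $w \in A^\ast$, representing the element $\pi(w) \in M$, together with a regular language $R \subseteq A^\ast$ presented by a deterministic finite automaton $\mathcal{A} = (Q, A, \delta, q_0, F)$ with $L(\mathcal{A}) = R$. The question is whether there is some $u \in R$ with $\pi(u) = \pi(w)$.

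The first step is to construct from $\mathcal{A}$ an MSO formula $\phi_R(x)$ with one free first-order variable expressing ``$x \in \pi(R)$''. The key observation is that $x \in \pi(R)$ if and only if in the product graph $\MCG{M}{A} \times \mathcal{A}$, the pair $(x, q)$ is reachable from $(1, q_0)$ for some $q \in F$, where edges $(u, q) \xrightarrow{a} (u', q')$ exist precisely when $u \xrightarrow{a} u'$ in $\MCG{M}{A}$ and $\delta(q, a) = q'$. We encode the reachable set by a tuple $(X_q)_{q \in Q}$ of monadic variables, with $X_q$ intended as $\{v : (v, q) \text{ is reachable from } (1, q_0)\}$. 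Using the standard MSO encoding of transitive closure by universal quantification over closed sets, we set
\[
\phi_R(x) := \forall (X_q)_{q \in Q} \Big[ \big(1 \in X_{q_0} \wedge \mathrm{Closed}\big((X_q)_{q \in Q}\big)\big) \rightarrow \bigvee_{q \in F} x \in X_q \Big],
\]
where $\mathrm{Closed}\big((X_q)_{q \in Q}\big)$ abbreviates the first-order formula
\[
\bigwedge_{q \in Q} \bigwedge_{a \in A} \forall u\, \forall u'\, \big( (u \in X_q \wedge E_a(u, u')) \rightarrow u' \in X_{\delta(q, a)}\big).
\]
A short verification shows that $\phi_R(x)$ holds at $x$ in $\MCG{M}{A}$ exactly when $x \in \pi(R)$: taking $(X_q)$ to be the actual reachable sets gives the forward direction, and the backward direction uses that any closed tuple containing $(1, q_0)$ must contain every reachable pair.

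Next, for the input word $w \equiv a_1 a_2 \cdots a_n$, I would build a first-order formula $\psi_w(x)$ asserting that $x$ is the vertex reached from $1$ by following the edges labelled $a_1, a_2, \ldots, a_n$ in order; explicitly,
\[
\psi_w(x) := \exists x_1 \cdots \exists x_{n-1}\, \big( E_{a_1}(1, x_1) \wedge E_{a_2}(x_1, x_2) \wedge \cdots \wedge E_{a_n}(x_{n-1}, x)\big),
\]
with the obvious base cases when $n \leq 1$ or $w = \varepsilon$. Since $\MCG{M}{A}$ is deterministic (every vertex has exactly one outgoing $a$-edge for each $a \in A$), the unique solution is $x = \pi(w)$. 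Consequently $\pi(w) \in \pi(R)$ if and only if $\MCG{M}{A} \models \exists x\, \big(\psi_w(x) \wedge \phi_R(x)\big)$, which is a closed MSO sentence effectively computable from the pair $(w, \mathcal{A})$. Decidability of the MSO theory of $\MCG{M}{A}$ therefore decides the instance.

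The construction is in essence a routine MSO translation of a standard product-automaton argument, and there is no serious obstacle beyond careful bookkeeping. The main subtlety is that $\phi_R$ quantifies over a tuple of $|Q|$ monadic variables, and the closure condition must be formulated uniformly across every pair $(q, a) \in Q \times A$ so as to faithfully simulate $\delta$; phrasing the reachable set via universal quantification over closed tuples (rather than via an explicit least-fixed-point operator, which is not directly available in MSO) is what makes the whole scheme go through.
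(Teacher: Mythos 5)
Your proof is correct and takes essentially the same route as the paper: both reduce membership in $\pi(R)$ to evaluating an MSO-definable ``reachable from $1$ by a word in $R$'' predicate at the vertex $\pi(w)$ of $\MCG{M}{A}$. The only difference is one of implementation -- you build the formula explicitly from a DFA via universal quantification over closed tuples of sets (and pin down $\pi(w)$ with the first-order formula $\psi_w$), whereas the paper invokes the standard fact that the predicate $\textbf{reach}_L(x,y)$ is (weak) MSO-definable by induction on a regular expression for $L$, citing the literature.
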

\begin{proof}
Let $L \subseteq A^\ast$ be a regular language. Let $\textbf{reach}_{L}(x, y)$ be the predicate with variables $x, y \in M$ denoting whether the vertex $y$ can be reached from $x$ by some sequence of edges $E_{\sigma_1}, \dots, E_{\sigma_n}$ such that $E_{\sigma_i}$ is an edge labelled by $\sigma_i \in A$, and $\sigma_1 \cdots \sigma_n \in L$. Clearly, for a word $w \in A^\ast$, we have that $\pi(w) \in \pi(L)$ if and only if $\MCG{M}{A} \models \textbf{reach}_{L}(1, \pi(w))$. Thus, to establish the claim, it suffices to show that $\textbf{reach}_{L}(x, y)$ is a monadic second-order predicate. In fact, it is even a weak monadic second-order predicate; this can be shown (see e.g.\ \cite{Schulz2010}) by induction on the operations of a regular expression for $L$.
\end{proof}
\begin{remark}
We do not require the full monadic second-order logic in the above proposition. The fragment, called $\text{FO}(\text{Reg})$ in \cite{Schulz2010}, consisting of first-order logic together with the reachability predicates $\textbf{reach}_{L}(x, y)$, is obviously sufficient; that it is not all of monadic second-order logic is not as obvious.
\end{remark}

Of course, an immediate corollary of the above is the following. 

\begin{corollary}\label{Cor: VF U(M) => RSMP dec}
Let $M = \pres{Mon}{A}{R_i = 1 \: (i \in I)}$ be a finitely presented special monoid with virtually free group of units. Then the rational subset membership problem for $M$ is decidable. 
\end{corollary}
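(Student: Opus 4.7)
The plan is to derive this corollary essentially immediately from the two preceding results in Section~\ref{Sec: Applications and Open Problems}. First I would invoke Theorem~\ref{Thm: Dec MSO iff U(M) VF}: since by hypothesis $U(M)$ is virtually free, the monadic second-order theory of the Cayley graph $\MCG{M}{A}$ is decidable. Then I would apply the unnumbered proposition immediately preceding this corollary, which states that any finitely generated monoid whose Cayley graph has decidable monadic second-order theory has decidable rational subset membership problem. Chaining these two implications yields the conclusion.

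Since the corollary is a direct combination of the two preceding results, there is essentially no new content to prove beyond invoking them. Thus the only step worth spelling out, if one wishes to make the proof self-contained in spirit rather than in letter, would be a brief reminder of why decidability of the rational subset membership problem follows from decidability of MSO. For a regular language $L \subseteq A^\ast$ and a word $w \in A^\ast$, membership of $\pi(w)$ in $\pi(L)$ is equivalent to $\MCG{M}{A} \models \textbf{reach}_L(1, \pi(w))$, and $\textbf{reach}_L$ is expressible in (weak) monadic second-order logic by induction on the structure of a regular expression for $L$.

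There is no real obstacle here — all the heavy lifting has already been done in establishing Theorem~\ref{Thm: MULLER-SCHUPP!} and its consequence Theorem~\ref{Thm: Dec MSO iff U(M) VF}. The corollary is truly just a packaging result, of independent interest because it extends the results of Kambites and Render on rational subset membership to the class of special monoids with virtually free group of units, and places this class squarely within the framework of monoids whose Cayley graphs have decidable monadic second-order theory. Accordingly, the proof will be only a couple of lines, citing Theorem~\ref{Thm: Dec MSO iff U(M) VF} and the preceding proposition.
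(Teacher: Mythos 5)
Your proposal matches the paper exactly: the corollary is stated there as "an immediate corollary of the above," i.e.\ the combination of Theorem~\ref{Thm: Dec MSO iff U(M) VF} with the unnumbered proposition on $\textbf{reach}_L$ that immediately precedes it. Correct, and the same route as the paper.
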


Prior to the above corollary, few results on the rational subset membership problem for (non-group) monoids or semigroups were known. In fact, we will mention all such results; these were all shown by Kambites \& Render, and can be found collected in \cite{Lohrey2015}. Given a semigroup with decidable rational subset membership, then a finitely generated Rees matrix semigroup has decidable rational subset membership problem \cite{Render2010}; monoids which satisfy certain small overlap conditions (none of which are satisfied by special monoids) \cite{Kambites2009}; polycyclic and bicyclic monoids \cite{Render2009}. In fact, their result on bicyclic and polycyclic monoids is a consequence of a more general result which follows from material found in \cite{Book1982b}.

\subsection{Characterising context-free special monoids}

In this section we will combine the various results of this article together with other work by the author -- a generalisation of the Muller-Schupp theorem for groups to special monoids -- in order to demonstrate the rigidity of the class of special monoids with virtually free group of units. In particular, we will show that from many viewpoints, including that of undirected geometry and logic, this class behaves very similarly to the class of virtually free groups.

We begin with a definition; this is elaborated on in great detail in \cite{NybergBrodda2020b}. The \textit{word problem} for a group $G$ with group generating set $A$ is commonly interpreted as the set of words over $A \cup A^{-1}$ which equal the identity element of the group. This set of words can be studied using language-theoretic methods, which has seen a great deal of success; see the introduction for a brief overview. For monoids in general, this set has very little meaning; a more appropriate definition was introduced by Duncan \& Gilman in \cite{Duncan2004}. If $M$ is a monoid generated by a set $A$ with associated homomorphism $\pi \colon A^\ast \to M$, and $\#$ is a symbol not in $A$, then we define the \textit{word problem} of $M$ as the set 
\[
WP(M, A) := \{ u \# v^{\text{rev}} \mid \pi(u) = \pi(v) \}
\]
where $v^{\text{rev}}$ denotes the reverse of the word $v$. We will not discuss the technicalities involved in this definition further, but note that this definition coincides with the usual one if $M$ is a group, and that the language (cone) class it belongs to does not depend on generating set $A$ chosen. The author has shown in related work that a finitely presented special monoid has context-free word problem if and only if its group of units is virtually free \cite{NybergBrodda2020b}. This is a generalisation of the Muller-Schupp theorem from groups to special monoids -- as groups are special monoids, and the group of units of a group is the group itself, it is a proper generalisation. 

Thus, we can gather up all results of the previous sections, in particular Theorems~\ref{Thm: MULLER-SCHUPP!} and \ref{Thm: Dec MSO iff U(M) VF} along with Corollary~\ref{Cor: M qi to a tree iff U(M) VF}. This provides ample justification for claiming that the class of special monoids with virtually free units is incredibly well-behaved, especially when compared to the class of virtually free groups.

\begin{theorem}\label{Thm: Full equivalence of context-free M}
Let $M$ be a finitely presented special monoid with group of units $U(M)$. Then the following are equivalent:
\begin{enumerate}
\item $M$ has context-free word problem.
\item $U(M)$ is virtually free.
\item The right Cayley graph $\MCG{M}{A}$ is a context-free graph.
\item $\MCG{M}{A}$ is quasi-isometric to a tree (as undirected graphs).
\item The monadic second-order theory of $\MCG{M}{A}$ is decidable.
\end{enumerate}
\end{theorem}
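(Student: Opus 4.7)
The plan is to use statement (2), that $U(M)$ is virtually free, as a central hub and establish each of the other four conditions as equivalent to (2) by appealing to results already proved in the paper or in closely related work of the author. Since all five statements are claimed to be simultaneously equivalent, it suffices to verify four biconditionals of the form $(i) \iff (2)$ for $i \in \{1,3,4,5\}$; the remaining equivalences then follow by transitivity.

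The equivalence $(2) \iff (3)$ is precisely the content of Theorem~\ref{Thm: MULLER-SCHUPP!}, which is the main result of the paper and has been established via the construction of $\fU$ and the tree-of-copies machinery of Section~\ref{Sec: Trees and Determinisations}, together with the structural results of Gray--Steinberg recounted in Section~\ref{Sec: Building MCG from fR1}. The equivalence $(2) \iff (4)$ is Corollary~\ref{Cor: M qi to a tree iff U(M) VF}, which was deduced from Theorem~\ref{Thm: MULLER-SCHUPP!} by using that context-free graphs are quasi-isometric to trees (via \cite[Lemma~8.4]{Chalopin2017}) for one direction, and by invoking the finite tree-width characterisation in Theorem~\ref{Thm: Big fat equivalence list} applied to the induced subgraph $\fU \subseteq \MCG{M}{A}$ for the converse. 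The equivalence $(2) \iff (5)$ is Theorem~\ref{Thm: Dec MSO iff U(M) VF}, whose forward direction uses Rabin's theorem for context-free graphs (\cite[Theorem~4.4]{Muller1985}) combined with Theorem~\ref{Thm: MULLER-SCHUPP!}, and whose converse uses the fact \cite{Kuske2003} that decidability of the MSO theory of a Cayley graph passes to f.g.\ submonoids, together with the group-case characterisation of \cite[Corollary~4.1]{Kuske2005}.

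The only equivalence that does not appear explicitly in the excerpt is $(1) \iff (2)$, which one should simply cite from the companion paper \cite{NybergBrodda2020b}. That paper proves the monoidal generalisation of the Muller--Schupp theorem: a finitely presented special monoid $M$ has context-free word problem (in the sense of Duncan--Gilman, using the language $\{u \# v^{\text{rev}} : \pi(u) = \pi(v)\}$) if and only if $U(M)$ is virtually free. Since for groups this definition of word problem reduces to the classical one, and since virtual freeness of $U(M)$ is the shared pivot condition, this plugs directly into the chain of equivalences.

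Assembling these citations into a proof is essentially routine bookkeeping rather than new mathematics, and there is no serious obstacle: the hardest step --- the geometric half of the generalised Muller--Schupp theorem, namely (2) $\iff$ (3) --- has already been done in Section~\ref{Sec: Building MCG from fR1}, and the language-theoretic half (1) $\iff$ (2) is imported as a black box. I would therefore write the proof as a short ``collection'' argument, stating ``$(1) \iff (2)$ is \cite[Theorem~X]{NybergBrodda2020b}; $(2) \iff (3)$ is Theorem~\ref{Thm: MULLER-SCHUPP!}; $(2) \iff (4)$ is Corollary~\ref{Cor: M qi to a tree iff U(M) VF}; and $(2) \iff (5)$ is Theorem~\ref{Thm: Dec MSO iff U(M) VF},'' with perhaps one sentence emphasising that this constitutes the full monoidal analogue of the classical group-theoretic tetrad of equivalent characterisations of virtually free groups.
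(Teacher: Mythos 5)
Your proposal matches the paper's own treatment exactly: the paper proves this theorem precisely by collecting Theorem~\ref{Thm: MULLER-SCHUPP!} for $(2)\iff(3)$, Corollary~\ref{Cor: M qi to a tree iff U(M) VF} for $(2)\iff(4)$, Theorem~\ref{Thm: Dec MSO iff U(M) VF} for $(2)\iff(5)$, and importing $(1)\iff(2)$ from \cite{NybergBrodda2020b}. Your hub-and-spoke assembly around condition (2) is correct and is the intended argument.
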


We remark that if we take $M$ to be a finitely presented group in the statement of Theorem~\ref{Thm: Full equivalence of context-free M}, then the results are all well-known to hold, and references to all can be found -- and indeed form crucial parts of the proofs -- in the previous sections in which the above are shown for special monoids.

As a final note, all of the results stated to this point have assumed that $M$ is finitely presented. On the one hand, it is well-known that a group with context-free word problem is finitely presented. On the other hand, for general semigroups, however, this need not be true \cite[Proposition~9]{Hoffmann2012}. It is perhaps surprising that, despite their similarity to groups, special monoids can exhibit this behaviour, too.

\begin{prop}
There exists a non-finitely presentable context-free special monoid.
\end{prop}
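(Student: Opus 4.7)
The plan is to exhibit an explicit finitely generated special monoid with an infinite irredundant family of defining relators whose word problem is nonetheless context-free in the sense of Duncan--Gilman. Such an example shows that the hypothesis of finite presentability in Theorem~\ref{Thm: Full equivalence of context-free M} cannot be removed. The construction mirrors, in the special setting, the non-special example of Hoffmann--Kuske--Thomas \cite{Hoffmann2012}: one takes a small finite alphabet and an infinite family of relators $\{R_n = 1\}_{n \geq 1}$ with combinatorial structure that rules out any finite-subfamily reduction. A natural candidate is
\[
M = \pres{Mon}{a, b, c}{a b^n c = 1 : n \geq 1},
\]
or a close variant; here $a$ and $c$ admit infinitely many one-sided inverses parameterised by $n$, yet the relators are simple enough to be parsed by a pushdown automaton.

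First I would verify that $WP(M, A) = \{u \# v^{\mathrm{rev}} : u \lra{M} v\}$ is context-free, by constructing a pushdown automaton recognising it. Since each relator has the shape $a \cdot b^n \cdot c$, a reduction sequence $u \lra{M} v$ can be encoded by a nested pattern of insertions and deletions that the PDA tracks on its stack, in a bicyclic-like fashion; the automaton nondeterministically guesses, for each block of $b$'s that it encounters, which relator witnesses the cancellation, and the simple left-to-right shape of the relators ensures that the stack discipline suffices.

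The main obstacle is to show that $M$ is not finitely presentable. For every finite $J \subseteq \{1, 2, \ldots\}$, one must produce a witness pair $(u, v)$ congruent under $\lra{M}$ but not under the congruence $\lra{M_J}$ generated by $\{a b^j c : j \in J\}$. The strategy is to choose $m \notin J$ with $m$ avoiding the lattice of combinations of $\{j : j \in J\}$, and to find a numerical or monoid-homomorphism invariant preserved by $\lra{M_J}$ -- for instance a projection to a suitable finite monoid or a linear functional on $A^*$ capturing constraints on the $b$-multiplicities inside $abc$-bracketed contexts -- that distinguishes $a b^m c$ from $\varepsilon$ modulo $\lra{M_J}$, whereas the full family contains the pair $(a b^m c, \varepsilon)$ by definition. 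Designing an invariant delicate enough to fail on some $R_n$ for each finite $J$ but robust enough to be compatible with the chosen $R_j$'s for $j \in J$ is the core difficulty.

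Finally, one verifies that $M$ is not a group, so that the Muller--Schupp consequence ``context-free groups are finitely presented'' does not force a contradiction; this is immediate from the one-sided nature of the relators, which prevents $a$ and $c$ from being two-sided units.
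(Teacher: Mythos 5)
You have found exactly the monoid the paper uses, $M = \pres{Mon}{a,b,c}{ab^nc = 1 \ (n \geq 1)}$, but both halves of your verification are left as open plans, and the half you flag as ``the core difficulty'' is in fact the easy part once you use rewriting systems. The left-hand sides $ab^nc$ have no overlaps with one another: no non-empty proper suffix of $ab^ic$ is a prefix of $ab^jc$, and no $ab^ic$ occurs as a factor of $ab^jc$ for $i \neq j$ (the unique $a$ and unique $c$ in $ab^jc$ force $i=j$). Hence the special rewriting system $\{ab^nc \to \varepsilon\}$ is terminating and locally confluent, so complete by Newman's Lemma, and the same holds for the subsystem indexed by any finite $J$. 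The ``invariant'' you are searching for is therefore just the normal form: for any single $m \notin J$, the word $ab^mc$ is already irreducible modulo the $J$-subsystem, so it is not congruent to $\varepsilon$ there, while it is congruent to $\varepsilon$ in $M$. There is no need to avoid any ``lattice of combinations'' of the $j \in J$, and no bespoke finite-monoid projection is required; your plan as stated leaves this step genuinely unresolved, whereas the confluence argument closes it in two lines. (One then invokes the standard fact that finite presentability is independent of the choice of finite generating set.)

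For context-freeness of the word problem, your PDA sketch is plausible but underspecified: the Duncan--Gilman language $\{u \# v^{\mathrm{rev}} \mid u \lra{M} v\}$ is not obviously recognised by the ``guess which relator cancels each $b$-block'' discipline you describe, since arbitrary nestings of insertions and deletions on both sides of the $\#$ must be handled, and you would need to argue this carefully. The paper avoids this entirely by observing that the system above is a monadic context-free complete rewriting system (the set of left-hand sides is the regular language $ab^+c$) and citing Book's theorem that such a system yields a context-free word problem. I would recommend either carrying out the PDA construction in detail or, more efficiently, routing both halves of the argument through the completeness of the rewriting system as the paper does.
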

\begin{proof}
Let $M = \pres{Mon}{a,b,c}{ab^ic = 1 \: (i \geq 1)}$. Then $M$ admits the rewriting system with rules $\{ (ab^ic \to \varepsilon) \mid i \geq 1 \}$. This system is terminating, as it is special, and furthermore it is locally confluent. By Newman's Lemma, it is complete. Evidently, no finite subset of these rules suffices to define all rules, as no rule can be applied to any other; thus $M$ is not finitely presentable with respect to $\{ a, b, c\}$, and hence is not finitely presentable with respect to any finite generating set.

As the language $ab^+c$ is regular (and hence context-free), $M$ admits a monadic context-free complete rewriting system. Hence $M$ has context-free word problem by \cite[Corollary~3.8]{Book1982}.
\end{proof}

If $M = \pres{Mon}{a,b,c}{ab^ic = 1 \: (i \geq 1)}$, then investigating the right Cayley graph of the monoid with respect to this generating set seems feasible. We conjecture that this graph is context-free. 

There are many more directions to pursue regarding the geometry of special monoids. In particular, connections with word-hyperbolicity, the fellow traveller property, automaticity, and more can be investigated; these results will appear in forthcoming work by the author. 

\section*{Acknowledgements}

The research in this article was conducted while the author was a PhD student at the University of East Anglia, Norwich, UK. The author wishes to thank his supervisor Dr Robert D. Gray for many helpful discussions and comments invaluable to the development of the present document.

\bibliography{GeometryOfSpecialMonoids} 
\bibliographystyle{amsalpha}

\end{document}